\newcommand{\cmark}{\ding{51}}%
\newcommand{\xmark}{\ding{55}}%
\definecolor{left} {HTML}{001528}
  \newcommand{\boundellipse}[3]
{(#1) ellipse (#2 and #3)
}
\newcommand{\tobi}[1]{{\color{blue}tobias says: #1}}  
\newtheorem{definition}{Definition}[section]
\newtheorem{theorem}[definition]{Theorem}
\newtheorem{corollary}[definition]{Corollary}
\newtheorem{example}[definition]{Example} 
\newtheorem{Lemma}[definition]{Lemma} 
\newtheorem{assumption}{Assumption}
\newcommand{\Var}{\operatorname{Var}}
\newcommand{\const}{\operatorname{const}}
\newcommand{\Cov}{\operatorname{Cov}}
\newcommand{\prob}{\operatorname{Pr}}
\newcommand{\ra}[1]{\renewcommand{\arraystretch}{#1}}
\let\vec\bm
\title{Quasi Markov Chain Monte Carlo Methods}
\author{Tobias Schwedes and Ben Calderhead}
\begin{document}
\maketitle

\begin{abstract}
\noindent Quasi-Monte Carlo (QMC) methods for estimating integrals are attractive since the resulting estimators typically converge at a faster rate than pseudo-random Monte Carlo. However, they can be difficult to set up on arbitrary posterior densities within the Bayesian framework, in particular for inverse problems. We introduce a general parallel Markov chain Monte Carlo (MCMC) framework, for which we prove a law of large numbers and a central limit theorem. In that context, non-reversible transitions are investigated. We then extend this approach to the use of adaptive kernels and state conditions, under which ergodicity holds. As a further extension, an importance sampling estimator is derived, for which asymptotic unbiasedness is proven. We consider the use of completely uniformly distributed (CUD) numbers within the above mentioned algorithms, which leads to a general parallel quasi-MCMC (QMCMC) methodology. We prove consistency of the resulting estimators and demonstrate numerically that this approach scales close to $n^{-2}$ as we increase parallelisation, instead of the usual $n^{-1}$ that is typical of standard MCMC algorithms. In practical statistical models we observe multiple orders of magnitude improvement compared with pseudo-random methods.

\end{abstract}

\section{Introduction}

For many problems in science MCMC has become an indispensable tool due to its
ability to sample from arbitrary probability distributions known up only
to a constant. Comprehensive introductions on MCMC methods can be found in \cite{newman1999monte,liu2008monte,robert2004monte,landau2014guide}. Estimators resulting from MCMC scale independently of dimensionality. However, they have the fairly slow universal convergence rate of $n^{-1}$, where $n$ denotes the number of samples generated, in the mean squared error (MSE), same as classic Monte Carlo methods using pseudo-random numbers. 
For the latter, faster convergence rates of order close to $n^{-2}$ can be achieved when samples are generated by a suitable low-discrepancy sequence, i.e.\ points which are homogeneously distributed over space (\cite{dick2013high}). These so called quasi-Monte Carlo (QMC) methods, despite their generally deteriorating performance with increasing (effective) dimension (\cite{wang2003effective,caflisch1997valuation}), can nonetheless lead to significant computational savings compared to standard Monte Carlo. However, they generally require the integral of interest to be expressible in terms of an expectation with respect to a unit hypercube, which limits their general application.

The first applications of QMC in the context of MCMC go back to \cite{chentsov1967pseudorandom} and \cite{sobol1974pseudo}, which assume a discrete state space. In \cite{chentsov1967pseudorandom}, the driving sequence of uniformly distributed independent and identically distributed (IID) random numbers is replaced by a completely uniformly distributed (CUD) sequence. The same approach is used in \cite{owen2005quasi} and \cite{chen2011consistency}. In \cite{liao1998variance}, a Gibbs sampler that runs on randomly shuffled QMC points is introduced. Later, \cite{chaudhary2004acceleration} uses a weighting of rejected samples to generate balanced proposals. Both successfully applied QMC in MCMC, albeit without providing any theoretical investigation. \cite{craiu2007acceleration} uses QMC in multiple-try Metropolis-Hastings, and \cite{lemieux2006exact} within an exact sampling method introduced by \cite{propp1996exact}. In \cite{l2008randomized} the so called array-randomised QMC (RQMC) was introduced that uses quasi-Monte Carlo to update multiple chains that run in parallel. Further, the roter-router model, which is a deterministic analogue to a random walk on a graph, was applied in \cite{doerr2009deterministic} on a number of problems. We note that most of these approaches resulted in relatively modest performance improvements over non-QMC methods \cite{chen2011consistency}.

Based on the coupling argument by Chentsov from \cite{chentsov1967pseudorandom}, it was proven in \cite{owen2005quasi} that an MCMC method defined on a finite state space still has the correct target as its stationary distribution when the driving sequence of IID numbers is replaced by weakly CUD (WCUD) numbers. Subsequently, \cite{tribble2008construction} provided proofs of some theoretical properties of WCUD sequences, along with numerical results using a Gibbs sampler driven by WCUD numbers, which achieves significant performance improvements compared to using IID inputs. More recently, the result from \cite{owen2005quasi} was generalised to WCUD numbers and continuous state spaces by Chen (\cite{chen2011consistency}).

In this work, we consider the theoretical and numerical properties of 
the parallel MCMC method introduced in \cite{calderhead2014general}, 
which we here call multiple proposal MCMC (MP-MCMC) as it proposes 
and samples {\it multiple} points in each iteration. We extend this methodology to the use of non-reversible transition kernels and introduce an adaptive version, for which we show ergodicity. Further, we derive an importance sampling MP-MCMC approach, in which all proposed points from one iteration are accepted and then suitably weighted in order to consistently estimate integrals with respect to the posterior. We then combine these novel MP-MCMC algorithms with QMC by generalising them to use arbitrary CUD numbers as their driving sequence, and we establish conditions under which consistency holds. Due to the fact that the state space is covered by multiple proposals in each iteration, one might expect that using QMC numbers as the seed in MP-MCMC should harvest the benefits of low-discrepancy sequences more effectively than in the single proposal case previously considered.  Moreover, the importance sampling approach mentioned above enables MP-MCMC to remove the discontinuity introduced by the acceptance threshold when sampling from the multiple proposals, which improves the performance when using QMC numbers as the driving sequence.  Indeed, when combining the multiple proposal QMC approach together with the importance sampling method we observe in numerical simulations a convergence rate of order close to $n^{-2}$ for this novel MCMC method, similar to traditional QMC methods.

This work is, to the best of our knowledge, the first publication
showing substantial benefits in the use of QMC in MCMC for
arbitrary posteriors that are not known analytically and are not hierarchical, i.e.\ do not possess a lower-dimensional structure for their conditional probabilities. Hierarchical MCMC sampling problems using QMC for medium dimensions that have been considered in the literature
include the $11$-dimensional hierarchical Poisson model for pump failures 
from \cite{gelfand1990sampling}, which was treated via QMC Gibbs sampling 
methods by \cite{liao1998variance} and \cite{owen2005quasi}, respectively,
and a $42$-dimensional probit regression example from \cite{finney1947estimation}, treated in \cite{tribble2008construction} via the use of a QMC seed in a Gibbs sampling scheme introduced in \cite{albert1993bayesian}. In these problems however, conditional distributions are available explicitly such that direct sampling can be applied.

In this paper, we begin with re-defining the MP-MCMC algorithm previously introduced in \cite{calderhead2014general} and then consider a number of novel extensions, which result finally in a parallel CUD driven method that achieves a higher rate of convergence similar to QMC.  The list of novel algorithms we consider is presented in Table 1.  Throughout the paper, we also prove some theoretical results for the proposed algorithms as well as investigating their performance in practice.  For the purpose of clarity and readability, we will often state the lemma and refer the reader to the appropriate section in the appendix for its full proof.

\begin{table}[h]
\ra{1.2}
\centering
\caption{ 
Summary of algorithms introduced in this work, and associated properties}
\centering
\resizebox{.75\textwidth}{!}{

\begin{tabular}{  @{} *7c @{}}  \bottomrule
{Algorithm} & {Section} & {Adaptive} & {IS}
& {PSR} & {CUD} & {Num.\ conv.\ rate} \\ \midrule

\ref{algorithm:multiproposal_MH}    & 
\ref{subsec:derivation_mpmcmc}    & 
\xmark    &  \xmark  &     \cmark  &  \xmark    & $n^{-1}$ \\   

\ref{algorithm:adaptive_mp_mcmc}    &
\ref{subsubsec:an_adaptive_mpmcmc_algorithm}    &
\cmark    &  \xmark   &    \cmark   & \xmark     & $n^{-1}$\\ 

\ref{algorithm:importance_sampling_mp_mcmc}    &
\ref{subsubsec:algorithm_description_is_mpmcmc}     &
\xmark   &   \cmark  &    \cmark  &  \xmark     & $n^{-1}$\\ 

\ref{algorithm:adaptive_importance_sampling_mp_mcmc}    &
\ref{subsubsec:algorithm_description_adaptive_IS_mpmcmc}     &
\cmark    &  \cmark   &   \cmark   & \xmark    & $n^{-1}$\\ 

\ref{algorithm:multiproposal_quasi_MH}    &
\ref{subsubsec:algorithm_description_mpqmcmc}      &
\xmark     &  \xmark    &   \xmark   & \cmark     & $n^{-1}$\\ 

\ref{algorithm:importance_sampling_mp_qmcmc}    &
\ref{subsubsec:algorithm_description_IS_mpqmcmc}    &
\xmark     & \cmark    &    \xmark   & \cmark    & $\approx n^{-2}$ \\ 
      
\ref{algorithm:adaptive_importance_sampling_mp_qmcmc}    &
\ref{subsubsec:algorithm_description_adaptive_IS_mpqmcmc}		&
\cmark		 &  \cmark 	   &  \xmark  & \cmark		& $\approx n^{-2}$ \\ 
      
 \bottomrule

\end{tabular}
\label{table:results_bayesian_linear_regression}
}
\end{table}

In Section \ref{Section_Concepts_QMC} we introduce the basics of QMC and 
give a short review of the literature regarding CUD points, discuss some
CUD constructions and display the construction used in this work.

Next, in Section \ref{sec:multiple_proposal_mcmc}, we present the multiple proposal MCMC (MP-MCMC) framework from \cite{calderhead2014general}, first using pseudo-random numbers, and introduce two new formulations of MP-MCMC as a single state Markov chain over a product space, which we use for proving a number of theoretical properties.  We also formally prove a law of large numbers and central limit theorem for MP-MCMC and carefully consider a variety of novel extensions.  In particular, we consider the use of optimised and non-reversible transitions, as well as adaptivity of the proposal kernel, for which we prove ergodicity.  We then compare their relative performance through a simulation study.

In Section \ref{sec:importance_sampling} we consider the use of importance sampling within an MP-MCMC framework.  We suggest an adaptive version of this algorithm and prove its ergdocity, and consider the importance sampling approach as the limiting case of sampling from the finite state Markov chain on the multiple proposals.  We conclude by proving asymptotic unbiasedness of the proposed methods and empirically comparing their performance.

In Section \ref{sec:multiproposal_quasi_MH} we generalise the previously introduced MP-MCMC algorithms to the case of using CUD numbers as the driving sequence, instead of pseudo-random numbers.  We describe two regularity conditions that we then use to prove consistency of the proposed method, and we discuss how CUD numbers should be best incorporated within MCMC algorithms generally.  We prove asymptotic unbiasedness of the two proposed algorithms and demonstrate through a couple of numerical simulations an increased convergence rate of the empirical variance of our estimators, approaching $n^{-2}$ rather than usual $n^{-1}$ for traditional MCMC methods.

Finally, we present some conclusions and discuss the very many avenues for future work.

%
%


\section{Some Concepts from Quasi Monte Carlo}\label{Section_Concepts_QMC}

Quasi-Monte Carlo (QMC) techniques approximate integrals by an equal-weight quadrature rule similar to standard Monte Carlo. However, instead of using IID random samples as evaluation points, one uses low-discrepancy sequences designed to cover the underlying domain more evenly. Common choices for such sequences for QMC include Sobol sequences and digital nets \cite{dick2013high}.  
Due to the increased spatial coverage of the domain QMC generally yields better convergence rates than 
standard Monte Carlo.

\subsection{QMC background}

Standard QMC approaches generally permit the use of a high-dimensional hypercube as the domain of integration. However, using standard approaches, 
e.g., inverse transformations as introduced in \cite{devroye1986non}, 
samples from arbitrary domains may be constructed, as long 
as the inverse CDF is available. When the inverse of the CDF is not available
directly, one must resort to alternative sampling methods, which 
motivates the development of the MCMC methods later in this paper.

\subsubsection{Discrepancy and Koksma-Hlawka inequality}

Estimators based on QMC use a set of deterministic sample points 
$\vec{x}_i\in[0,1]^d$ for $i=1,...,n$ and $d \in \mathbb{N}$, that are members of a 
low-discrepancy sequence. Roughly speaking, these points are 
distributed inside $[0,1]^d$ such that the uncovered areas are 
minimised. Typically, the same holds true for 
projections of these points onto lower-dimensional faces of 
the underlying hypercube. Referring to \cite{niederreiter1992random},
for a set of QMC points $P=\{\vec{x}_1,...,\vec{x}_n\}$, 
the star discrepancy can be defined as
\begin{align}
D^{*d}_n(P) = \sup_{\vec{a} \in(0,1]^d} \left|\frac{1}{n}\sum_{i=1}^n \mathbf{I}_{(0,\vec{a}]}(\vec{x}_i) - \prod_{i=1}^n a_i \right|,
\label{eq:star_discrepancy}
\end{align}
where $\vec{a}$ has coordinates $0\le a_j \le 1$ for any $j=1,...,d$, 
respectively.  This gives us a measure of how well-spaced out a set 
of points is on a given domain. One of the main results 
in QMC theory, the Koksma-Hlawka inequality, provides an upper 
bound for the error of a QMC estimate based on \eqref{eq:star_discrepancy} by
\begin{align}
\left|\frac{1}{n}\sum_{i=1}^n f(\vec{x}_i) - \int_{[0,1]^d}f(\vec{x})\mathrm{d}\vec{x} \right| \le V\left(f\right) \cdot D^{*d}_n(P),
\label{eq:koksma_hlawka_inequality}
\end{align}
where $V(f)$ denotes the variation of $f$ in the sense of Hardy-Krause.
For a sufficiently smooth $f$, $V(f)$ can be expressed as the sum of terms
\begin{align}
\int_{[0,1]^k} \left| \frac{\partial^k f}{\partial x_{i_1} ... \partial x_{i_k}}
\right|_{x_j=1, j\neq i_1, ..., i_k} \mathrm{d}x_{i_1}...\mathrm{d}x_{i_k},
\end{align}
where $i_1 < ... < i_k$ and $k \le d$. A more general definition for
the case of non-smooth $f$ and in a multi-dimensional setting is beyond 
the scope of this work, but can be found in \cite{owen2005multidimensional}.
In \eqref{eq:koksma_hlawka_inequality}, $V(f)$ is assumed to be finite. 
Thus, the error of the approximation is deterministically bounded by a smoothness measure of the integrand and a quality measure for the point set. The Koksma-Hlawka equation \eqref{eq:koksma_hlawka_inequality} (for the case $d=1$) was first proven by Koksma \cite{koksma1942ageneral}, and the general case ($d\in \mathbb{N}$) was subsequently proven by Hlawka \cite{hlawka1961funktionen}. Note that for some functions arising in practise it holds $V(f) = \infty$, e.g.\ the inverse Gaussian map
from the hypercube to the hypersphere \cite{basu2016transformations}, so that
equation \eqref{eq:koksma_hlawka_inequality} cannot be applied.\\

\subsubsection{Convergence rates}

The use of low-discrepancy sequences instead of pseudorandom numbers may allow a faster convergence rate of the sampling error. Given an integrand $f$ with $V(f)<\infty$, constructions for QMC points can achieve convergence rates close to $\mathcal{O}(n^{-2})$ in the MSE, compared to $\mathcal{O}(n^{-1})$ for standard Monte Carlo \cite{dick2013high}. 
For smooth functions, it is possible to achieve convergence rates of order $\mathcal{O}(n^{-2\alpha} \log(n)^{2d\alpha})$ when $f$ is $\alpha$-times differentiable (\cite{dick2009quasi}). However, if $f$ has only bounded variation but is not differentiable, convergence rates of in general only $\mathcal{O}(n^{-2}\log(n)^{2d})$ hold true \cite{sharygin1963lower}. For practical applications where the dimensionality $d$ is large and the number of samples $n$ is moderate, QMC does therefore not necessarily perform better than standard Monte Carlo.
In some settings, using randomised QMC (RQMC) one can achieve convergence rates of 
$\mathcal{O}(n^{-3})$ \cite{l2006randomized, l2008randomized} in MSE, 
and $n^{-3}$ for the 
empirical variance in certain examples \cite{l2018sorting}.

\subsubsection{The curse of dimensionality}

The curse of dimensionality describes the phenomenon of 
exceeding increase in the complexity of a problem with the
dimensionality it is set in \cite{richard1957dynamic}.
Classical numerical integration methods such as quadrature
rules become quickly computationally infeasible to use when
the number of dimensions increases. This is since the number of
evaluation points typically increases exponentially with the the 
dimension, making such integration schemes impractical for
dimensions that are higher than say $d=6$.
However, in \cite{paskov1995faster} a high-dimensional ($d=360$)
problem from mathematical finance was successfully solved using
quasi-Monte Carlo (Halton and Sobol sequences). Since then, much research
has been undertaken to lift the curse
of dimensionality in QMC, referring to \cite{kuo2005lifting},
\cite{dick2010digital} and \cite{dick2013high}.

In general, a well-performing integration rule in a high-dimensional
setting will depend on the underlying integrand or a class of integrands.
\cite{caflisch1997valuation} introduced the notion of effective
dimension, which identifies the number of coordinates of a
function or indeed of a suitable decomposition (e.g.\ ANOVA), respectively,
which carries most of the information about the function.
This concept accounts for the fact that not all variables in a function
are necessarily informative about the variability in the function,
and may therefore be neglected when integrated. In practical applications 
the effective dimension can be very low ($d=2,3$) compared to the actual 
number of variables in the integrand.
To model such situations, weighted function spaces have been introduced
in \cite{sloan1998quasi}. In principle, the idea is to assign a weight
to every coordinate or to any subset of coordinates for a particular
decomposition of the integrand, thereby prioritising variables with high
degree of information on the integrand. Weighted function spaces have a
Hilbert space structure. For a particular class of such spaces, namely
reproducing kernel Hilbert spaces (RKHS), the worst-case error of the integration,
defined as the largest error for any function in the unit ball
of the RKHS, can be expressed explicitely in terms of the reproducing
kernel. Based on this, it is possible to prove the existence of 
low-discrepancy sets that provide an upper bound of the worst-case error
proportional to $N^{-1+\delta}$ for any $\delta >0$, where the constant 
does neither depend on $N$ nor on $d$. Furthermore, there exist
explicit constructions for such amenable point sets, e.g.\ the greedy
algorithm for shifted rank-1 lattice rules by \cite{sloan2002constructing} 
and the refined fast implementation based on Fast Fourier Transforms 
provided by \cite{nuyens2006fast}.
Modern quasi-Monte Carlo implementations can thus be useful in 
applications with up to hundreds and even thousands of dimensions.

The constructions of QMC point sets used for MCMC in this work are generic in the sense that
their construction does actually not depend on the underlying integrand.
Major performance gains compared to standard Monte Carlo
can still be achieved for moderately large dimensions, which we will see 
in Section \ref{sec:multiproposal_quasi_MH}. However, the incorporation of QMC constructions tailored to an inference problem solved by MCMC could a valuable future extension of this work.

\subsubsection{Randomised QMC}

Despite possibly far better convergence rates of QMC methods 
compared to standard MCMC, they
produce estimators that are biased and lack practical error 
estimates. The latter is due to the fact that evaluating the 
Koksma-Hlawka inequality requires not only computing the 
star discrepancy, which is an NP-hard problem \cite{gnewuch2009finding}, 
but also computing the total variation $V(f)$, which is generally 
even more difficult than integrating $f$. However, both drawbacks 
can be overcome by introducing a randomisation into the QMC
construction which preserves the underlying properties of the
QMC point distribution. For this task, there have been
many approaches suggested in the literature, such as shifting 
using Cranley-Patterson rotations 
\cite{cranley1976randomization}, digital shifting \cite{dick2013high}, 
and scrambling \cite{owen1997monte, owen1997scrambled}. In some
cases, randomisation can even improve the convergence rate of
the unrandomised QMC method, e.g.\ scrambling applied to digital nets
in \cite{owen1997scrambled, dick2011higher} under sufficient 
smoothness conditions. In these situations, the average of multiple QMC
randomisations yields a lower error than the worst-case QMC error.

\subsubsection{Completely uniformly distributed points}
\label{subsubsec:completely_uniformly_distributed_points}

Conceptually, QMC is based on sets of points which fill an underlying hypercube
homogeneously. Through suitable transformations applied to those points, samples 
are created which respresent the underlying target.
In constrast, MCMC relies on an iterative mechanism which makes use of ergodicity.
More presicely, based on a current state a subsequent state is proposed and 
then accepted or rejected, in such a way that the resulting samples represent 
the underlying target.
In that sense, QMC is about filling space, relying on equidistributedness,
while MCMC is about moving forward in time, relying on ergodicity.
Averages of samples 
can therefore be considered as space averages in QMC and time-averages in
MCMC, respectively. 

Standard MCMC works in the following way: based on a given 
$d$-dimensional sample, a new sample is proposed using $d$ IID random numbers
in $(0,1)$ using a suitable transformation. Then an accept/reject mechanism is employed, i.e.\ the proposed sample is accepted with
a certain probability, for which another random point in $(0,1)$ is required.
Thus, for $n$ steps we require $n(d+1)$ points, $u_1,...,u_{n(d+1)}\in (0,1)$.
The idea in applying QMC to MCMC is to replace the IID points
$u_i$, for $i=1,...,n(d+1)$, by more evenly distributed points.
There are two sources for problems connected to this approach: 
first, the sequence of states in the resulting process will not be Markovian, 
and thus consistency is not straightforward as 
the standard theory relies on the Markovian assumption. 
However, we know for instance from adaptive MCMC that even if 
the underlying method is non-Markovian ergodicity can still be proven
\cite{haario2001adaptive,haario2006dram, 
roberts2007coupling, latuszynski2013adaptive, andrieu2006ergodicity,
roberts2009examples, andrieu2008tutorial}.

Typically, computer simulations of MCMC are driven by a pseudo-random
number generator (PRNG). A PRNG is an algorithm that generates a sequence
of numbers which imitate the properties of random numbers. The generation procedure is
however deterministic as it is entirely determined by an initial value.
We remark that carefully considered, a sequence constructed by an MCMC
method using a PRNG does therefore actually not fulfill the 
Markov property either since the underlying seed is deterministic.
However, it is generally argued that given a good choice, a pseudo-random number 
sequence has properties that are sufficiently
similar to actual IID numbers as to consider the resulting algorithm
as probabilistic. A first formal criteria for a good choice of pseudo-random
numbers typically used in computer simulations
was formulated in Yao's test \cite{yao1982theory}. Roughly speaking,
a sequence of words passes the test if, given a reasonable computational
power, one is not able to distinguish from a sequence generated at random.
For modern versions
of empirical tests for randomness properties in PRNGs we
refer to the Dieharder test suite \cite{brown2017dieharder}
and the Test01 software library \cite{l2007testu01}. As an example,
the spacings of points which are selected according to the underlying PRNG
on a large interval are tested for being exponentially distributed. 
Asymptotically, this holds true for the spacings of truly randomly chosen 
points. 

A second source of problems in using QMC seeds in MCMC arises since MCMC is inherently sequential, which is a feature that QMC
methods generally do not respect. For example, the Van der Corput 
sequence (\cite{van1935b}), which will be introduced below, has been applied 
as a seed for MCMC in \cite{morokoff1993quasi}. In their example,
the first of an even number of heat particles, which are supposed to move
according to a symmetric random walk, always moves to the left, when sampled 
by the Van der Corput sequence. This peculiar behaviour occurs since,
although the VdC-sequence is equidistristributed over $[0,1]$, 
non-overlapping tupels of size $d= 2 m$ for $m\in \mathbb{N}$ are not 
equidistributed over $[0,1]^d$, as is shown later.

The convergence of a QMC method, i.e.\ the succesful integration of
a function on $\mathbb{R}^d$, relies on the equidistributedness of tupels
$(u_{(n-1)d+1}, ..., u_{nd})\in [0,1]^d$ for $n\rightarrow \infty$,
where $d$ is fixed. In order to prevent failure when using QMC in MCMC such as in \cite{morokoff1993quasi}, tupels of the form 
$(u_{(n-1)d'+1}, ..., u_{nd'}) \in [0,1]^{d'}$ must satisfy
equidistributedness for $n\rightarrow \infty$ while $d'$ is
variable. This naturally leads us to the definition
of CUD numbers: a sequence $(u_i)_{i}\subset [0,1]$ is called completely uniformly distributed (CUD) if for any $d\ge 1$ the points $\vec{x}^{(d)}_i = (u_i, \ldots, u_{i+d-1})\in [0,1]^d$ fulfill 
\begin{align*}
D_n^{*d}(\vec{x}_1^{(d)}, \ldots, \vec{x}_n^{(d)})\rightarrow 0, \quad \text{ as } \quad n\rightarrow \infty.
\end{align*}
In other words, any sequence of overlapping blocks of $u_i$ of size $d$ yield the desirable uniformity property $D_n^{*d}\rightarrow 0$ for a CUD sequence $(u_i)_{i\ge 1}$. It was shown in \cite{chentsov1967pseudorandom} that this is equivalent to any sequence of non-overlapping blocks of $u_i$ of size $d$ satisfying $D_n^{*d}\rightarrow 0$, i.e.\ 
\begin{align}
D_n^{*d}(\vec{\tilde{x}}_1^{(d)}, \ldots, \vec{\tilde{x}}_n^{(d)})\rightarrow 0, \quad \text{ as } \quad n\rightarrow \infty, 
\label{eq:cud_convergence_non_overlapping}
\end{align}
where $\vec{\tilde{x}}_i^{(d)}:=(u_{d(i-1)+1},\ldots, u_{di})\in [0,1]^d$.
In \cite{chen2011consistency}, Chen et al.\ prove that if in standard 
MCMC the underlying driving sequence of IID numbers is replaced by 
CUD numbers, the resulting algorithm consistently samples from the 
target distribution under certain regularity conditions. One can easily
show that every sequence of IID numbers is also CUD.

\vspace{4mm}
\noindent \textbf{Constructions in the literature}
\vspace{1.5mm}

\noindent There are a number of techniques to construct CUD 
sequences in the literature.
In \cite{levin1999discrepancy}, several constructions of CUD
sequences are introduced, but none of them amenable for actual
implementation \cite{chen2011consistencythesis}. In 
\cite{chen2011consistencythesis}, an
equidistributed linear feedback shift register (LFSR) sequence 
implemented by Matsumoto and Nishimura 
is used, which is shown to have the CUD property.
In \cite{owen2005quasi} the author uses a CUD sequence that 
is based on the linear congruential generator (LCG) developed 
in \cite{entacher1998quasi}. The lattice construction from 
\cite{niederreiter1977pseudo} and the shuffling strategy for 
QMC points from \cite{liao1998variance} are also both shown to 
produce CUD points in \cite{tribble2008construction}.
Furthermore, \cite{chen2012new} presents constructions of 
CUD points based on fully equidistributed LFSR, and antithetic and 
round trip 
sampling, of which the former we will use for our simulations later on.
The construction introduced in \cite{tribble2008construction} relies on 
a LCG with initial seed $1$ and increment equal to zero. For a given
sequence length, a good multiplier is found by the primitive roots values 
displayed in \cite{l1999tables}.

\vspace{4mm}
\noindent \textbf{Illustration of a CUD sequence}
\vspace{1.5mm}

\noindent As an illustration, we display in Figure 
\ref{fig:cuds_vs_psr} an implementation of the
the CUD construction, which was 
introduced in \cite{chen2012new} and relies 
on a LFSR with a transition mechanism based on primitive polynomials 
over the Galois field $GF(2)$. The resulting sequence is visually more 
homogeneously distributed than that generated using pseudo-random numbers.
For a complete description of the construction, sufficient for a reader
to implement the method themselves, we refer to section 3 in 
\cite{chen2012new}. Additionally, we provide our own Python implementation 
of this CUD generator in \cite{tobias_schwedes_2018_1255042}, as well as the one introduced by \cite{tribble2008construction}.

\vspace{4mm}
\noindent \textbf{Construction used in this work}
\vspace{1.5mm}

\noindent 
Given a target defined on a $d$-dimensional space we employ a technique
of running through a generated CUD sequence $d$ times, similar to
\cite{owen2005quasi}, thereby creating tupels of size $d$. The
resulting tupels are pairwise different from each other and every
tupel is used exactly once in the simulation.
Similarly to \cite{owen2005quasi}, we prepend
a tupel of values close to zero to the resulting tupel sequence, 
imitating the property of an integration lattice containing a point at the 
origin.\\
More precisely, we use the CUD construction based on section 3 in \cite{chen2012new}, which 
creates sequences of length $L=2^m-1$ for integers $10 \le m \le 32$. Given a
sequence $u_1,...,u_L \in (0,1)$ and dimensionality $d$, we cut off the
sequence at $T:= \lfloor L/d \rfloor \cdot d \le L$, leading to the trimmed 
sequence $u_1,...,u_T$. Since $L-T \le d \ll T,L$, trimming has no relevant influence
on the outcomes of simulations. In order to make efficient use of the generated
sequence, we generate tupels of size $d$ and of the form
\begin{align*}
&(u_1,...,u_d), (u_{d+1}, \ldots, u_{2d}), ..., (u_{T-d+1}, ..., u_T),\\
&(u_2,...,u_{d+1}), (u_{d+2}, \ldots, u_{2d+1}), ..., (u_{T-d+2}, ..., u_T, u_1),\\
&...\\
&(u_d,...,u_{2d-1}), (u_{2d}, \ldots, u_{3d-1}), ..., (u_{T}, u_1, ..., u_{d-1}).
\end{align*}
The sequence of points $v_n$, $n=1,...,dT$, given by 
$u_{1}, ..., u_T$, $u_2, ..., u_T, u_1$, $...$, $u_{d}, ..., u_T, u_1, ..., u_{d-1}$,
still satisfies the CUD property. This is true since the shifting of indices
in $u_1,...,u_T$ to $u_{k+1},...,u_T, u_1, ..., u_k$ for any $k\in \mathbb{N}$
does not influence the CUD property. Further, appending a CUD sequence
to another CUD sequence of the same length preserves the CUD property,
too. Finally, prepending a single tupel of size $d$ does 
not affect the CUD property for overlapping tupels of size $d$.

\begin{figure}[h]
    \centering
    \begin{subfigure}[b]{0.49\textwidth}
        \includegraphics[width=\textwidth]{./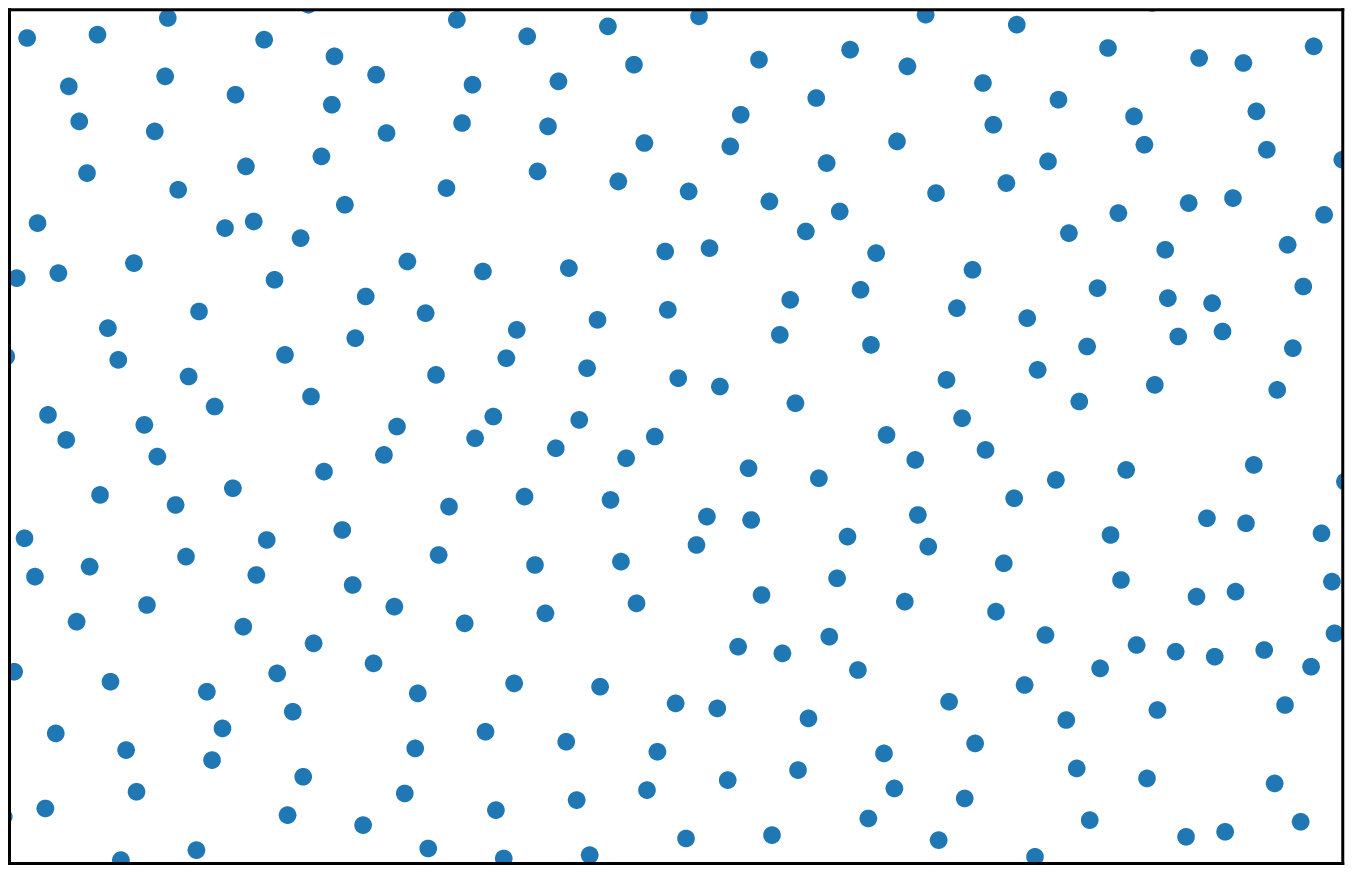}
        \caption{A completely uniformly distributed sequence.}
    \end{subfigure} 
    \begin{subfigure}[b]{0.49\textwidth}
    \includegraphics[width=\textwidth]{./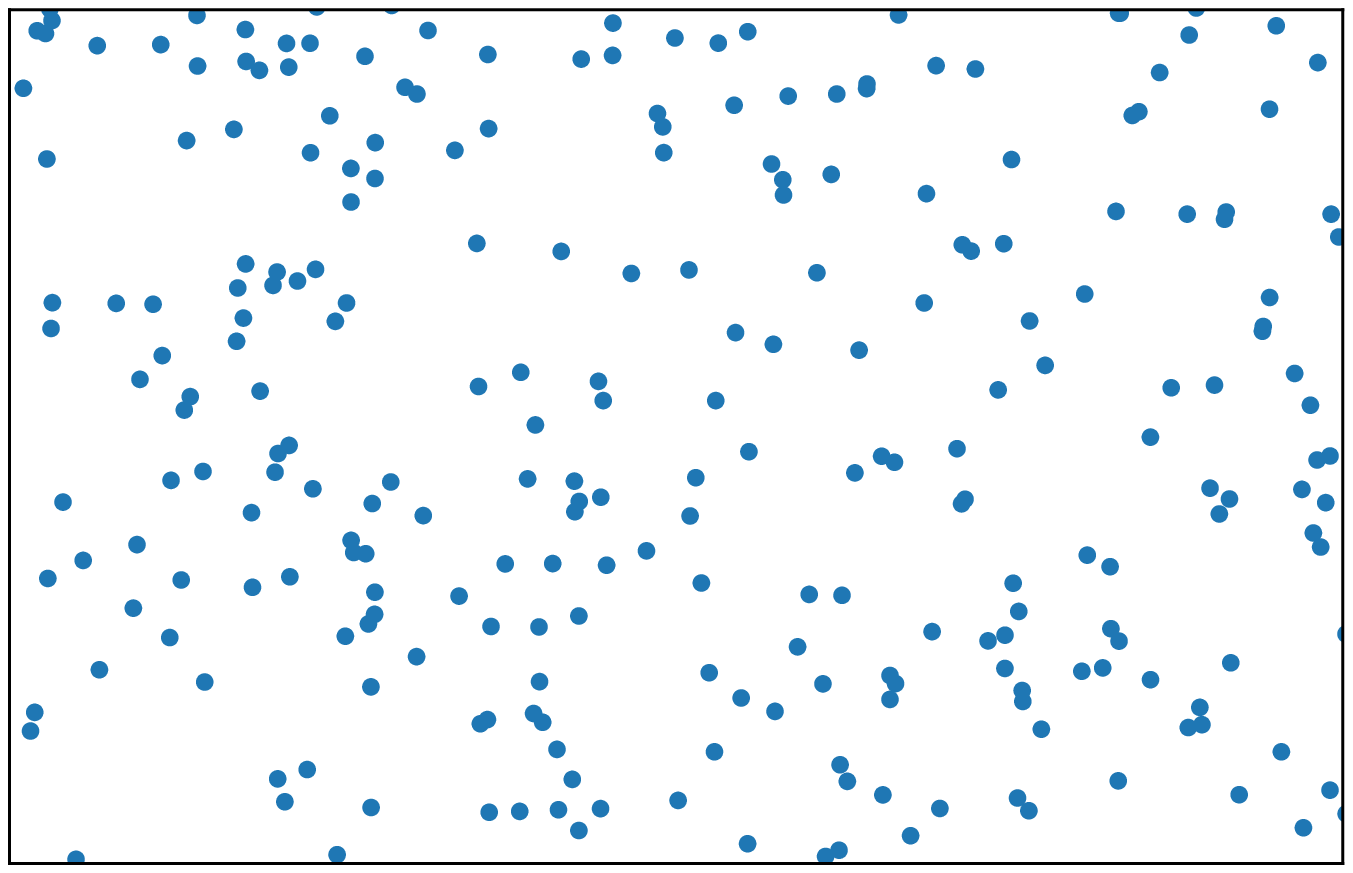}
        \caption{A pseudo-randomly generated sequence.}
    \end{subfigure} 
    \caption{\small{Segments of CUD and pseudo-random finite sequences in $(0,1)^2$.}}
    \label{fig:cuds_vs_psr}
\end{figure}

\vspace{4mm}
\noindent \textbf{QMC sequences are generally not CUD}
\vspace{1.5mm}

\noindent 
Finally we note that care must be taken in the choice of the QMC sequence 
applied to MCMC 
since not every low discrepancy sequence is a CUD sequence. For the 
\textit{van der Corput sequence} $(u_n)_{n}$ (\cite{van1935b}), any
$u_{2n}\in (0,1/2)$ and any $u_{2n-1}\in [1/2,1)$ 
for all $n\ge 1$. Thus, 
\begin{align}
\vec{x}^{(2)}_{2n} &\in (0,1/2)\times [1/2,1), \text{ and }\\
\vec{x}^{(2)}_{2n-1} &\in [1/2,1)\times (0,1/2).
\end{align}
Therefore, the sequence of overlapping tupels $\vec{x}^{(2)}_{n}$ never hits 
the square $(0,1/2)\times(0,1/2)$, which implies $D^{*2}_n \ge 1/4$ for any 
$n$. Note that the same holds true for non-overlapping tupels
$\tilde{\vec{x}}^{(2)}_{n}$. Hence, the van der Corput sequence is 
not a CUD sequence.


\section{Pseudo-random MP-MCMC}
\label{sec:multiple_proposal_mcmc}

In \cite{calderhead2014general}, a natural generalisation of the 
well-known Metropolis-Hastings algorithm (\cite{hastings1970monte}) 
that allows for parallelising a single chain is achieved by 
proposing multiple points in parallel. In every MCMC iteration, 
samples are drawn from a finite state Markov chain on the proposed 
points, which is constructed in such a way that the overall 
procedure has the correct target density $\pi$ on a state space
$\Omega \subset\mathbb{R}^d$ for $d \in \mathbb{N}$, as its 
stationary distribution. 
In this section we introduce 
this algorithm, demonstrate that this approach mathematically 
corresponds to a Metropolis-Hastings algorithm over a product 
space and prove its consistency, as well as some asymptotic 
limit theorems, before considering how to extend this algorithm 
to improve its sampling performance.

\subsection{Derivation}
\label{subsec:derivation_mpmcmc}

Before presenting the MP-MCMC algorithm we first note that any joint probability distribution $p(\vec{y}_{1:N+1})$, where $\vec{y}_{1:N+1}=\vec{y}_{[1:N+1]}=(\vec{y}_1,...,\vec{y}_{N+1})$ with $\vec{y}_i\in \Omega$ $\forall i=1,...,N+1$,
can be factorised in $N+1$ different ways, using conditional probabilities of the form, 
$p(\vec{y}_{1:N+1}) = p(\vec{y}_i)p(\vec{y}_{\setminus i}|\vec{y}_i)$,
where $\vec{y}_{\setminus i}:=\vec{y}_{[1:i-1,i+1:N+1]}$.
If the target $\pi$ is the marginal distribution for $\vec{y}_i$ of 
$\vec{y}_{1:N+1}\sim p$ and any $i=1,...,N+1$, then
\begin{align*}
p(\vec{y}_{1:N+1}) = \pi(\vec{y}_i)\kappa (\vec{y}_i, \vec{y}_{\setminus i}),
\end{align*}
for a proposal distribution $\kappa$ satisfying 
$\kappa(\vec{y}_i, \vec{y}_{\setminus i})\equiv p(\vec{y}_{\setminus i}|\vec{y}_i)$.
Thus, in the $i$th factorisation, $\vec{y}_i \sim \pi$, 
while the other $\vec{y}_{\setminus i}\sim \kappa(y_i, \cdot)$ 
Referring to \cite{tjelmeland2004using, calderhead2014general}, a uniform auxiliary variable
$I\in \{1,...,N+1 \}$ can be introduced that determines which factorisation is used,
such that 
\begin{align}
p(\vec{y}_{1:N+1}, I=i)
= \frac{1}{N+1}\pi(\vec{y}_i)\kappa(\vec{y}_i, \vec{y}_{\setminus i}).
\label{eq:factorisation_derivation_mpmcmc}
\end{align}

\begin{algorithm}[h]	
\SetAlgoLined
\KwIn{
Initialise starting point $\vec{x}_0=\vec{y}_1\in \Omega\subset \mathbb{R}^d$, 
number of proposals $N$, number of
accepted samples per iteration $M$, auxiliary 
variable $I=1$ and counter $n=1$\;}
\For{\textnormal{each MCMC iteration $\ell=1,2,...$}}{
 Sample $\vec{y}_{\setminus I}$ conditioned on $I$, i.e., draw $N$ new points from the proposal kernel $\kappa(\vec{y}_I, \cdot) = p(\vec{y}_{\setminus I}|\vec{y}_I)$ \;
  Calculate the stationary distribution of $I$ conditioned on $\vec{y}_{1:N+1}$,
  i.e.\ $\forall$ $i=1,...,N+1$, $p(I=i|\vec{y}_{1:N+1}) = \pi(\vec{y}_i)\kappa(\vec{y}_{{i}}, \vec{y}_{\setminus{i}}) / \sum_j \pi(\vec{y}_j)\kappa(\vec{y}_{{j}}, \vec{y}_{\setminus{j}})$, 
  which can be done in parallel\; 
 \For{$m=1,...,M$}{
 Sample new $I$ via the stationary distribution $p(\cdot|\vec{y}_{1:N+1})$\; 
 Set new sample $\vec{x}_{n+m} = \vec{y}_I$\;
}
 Update counter $n=n+M$
 }
\caption{Multiple-proposal Metropolis-Hastings}
 \label{algorithm:multiproposal_MH}
\end{algorithm}

\subsubsection{A Markov chain over a product space}
\label{subsubsec:markov_chain_over_product_space}

The MP-MCMC method generates $M\in \mathbb{N}$ new samples 
per iteration, and
can be considered as a single Markov chain over
the product space of proposal and auxiliary variables 
$(\vec{y}_{1:N+1}, I_{1:M})\in \Omega^{N+1}\times \{1,...,N+1\}^M$ 
by applying a combination 
of two transition kernels, each of which preserves the underlying joint stationary distribution. First, the states of a finite state Markov chain are created by updating the proposals $\vec{y}_{\setminus{i}}$ conditioned on $\vec{y}_{i}$ and $I_M=i$, which clearly preserves the joint target distribution as we sample directly from $\kappa(\vec{y}_i, \cdot)$; this is equivalent to a Gibbs sampling step. The choice of the proposal kernel is up to the practitioner, and kernels based on Langevin diffusion and Hamiltonian dynamics have successfully been applied (\cite{calderhead2014general}). 
Secondly, $I_m$ conditioned on $\vec{y}_{1:N+1}$ and $I_{m-1}$
is sampled $M$ times, i.e.\ for $m=1,...,M$,
using a transition matrix $A$, where $A(i,j)=A(i,j|\vec{y}_{1:N+1})$ denotes the 
probability of transitioning 
from $I_{m-1}=i$ to $I_m=j$. Here, $I_0$ denotes the $M$th (i.e. last) sample of $I$,
i.e.\ $I_M$, from the previous iteration.
An illustration of this procedure
is given in Figure \ref{fig:illustration_mpmcmc_single_chain}.
Using the factorisation from
\eqref{eq:factorisation_derivation_mpmcmc}, the joint distribution of 
$(\vec{y}_{1:N+1}, i_m)$ for $I_m=i_m$ denoting the $m$th sample of $I$
and $m=1,...,M$, can be expressed as,
\begin{align*}
p(\vec{y}_{1:N+1}, I_m=i_m) = \frac{1}{N+1} \pi(\vec{y}_{i_m}) \kappa(\vec{y}_{i_m}, \vec{y}_{\setminus{i_m}}).
\end{align*}
Observe that $\vec{y}_{i_m}$ has the correct density $\pi(\vec{y}_{i_m})$
for any $m=1,...,M$.
Thus, those are the samples 
we collect in every iteration. For the particular case where 
$A(i,j)=p(I=j | \vec{y}_{1:N+1})$, independent of $i$, denotes the stationary 
transition matrix on the states $I_1,...,I_M$ given $y_{1:N+1}$, the entire 
procedure described here 
is given in Algorithm \ref{algorithm:multiproposal_MH}. Note that, for the 
sake of clarity, we make a distinction between proposals $\vec{y}_{1:N+1}$ from
one iteration, and the accepted samples $\vec{x}_{n}$ with $n\in \mathbb{N}$.

\begin{figure}
\centering
\resizebox{\linewidth}{!}{
\begin{tikzpicture}[scale=.33, font=\sffamily, dot/.style = 
					{state, fill=gray!20!white, line width=0.01mm, 
					inner sep=1pt, minimum size=0.1pt, minimum width=0.02cm},
					>=triangle 45]

        \node at (0, -2)   (x1) 	  {};
        
        \node at (5, 10)   (yiphantom) 	  {};        
        \node at (7.5, 10)  (yi) 	  {\text{\small ${y}^{(i)}_{1:N+1}$}};
		\node at (16, 7.5) (Ii1)   {\text{\small $I_{1}^{(i)}|{y}^{(i)}_{1:N+1}$}};
		\node[minimum size=20pt] at (16, 3) (Ii2)   	{$...$};
		\node at (16, -1.5) (IiM)   {\text{\small $I_{M}^{(i)}|{y}^{(i)}_{1:N+1}$}};

		\draw[ black, line width=0.05mm] [->]  (x1) -- (yiphantom)
		node[midway, color=black, below right= -0.2cm] 
		{\text{\small $\kappa({y}^{(i-1)}_{I_M^{(i-1)}}, \cdot )$}};

		\draw[ black, line width=0.05mm] [->]  (yi) -- (Ii1) ;
		\draw[ black, line width=0.05mm] [->]  (Ii1) -- (Ii2)
		node[midway, color=black, left] {\text{\scriptsize $A(I^{(i)}_1, I^{(i)}_2)$}};
		\draw[ black, line width=0.05mm] [->]  (Ii2) -- (IiM)
		node[midway, color=black, left] {\text{\scriptsize $A(I^{(i)}_{M-1}, I^{(i)}_M)$}};

        \node at (20, -2)   (x2) 	  {};
        \node at (25, 10)   (yiplus1phantom) 	  {};        
        \node at (27.5, 10)  (yiplus1) 	  {\text{\small ${y}^{(i+1)}_{1:N+1}$}};
		\node at (36, 7.5) (Iiplus11)   {\text{\small $I_{1}^{(i+1)}|{y}^{(i+1)}_{1:N+1}$}};
		\node[minimum size=20pt] at (36, 3) (Iiplus12)   	{$...$};
		\node at (36, -1.5) (Iiplus1M)   {\text{\small $I_{M}^{(i+1)}|{y}^{(i+1)}_{1:N+1}$}};

		\draw[ black, line width=0.05mm] [->]  (x2) -- (yiplus1phantom)
		node[midway, color=black, below right= -0.2cm] 
		{\text{\small $\kappa({y}^{(i)}_{I_M^{(i)}}, \cdot )$}};
		\draw[ black, line width=0.05mm] [->]  (x2) -- (yiplus1phantom) ;
		\draw[ black, line width=0.05mm] [->]  (yiplus1) -- (Iiplus11) ;
		\draw[ black, line width=0.05mm] [->]  (Iiplus11) -- (Iiplus12) 
		node[midway, color=black, left] {\text{\scriptsize $A(I^{(i+1)}_1, I^{(i+1)}_2)$}};
		\draw[ black, line width=0.05mm] [->]  (Iiplus12) -- (Iiplus1M)
		node[midway, color=black, left] {\text{\scriptsize $A(I^{(i+1)}_{M-1}, I^{(i+1)}_M)$}};

        \node at (40, -2)   (x3) 	  {};
        \node at (45, 10)   (yiplus2phantom) 	  {};        

		\draw[ black, line width=0.05mm] [->]  (x3) -- (yiplus2phantom)
		node[midway, color=black, below right= -0.2cm] 
		{\text{\small $\kappa({y}^{(i+1)}_{I_M^{(i+1)}}, \cdot )$}};

		\node at (13, -4.5) (iter_i)   {$i$th iteration};
		\node at (33, -4.5) (iter_i)   {$(i+1)$th iteration};

    \begin{scope}
    [on background layer]{\fill[rounded corners= 10pt, shading = axis, left color=white, right color=black!20,black!60,thick,dotted,fill=black!20] ($(-5, 11.5)$)  rectangle ($(0, -3)$);}    
    [on background layer]{\draw[rounded corners= 10pt, black!60,thick,dotted,fill=black!20] ($(5, 11.5)$)  rectangle ($(20, -3)$);}
    [on background layer]{\draw[rounded corners= 10pt,black!60,thick,dotted,fill=black!20] ($(25, 11.5)$)  rectangle ($(40., -3)$);}
    [on background layer]{\fill[rounded corners= 10pt,shading = axis, left color=black!20, right color=white,black!60,thick,dotted,fill=black!20] ($(45, 11.5)$)  rectangle ($(50, -3)$);}
    \end{scope}

        \node at (40, -2)   (x3) 	  {};

\end{tikzpicture} 
}

\caption{Visualisation of MP-MCMC as a single Markov chain on the product space of proposal samples and auxiliary variables}
\label{fig:illustration_mpmcmc_single_chain}
\end{figure}
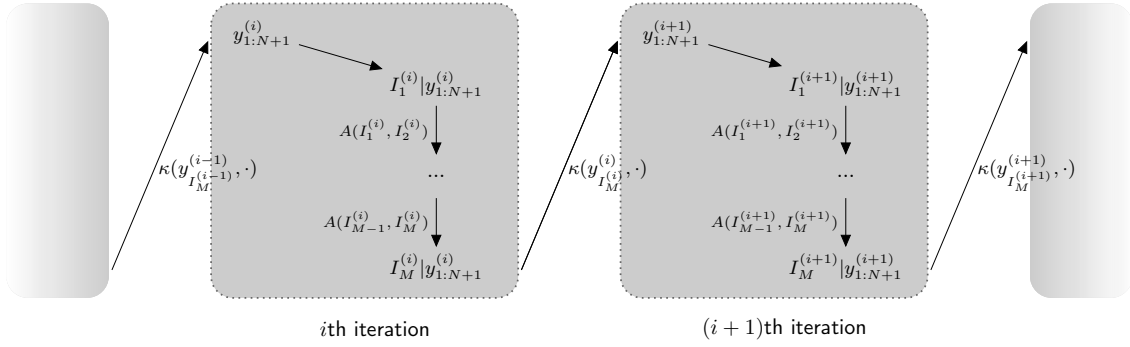

Considering MP-MCMC as a Markov chain over the product space of proposals
and auxiliary variables has the advantage that a formula for the
transition kernel of the resulting chain can be derived.
This is useful for later statements considering ergodicity of
adaptive versions of MP-MCMC in Section \ref{subsec:adaptive_mpmcmc}, 
which require computations on the transition probabilities.

\subsubsection{Transition probabilities on the product space}
\label{subsubsec:transition_probabilities_product_space}

An explicit form for the transition kernel density $\hat{P}(\tilde{z}, z)$, 
from state $\tilde{z}=(\tilde{\vec{y}}_{1:N+1}, \tilde{i}_{1:M})$
to state $z=(\vec{y}_{1:N+1}, i_{1:M})$, where $\tilde{z},z \in 
\Omega^{N+1}\times \{1,...,N+1\}^M$, is given by,
\begin{align}
\hat{P}(\tilde{z}, z) 
= \kappa(\vec{y}_{i_0}, \vec{y}_{\setminus{i_0}}) \prod_{m=1}^M A(i_{m-1}, {i_m})
\label{eq:last_eq_transition_kernel_mp_mcmc}
\end{align}
where we implicitly used that $i_0 = \tilde{i}_M$ and
$\vec{y}_{i_0}=\tilde{\vec{y}}_{\tilde{i}_M}$. Waiving the latter assumption,
we need to add 
the term $\delta_{\tilde{\vec{y}}_{\tilde{i}_M}}(\vec{y}_{i_0})$ to the 
expression of the transition kernel.
A more thorough derivation of equation
\eqref{eq:last_eq_transition_kernel_mp_mcmc}, as well as the subsequent 
equation \eqref{eq:transition_kernel_set_mp_mcmc1}, is presented in Appendix
\ref{appendix:transition_probabilities_product_space}.
Let us introduce the notation $B_{1:{n}}=B_1 \times ... \times B_{n}$ for any
sets $B_1, ..., B_n$ and $n \in \mathbb{N}$.  
Let $B\in \mathcal{B}(\Omega^{N+1}) \times \mathcal{P}(\{1,...,N+1\}^M)$ such
that $B= C_{1:{N+1}} \times D_{1:M} \subset \Omega^{N+1}\times \{1,...,N+1\}^M$,
where $\mathcal{P}(\{1,...,N+1\}^M)$ denotes the power set of $\{1,...,N+1\}^M$.
The probability $\prob(z \in B |\tilde{z})=\hat{P}(\tilde{z}, B)$ of 
a new state $z=(\vec{y}_{1:N+1},i_{1:M})\in B$ given a current 
state $\tilde{z}=(\tilde{\vec{y}}_{1:N+1}, \tilde{i}_{1:M})$ with $\tilde{i}_M=i_0$ 
can be expressed as,
\begin{align}
\hat{P}(\tilde{z}, B) = \chi_{C_{i_0}(\tilde{\vec{y}}_{\tilde{i}_M})}\int_{C_{\setminus{i_0}}}\kappa(\vec{y}_{i_0}=\tilde{\vec{y}}_{\tilde{i}_M},  \vec{y}_{\setminus{i_0}}) \sum_{i_{1:M}\in D_{1:M}} \prod_{m=1}^M A(i_{m-1}, i_m| \vec{y}_{i_0}=\tilde{\vec{y}}_{\tilde{i}_M})
\mathrm{d}\vec{y}_{\setminus{i_0}},
\label{eq:transition_kernel_set_mp_mcmc1}
\end{align}
where we use the notation 
$B_{\setminus{i}} = B_1\times ...\times B_{i-1} \times B_{i+1} \times .. \times B_n$ 
for any sets $B_1, ..., B_n$ and $i=1,...,n\in \mathbb{N}$. Note that conditioning
on $\tilde{z}$ in \eqref{eq:transition_kernel_set_mp_mcmc1} reduces to conditioning on the last accepted
sample $\tilde{\vec{y}}_{\tilde{i}_M}$, which is the only sample of relevance
for the subsequent iteration. Thus, 
the domain of the transition kernel $P$ can be reduced to 
$\Omega$ by using the identification 
$\hat{P}(\tilde{\vec{y}}_{\tilde{i}_M}, B)\equiv \hat{P}(\tilde{z}, B)$.

\subsubsection{Equivalence of MP-MCMC to a chain on the accepted samples}
\label{subsubsec:equivalence_mpmcmc}

An alternative representation of MP-MCMC as a Markov chain over the product space
of proposals and auxiliary variables $(\vec{y}_{1:N+1}, I_{1:M})$ in one 
iteration is to understand it as a chain over the space of accepted samples
$(\vec{x}_1, ..., \vec{x}_M)$ in one iteration. Indeed, given the current 
accepted set of samples, any set of samples generated in a future iteration
are independent of the past iterations. This representation is useful since
it allows to see MP-MCMC as a Markov chain over a single real space, which will be
used to prove limit theorems in Section \ref{subsection:limit_theorems}. 
Further, explicit transition probabilities for this representation are derived 
in what follows, which are then used to prove ergodicity statements of adaptive 
versions of MP-MCMC in Section \ref{subsec:adaptive_mpmcmc}. Note that 
since $\vec{x}_i \in \Omega$ for any $i=1,...,M$, we have 
$(\vec{x}_1, ..., \vec{x}_M) \in \Omega^M$.

\subsubsection{Transition probabilities on the accepted samples}
\label{subsubsec:trans_probs_sample_state_space}

Clearly, we would like to have an expression for the transitions between
actual accepted states rather than proposal and auxiliary variables.
It is possible to derive from the
transition kernel $\hat{P}$, corresponding to the states of proposals and auxiliary variables $(\vec{y}_{1:N+1},I_{1:M})$, the transition kernel $P$, corresponding to only the actually accepted states $\vec{x}_{1:M} \in \Omega^M$, 
i.e.\ where $\vec{x}_m = \vec{y}_{I_m}$ for $m=1, ..., M$. 
The probability of accepted states
$\vec{x}_{1:M}\in B=B_{1:M}\in \mathcal{B}({\Omega^M})$ given a
previous state $\tilde{\vec{x}}_{1:M}$ can be  expressed in terms of the 
transition kernel $\tilde{P}$ on the model state space of proposals 
and auxiliary variables from \eqref{eq:transition_kernel_set_mp_mcmc1} 
as follows,
\begin{align}
{P}(\tilde{\vec{x}}_{1:M}, B) &= \hat{P}\left( \tilde{\vec{x}}_M, 
\bigcup_{i_1,...,i_M=1}^{N+1}\ \bigcap_{m=1}^M S_{i_m}(B) \right)
\label{eq:mp_mcmc_transition_kernel_general_case1a}
\end{align}
where
\begin{align}
S_{i_m}(B)=\Omega^{d(i_m-1)}\times B_m \times \Omega^{d(N+1-i_m)} \times \{1:N+1\}^{m-1} \times \{i_m\}\times \{1:N+1 \}^{M-m}
\label{eq:Sim_set}
\end{align}
for any $i_m=1,...,N+1$ and $m=1, ..., M$. 
The sets $S_{i_m}(B)$ are pairwise disjoint. A comprehensive derivation 
of equation \eqref{eq:mp_mcmc_transition_kernel_general_case1a} can be 
found in Appendix \ref{appendix:transition_probabilities_accepted_samples}.

\subsection{Consistency}
\label{subsection:consistency}

The question then arises how to choose the transition probabilities $A(i,j)$, for any $i,j=1,...,N+1$, such that the target distribution $\pi$ for accepted
samples is preserved. 
Given the transition matrix $[A(i,j)]_{i,j}$ on the finite states $i,j$ 
determining the transitions of $I$, and using the factorisation in 
\eqref{eq:factorisation_derivation_mpmcmc}, the detailed balance condition for 
updating $I$ in $(\vec{y}_{1:N+1}, I)$ is given by
\begin{align*}
\frac{1}{N+1}\pi(\vec{y}_i)\kappa(\vec{y}_i, \vec{y}_{\setminus{i}}) A(i,j) 
= \frac{1}{N+1}\pi(\vec{y}_i)\kappa(\vec{y}_i, \vec{y}_{\setminus{i}}) A(j,i),
\end{align*}
for all $i,j=1,...,N+1$.
Clearly, the detailed balance condition implies the balance condition,
\begin{align}
\frac{1}{N+1} \pi(\vec{y}_i)\kappa(\vec{y}_{i}, \vec{y}_{\setminus{i}})
= \sum_{j=1}^{N+1} \pi(\vec{y}_j) \kappa(\vec{y}_{j}, \vec{y}_{\setminus{j}}) A(j,i),
\label{eq:balance_condition_mpmcmc}
\end{align}
for any $i=1,...,N+1$. If \eqref{eq:balance_condition_mpmcmc} holds true, 
the joint distribution $p(\vec{y}_{1:N+1}, I)$
is invariant if $I$ is sampled using the transition matrix $A$.
We say that the sequence $(\vec{x}_i)_i$ of an MP-MCMC algorithm consistently 
samples $\pi$, if
\begin{align}
\lim_{n\rightarrow \infty} \frac{1}{n} \sum_{i=1}^n f(\vec{x}_i) 
= \int_\Omega f(\vec{x}) \pi(\vec{x}) \mathrm{d}\vec{x},
\label{eq:consistency}
\end{align}
for any continuous bounded $f$ on $\Omega \subset \mathbb{R}^d$. 
We have chosen this notion of consistency to represent the fact
that implicitely the formulation of low-discprenacy sets is closely
related to the Riemannian integral, whose well-definedness is 
ensured for continuous and bounded functions. For further details
we refer to the integrability condition introduced in Section
\ref{subsection:consistency}.
If the underlying Markov chain on the states $(\vec{y}_{1:N+1}, I)$ 
satisfies \eqref{eq:balance_condition_mpmcmc} and
is positive Harris with invariant distribution $p(\vec{y}_{1:N+1}, I)$, then 
\eqref{eq:consistency} holds true, which is an immediate consequence 
of the ergodic theorem, Theorem 17.1.7, in 
\cite{meyn2012markov}. For a definition of positive 
Harris we refer to Section 10.1.1 in \cite{meyn2012markov}.

\subsubsection{Sampling from the stationary distribution of $I$}
\label{subsubsection:sampling_from_stationary_distribution}

As stated in Algorithm \ref{algorithm:multiproposal_MH}, one can sample 
directly from the steady-state distribution of the Markov chain, 
conditioned on $\vec{y}_{1:N+1}$. The stationary distribution of $I$, 
given $\vec{y}_{1:N+1}$, also used in \cite{calderhead2014general}, 
equals
\begin{align}
p(I=j | \vec{y}_{1:N+1}) = \frac{\pi(\vec{y}_j)\kappa(\vec{y}_j, \vec{y}_{\setminus j})}{ \sum_{k=1}^{N+1}\pi(\vec{y}_k) \kappa(\vec{y}_k, \vec{y}_{\setminus k})},
\label{eq:barker_acceptance}
\end{align}
for any $j=1,...,N+1$. One can easily see that detailed balance holds for the stationary transition matrix $A(i,j)= p(I=j|\vec{y}_{1:N+1})$. Note that 
\eqref{eq:barker_acceptance} is a generalisation of Barker's algorithm 
\cite{barker1965monte} for multiple proposals in one iteration. For $N=1$, 
the term on the right hand side reduces to Barker's acceptance probability. 
Since this probability is always smaller or equal to Peskun's
acceptance probability, which is used in the usual Metropolis-Hastings
algorithm, samples generated by Barker's algorithm yield mean estimates
with an asymptotic variance that is at least as large as when generated by
Metropolis-Hastings \cite{peskun1973optimum}[Theorem 2.2.1]. A generalisation 
for Peskun's acceptance probability in the multiple proposal setting is
introduced in \cite{tjelmeland2004using}, which aims to minimise the 
diagonal entries of the transition matrix iteratively starting from
the stationary transition matrix, while preserving its reversibility.
The resulting MP-MCMC method is investigated numerically in \ref{subsec:extensions}.

\subsubsection{Sampling from the transient distribution of $I$}

Instead of sampling $I$ from the stationary finite state Markov 
chain conditioned on $\vec{y}_{1:N+1}$, \cite{calderhead2014general} proposes the choice
$A(i,j)=A(i,j|\vec{y}_{1:N+1})$, defined by
\begin{align}
A(i,j) = \begin{cases} \frac{1}{N}\min(1, R(i,j)) &\mbox{if } j \neq i \\
1-\sum_{j\neq i} A(i,j) &\mbox{otherwise,} \end{cases} 
\label{eq:transient_transition}
\end{align}
where $R(i,j) = {\pi(\vec{y}_j)\kappa(\vec{y}_j,\vec{y}_{\setminus j})} / {[\pi(\vec{y}_i)\kappa(\vec{y}_i,\vec{y}_{\setminus i})]}$. Referring to Proposition 1 in
\cite{calderhead2014general}, detailed balance is fulfilled for $A$
as given in \eqref{eq:transient_transition}. 
Note that this choice is a generalisation 
of the original Metropolis-Hastings acceptance probability. For $N=1$, i.e.\ 
if only a single state is proposed and a single state is accepted in each 
iteration, and if we replace the choice of $A$ in Algorithm 
\ref{algorithm:multiproposal_MH} by \eqref{eq:transient_transition}, the resulting algorithm reduces to the usual Metropolis-Hastings algorithm.

\subsection{Some practical aspects}

In this section, we discuss some practical considerations regarding the use
of MP-MCMC and some properties unique to this approach.

\subsubsection{Parallelisation}

In recent years, much effort has been focused on developing parallelisable MCMC strategies for performing inference on increasingly more computationally challenging problems.  A number of specific approaches have been considered previously that incorporate different levels of parallelisation, for example subsampling data to scale MCMC algorithms to big data scenarios (\cite{neiswanger2013asymptotically}, \cite{wang2013parallelizing}), parallelising geometric calculations used in designing efficient proposal mechanisms \cite{welling2011bayesian} and \cite{ahn2012bayesian}, and for certain cases, parallelising the likelihood computation (\cite{agarwal2011distributed}, \cite{smola2010architecture}).




One major advantage of MP-MCMC compared to many MCMC methods, including standard Metropolis-Hastings and other single proposal algorithms, is that it is inherently parallelisable as a single chain.  More precisely, the likelihoods associated with the multiple proposals in any iteration can be computed {\it in parallel} as these expressions are independent of each other.  Evaluating the likelihood is typically far more expensive than prior or proposal densities, and once all proposal likelihoods are computed within one iteration of MP-MCMC, sampling from the finite state chain typically requires minimal computational effort.

In standard single proposal Metropolis-Hastings the likelihood calculations are computed sequentially. In contrast, the computational speed-up of using MP-MCMC is close to a factor $N$, if in every iteration $N$ samples are drawn and $N$ computing cores are available.  We note that it is natural to match the number of proposals to the number of cores that are available for the simulation, or indeed to a mutiple of that number. The latter does not yield further computational speed-up compared to using $N$ proposals but other amenable features arise from an increased number of proposals, as we will see later in this paper.  Indeed, it is for this reason that MP-MCMC outperforms the obvious approach of running multiple single MCMC algorithms in parallel and subsequently combining their samples.


\subsubsection{Computation time}

Compared to $N$ independent chains, MP-MCMC will generally be expected to perform slightly slower on an identical parallel machine with $N$ cores, due to the communication overhead following likelihood computations. More precisely, before sampling from the finite state chain in MP-MCMC, all likelihood evaluations must be completed and communicated.  The overall time for a single iteration will therefore be dependent on the slowest computing time among all individual likelihood evaluations, although we note that measuring computation times is generally dependent on the underlying
operating system architecture, hardware, and the quality or optimality of the implementation.  At the current experimental state of our code we found it therefore not helpful to include computing times in our simulation studies,
but rather investigate platform, language and implementation independent
performance by comparing statistical efficiency with a fixed number of total samples.


\subsubsection{Minimal number of iterations and information gain}

In practice we need to make a couple of choices regarding the number of iterations to use, as well as the number of proposals to make within each iteration.  When using MP-MCMC with proposals that depend on the previous
iteration, employing too small a number of iterations together with a large number of proposals typically leads to a less useful estimate than
single proposal MCMC (Barker to Barker comparison).  What is meant here is that the MSE of global estimates using MP-MCMC, e.g.\ arithmetic mean, 
becomes large. This can be explained by a limited relative global
information gain by increasing the proposal number: in a single 
MCMC iteration, proposals are typically determined using a single
previously generated sample, for instance based on posterior information, such as the local geometry, e.g. MALA and its Riemannian versions. Increasing the
proposal number in a particular MCMC iteration increases the \textit{local} information gain around this point in the posterior. Visually, proposed samples in one iteration of MP-MCMC can be considered as a cloud of points, with some centre and covariance structure, which covers a certain region of the state space.

This local coverage improves with increasing proposal numbers. Thus, increasing the number of proposals will in turn increase the \textit{global} information gain about the posterior by covering the state space more thoroughly, only if sufficiently many MCMC iterations are taken, which each time moves the centre point of our cloud of points.
There is therefore clearly a trade off to be made between local coverage, with the corresponding increased parallelisation of this approach, and more global sequential moves, such that the target is sufficiently well explored.
Through a number of numerical experiments for increasing proposal
numbers, we found that typically at least $\ge 250$ iterations will be 
sufficient to achieve good results.

\subsection{Empirical results}

In the following, the performance of MP-MCMC is investigated
in terms of its MSE convergence and in comparison with single
proposal MCMC for a generic one-dimensional Gaussian posterior. 
We consider two types of acceptance probabilities
used in the transition kernel, one is Barker's acceptance probability 
\eqref{eq:barker_acceptance} and the other is 
Peskun's acceptance probabilities \eqref{eq:transient_transition}.
Note that both generalisations of single proposal acceptance
probabilities to multiple proposals are not unique, and other ways 
of generalising exist, e.g.\ see Section
\ref{subsubsec:optimised_transition_kernels}. To make a 
fair comparison between single and multiple proposal methods, we 
accept $N$ samples in every iteration, which is equal to the number of proposals.
Proposals are generated using a simplified manifold MALA (SmMALA)
kernel \cite{girolami2011riemann}, 
\begin{align}
\kappa(\vec{x}, \cdot) = N \left(\vec{x}+\varepsilon^2/2 G(\vec{x})^{-1}
\nabla \log \pi(\vec{x}), \varepsilon^2 G(\vec{x})^{-1}\right),
\label{eq:smMALA_kernel}
\end{align}
where $G(\vec{x})$ denotes the local covariance matrix given by the
expected Fisher information \cite{girolami2011riemann} for any
$\vec{x}\in \Omega$.
Referring to Figure \ref{fig:mse_SingleVsMulti1}, a performance
gain from switching from MCMC (Barker) to standard MP-MCMC (Barker) is achieved, resulting in an average MSE reduction of ca.\ $30 \%$ overall. There is no significant difference between MP-MCMC (Barker) and MP-MCMC (Peskun). However, usual Metropolis-Hastings outperforms all other methods; in comparison with standard MP-MCMC, this corresponds to an average MSE reduction of ca.\ $30 \%$ overall.
Thus, although average acceptance rates are significantly increased
by using the multiple proposal approach, referring to
\cite{calderhead2014general}, the resulting samples are not necessarily
more informative about the underlying posterior.
However, MP-MCMC still yields the advantage of enabling computations
of likelihoods to be performed in parallel and may be extended in many ways to further improve performance.

\begin{figure}[h]
    \centering
    \begin{subfigure}[b]{0.60\textwidth}
        \includegraphics[width=\textwidth]{./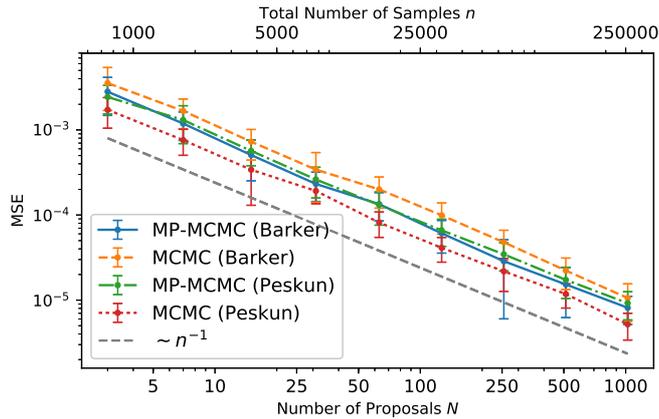}
    \end{subfigure} 
    \caption{\small{MSE of arithmetic mean for MCMC using Barker's and Peskun's
    acceptance probabilities, and MP-MCMC, resp., sampling from 
    a one-dimensional standard Normal posterior for increasing proposal numbers and 
    sample sizes. The results are based on $25$ MCMC runs, and the error bars correspond
    to twice a standard deviation, respectively. The difference between all convergence
    rates is only a constant}}
    \label{fig:mse_SingleVsMulti1}
\end{figure}

\subsection{Extensions of standard MP-MCMC}
\label{subsec:extensions}

We now consider the following extensions to the MP-MCMC, which can be made 
to improve sampling performance and which we investigate empirically.

\subsubsection{Introducing an auxiliary proposal state}
\label{intro_aux_prop_state}

When sampling from $A$ as in \eqref{eq:transient_transition},
the probability of transitioning from $I=i$ to $I=j$ may become small 
when the number of proposals is large. This is since the acceptance 
ratio $R(i,j)$ depends not 
only on the states $\vec{y}_i$ and $\vec{y}_j$, but all proposed states.
One may therefore introduce an auxiliary variable $\vec{z}$ as proposed
in \cite{calderhead2014general, tjelmeland2004using} in order to make
proposals of the form 
$\tilde{\kappa}(\vec{y}_i, \vec{y}_{\setminus i}) 
= \kappa(\vec{y}_i,\vec{z})\kappa(\vec{z},\vec{y}_{\setminus i})$. 
Throughout this work, we make use of this extension for numerical simulations.
Assuming that $\kappa$ samples the proposals $\vec{y}_{\setminus i}$ 
independently from each other, then the acceptance ratio simplifies to
\begin{align*}
R(i,j) = \frac{\pi(\vec{y}_j)\tilde{\kappa}(\vec{y}_j,\vec{y}_{\setminus j})}{\pi(\vec{y}_i)\tilde\kappa(\vec{y}_i,\vec{y}_{\setminus i})} = \frac{\pi(\vec{y}_j)\kappa(\vec{y}_j,\vec{z})\kappa(\vec{z},\vec{y}_i)}{\pi(\vec{y}_i)\kappa(\vec{y}_i,\vec{z})\kappa(\vec{z},\vec{y}_j)}.
\end{align*}
For a symmetric sample distribution $\kappa$, the acceptance ratio further 
simplifies to $R(i,j)={\pi(\vec{y}_j)}/\pi(\vec{y}_i)$.

\subsubsection{Non-reversible transition kernels}
\label{subsubsec:non_reversible_transition_kernels}

\noindent In the context of a finite state Markov chain, \cite{suwa2010markov} 
and \cite{todo2013geometric} introduce an MCMC method that 
redefines the transition matrix, allocating the
probability of an individual state to the transition probabilities
to other states, with the aim of minimising the
average rejection rate. In application, the resulting acceptance
rate is close to $1$, or even equal to $1$ in many iterations.
The resulting transition matrix is no longer reversible;
thus, it does not fulfill the detailed balance condition,
however it still satisfies the balance condition such that 
transitioning from one state
to another preserves the underlying stationary distribution. 
The proposed algorithm is immediately applicable for the finite state
sampling step in MP-MCMC. The resulting algorithm is a non-reversible
MP-MCMC, which preserves the stationary distribution $\pi$ of 
individual samples. 
Numerical experiments of this method can be found 
in Section \ref{subsubsec:empirical_results_extensions}.

\subsubsection{Optimised transition kernels}
\label{subsubsec:optimised_transition_kernels}

\noindent Given a Markov chain over finitely many proposed states, 
\cite{tjelmeland2004using}[Section 4] proposes an algorithm
that iteratively updates the transition matrix of an MCMC
algorithm, starting from Barker's acceptance probabilities defined on
finitely many states. The matrix is updated
until only at most one diagonal element is non-zero.
Since every update leaves the detailed balance condition valid, 
the resulting MCMC algorithm is reversible. Again, this method
is straightforward to apply in the finite state sampling step
of MP-MCMC, resulting in a reversible MP-MCMC, which clearly
leaves the stationary distribution $\pi$ of individual samples
unaltered. For a single proposal, this algorithm reduces to
the usual Metropolis-Hastings.
We now consider the performance in numerical experiments, 
comparing this method to the 
MP-MCMC with non-reversible transition kernels from 
\ref{subsubsec:non_reversible_transition_kernels}
and the standard MP-MCMC.

\subsubsection{Empirical results of MP-MCMC extensions}
\label{subsubsec:empirical_results_extensions}

In what follows, we compare the performance of standard MP-MCMC
to the MP-MCMC algorithms with improved transition kernels introduced
in Section \ref{subsubsec:non_reversible_transition_kernels} and
Section \ref{subsubsec:optimised_transition_kernels}. As posterior
distribution we consider a one-dimensional Gaussian, and for the underlying
proposal kernel we use the SmMALA formalism from \eqref{eq:smMALA_kernel}.
As measures
of performance we consider MSE convergence, acceptance rate and
mean squared jumping distance (MSJD) for increasing numbers of
proposals. Here, the acceptance rate is defined by the probability
of transitioning from one state to any of the $N$ different ones.
Further, MSJD $= 1/n \sum_{\ell =1}^n \| \vec{x}_{n+1} - \vec{x}_n \|^2$,
which is related to measuring the lag $1$ autocorrelation, and can
be applied to find the optimal choice of parameters determining the
proposal kernel (\cite{pasarica2010adaptively}).

Referring to Figure \ref{fig:acpt_msjd_improved_transitions},
switching from MP-MCMC to any of the MP-MCMC algorithms introduced in the 
previous two sections increases the average acceptance rate and the MSJD
for small proposal numbers significantly, and to a similar extend.
While the number of proposals increases, the difference to standard
MP-MCMC disappears, as they tend to the same maximal value; for 
the acceptance rate, this is $1$. At the same time, a performance gain
in terms of MSE, referring to Figure \ref{fig:mse_nonrev},
is only achieved by switching from the standard MP-MCMC transition kernel
to the non-reversible kernel (\ref{subsubsec:non_reversible_transition_kernels}), 
resulting in an average MSE reduction of 
ca.\ $20 \%$ overall. Applying the optimised transition algorithm
(\ref{subsubsec:optimised_transition_kernels})
does not significantly reduce the MSE, i.e.\ on average less than
$5 \%$ overall, compared to standard MP-MCMC. This is interesting
since it makes the point that, although the choice of proposals
is exactly the same, an increased acceptance rate does not
imply that the resulting samples are significantly
more informative about the posterior. In contrast to this observation we will see in Section \ref{sec:importance_sampling} that actually, more informative estimates, i.e.\ exhibiting lower variance, can indeed be achieved by accepting all proposed samples (i.e.\ acceptance rate $=1$) by suitably weighting them.
The number of iterations in the optimisation procedure used 
to generate the transition kernel from Section \ref{subsubsec:optimised_transition_kernels} increases significantly with the number of
proposals, and therefore the computation cost.
To sample a constant number of $250$ iterations in our toy example, we found the cost 
of performing the optimisation for more than $\approx 125$ proposals prohibitive.
For reference and comparison reasons, we displayed the results for 
proposal numbers only up to this number for standard MP-MCMC and non-reversible
MP-MCMC, too. 
Summarising, we were able to improve the constant in front of the convergence rate by using non-reversible and optimised transition kernels, however not the rate itself.

\begin{figure}[h]
    \centering
    \begin{subfigure}[b]{0.45\textwidth}
        \includegraphics[width=\textwidth]{./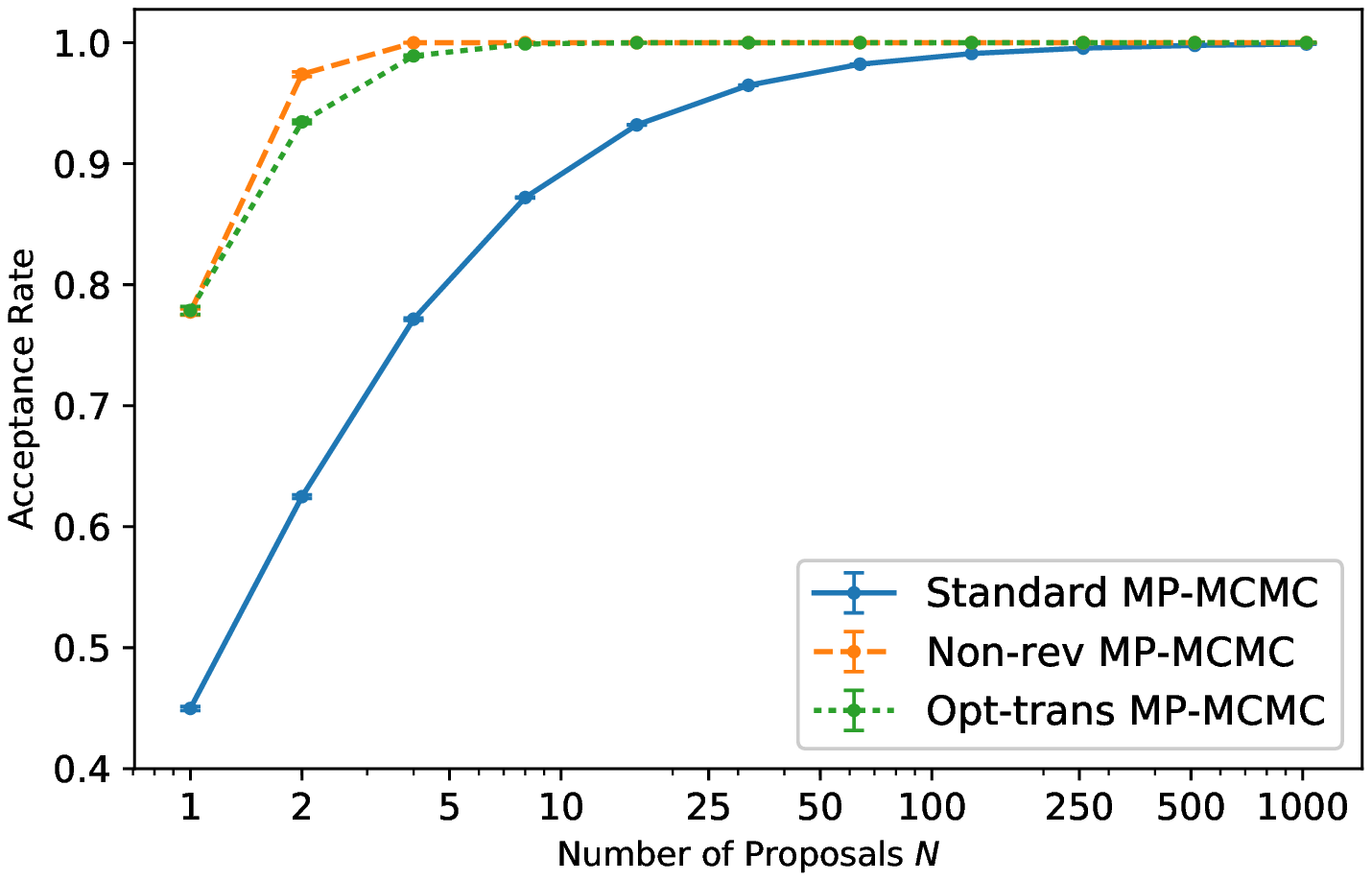}
    \end{subfigure} 
    \begin{subfigure}[b]{0.45\textwidth}
        \includegraphics[width=\textwidth]{./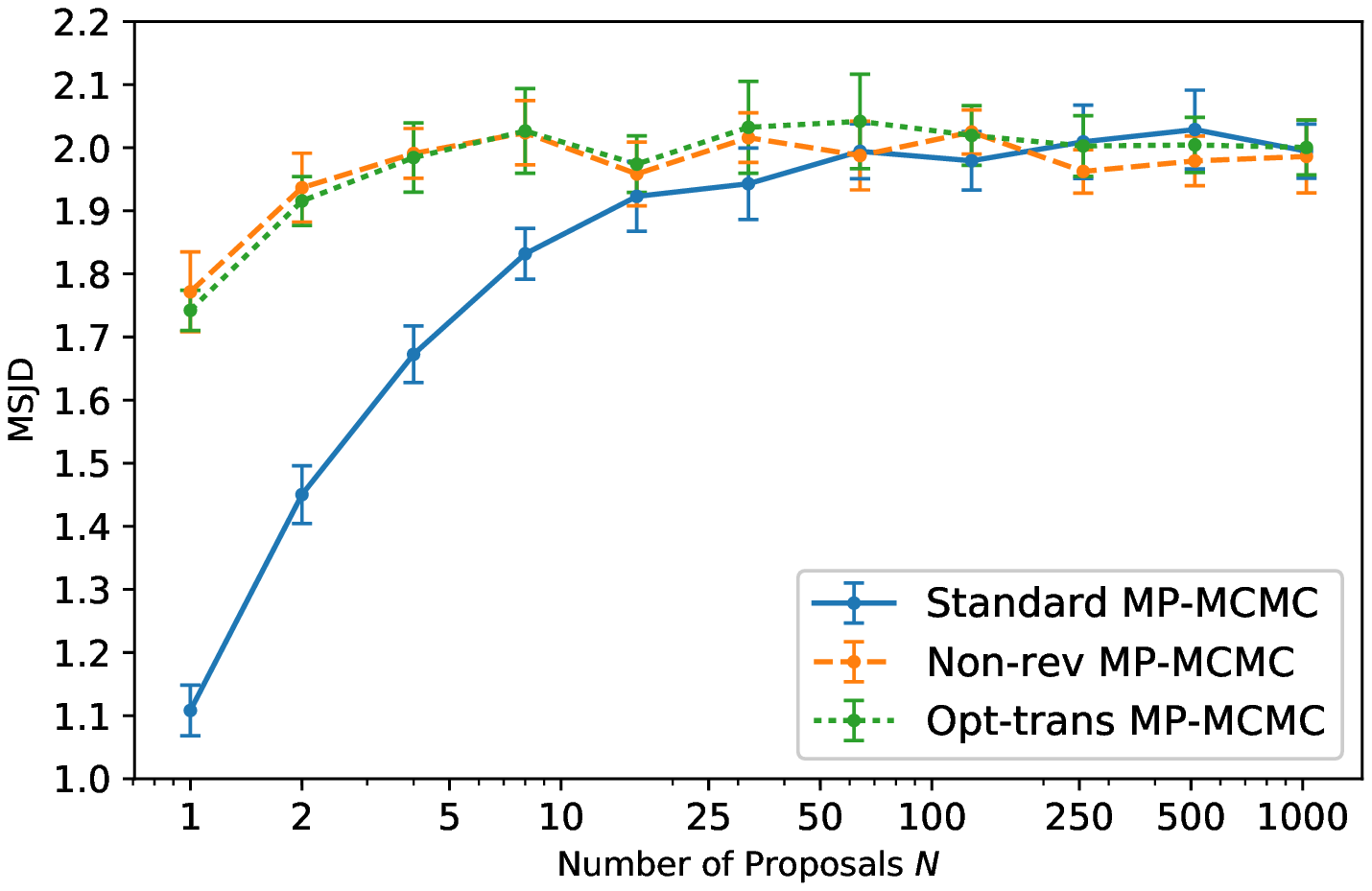}
    \end{subfigure}     
    \caption{\small{Acceptance rates and MSJD for standard MP-MCMC, MP-MCMC with 
    non-reversible (Non-rev) and optimised transitions (Opt-trans), sampling from 
    a one-dimensional standard Normal posterior for increasing proposal numbers and 
    sample sizes. The results are based on $10$ MCMC runs, and the error bars correspond
    to twice a standard deviation}}
    \label{fig:acpt_msjd_improved_transitions}
\end{figure}

\begin{figure}[h]
    \centering
    \begin{subfigure}[b]{0.60\textwidth}
        \includegraphics[width=\textwidth]{./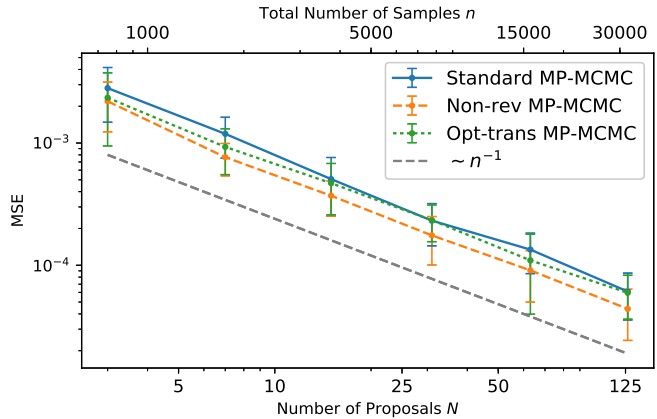}
    \end{subfigure} 
    \caption{\small{MSE of arithmetic mean for MP-MCMC, MP-MCMC with reversible 
    transitions (Non-rev) and MP-MCMC with optimised transitions (Opt-trans), resp., 
    sampling from 
    a one-dimensional standard Normal posterior for increasing proposal numbers and 
    sample sizes. The results are based on $25$ MCMC runs, and the error bars 
    correspond to twice a standard deviation, resp.}}
    \label{fig:mse_nonrev}
\end{figure}

\subsection{Limit theorems}
\label{subsection:limit_theorems}

In this section, the law of large numbers (LLN), a central limit theorem (CLT), 
and an expression for the asymptotic variance is derived. Essential for the
derivation is the observation that MP-MCMC can be considered as a single 
Markov chain on the product space of variables $(\vec{y}_{1:N+1},I_{1:M})$, 
i.e.\ of proposals and auxiliary variables, when
$N$ proposals are generated and $M$ samples are accepted in every iteration, respectively.
Here, $\vec{y}_i\in\Omega$
and $I_m \in\{1,...,N+1\}$ for $i=1,...,N+1$ and $m=1,...,M$. The joint 
probability on the associated space is 
given by
\begin{align*}
p(\vec{y}_{1:N+1},I_{1:M}) &= p(\vec{y}_{i}) \kappa(\vec{y}_{i}, \vec{y}_{\setminus{i}})  p(I_{1:M} |\vec{y}_{1:N+1})
\end{align*}
for any $i=1,...,N+1$, where $\kappa(\vec{y}_i,\vec{y}_{\setminus{i}})$ denotes the proposal
distribution. If ergodicity holds, then $p(\vec{y}_i)=\pi(\vec{y}_i)$ asymptotically, i.e.\ the samples
collected at each iteration will be distributed according to the target. In that case,
updating $(\vec{y}_{1:N+1},I_{1:M})$ leaves the joint distribution invariant.
Throughout this section, we will state the main results and definitions while
referring to the appendix for the full proof of results to aid readability.




\subsubsection{Law of large numbers}
\label{subsubsec:lln}

Given a scalar-valued function $f$ on $\Omega$, and samples 
$\vec{x}_1, \vec{x}_2, ...$ of an MP-MCMC
simulation according to Algorithm \ref{algorithm:multiproposal_MH}, 
we wish to prove that 
$\hat\mu_{n}\rightarrow \mu$ a.s., where $\hat\mu_{n} = (1/n) \sum_{i} f(\vec{x}_i)$ 
and $\mu = \int f(\vec{x}) \pi(\vec{x}) \mathrm{d}\vec{x}$, which is equivalent to 
$\hat\mu_{n,M,N}\rightarrow\mu$ a.s.\ for $n\rightarrow \infty$,
where
\begin{align}
    \hat\mu_{n,M,N} = \frac{1}{nM} \sum_{i=1}^n\sum_{m=1}^{M} f(\vec{x}_m^{(i)}),
    \label{eq:lln_mp_mcmc_inner_and_outer_iterations}
\end{align}
where $\vec{x}_m^{(i)}$ denotes the $m$th sample in the $i$th MCMC iteration. 

\begin{Lemma}[Law of Large Numbers]
\label{lemma:lln_mp_mcmc}
Assuming that the single Markov chain on the product space of
accepted samples per iteration, as defined by MP-MCMC, is
positive Harris \cite[10.1]{meyn2012markov}, then the law of large numbers holds true.
\begin{proof}
A proof is given in Appendix \ref{appendix:proof_lemma_lln_mp_mcmc}.
\end{proof}
\end{Lemma}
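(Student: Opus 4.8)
The plan is to recognise the collected samples as the time-average of a single test function against a positive Harris chain, and then to invoke the ergodic theorem (Theorem 17.1.7 in \cite{meyn2012markov}). Since the text preceding the statement already reduces the claim to showing $\hat\mu_{n,M,N}\to\mu$ almost surely, I would work throughout with the accepted-sample representation from Section \ref{subsubsec:equivalence_mpmcmc}: the sequence $\Phi_i := (\vec{x}_1^{(i)}, \ldots, \vec{x}_M^{(i)})\in\Omega^M$ of the $M$ samples accepted in the $i$th iteration is a Markov chain on $\Omega^M$, which by hypothesis is positive Harris; denote its invariant probability measure by $\bar\pi$.

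The structural ingredient I would isolate first is that each of the $M$ coordinate marginals of $\bar\pi$ equals the target $\pi$. This is read off from the factorisation \eqref{eq:factorisation_derivation_mpmcmc}: for every $m=1,\ldots,M$ one has $p(\vec{y}_{1:N+1}, I_m = i_m) = \tfrac{1}{N+1}\pi(\vec{y}_{i_m})\kappa(\vec{y}_{i_m}, \vec{y}_{\setminus i_m})$, so that under stationarity the accepted sample $\vec{x}_m^{(i)} = \vec{y}_{I_m}^{(i)}$ has marginal density $\pi$. I would record this as the identity
\begin{align*}
\int_{\Omega^M} h(\vec{x}_m)\,\bar\pi(\mathrm{d}\vec{x}_{1:M}) = \int_\Omega h(\vec{x})\,\pi(\vec{x})\,\mathrm{d}\vec{x},
\end{align*}
holding for each coordinate $m$ and each $\pi$-integrable $h$.

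With this in hand, I would introduce the test function $g(\vec{x}_{1:M}) := \tfrac{1}{M}\sum_{m=1}^M f(\vec{x}_m)$ on $\Omega^M$, so that by construction
\begin{align*}
\frac{1}{n}\sum_{i=1}^n g(\Phi_i) = \frac{1}{nM}\sum_{i=1}^n\sum_{m=1}^M f(\vec{x}_m^{(i)}) = \hat\mu_{n,M,N}.
\end{align*}
Because $f$ is bounded (more generally $\int|f|\,\mathrm{d}\pi<\infty$), the marginal identity gives $\int_{\Omega^M}|g|\,\mathrm{d}\bar\pi \le \int_\Omega|f|\,\mathrm{d}\pi<\infty$, so $g\in L^1(\bar\pi)$ and the ergodic theorem applies to the positive Harris chain $(\Phi_i)$ from any initial state. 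It yields $\hat\mu_{n,M,N}\to\int_{\Omega^M} g\,\mathrm{d}\bar\pi$ almost surely, and applying the marginal identity once more gives $\int_{\Omega^M} g\,\mathrm{d}\bar\pi = \tfrac{1}{M}\sum_{m=1}^M\int_\Omega f(\vec{x})\pi(\vec{x})\,\mathrm{d}\vec{x} = \mu$. Combined with the reduction noted at \eqref{eq:lln_mp_mcmc_inner_and_outer_iterations}, this gives $\hat\mu_n\to\mu$ almost surely.

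The main obstacle is structural rather than analytic: the ergodic theorem only controls the time-average of $g$ against the stationary law $\bar\pi$, and the reduction of that limit to $\mu$ rests entirely on \emph{every} one of the $M$ coordinate marginals of $\bar\pi$ being exactly $\pi$ simultaneously. I would therefore spend the most care justifying the marginal identity uniformly in $m$ from the invariance of the joint law under the two update kernels; once that is secured, the integrability check and the passage from the iteration-grouped average $\hat\mu_{n,M,N}$ to the flat average $\hat\mu_n$ are routine.
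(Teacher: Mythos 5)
Your proof is correct and takes essentially the same route as the paper's: the paper likewise treats MP-MCMC as a positive Harris chain on the accepted samples $\vec{x}_{1:M}\in\Omega^M$, applies the ergodic theorem of Meyn and Tweedie to the test function $F(\vec{x}_{1:M})=\frac{1}{M}\sum_{m=1}^M f(\vec{x}_m)$, and identifies the limit $\mathbb{E}_p[F]$ with $\mu$ via the same marginal computation based on the factorisation \eqref{eq:factorisation_derivation_mpmcmc} and the kernel normalisation. The only difference is presentational — you isolate the coordinate-marginal identity as a standalone step, whereas the paper carries out that integral inline — so there is nothing to correct.
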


\subsubsection{Central limit theorem}

\noindent The result we would like to have is the following: given a scalar-valued function $f$ on $\Omega$, and samples $\vec{x}_1, \vec{x}_2, ...$ of an MP-MCMC simulation, then
\begin{align*}
    \sqrt{n}\left(\hat\mu_n - \mu \right) \xrightarrow{\mathcal{D}} \mathcal{N}(0,\sigma^2),
\end{align*}
for some $\sigma^2>0$, which is equivalent to 
\begin{align*}
    \sqrt{nM}\left(\hat\mu_{n,M,N} - \mu \right) \xrightarrow{\mathcal{D}} \mathcal{N}(0,\sigma^2) \quad \text{for} \quad n\rightarrow\infty,
\end{align*}
where we used the same notation as in section \ref{subsubsec:lln}. In order to
prove the CLT we assume the chain to be positive Harris, and that the asymptotic 
variance can be expressed by the limit of the variances at iteration 
$n$ for $n \rightarrow \infty$, and is well-defined and positive. Since this 
seems to be natural to assume, we refer to this assumption by saying the MP-MCMC 
Markov chain is \textit{well-behaved}. Since this assumption is not easily 
verifiable in practice, we also give a formal definition of a uniform ergodicity 
condition on the Markov chain from \cite{meyn2012markov} that ensures the above. 
For the sake of readability, we refer to Appendix \ref{appendix:proof_lemma_ctl_mp_mcmc} 
for this formal condition.

\begin{Lemma}[Central Limit Theorem]
\label{lemma:clt_mp_mcmc}
Assuming that the single Markov chain on the product space of
accepted samples per iteration, as defined by MP-MCMC, is 
positive Harris and \textit{well-behaved}, then the central limit 
theorem holds true, and the asymptotic variance of the sequence 
$(f(\vec{x}_i))_{i \ge 1}$ is given by
\begin{align*}
    \sigma^2 = \zeta^{(0)} 
    + 2\sum_{1\le \ell<m \le M} \zeta^{(0)}_{\ell,m} 
    + \frac{2}{M}\sum_{k=1}^\infty \sum_{\ell,m=1}^{M} \zeta^{(k)}_{\ell,m}
\end{align*}
where $\zeta^{(0)}=\Var_{\pi}(f(\vec{x}))$, 
$\zeta_{m,n}^{(0)} = \Cov(f(\vec{x}_m),f(\vec{x}_{n}))$, and
$\zeta^{(k)}_{\ell,m} = \Cov( f(\vec{x}_\ell^{(i)}, f(\vec{x}_m^{(i+k)}))$ 
for any $ i,k \in \mathbb{N}$ and $\ell, m=1,...,M\in\mathbb{N}$.
\begin{proof}
For a proof, we refer to Appendix \ref{appendix:proof_lemma_ctl_mp_mcmc}.
\end{proof}
\end{Lemma}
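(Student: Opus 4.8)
The plan is to use the representation, established in Section~\ref{subsubsec:equivalence_mpmcmc}, of MP-MCMC as a genuine time-homogeneous Markov chain on the space of accepted samples per iteration, $\vec{X}^{(i)} := (\vec{x}_1^{(i)}, \ldots, \vec{x}_M^{(i)}) \in \Omega^M$, with transition kernel $P$ from \eqref{eq:mp_mcmc_transition_kernel_general_case1a}. Introducing the block functional $F(\vec{X}^{(i)}) := \sum_{m=1}^M f(\vec{x}_m^{(i)})$, one has $\hat\mu_{n,M,N} = \frac{1}{M}\cdot\frac{1}{n}\sum_{i=1}^n F(\vec{X}^{(i)})$, so that up to the factor $1/M$ the estimator is an ordinary ergodic average of the scalar functional $F$ along the iteration-indexed chain. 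The problem thus reduces to a standard Markov chain CLT for $F$, followed by re-expressing the resulting time-series asymptotic variance in the claimed componentwise form.

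For the CLT I would take the martingale-approximation route. Since $(\vec{X}^{(i)})_i$ is positive Harris with stationary law the $\Omega^M$-marginal of $p(\vec{y}_{1:N+1},I)$, and is assumed \textit{well-behaved} (equivalently, satisfies the uniform ergodicity condition deferred to Appendix~\ref{appendix:proof_lemma_ctl_mp_mcmc}), one solves the Poisson equation $\hat F - P\hat F = F - \E_\pi[F]$ for some $\hat F \in L^2(\pi)$, decomposes $\sum_{i=1}^n\bigl(F(\vec{X}^{(i)}) - \E_\pi[F]\bigr)$ into a square-integrable martingale plus a telescoping remainder that is asymptotically negligible after scaling by $1/\sqrt n$, and invokes the martingale CLT (cf.\ the CLT for positive Harris chains in \cite{meyn2012markov}). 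This gives $\sqrt{n}\bigl(\tfrac{1}{n}\sum_{i=1}^n F(\vec{X}^{(i)}) - M\mu\bigr) \xrightarrow{\mathcal{D}} \mathcal{N}(0,\tau^2)$, where $\E_\pi[F] = M\mu$ because each marginal $\vec{x}_m^{(i)}$ has law $\pi$ by construction, and $\tau^2 = \Var_\pi(F) + 2\sum_{k\ge 1}\Cov\bigl(F(\vec{X}^{(0)}),F(\vec{X}^{(k)})\bigr)$.

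It then remains to rescale and unwind $\tau^2$. Since $\hat\mu_{n,M,N} = \frac{1}{M}\cdot\frac{1}{n}\sum_i F(\vec{X}^{(i)})$, a Slutsky/scaling argument upgrades the above to $\sqrt{nM}\,(\hat\mu_{n,M,N} - \mu) \xrightarrow{\mathcal{D}} \mathcal{N}(0,\sigma^2)$ with $\sigma^2 = \tau^2/M$. Expanding by bilinearity of covariance and using $\Var_\pi(f(\vec{x}_m^{(i)})) = \zeta^{(0)}$ for every $m$ gives the two building blocks $\Var_\pi(F) = M\zeta^{(0)} + 2\sum_{1\le \ell<m\le M}\zeta^{(0)}_{\ell,m}$ and $\Cov\bigl(F(\vec{X}^{(0)}),F(\vec{X}^{(k)})\bigr) = \sum_{\ell,m=1}^M \zeta^{(k)}_{\ell,m}$; substituting these into $\sigma^2 = \tau^2/M$ and collecting terms yields the stated decomposition. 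The same expression is obtained more directly from the well-behaved assumption by computing $nM\,\Var(\hat\mu_{n,M,N})$, splitting the double iteration sum into its diagonal ($i=j$) and off-diagonal ($|i-j|=k$) parts, and passing to the limit via the Cesàro weights $(n-k)/n \to 1$ under absolute summability of the autocovariances.

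The main obstacle is the CLT step rather than the bookkeeping: one must certify that positive Harris recurrence together with the well-behaved/uniform-ergodicity hypothesis suffices both to produce a Poisson solution $\hat F$ with enough integrability for the martingale CLT, and to force the series $\sum_{k\ge1}\sum_{\ell,m}\zeta^{(k)}_{\ell,m}$ to converge absolutely, so that $\sigma^2$ is finite and equals $\lim_{n}nM\,\Var(\hat\mu_{n,M,N})$. The \textit{well-behaved} assumption is designed precisely to sidestep the delicate verification of mixing rates, while the uniform ergodicity condition in the appendix supplies the geometric decay of correlations that makes the Poisson equation solvable and the autocovariance series summable.
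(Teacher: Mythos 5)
Your proposal is correct and follows essentially the same route as the paper's proof in Appendix \ref{appendix:proof_lemma_ctl_mp_mcmc}: represent MP-MCMC as a positive Harris chain on $\Omega^M$, apply a Markov-chain CLT to the block functional $F$, pass to the stationary initial distribution (justified because the CLT is initial-distribution independent), and unwind the asymptotic variance by separating within-iteration from between-iteration covariances with Cesàro weights $(n-k)/n \to 1$. The only real difference is that the paper invokes Theorems 17.0.1 and 17.3.6 of \cite{meyn2012markov} as a black box, whereas you sketch the Poisson-equation/martingale approximation that underlies those theorems; nothing is gained or lost by this.

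One point worth flagging: your bookkeeping gives $\sigma^2 = \zeta^{(0)} + \frac{2}{M}\sum_{1\le\ell<m\le M}\zeta^{(0)}_{\ell,m} + \frac{2}{M}\sum_{k\ge 1}\sum_{\ell,m=1}^{M}\zeta^{(k)}_{\ell,m}$, i.e.\ the within-iteration covariance term also carries the factor $1/M$. This agrees with the paper's own intermediate computation ($M\gamma^{(0)} = \zeta^{(0)} + \frac{2}{M}\sum_{1\le\ell<m\le M}\zeta^{(0)}_{\ell,m}$) but not with the formula displayed in the Lemma and in the final line of the paper's proof, where that term appears as $2\sum_{1\le\ell<m\le M}\zeta^{(0)}_{\ell,m}$. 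So your derivation does not, as you claim, yield the stated decomposition verbatim; rather, it yields the correct formula and exposes a factor-of-$M$ inconsistency in the statement. You should state this explicitly rather than assert agreement.
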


\subsection{Adaptive MP-MCMC}
\label{subsec:adaptive_mpmcmc}


We now introduce adaptive versions of MP-MCMC within a general 
framework of adaptivity for Markov chains, and present theory based on 
\cite{roberts2007coupling} that allows us to prove ergodicity in adaptive MP-MCMC. Further, an explicit adaptive MP-MCMC method is introduced as Algorithm \ref{algorithm:adaptive_mp_mcmc} in Section \ref{subsubsec:an_adaptive_mpmcmc_algorithm}, for which we prove ergodicity based on the results mentioned above. The performance of IS-MP-MCMC compared to MP-MCMC is then investigated in a simulation study.

\subsubsection{Adaptive transition kernels in MP-MCMC}

In the following we consider MP-MCMC as a single Markov chain over the 
accepted samples in one iteration. For any $n\in \mathbb{N}$, let 
$\vec{Z}_n = \vec{x}_{1:M}^{(n)} \in \mathbb{R}^{Md}$
denote the state of the MP-MCMC algorithm 
at time $n$, i.e.\ the vector of accepted samples in iteration $n$. Further let $\Gamma_n$ denote a $\mathcal{Y}$-valued random variable which 
determines the kernel choice for
updating $\vec{Z}_n$ to $\vec{Z}_{n+1}$. We have
\begin{align*}
p(\vec{Z}_{n+1} =\vec{z}_{n+1}| \vec{Z}_i=\vec{z}_i, \Gamma_i=\gamma_i, i=1,...,n ) &= p(\vec{Z}_{n+1} =\vec{z}_{n+1}| \vec{Z}_n=\vec{z}_{n}, \Gamma_n=\gamma_n) \\
&= P_{\gamma_n}(\vec{z}_n,\vec{z}_{n+1}).
\end{align*}
The dependency of $\Gamma_n$ on previous samples and kernel choices, i.e.\
$\Gamma_n|\vec{Z}_i, \Gamma_i, i=1,...,n$, will be determined by the corresponding
adaptive algorithm. For given starting values $\vec{Z}_0=\vec{z}, \Gamma_0=\gamma$, let
\begin{align*}
P^{(n)}_{\gamma} (\vec{z}, \vec{z}_n) = p(\vec{Z}_n=\vec{z}_n| \vec{Z}_0=\vec{z}, \Gamma_0=\gamma)
\end{align*}
denote the corresponding $n$-step conditional probability density of the 
associated adaptive algorithm. Finally, let us define the total variation
between the joint distribution $p$ of variables $\vec{z}=\vec{x}_{1:M}$ and 
$P^{(n)}_{\gamma}$ by
\begin{align*}
T(\vec{z},\gamma, n) = \left\| P^{(n)}_{\gamma}(\vec{z},\cdot) - p(\cdot) \right\| = \sup_{B} \left|\int_B P^{(n)}_{\gamma}(\vec{z},\vec{\tilde{z}}) - p(\vec{\tilde{z}})\mathrm{d}\vec{\tilde{z}} \right|.
\end{align*}
Note that under the joint distribution $p$, all individual samples 
from every iteration are distributed according to the target $\pi$.

\subsubsection{Ergodicity for adaptive chains}

Following \cite{roberts2007coupling}, we call the underlying adaptive algorithm
ergodic if $T(\vec{z},\gamma, n)\rightarrow 0$ for $n\rightarrow \infty$. Referring to
their Theorem 1 and Theorem 2, the following results give sufficient 
conditions, under which ergodicity holds true. An integral requirement in both
theorems is that the changes of the transition kernel due to adaptation tend to 
zero. To that end, we define the random variable
\begin{align*}
D_n = D_n(\Gamma_{n+1},\Gamma_n) = \sup_{\vec{z}}\| P_{\Gamma_{n+1}}(\vec{z},\cdot) - P_{\Gamma_n}(\vec{z}, \cdot)\|.
\end{align*}
Note that $D_n \rightarrow 0$ for $n\rightarrow \infty$ does not mean that
$\Gamma_n$ necessarily converges. Also, the amount of adaptation to be
infinite, i.e.\ $\sum_n D_n = \infty$, is allowed.
We note that there exist more general 
adaptive schemes in the literature
(\cite{atchad2011adaptive}) which allow that different transition kernels $P_\gamma$ 
have different stationary distributions, however we do not consider these here.

\begin{theorem}[Theorem 1, \cite{roberts2007coupling}]
\label{thm:theorem1roberts2007}
Given an adaptive MP-MCMC algorithm with adaptation space $\mathcal{Y}$ and such
that $p$ is the stationary distribution for any transition kernel $P_\gamma$, 
$\gamma \in \mathcal{Y}$. If,
\begin{itemize}
\item (Simultaneous Uniform Ergodicity) For any $\varepsilon>0$ there is
$N=N(\epsilon)\in \mathbb{N}$ such that $\| P^{(N)}_\gamma(\vec{z},\cdot) - p(\cdot) \|\le \varepsilon$
for any $\vec{z}$ and $\gamma\in \mathcal{Y}$; and
\item (Diminishing Adaptation) $D_n\rightarrow 0$ for $n \rightarrow \infty$ in probability,
\end{itemize}
then the adaptive algorithm is ergodic.
\end{theorem}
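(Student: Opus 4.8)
The plan is to show directly that $T(\vec{z},\gamma,n) = \| P^{(n)}_\gamma(\vec{z},\cdot) - p(\cdot)\| \to 0$ as $n\to\infty$, playing the two hypotheses against one another over a \emph{finite} time window via a coupling argument. First I would fix $\varepsilon>0$ and invoke Simultaneous Uniform Ergodicity to obtain an integer $N=N(\varepsilon)$ such that $\| P^{(N)}_\gamma(\vec{z},\cdot) - p(\cdot)\|\le \varepsilon$ for every $\vec{z}$ and every $\gamma\in\mathcal{Y}$. The key idea is that, although the adaptive chain never uses a single fixed kernel, over a window of length $N$ the kernel barely moves (by Diminishing Adaptation), so the adaptive chain run from time $n-N$ to $n$ is close to a chain driven by the single frozen kernel $P_{\Gamma_{n-N}}$, and the latter is within $\varepsilon$ of $p$ by the choice of $N$.

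Concretely, for large $n$ I would construct on a common probability space the genuine adaptive process $\vec{Z}_{n-N},\dots,\vec{Z}_n$ and a \emph{frozen} comparison process $\vec{Z}'_{n-N},\dots,\vec{Z}'_n$ that agrees with it at time $n-N$ (so $\vec{Z}'_{n-N}=\vec{Z}_{n-N}$) and thereafter applies the single kernel $P_{\gamma^*}$ with $\gamma^*=\Gamma_{n-N}$. Conditioning on $\vec{Z}_{n-N}$ and $\Gamma_{n-N}$, Simultaneous Uniform Ergodicity gives $\| \mathrm{Law}(\vec{Z}'_n) - p(\cdot)\| = \| P^{(N)}_{\gamma^*}(\vec{Z}_{n-N},\cdot) - p(\cdot)\|\le \varepsilon$, irrespective of which random value $\gamma^*$ takes. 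A triangle inequality then yields
\begin{align*}
T(\vec{z},\gamma,n) \;\le\; \big\| \mathrm{Law}(\vec{Z}_n) - \mathrm{Law}(\vec{Z}'_n) \big\| \;+\; \varepsilon,
\end{align*}
so it only remains to show that the first term vanishes as $n\to\infty$.

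To control that term I would couple the two chains maximally at each step of the window: they are driven by the same kernel at the first step (both use $\Gamma_{n-N}$), so they can be made to coalesce there, and at step $n-N+j$ they differ only when the adaptive kernel $P_{\Gamma_{n-N+j}}$ disagrees with the frozen kernel $P_{\Gamma_{n-N}}$ at the current state. Hence the probability that the coupled chains have separated by time $n$ is bounded, via the definition of $D_k=\sup_{\vec{z}}\|P_{\Gamma_{k+1}}(\vec{z},\cdot)-P_{\Gamma_k}(\vec{z},\cdot)\|$ and a telescoping estimate, by $\E\!\big[\,N\sum_{i=0}^{N-2} D_{n-N+i}\big]$. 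Since $N$ is fixed and each $D_k\to 0$ in probability by Diminishing Adaptation, this finite sum converges to $0$ in probability; because every $D_k\le 1$, bounded convergence upgrades this to convergence of the expectation, giving $\| \mathrm{Law}(\vec{Z}_n)-\mathrm{Law}(\vec{Z}'_n)\|\to 0$. Therefore $\limsup_{n\to\infty} T(\vec{z},\gamma,n)\le \varepsilon$, and letting $\varepsilon\downarrow 0$ completes the argument.

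I expect the main obstacle to be precisely this last bookkeeping: the frozen kernel $\gamma^*=\Gamma_{n-N}$ is itself random and depends on the whole history up to time $n-N$, so the coupling and both estimates must be carried out conditionally on that history and shown to hold uniformly in the realised value of $\gamma^*$ (this is exactly where Simultaneous Uniform Ergodicity, rather than mere ergodicity of each $P_\gamma$, is indispensable). The second delicate point is converting the \emph{in-probability} statement of Diminishing Adaptation into a bound on the expected decoupling probability, which works only because total-variation distances are uniformly bounded by one and the window length $N$ is held fixed while $n\to\infty$.
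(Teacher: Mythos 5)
Your proof is correct and takes essentially the same approach as the source of this result: the paper does not reprove Theorem \ref{thm:theorem1roberts2007} itself but cites \cite{roberts2007coupling}, describing that proof as ``coupling the adaptive chain with another chain that is adaptive only up to a certain iteration'' --- which is precisely your frozen-kernel comparison chain started at time $n-N$. The window-of-length-$N$ maximal coupling, the telescoping bound on the decoupling probability through the $D_k$, and the upgrade from convergence in probability to convergence of expectations via the uniform bound $D_k\le 1$ are exactly the ingredients of the original argument.
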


The first condition in the previous result is relatively strong, and might
not always be verifiable in practice. However, ergodicity still holds if the uniform
convergence of $P_\gamma$ is relaxed to the following containment condition: 
roughly speaking, for given starting values for $\vec{Z}_0$ and $\Gamma_0$, and 
sufficiently large $n$, $P^n_\gamma$ is close to $p$ with high probability. 
To formalise the containment condition, let us define for any $\varepsilon>0$,
$\vec{z}\in \mathbb{R}^{Md}$ and $\gamma \in \mathcal{Y}$,
\begin{align*}
M_\epsilon (\vec{z},\gamma) = \inf \{ n\ge 1: \| P_\gamma^n(\vec{z},\cdot) - p(\cdot) \| \le \epsilon \},
\end{align*}
and state the following ergodicity result.

\begin{theorem}[Theorem 2, \cite{roberts2007coupling}]
\label{thm:theorem2roberts2007}
Consider an adaptive MP-MCMC algorithm that has diminishing adaptation, and let
$\vec{z}^* \in \Omega^M, \gamma^* \in \mathcal{Y}$. If 
\begin{itemize}
\item (Containment) For any $\delta>0$ there exists a $N\in \mathbb{N}$ such that
$P(M_\epsilon (\vec{Z}_n,\Gamma_n) \le N| \vec{Z}_0=\vec{z}^*, \Gamma_0=\gamma^*)\ge 1-\delta$ for
any $n \in \mathbb{N}$,
\end{itemize}
then the adaptive algorithm is ergodic.
\end{theorem}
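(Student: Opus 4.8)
The plan is to prove ergodicity, i.e.\ $T(\vec{z}^*,\gamma^*,n)\to 0$, by a coupling argument that plays the two hypotheses against one another: containment controls how long a \emph{frozen} (non-adaptive) kernel needs to reach stationarity, while diminishing adaptation guarantees that over any fixed such window the genuinely adaptive chain and a frozen companion chain stay indistinguishable. Fix $\epsilon>0$ and $\delta>0$. By the containment hypothesis choose $N=N(\epsilon,\delta)\in\mathbb{N}$ so that
\begin{align*}
\prob\bigl(M_\epsilon(\vec{Z}_n,\Gamma_n)\le N \,\big|\, \vec{Z}_0=\vec{z}^*,\Gamma_0=\gamma^*\bigr)\ge 1-\delta
\end{align*}
for every $n\in\mathbb{N}$. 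The idea is then to run the adaptive chain up to a late time $n^{*}:=n-N$, and from that point on compare it with an auxiliary chain $\tilde{\vec{Z}}$ that shares the state $\tilde{\vec{Z}}_{n^{*}}=\vec{Z}_{n^{*}}$ but thereafter evolves under the single frozen kernel $P_{\Gamma_{n^{*}}}$.

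First I would bound the distance to stationarity of the frozen companion. Conditioning on $\mathcal{G}_{n^{*}}$, the $\sigma$-algebra generated by $(\vec{Z}_i,\Gamma_i)_{i\le n^{*}}$, the law of $\tilde{\vec{Z}}_{n}$ is exactly $P_{\Gamma_{n^{*}}}^{N}(\vec{Z}_{n^{*}},\cdot)$. On the event $\{M_\epsilon(\vec{Z}_{n^{*}},\Gamma_{n^{*}})\le N\}$, the definition of $M_\epsilon$ together with the fact that $p$ is stationary for every $P_\gamma$ (so that $\|P_\gamma^{k}(\vec{z},\cdot)-p(\cdot)\|$ is non-increasing in $k$, since applying a Markov kernel is a contraction in total variation) gives $\|P_{\Gamma_{n^{*}}}^{N}(\vec{Z}_{n^{*}},\cdot)-p(\cdot)\|\le\epsilon$. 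Since the complementary event has probability at most $\delta$ and total variation is bounded by $1$, this yields $\|\mathcal{L}(\tilde{\vec{Z}}_{n})-p(\cdot)\|\le\epsilon+\delta$.

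Next I would control the discrepancy between the true adaptive chain and its frozen companion over the window $[n^{*},n]$. Starting from the common state at time $n^{*}$, I would build a step-by-step maximal coupling of $\vec{Z}$ and $\tilde{\vec{Z}}$: while the two chains currently agree, at step $j$ they can be made to take an identical transition except on an event of probability at most $\sup_{\vec{z}}\|P_{\Gamma_{j}}(\vec{z},\cdot)-P_{\Gamma_{n^{*}}}(\vec{z},\cdot)\|$, which by the triangle inequality for the kernel differences is dominated by $\sum_{i=n^{*}}^{j-1}D_i$. A union bound over the $N$ steps then bounds the probability that the coupled chains ever disagree, and hence $\|\mathcal{L}(\vec{Z}_{n})-\mathcal{L}(\tilde{\vec{Z}}_{n})\|$, by a quantity controlled (up to the fixed factor $N$) by $\sum_{i=n^{*}}^{n-1}D_i$. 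With $N$ now held fixed, diminishing adaptation ($D_i\to 0$ in probability) forces this finite sum to converge to $0$ in probability as $n^{*}=n-N\to\infty$, and a standard truncation argument converts this into a bound on the expected coupling-failure probability that vanishes.

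Combining the two bounds through the triangle inequality
\begin{align*}
T(\vec{z}^*,\gamma^*,n)\le \|\mathcal{L}(\vec{Z}_{n})-\mathcal{L}(\tilde{\vec{Z}}_{n})\|+\|\mathcal{L}(\tilde{\vec{Z}}_{n})-p(\cdot)\|
\end{align*}
gives $\limsup_{n\to\infty}T(\vec{z}^*,\gamma^*,n)\le\epsilon+\delta$, and since $\epsilon,\delta>0$ were arbitrary the limit is $0$. The main obstacle I anticipate is the careful handling of the quantifier order and of the ``in probability'' mode of convergence in diminishing adaptation: $N$ must be chosen from containment \emph{before} letting $n\to\infty$, yet the coupling-failure bound involves a sum of exactly $N$ adaptation increments evaluated at the sliding, ever-later times $n^{*},\dots,n-1$, and $\Gamma_j$ is itself a random object tied to the very path being coupled. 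Making the passage from $D_i\to0$ in probability to a genuine bound on the coupling probability rigorous---rather than a na\"ive expectation bound---together with verifying the monotonicity of $\|P_\gamma^k(\vec{z},\cdot)-p(\cdot)\|$ in $k$ used for the containment step, is where the technical weight of the proof lies.
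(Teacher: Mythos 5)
Your proposal is correct in outline and takes essentially the same route as the paper, which itself offers no proof of this theorem but defers to \cite{roberts2007coupling} with the one-line remark that the argument couples the adaptive chain with a companion chain that is adaptive only up to a certain iteration — exactly your frozen-kernel construction at time $n^{*}=n-N$. Your use of containment to fix $N$, the total-variation contraction argument giving monotonicity of $\|P_\gamma^k(\vec{z},\cdot)-p(\cdot)\|$ in $k$ (valid since $p$ is stationary for each $P_\gamma$, an assumption carried over from the framework), and the step-wise coupling whose failure probability is bounded by the telescoped sum of the $D_i$'s, followed by a truncation/bounded-convergence step, faithfully reproduces the Roberts--Rosenthal proof, including the genuine technical caveats you correctly flag.
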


The proofs of both previous theorems are based on coupling
the adaptive chain with another chain that is adaptive only up to a certain iteration. 
Referring to \cite{bai2011containment} containment can actually be derived 
via the much easier to verify simultaneous polynomial ergodicity or 
simultaneous geometrical ergodicity. The latter immediately implies containment.

\begin{Lemma}[Asymptotic distribution of adaptive MP-MCMC]
Suppose that either the conditions of Theorem \ref{thm:theorem1roberts2007} or
\ref{thm:theorem2roberts2007} are satisfied, then the accepted samples 
$\vec{x}^{(n)}_m\in \Omega$, i.e.\ the $m$th sample from the $n$th
iteration for $n\in \mathbb{N}$ and $m=1,...,M\in \mathbb{N}$, given by 
$\vec{y}^{(n)}_{I_m^{(n)}}=\vec{x}^{(n)}_m$, are asymptotically distributed 
according to the target $\pi$.
\begin{proof}
As the asymptotic behaviour of the Markov chain, defined on states $\vec{x}_{1:M}$, 
is not influenced by the initial distribution, we may assume the joint target $p$
as initial distribution. The statement then follows immediately.
\end{proof}
\end{Lemma}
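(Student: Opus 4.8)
The plan is to obtain the single-sample statement as a marginalisation of the joint ergodicity that Theorem \ref{thm:theorem1roberts2007} and Theorem \ref{thm:theorem2roberts2007} already provide for the chain on the accepted samples. First I would invoke whichever hypothesis is assumed: under either simultaneous uniform ergodicity with diminishing adaptation, or containment with diminishing adaptation, the cited theorems give that the adaptive algorithm is ergodic in the sense defined above, namely
\begin{align*}
T(\vec{z}^*, \gamma^*, n) = \left\| P^{(n)}_{\gamma^*}(\vec{z}^*, \cdot) - p(\cdot) \right\| \longrightarrow 0 \quad \text{as} \quad n \to \infty,
\end{align*}
where $p$ is the joint stationary distribution of the chain $\vec{Z}_n = \vec{x}^{(n)}_{1:M}$ on $\Omega^M$. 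Since this limit is independent of the starting values $\vec{z}^*, \gamma^*$, the law $\mathcal{L}(\vec{Z}_n)$ converges to $p$ in total variation regardless of how the chain is initialised; this is the sense in which the asymptotic behaviour is not influenced by the initial distribution.

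Next I would recall the invariance property already established for this model: under $p$ the marginal in any single accepted coordinate is the target. Concretely, from the factorisation \eqref{eq:factorisation_derivation_mpmcmc} the joint law of proposals and auxiliary variable satisfies $p(\vec{y}_{1:N+1}, I_m = i_m) = \tfrac{1}{N+1}\pi(\vec{y}_{i_m})\kappa(\vec{y}_{i_m}, \vec{y}_{\setminus i_m})$, so the accepted sample $\vec{x}_m = \vec{y}_{I_m}$ carries density $\pi$ under $p$ for every $m = 1, \dots, M$. Thus the $m$th marginal of $p$ equals $\pi$.

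Finally, the key step is to pass from joint total-variation convergence on $\Omega^M$ to the marginal of a fixed coordinate. Here I would use that total variation distance is non-increasing under any measurable pushforward; applying this contraction to the coordinate projection $\Omega^M \ni \vec{x}_{1:M} \mapsto \vec{x}_m \in \Omega$ gives
\begin{align*}
\left\| \mathcal{L}(\vec{x}^{(n)}_m) - \pi \right\| \le \left\| \mathcal{L}(\vec{Z}_n) - p \right\| = T(\vec{z}^*, \gamma^*, n) \longrightarrow 0,
\end{align*}
so each $\vec{x}^{(n)}_m$ is asymptotically distributed according to $\pi$. The only genuinely delicate point is this marginalisation: the ergodicity theorems are phrased for the joint law on the product space, and the per-sample conclusion is recovered only after projecting onto a single coordinate and using that the $m$th marginal of $p$ is $\pi$. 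Everything else is an immediate consequence of the cited results, which is why the statement follows at once once the joint convergence is in hand.
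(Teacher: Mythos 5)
Your proof is correct, and it reaches the conclusion by a slightly different route than the paper. The paper's own argument is a reduction to stationarity: since $p$ is invariant for every kernel $P_\gamma$ (a hypothesis of Theorems \ref{thm:theorem1roberts2007} and \ref{thm:theorem2roberts2007}), a chain started from $p$ satisfies $\vec{Z}_n \sim p$ exactly at every iteration, hence every accepted coordinate is exactly $\pi$-distributed; ergodicity --- which holds uniformly over starting values $(\vec{z},\gamma)$ --- is then invoked only to argue that the asymptotic law cannot depend on the initialisation, so the same conclusion holds for an arbitrary start. You instead keep the initialisation arbitrary and pass to the limit directly: the ergodicity conclusion $T(\vec{z}^*,\gamma^*,n)\rightarrow 0$ gives total-variation convergence of the joint law of $\vec{Z}_n$ to $p$, and you recover the per-coordinate statement from the contraction of total variation under the coordinate projection, combined with the fact that each marginal of $p$ is $\pi$. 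Both arguments rest on the same two ingredients (the cited ergodicity theorems and the marginal structure of $p$), but your version makes explicit the one step that the paper's ``follows immediately'' hides --- the marginalisation --- and it yields the slightly stronger quantitative bound $\left\| \mathcal{L}(\vec{x}^{(n)}_m) - \pi \right\| \le T(\vec{z}^*,\gamma^*,n)$, so each coordinate inherits the joint chain's total-variation rate. The paper's reduction, by contrast, is shorter and gives exactness (not merely asymptotics) under a stationary start.
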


\subsubsection{An adaptive MP-MCMC algorithm}
\label{subsubsec:an_adaptive_mpmcmc_algorithm}

In this section, we consider an adaptive version of the MP-MCMC algorithm,
which allows for iterative updates of the proposal covariance. The
underlying proposal distribution is formulated in a general fashion, 
however ergodicity will be proven for the special case of a Normal
distribution. In that case, the
resulting algorithm can be considered as a generalisation of the adaptive
MCMC algorithm introduced by Haario et al.\ \cite{haario2001adaptive}
allowing for multiple proposals. However, a different covariance
estimator than the standard empirical covariance used in 
\cite{haario2001adaptive} is applied here: the estimate for the proposal
covariance in iteration $n+1$ incorporates information
from all previous proposals of iterations $1,2,..., n\in \mathbb{N}$. The 
proposals are thereby weighted according to the stationary distribution of
the auxiliary variable. Note that weighting proposed states does not
necessarily decrease the asymptotic variance of the resulting mean estimates
(\cite{delmas2009does}), although in many cases it does 
(\cite{frenkel2006waste,ceperley1977monte}). This holds in particular when
the number of proposed states is large, which is why we find the weighting
estimator preferable over the standard empirical covariance estimate.
The resulting method is displayed as Algorithm
\ref{algorithm:adaptive_mp_mcmc}, where we have highlighted the differences compared to Algorithm \ref{algorithm:multiproposal_MH}.


\begin{algorithm}[h]
\SetAlgoLined
\KwIn{Initialise starting point $\vec{x}_0=\vec{y}_1\in\Omega$, number of proposals $N$, number of
accepted samples per iteration $M$, auxiliary 
variable $I=1$, counter $n=1$, initial \hl{mean estimate $\vec{\mu}_1$ and covariance estimate $\Sigma_1$}\;}
\For{\textnormal{each MCMC iteration $\ell=1,2,...$}}{
 Sample $\vec{y}_{\setminus I}$ conditioned on $I$ \hl{and $\Sigma_\ell$}, i.e., draw $N$ 
 new points from the proposal kernel $\kappa_{\text{\hl{$\Sigma_\ell$}}}(\vec{y}_I, \cdot) = p(\vec{y}_{\setminus I}|\vec{y}_I, $\hl{$\Sigma_\ell$}$)$ \;
  Calculate the stationary distribution of $I$ conditioned on $\vec{y}_{1:N+1}$ 
  \hl{and $\Sigma_\ell$}, i.e.\ $\forall$ $i=1,...,N+1$, $p(I=i|\vec{y}_{1:N+1}, $\text{\hl{$\Sigma_\ell$}}$) = \pi(\vec{y}_i)\kappa_{\text{\hl{$\Sigma_\ell$}}}(\vec{y}_{{i}}, \vec{y}_{\setminus{i}}) / \sum_j \pi(\vec{y}_j)\kappa_{\text{\hl{$\Sigma_\ell$}}}(\vec{y}_{{j}}, \vec{y}_{\setminus{j}})$, 
  which can be done in parallel\;
 \For{$m=1,...,M$}{
 Sample new $I$ via the stationary distribution $p(\cdot|\vec{y}_{1:N+1},$\hl{$\Sigma_\ell$}$)$\; 
 Set new sample $\vec{x}_{n+m} = \vec{y}_I$\;
}
Update counter $n=n+M$ \;
\hl{Compute $\tilde{\vec{\mu}}_{\ell+1}=\sum_{i} p(I=i| {{\vec{y}}}_{1:N+1}, \Sigma_\ell)\vec{y}_i$}\;
\hl{Set $\vec{\mu}_{\ell+1} = \vec{\mu}_{\ell} + \frac{1}{\ell+1}\left(\tilde{\vec{\mu}}_{\ell+1} - \vec{\mu}_{\ell}\right)$}\;
\hl{Compute $\tilde{{\Sigma}}_{\ell+1} = \sum_{i}p(I=i| {\vec{y}}_{1:N+1}, {{\Sigma}}_\ell)[{\vec{y}}_i-\vec{\mu}_{\ell+1}][{\vec{y}}_i-\vec{\mu}_{\ell+1}]^T$}\;
\hl{Set $\Sigma_{\ell+1} = \Sigma_{\ell} + \frac{1}{\ell+1}(\tilde{\Sigma}_{\ell+1} - \Sigma_{\ell})$}\;
 }
\caption{Adaptive MP-MCMC \newline 
All code altered compared to original MP-MCMC, Algorithm \ref{algorithm:multiproposal_MH},
is highlighted}
 \label{algorithm:adaptive_mp_mcmc}
\end{algorithm}

\vspace{4mm}
\noindent \textbf{Ergodicity}
\vspace{1.5mm}

\noindent In what follows we prove ergodicity of the underlying adaptive MP-MCMC
method with $\kappa_\gamma = N_\gamma$, i.e.\ the proposal distribution
being normally distributed, based on the sufficient conditions provided by Theorem
\ref{thm:theorem2roberts2007}. We prove three different ergodicity 
results, each based on slightly different requirements. We begin with
the case of when proposals are sampled independently of previous samples.
In all cases we assume the target $\pi$ to be absolutely continuous with respect
to the Lebesgue measure. Further, we say that $\mathcal{Y}$ is bounded if there are
$0<c_1<c_2<\infty$ such that $c_1 I \le \gamma \le c_2I$ for any
$\gamma \in \mathcal{Y}$, where the ``$\le$'' is understood in the usual way
considering matrices: For two matrices $A,B \in \mathbb{R}^{n\times n}$, 
$A\le B$ means that $B-A$ is positive semi-definite.

\begin{theorem}
\label{thm:ergodicity_adap_mpmcmc_independent}
Let us assume that the proposal distribution 
$\kappa_\gamma=N_{\gamma}, \gamma \in \mathcal{Y}$, 
depends on previous samples only through the parameter $\gamma$ but is 
otherwise independent. If $\mathcal{Y}$
is bounded, then the adaptive MP-MCMC method described by 
Algorithm \ref{algorithm:adaptive_mp_mcmc} is ergodic.
\begin{proof}
The proof is based on Theorem \ref{thm:theorem2roberts2007}, and can be
found in Appendix \ref{subsec:proof_ergodicity_adapt_mpmcmc_independent}.
\end{proof}
\end{theorem}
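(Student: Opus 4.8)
The plan is to verify the two hypotheses of Theorem \ref{thm:theorem2roberts2007}, diminishing adaptation and containment, for the chain on the accepted samples $\vec{Z}_n = \vec{x}_{1:M}^{(n)}$ with adaptation parameter $\Gamma_n = \Sigma_n \in \mathcal{Y}$. By construction each kernel $P_\gamma$ leaves the joint target $p$ invariant, since the stationary selection probabilities \eqref{eq:barker_acceptance} satisfy the balance condition \eqref{eq:balance_condition_mpmcmc} for every fixed $\gamma$; thus the standing assumption of the theorem holds, and it remains to control the adaptation and the ergodicity uniformly over $\gamma$.

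First I would establish diminishing adaptation. The update $\Sigma_{\ell+1} = \Sigma_\ell + \frac{1}{\ell+1}(\tilde\Sigma_{\ell+1} - \Sigma_\ell)$ is a stochastic-approximation recursion with step size $1/(\ell+1)\to 0$. Since the iterates stay in the bounded set $\mathcal{Y}$, the weighted empirical covariance $\tilde\Sigma_{\ell+1}$ and $\Sigma_\ell$ both lie in a fixed bounded set, so $\|\Sigma_{\ell+1}-\Sigma_\ell\| \le \frac{1}{\ell+1}\|\tilde\Sigma_{\ell+1}-\Sigma_\ell\| \to 0$. It then suffices to show that $\gamma \mapsto P_\gamma$ is Lipschitz in total variation, uniformly in the current state, so that $D_n = \sup_{\vec{z}} \|P_{\Gamma_{n+1}}(\vec{z},\cdot) - P_{\Gamma_n}(\vec{z},\cdot)\| \to 0$. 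For this I would use the explicit kernel \eqref{eq:mp_mcmc_transition_kernel_general_case1a}, which is assembled from the Gaussian densities $N_\gamma$ and the selection weights \eqref{eq:barker_acceptance}; on the compact region $c_1 I \le \gamma \le c_2 I$ the eigenvalues of $\gamma$ are bounded away from $0$ and $\infty$, so a standard total-variation estimate between Gaussians with nearby covariances, propagated through the finite-state weights, gives $\|P_{\gamma'}(\vec{z},\cdot) - P_{\gamma}(\vec{z},\cdot)\| \le L\,\|\gamma'-\gamma\|$ with $L$ independent of $\vec{z}$.

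Next I would obtain containment. By the remark following Theorem \ref{thm:theorem2roberts2007}, it is enough to verify simultaneous geometric ergodicity, which by \cite{bai2011containment} implies containment. Because the proposal is an independence sampler, $\kappa_\gamma$ does not depend on the current point, so the selection weights collapse to importance weights $\pi(\vec{y}_j)/N_\gamma(\vec{y}_j)$, and over the bounded $\mathcal{Y}$ the densities $N_\gamma$ are bounded above and below by fixed Gaussians on every compact set. This uniform positivity yields a minorisation $P_\gamma(\vec{x},\cdot) \ge \delta\,\nu(\cdot)$ on any compact small set $C$, with $\delta,\nu$ independent of $\gamma$. I would then establish a geometric drift condition $P_\gamma V \le \lambda V + b\,\mathbf{1}_C$, with $\lambda<1$ and $b<\infty$ uniform in $\gamma$, for a Lyapunov function $V$ matched to the tails of $\pi$; the independence structure renders the one-step expectation of $V$ explicit in terms of the importance weights. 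Minorisation and drift together give simultaneous geometric ergodicity, hence containment, and Theorem \ref{thm:theorem2roberts2007} then yields $T(\vec{z},\gamma,n)\to 0$.

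The main obstacle is the uniform drift condition. For an independence sampler the return rate is dictated by the weights $\pi/N_\gamma$, so the delicate step is choosing $V$ and bounding its one-step expectation uniformly over $\gamma \in \mathcal{Y}$; this is precisely where the boundedness of $\mathcal{Y}$ and the tail behaviour of $\pi$ relative to the Gaussian proposal enter, and if $\pi$ has heavier-than-Gaussian tails one would fall back on the simultaneous polynomial ergodicity route of \cite{bai2011containment}. By comparison, diminishing adaptation is routine once the Lipschitz dependence of $P_\gamma$ on $\gamma$ has been recorded.
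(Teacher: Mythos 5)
Your verification of diminishing adaptation coincides with the paper's: the recursion for $\Sigma_\ell$ has step size $1/(\ell+1)$, the iterates stay in the bounded set $\mathcal{Y}$, and a total-variation Lipschitz bound for Gaussian kernels with covariances pinched between $c_1 I$ and $c_2 I$ gives $D_n \le \operatorname{const}/n \to 0$; the invariance of $p$ under each $P_\gamma$ is also correctly noted. The problem is your containment argument, and it is a genuine gap rather than a matter of detail. You propose to prove simultaneous geometric ergodicity via a minorisation on a \emph{compact} small set together with a uniform drift condition $P_\gamma V \le \lambda V + b\,\mathbbm{1}_C$, but you never exhibit $V$ nor prove the drift bound; you yourself label it ``the main obstacle'' and hedge toward polynomial ergodicity if $\pi$ has heavy tails. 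This route cannot be completed under the hypotheses of the theorem: for an independence sampler the return dynamics are governed by the importance ratio $\pi/N_\gamma$, and when this ratio is unbounded --- which the theorem does not exclude, since it assumes nothing about the tails of $\pi$ --- even a single independence-type Metropolis kernel fails to be geometrically ergodic, so no choice of $V$ can deliver a geometric drift bound, let alone one uniform over $\gamma \in \mathcal{Y}$.

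The paper closes containment by a much shorter argument that your plan misses: in the S.S.A.G.E.\ criterion (Definition \ref{def:ssage}, from \cite{roberts2007coupling} and \cite{bai2011containment}) one may take the small set to be the \emph{entire} space, $C=\Omega^M$, and the constant Lyapunov function $V\equiv 1$. Then the drift inequality is vacuous (it reads $1 \le \lambda + b$, satisfied with $\lambda = 1/2$, $b=1$), and the minorisation holds with $\delta=1$ and $\nu_\gamma = P_\gamma$, precisely because with an independence proposal the transition kernel is regarded as not depending on the current state. In other words, the independence assumption is exploited to make the small set global and the Lyapunov function constant, eliminating any drift computation and any tail condition on $\pi$; your proposal instead spends the independence assumption only on simplifying the selection weights, and then runs into exactly the drift problem that the paper's construction is designed to bypass.
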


The following result allows for adapting the mean value of the Normal
distribution in addition to its covariance adaptively. The mean value
is estimated via weighted proposals, as defined in equation 
\eqref{eq:def_importance_sampling_estimator}.

 
\begin{corollary}
\label{cor:ergodicity_adapt_mpmcmc_independent_mean}
Let us assume that the proposal distribution 
$\kappa_\gamma= N_{\gamma}, \gamma=(\vec{\mu}, \Sigma) \in \mathcal{Y}$ 
depends on previous samples only through the parameter $\gamma$ but is 
otherwise independent.
If $\mathcal{Y}$ is bounded, i.e.\ both mean and covariance estimates
are bounded, then the adaptive MP-MCMC method described by Algorithm
\ref{algorithm:adaptive_mp_mcmc} is ergodic.
\begin{proof}
The containment condition follows analogously to the proof for Theorem
\ref{thm:ergodicity_adap_mpmcmc_independent}. The proof of diminishing 
adaptation requires integration of $N_{\gamma_n+s(\gamma_{n+1}-\gamma_n)}$, 
where $\gamma_n = (\vec{\mu}_n, \Sigma_n)$, similar to equation 
\eqref{eq:upper_bound_dim_adapt}. An estimate similar to 
\eqref{eq:exp_term_estimate_dim_adapt2}
follows, where the bound is given by additional constant terms
multiplied by either $\|\vec{\mu}_{n+1}- \vec{\mu}_n\|$ or $\|\Sigma_{n+1}- \Sigma_n\|$.
Both terms can be bounded by a constant multiplied by $1/n\rightarrow 0$ for $n\rightarrow \infty$, which concludes the proof.
\end{proof}
\end{corollary}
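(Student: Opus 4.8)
The plan is to invoke Theorem \ref{thm:theorem2roberts2007}, so the whole argument reduces to verifying its two hypotheses for the chain on the accepted samples $\vec{Z}_n=\vec{x}_{1:M}^{(n)}$, whose transition kernel is now indexed by the pair $\gamma=(\vec{\mu},\Sigma)$ rather than by $\Sigma$ alone. Concretely I would show (i) containment and (ii) diminishing adaptation, $D_n=\sup_{\vec{z}}\|P_{\Gamma_{n+1}}(\vec{z},\cdot)-P_{\Gamma_n}(\vec{z},\cdot)\|\to 0$, and then appeal directly to the theorem to conclude ergodicity.

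For containment I would argue that nothing in the proof of Theorem \ref{thm:ergodicity_adap_mpmcmc_independent} uses the fact that only $\Sigma$ is adapted: since $\mathcal{Y}$ is bounded, both $c_1 I\le\Sigma\le c_2 I$ and $\|\vec{\mu}\|$ bounded, the Normal proposal densities $N_\gamma$ are uniformly bounded above and below on every compact region, independently of $\gamma\in\mathcal{Y}$. This yields the same minorisation/simultaneous uniform ergodicity as before, from which containment follows exactly as in Theorem \ref{thm:ergodicity_adap_mpmcmc_independent} (alternatively via the implication simultaneous geometric ergodicity $\Rightarrow$ containment of \cite{bai2011containment}). A bounded, adaptively chosen mean merely translates the proposal and preserves these uniform two-sided bounds, so the argument transfers verbatim.

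The substantive work is diminishing adaptation. Here I would use the explicit kernel \eqref{eq:mp_mcmc_transition_kernel_general_case1a} assembled from \eqref{eq:transition_kernel_set_mp_mcmc1}, noting that the dependence on $\gamma$ enters only through the proposal factor $\kappa_\gamma=N_\gamma$, both inside the integral over $\vec{y}_{\setminus i_0}$ and inside the acceptance weights $A(i,j\mid\vec{y})$ of \eqref{eq:barker_acceptance}, which are ratios lying in $[0,1]$. I would bound the total-variation difference of the two kernels by the $L^1$-difference of the proposal densities and write $N_{\gamma_{n+1}}-N_{\gamma_n}=\int_0^1 \nabla_\gamma N_{\gamma_n+s(\gamma_{n+1}-\gamma_n)}\cdot(\gamma_{n+1}-\gamma_n)\,\mathrm{d}s$ along the segment joining the two parameters. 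Boundedness of $\mathcal{Y}$ makes the $\gamma$-gradient of the Normal density — partials in $\vec{\mu}$ and in the entries of $\Sigma$ — uniformly integrable in $\vec{y}$ with a constant depending only on $c_1,c_2$, which gives, uniformly in $\vec{z}$, an estimate of the form $D_n\le C\big(\|\vec{\mu}_{n+1}-\vec{\mu}_n\|+\|\Sigma_{n+1}-\Sigma_n\|\big)$; this is precisely the analogue of the bound in the proof of Theorem \ref{thm:ergodicity_adap_mpmcmc_independent}, now carrying the extra mean term. Finally I would feed in the update rules of Algorithm \ref{algorithm:adaptive_mp_mcmc}, $\vec{\mu}_{n+1}-\vec{\mu}_n=\tfrac{1}{n+1}(\tilde{\vec{\mu}}_{n+1}-\vec{\mu}_n)$ and $\Sigma_{n+1}-\Sigma_n=\tfrac{1}{n+1}(\tilde{\Sigma}_{n+1}-\Sigma_n)$; since $\tilde{\vec{\mu}}_{n+1}$ is a stationary-probability-weighted convex combination of the proposals and $\tilde{\Sigma}_{n+1}$ the corresponding weighted covariance, the boundedness of the estimates forces $\|\tilde{\vec{\mu}}_{n+1}-\vec{\mu}_n\|$ and $\|\tilde{\Sigma}_{n+1}-\Sigma_n\|$ to be bounded by a constant, so $D_n\le C''/(n+1)\to 0$ almost surely, hence in probability. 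With both conditions of Theorem \ref{thm:theorem2roberts2007} in place, ergodicity follows.

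The main obstacle I anticipate is the uniform-in-$\vec{z}$ control of $\|P_{\Gamma_{n+1}}(\vec{z},\cdot)-P_{\Gamma_n}(\vec{z},\cdot)\|$ by the parameter increments: because $N_\gamma$ appears simultaneously in the proposal integral and nonlinearly in the acceptance weights of \eqref{eq:barker_acceptance}, the $\gamma$-dependence must be differentiated through a quotient and a product of factors and then bounded uniformly over the state. The boundedness of $\mathcal{Y}$ is exactly what rescues this step, since it renders the Normal density and its mean- and covariance-derivatives uniformly bounded and integrable, so the segment-integration estimate is finite and uniform; extending the covariance-only computation of Theorem \ref{thm:ergodicity_adap_mpmcmc_independent} to include the mean-derivatives is the one genuinely new ingredient.
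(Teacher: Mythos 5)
Your proof follows the paper's argument essentially verbatim: containment is inherited from the S.S.A.G.E.\ argument of Theorem \ref{thm:ergodicity_adap_mpmcmc_independent} (unaffected by a bounded adaptive mean), and diminishing adaptation is obtained by integrating the Gaussian density along the parameter segment joining $\gamma_n$ and $\gamma_{n+1}$, yielding a bound of the form $C\left(\|\vec{\mu}_{n+1}-\vec{\mu}_n\| + \|\Sigma_{n+1}-\Sigma_n\|\right)$, which the update rules of Algorithm \ref{algorithm:adaptive_mp_mcmc} make $O(1/n)$. Your explicit attention to the $\gamma$-dependence of the Barker weights \eqref{eq:barker_acceptance} is, if anything, slightly more careful than the paper's own computation, which bounds the acceptance products by $(N+1)^M$ and differentiates only the proposal factor.
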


Now, we consider the case where we assume that the target $\pi$ has bounded 
support, i.e.\ there is $S\subset \mathbb{R}^d$ with $\lambda(S)<\infty$
such that $\pi(\vec{z})=0$ for any $\vec{z} \not\in S$. The dependence of
the proposal distribution on previous samples is not restricted anymore only to the
adaptation parameters.

\begin{theorem}
\label{thm:ergodicity_adapt_mpmcmc_bounded}
If $\kappa_\gamma=N_{\gamma}, \gamma \in \mathcal{Y}$, and the target distribution 
$\pi$ has bounded support in $\mathbb{R}^d$ and its density is continuous, then
the adaptive MP-MCMC method described by Algorithm \ref{algorithm:adaptive_mp_mcmc} 
is ergodic.
\begin{proof}
The proof is again based on Theorem \ref{thm:theorem2roberts2007}, and can be
found in Appendix \ref{subsec:ergodicity_adapt_mpmcmc_bounded}.
\end{proof}
\end{theorem}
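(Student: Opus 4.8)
The plan is to invoke Theorem \ref{thm:theorem2roberts2007}, so that it suffices to verify diminishing adaptation together with the containment condition. The structural observation that makes the bounded-support hypothesis useful is that every accepted sample lies in $S$: an index $i$ can only be selected when its weight $p(I=i\mid\vec{y}_{1:N+1},\Sigma_\ell)\propto \pi(\vec{y}_i)\kappa_{\Sigma_\ell}(\vec{y}_i,\vec{y}_{\setminus i})$ is positive, and $\pi(\vec{y}_i)=0$ for $\vec{y}_i\notin S$. Hence the Markov chain $\vec{Z}_n=\vec{x}^{(n)}_{1:M}$ evolves on the bounded set $S^M$, and the weighted estimates $\tilde{\vec{\mu}}_{\ell+1}$ and $\tilde{\Sigma}_{\ell+1}$ in Algorithm \ref{algorithm:adaptive_mp_mcmc} are convex combinations of quantities built only from proposals lying in $S$. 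Consequently $\vec{\mu}_\ell\in\operatorname{conv}(S)$ and $\Sigma_\ell$ is bounded above for all $\ell$, so the visited adaptation parameters remain in a bounded subset of $\mathcal{Y}$.

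First I would establish diminishing adaptation. From the running-average updates in Algorithm \ref{algorithm:adaptive_mp_mcmc} we have $\|\vec{\mu}_{\ell+1}-\vec{\mu}_\ell\|=\frac{1}{\ell+1}\|\tilde{\vec{\mu}}_{\ell+1}-\vec{\mu}_\ell\|$ and $\|\Sigma_{\ell+1}-\Sigma_\ell\|=\frac{1}{\ell+1}\|\tilde{\Sigma}_{\ell+1}-\Sigma_\ell\|$, and by the boundedness just noted both right-hand sides are $O(1/\ell)$ uniformly. Writing $\gamma_\ell=(\vec{\mu}_\ell,\Sigma_\ell)$, it then remains to show that $\gamma\mapsto P_\gamma$ is locally Lipschitz in total variation on the compact parameter range. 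This follows from the explicit form \eqref{eq:mp_mcmc_transition_kernel_general_case1a} of the transition kernel on the accepted samples: its integrand depends on $\gamma$ only through the Gaussian densities $\kappa_\gamma$, which are jointly continuous in their arguments and, on the compact domain $S$, uniformly so. Hence $D_n=\sup_{\vec{z}}\|P_{\gamma_{n+1}}(\vec{z},\cdot)-P_{\gamma_n}(\vec{z},\cdot)\|\le C\|\gamma_{n+1}-\gamma_n\|\to 0$, giving diminishing adaptation.

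For containment I would prove the stronger simultaneous uniform ergodicity and appeal to the implication noted after Theorem \ref{thm:theorem2roberts2007} (via \cite{bai2011containment}, since simultaneous geometric ergodicity implies containment and uniform ergodicity is a uniform-in-$\gamma$ special case). Because the chain lives on the compact set $S^M$ and the Gaussian proposal density is continuous and strictly positive, one can bound $\kappa_\gamma(\vec{x},\vec{y})\ge \varepsilon_0>0$ uniformly over $\vec{x},\vec{y}\in S$ and over the compact range of $\gamma$; inserting this lower bound into \eqref{eq:mp_mcmc_transition_kernel_general_case1a} yields a one-step minorization $P_\gamma(\vec{z},\cdot)\ge\eta\,\nu(\cdot)$ with $\eta>0$ and $\nu$ a fixed probability measure, holding simultaneously for all admissible $\gamma$. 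A uniform minorization on the whole compact state space gives $\|P^{(N)}_\gamma(\vec{z},\cdot)-p(\cdot)\|\le(1-\eta)^N$ with a rate independent of $\gamma$, which is exactly simultaneous uniform ergodicity. This step mirrors the containment argument in the proof of Theorem \ref{thm:ergodicity_adap_mpmcmc_independent}, now using compactness of the support rather than independence of the proposals to control the starting point $\tilde{\vec{x}}_M$ of a random-walk proposal.

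The main obstacle is the uniform positive lower bound $\kappa_\gamma\ge\varepsilon_0$ over all visited $\gamma$: this requires the proposal covariances $\Sigma_\ell$ to stay bounded away from degeneracy, since a vanishing smallest eigenvalue makes the Gaussian concentrate and destroys the uniform lower bound over $S\times S$. Bounded support only controls $\Sigma_\ell$ from above, so the delicate point is to show $\lambda_{\min}(\Sigma_\ell)\ge c_1>0$ uniformly in $\ell$. I would secure this by working on the restricted adaptation space $\mathcal{Y}=\{\,\gamma=(\vec{\mu},\Sigma): c_1 I\le\Sigma\le c_2 I,\ \vec{\mu}\in\operatorname{conv}(S)\,\}$ (equivalently by the customary regularisation of replacing $\Sigma_\ell$ by $\Sigma_\ell+\epsilon I$), which keeps every visited parameter in a compact set on which both the Lipschitz estimate and the minorization hold, thereby completing the verification of the hypotheses of Theorem \ref{thm:theorem2roberts2007}.
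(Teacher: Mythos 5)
Your proof is correct and takes essentially the same route as the paper's: both verify the hypotheses of Theorem \ref{thm:theorem2roberts2007}, getting diminishing adaptation from the $O(1/\ell)$ running-average updates combined with boundedness of the visited parameters, and containment from a simultaneous minorization of $P_\gamma$ on the compact support of $\pi$ (compactness following, as you both use, from boundedness plus continuity of the density).

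Two differences are worth recording. First, the paper routes containment through the S.S.A.G.E.\ condition of \cite{roberts2007coupling} with $C=S$ and $V\equiv 1$, assembling the minorization constant $\delta = c_A^M c_N$ from separate uniform lower bounds on the finite-state transition probabilities $A(i,j)$ (equation \eqref{eq:boundness_finite_chain_transition_probability}) and on the Gaussian proposal density over the compact set; your one-step minorization yielding simultaneous uniform ergodicity is essentially the same estimate, and on a compact state space the two routes coincide, so this is cosmetic. Second --- and this is a point in your favour --- you explicitly isolate the requirement that $\lambda_{\min}(\Sigma_\ell)$ stay bounded away from zero, observing that bounded support only controls $\Sigma_\ell$ from above: the weighted estimates $\tilde{\Sigma}_\ell$ can degenerate when the weights concentrate, and since $\Sigma_L = \frac{1}{L}\Sigma_1 + \frac{1}{L}\sum_{\ell=2}^{L}\tilde{\Sigma}_\ell$, the positive-definite initial contribution decays like $1/L$. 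The paper glosses over this by asserting that boundedness of $\mathcal{Y}$ (in its two-sided sense $c_1 I \le \gamma \le c_2 I$) ``follows from the bounded support assumption,'' which is clear only for the upper bound. Your explicit restriction of the adaptation space, or equivalently the $\Sigma_\ell + \epsilon I$ regularisation, is a legitimate and arguably necessary patch that makes the minorization (and the Lipschitz estimate in your diminishing-adaptation step) rigorous where the paper's argument is slightly elliptical.
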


\begin{corollary}
Under the same conditions as in Theorem \ref{thm:ergodicity_adapt_mpmcmc_bounded},
except that we do not assume continuity, the statement still holds true
if we instead assume that $\pi$ is bounded from above and below on its support, i.e.\
$\exists \ 0<\eta \le \rho < \infty$ such that
\begin{align}
\eta \le \pi(\vec{x}) \le \rho 
\label{eq:boundedness_target_support}
\end{align}
for any $\vec{x}$ in the support of $\pi$.
\begin{proof}
The statement follows similarly to the proof of Theorem 
\ref{thm:ergodicity_adapt_mpmcmc_bounded}, except that the boundedness of
the transition probability on the finite state chain in 
\eqref{eq:boundness_finite_chain_transition_probability}
is ensured by \eqref{eq:boundedness_target_support} instead of by continuity.
\end{proof}
\end{corollary}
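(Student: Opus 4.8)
The plan is to follow the proof of Theorem~\ref{thm:ergodicity_adapt_mpmcmc_bounded} essentially line by line and to isolate the single place where continuity of $\pi$ was invoked, replacing it by the two-sided bound \eqref{eq:boundedness_target_support}. As in that proof, ergodicity is obtained by verifying the hypotheses of Theorem~\ref{thm:theorem2roberts2007}, namely diminishing adaptation together with the containment condition, so it suffices to re-examine each of these under the weaker regularity assumption.

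First I would observe that diminishing adaptation is untouched by the change of hypothesis: the updates of $\vec{\mu}_{\ell}$ and $\Sigma_{\ell}$ in Algorithm~\ref{algorithm:adaptive_mp_mcmc} are scaled by $1/(\ell+1)$, so that $\|\gamma_{n+1}-\gamma_n\|=\mathcal{O}(1/n)$, and the corresponding bound on $D_n$ derived in the proof of Theorem~\ref{thm:ergodicity_adapt_mpmcmc_bounded} depends only on $\mathcal{Y}$ being bounded and on the Gaussian form of $\kappa_\gamma$, not on any regularity of $\pi$. Hence $D_n\to 0$ carries over verbatim, with no reference to continuity.

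Next I would treat containment, which in the original proof is secured through a simultaneous uniform ergodicity (minorisation) argument; the only ingredient requiring continuity was the uniform positive lower bound \eqref{eq:boundness_finite_chain_transition_probability} on the finite-state transition probabilities \eqref{eq:barker_acceptance}. Condition \eqref{eq:boundedness_target_support} supplies this bound directly and more robustly. Indeed, whenever proposals $\vec{y}_j,\vec{y}_k$ lie in the support $S$ of $\pi$, the bound $\eta\le \pi\le \rho$ pins every ratio $\pi(\vec{y}_j)/\pi(\vec{y}_k)$ in the interval $[\eta/\rho,\rho/\eta]$, so that
\begin{align*}
p(I=j\mid \vec{y}_{1:N+1},\gamma)=\frac{\pi(\vec{y}_j)\kappa_\gamma(\vec{y}_j,\vec{y}_{\setminus j})}{\sum_{k}\pi(\vec{y}_k)\kappa_\gamma(\vec{y}_k,\vec{y}_{\setminus k})}
\end{align*}
is bounded below uniformly once the Gaussian factors are controlled. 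Because $S$ is bounded and $\mathcal{Y}$ is bounded (so $c_1 I\le\gamma\le c_2 I$), the densities $\kappa_\gamma(\cdot,\cdot)$ are trapped between a positive minimum and a finite maximum on $S\times S$, uniformly in $\gamma$; combining this with the $\pi$-ratio bound reproduces \eqref{eq:boundness_finite_chain_transition_probability} with continuity nowhere used. The resulting uniform minorisation yields simultaneous uniform ergodicity, hence containment, exactly as before (alternatively via \cite{bai2011containment}). With diminishing adaptation and containment both in hand, Theorem~\ref{thm:theorem2roberts2007} delivers ergodicity.

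The main obstacle I anticipate is checking that the lower bound on \eqref{eq:barker_acceptance} is genuinely \emph{uniform} over all proposal configurations $\vec{y}_{1:N+1}$ and all $\gamma\in\mathcal{Y}$ simultaneously, rather than merely pointwise; this is precisely where the joint use of bounded support and bounded $\mathcal{Y}$ is essential, since it is the compactness of $S\times S$ and of the closure of $\mathcal{Y}$ that converts the continuous Gaussian factors into uniform two-sided bounds. Once this uniformity is established, no further use of continuity of $\pi$ is required, and the remainder of the argument is identical to that of Theorem~\ref{thm:ergodicity_adapt_mpmcmc_bounded}.
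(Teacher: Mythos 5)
Your proposal is correct and follows essentially the same route as the paper: the paper's own proof simply notes that the lower bound \eqref{eq:boundness_finite_chain_transition_probability} on the finite-state transition probabilities, previously obtained from continuity and compactness, is now supplied directly by \eqref{eq:boundedness_target_support}, with the rest of the argument (diminishing adaptation and the S.S.A.G.E.\ minorisation for containment) carried over unchanged. Your additional checks --- that diminishing adaptation never used continuity of $\pi$, and that the Gaussian factors are uniformly bounded on the compact support for bounded $\mathcal{Y}$ --- are exactly the details the paper leaves implicit.
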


In the following case, we waive both the independence as well as the
bounded support condition, however we require a few other assumptions
to show ergodicity in Theorem \ref{thm:ergodicity_adaptive_mpmcmc_positive},
among which is the positivity of the target distribution. Therefore, 
Theorem \ref{thm:ergodicity_adapt_mpmcmc_bounded} and Theorem
\ref{thm:ergodicity_adaptive_mpmcmc_positive} can be considered as
complementing each other.
The containment condition is generally hard to prove in practise
wihtout further assumptions, and for the case of unbounded support 
of a dependent proposal kernel we have not managed to achieve such
a direct proof. Hence, we turned to \cite{craiu2015stability} which states
sufficient and practically verifiable assumptions under which the
rather technical containment condition holds true.
More precisely, it
is assumed that the underlying Markov chain can only jump within a finite
distance of any current sample. Furthermore, the transition
kernel is only adapted within a compact region of the state space. Outside
of this region, a fixed proposal kernel is used, which defines a chain that
converges to the correct stationary distribution. In what follows,
$K_D$ is the set of all states within an Euclidean distance $D$ from $K$
for any bounded set $K \subset \Omega^M$.

\begin{assumption}[Bounded jump condition]
\label{assumption:bounded_jump_condition}
Assume that there is a $D< \infty$ such that
\begin{align}
P_\gamma(\tilde{\vec{z}}, \{ \vec{z}\in \Omega^M: 
\|\vec{z} - \tilde{\vec{z}} \| \le D
\}) = 1 \quad \forall \ \tilde{\vec{z}}\in \Omega^M, \gamma \in \mathcal{Y}.
\label{eq:bounded_jump_condition}
\end{align}
\end{assumption}

\begin{assumption}[Non-adaptive kernel condition]
\label{assumption:adaptation_within_compact}
Assume that there is a bounded $K\subset \Omega^M$ such that
\begin{align}
P_\gamma(\vec{z}, B) = P(\vec{z}, B) \quad \forall \ B\in \mathcal{B}(\Omega^M), 
\vec{z}\in \Omega^M\setminus{K},
\label{eq:no_adaptation_outside_compact}
\end{align}
for some fixed transition kernel $P$ defining a chain that converges to
the correct stationary distribution $p$ on $\Omega^M$ in total variation
for any initial point. Further, it is assumed that
\begin{align}
\exists \ M<\infty \quad \text{such that} \quad P(\tilde{\vec{z}}, \mathrm{d}\vec{z})
\le M \lambda(\mathrm{d}\vec{z}),
\label{eq:boundedness_above_transition_kernel}
\end{align}
for any $\tilde{\vec{z}}\in K_D\setminus{K}$ and any 
$\vec{z} \in K_{2D} \setminus{K_D}$, where $D$ is as in 
Assumption \ref{assumption:bounded_jump_condition}. Moreover, there are 
$\varepsilon, \delta >0$ such that
\begin{align}
P(\tilde{\vec{z}}, \mathrm{d}\vec{z}) \ge \varepsilon \lambda(\mathrm{d}\vec{z})
\label{eq:boundedness_below_transition_kernel}
\end{align}
for any $\tilde{\vec{z}}, \vec{z}$ with 
$\| \tilde{\vec{z}} - \vec{z} \|< \delta$ 
in some bounded rectangle contained in $\Omega^M$ and
that contains $K_{2D}\setminus{K_D}$.
\end{assumption}

We remark that the conditions from equation \eqref{eq:bounded_jump_condition} 
and \eqref{eq:no_adaptation_outside_compact} are easily
enforced upon Algoritm \ref{algorithm:adaptive_mp_mcmc} by making
the following changes to the algorithm: if 
a proposal is generated that does not satisfy the first equation
simply remove the proposal and sample a new proposal and repeat until 
it the condition satisfied. In order to ensure the second
assumption, set the proposal distribution to a fixed Gaussian
distribution outside of the set $K$. One easily verifies that
the remaining conditions \eqref{eq:boundedness_above_transition_kernel}
and \eqref{eq:boundedness_below_transition_kernel} are then
also satisfied.

\begin{theorem}[Ergodicity of adaptive MP-MCMC]
\label{thm:ergodicity_adaptive_mpmcmc_positive}
Let the conditions from Assumption \ref{assumption:bounded_jump_condition}
and  Assumption \ref{assumption:adaptation_within_compact} be satisfied.
Further, let $\pi$ be continuous and positive, $\Omega \subset \mathbb{R}^d$ 
open, and $\mathcal{Y}$ be
bounded. Then, the adaptive MP-MCMC method described by Algorithm
\ref{algorithm:adaptive_mp_mcmc} is ergodic.
\begin{proof}
For a proof, we refer to Appendix \ref{subsec:ergodicity_adapt_mpmcmc_positive}.
\end{proof}
\end{theorem}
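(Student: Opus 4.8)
The plan is to invoke Theorem \ref{thm:theorem2roberts2007}, which reduces the proof of ergodicity to verifying two conditions: diminishing adaptation and containment. Since $\kappa_\gamma = N_\gamma$ and Algorithm \ref{algorithm:adaptive_mp_mcmc} samples the auxiliary variable from the stationary selection distribution \eqref{eq:barker_acceptance}, the balance condition \eqref{eq:balance_condition_mpmcmc} discussed in Section \ref{subsection:consistency} guarantees that each kernel $P_\gamma$ preserves the joint target $p$; hence the standing hypothesis of Theorem \ref{thm:theorem2roberts2007} is met, and it remains only to check its two bullet-point conditions.

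First I would establish diminishing adaptation. The adaptation parameter $\gamma_n = (\vec{\mu}_n, \Sigma_n)$ is updated through the recursions in Algorithm \ref{algorithm:adaptive_mp_mcmc}, whose step sizes are $1/(\ell+1)$; since $\mathcal{Y}$ is bounded the weighted increments $\tilde{\vec{\mu}}_{\ell+1}, \tilde{\Sigma}_{\ell+1}$ stay bounded, so that $\|\gamma_{n+1} - \gamma_n\| = O(1/n)$. For $\vec{z}\in \Omega^M\setminus K$ the kernel is fixed by Assumption \ref{assumption:adaptation_within_compact}, whence $P_{\Gamma_{n+1}}(\vec{z},\cdot) = P_{\Gamma_n}(\vec{z},\cdot)$; for $\vec{z}\in K$ the Gaussian proposal density combined with the finite-state resampling step depends Lipschitz-continuously on $\gamma$ uniformly over the bounded set $\mathcal{Y}$, giving $\| P_{\Gamma_{n+1}}(\vec{z},\cdot) - P_{\Gamma_n}(\vec{z},\cdot)\| \le C\|\gamma_{n+1}-\gamma_n\|$. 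Taking the supremum over $\vec{z}$ yields $D_n = O(1/n)\to 0$, which is stronger than convergence in probability and parallels the diminishing-adaptation estimate behind Corollary \ref{cor:ergodicity_adapt_mpmcmc_independent_mean}.

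The harder task is containment, and here the strategy is to appeal to the sufficient conditions of \cite{craiu2015stability} rather than attempting a direct bound on $M_\epsilon(\vec{Z}_n,\Gamma_n)$. Their stability criterion shows that containment holds for an adaptive chain whose moves have uniformly bounded range and whose adaptation is confined to a compact set, provided the fixed kernel used outside that set drives the chain back toward it and satisfies suitable minorization and boundedness estimates near the boundary. These are exactly the ingredients packaged into Assumption \ref{assumption:bounded_jump_condition} (bounded jumps) and Assumption \ref{assumption:adaptation_within_compact} (adaptation inside $K$, together with the upper bound \eqref{eq:boundedness_above_transition_kernel} and the local minorization \eqref{eq:boundedness_below_transition_kernel}). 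What remains is to check that a continuous, positive $\pi$ on the open set $\Omega$ renders the fixed outer kernel $P$ a $\phi$-irreducible, aperiodic, convergent chain to which their theorem applies: positivity keeps the selection probabilities \eqref{eq:barker_acceptance} bounded away from degeneracy on compact sets, and continuity transfers this to the lower bound required in \eqref{eq:boundedness_below_transition_kernel}. With all hypotheses of the \cite{craiu2015stability} criterion verified, containment follows, and Theorem \ref{thm:theorem2roberts2007} then delivers ergodicity of Algorithm \ref{algorithm:adaptive_mp_mcmc}.

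The principal obstacle throughout is containment. Unlike the bounded-support setting of Theorem \ref{thm:ergodicity_adapt_mpmcmc_bounded}, the unbounded state space here forces reliance on the external stability framework, and the bulk of the technical work lies in massaging the MP-MCMC transition kernel---built from a Gaussian proposal followed by finite-state resampling---into the minorization and boundedness form demanded by \cite{craiu2015stability}, which is precisely what Assumptions \ref{assumption:bounded_jump_condition} and \ref{assumption:adaptation_within_compact} are engineered to supply.
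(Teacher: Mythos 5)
Your proposal follows essentially the same route as the paper's proof: ergodicity via Theorem \ref{thm:theorem2roberts2007}, diminishing adaptation obtained from the $O(1/n)$ increments of the adaptation recursion combined with Lipschitz dependence of the Gaussian transition kernel on $\gamma$ over the bounded set $\mathcal{Y}$, and containment by invoking the stability criterion of \cite{craiu2015stability} under Assumptions \ref{assumption:bounded_jump_condition} and \ref{assumption:adaptation_within_compact}. The only difference is one of detail rather than of approach: where you assert the Lipschitz bound $\| P_{\Gamma_{n+1}}(\vec{z},\cdot) - P_{\Gamma_n}(\vec{z},\cdot)\| \le C\|\gamma_{n+1}-\gamma_n\|$, the paper proves it explicitly by interpolating $A_n(s)=\Sigma_n+s(\Sigma_{n+1}-\Sigma_n)$ and bounding the derivative of the Gaussian density via Jacobi's formula and Schur-complement estimates.
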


\subsubsection{Adaptive MP-MCMC with non-Gaussian proposals}

The question arises what theoretical guarantees hold if the underlying
proposal distribution is not Gaussian, different to the case of Algorithm
\ref{algorithm:adaptive_mp_mcmc}. As before, we may proceed by proving 
diminishing adaptation and containment. According to
\cite{craiu2015stability}, the first condition, which basically says
that the changes in the process become smaller and smaller as time
goes by, is typically simple to achieve by carefully choosing the
proposal distribution and designing what adaptation is used.
As pointed out above, the second condition is hard to prove directly
without further assumptions. However, by raising the further
two conditions \ref{assumption:bounded_jump_condition} and
\ref{assumption:adaptation_within_compact} upon the transition kernel
ensures containment. Both conditions are closely related to the 
choice of the proposal distribution and the design of adaptation, which
both are in the hand of the user. This suggests that for 
algorithms that apply a different adaptation as in Algorithm
\ref{algorithm:adaptive_mp_mcmc} and have non-Gaussian
proposals similar results as in the last section can be achieved.

\section{Pseudo-random MP-MCMC with Importance Sampling}
\label{sec:importance_sampling}

In MP-MCMC, samples are drawn from a Markov chain
defined on all $N+1$ proposals in each iteration. These samples are
in turn typically used to compute some quantity of interest, which can
be expressed as an integral with respect to the target.
The same can be achieved by weighting all proposed states from
each iteration appropriately without any sampling on the finite state 
chain. Before explaining the details of the resulting method, we state
an intuitive motivation for why we should make use of weighting.

\subsection{Introduction and motivation}

We start by arguing that increasing the number of accepted samples per iteration 
while keeping the number of proposals and the number of outer iterations 
constant is typically beneficial in terms of reducing the empirical 
variance of estimates. In order to see this, note that for the variance $\sigma_{n,M,N}^2$ 
of the mean estimator $\hat{\mu}_{n,M,N}$ for $n,M,N\in\mathbb{N}$ it holds that
\begin{align*}
    \sigma_{n,M,N}^2 &= \frac{1}{n}\sum_{i=1}^n \Var 
    \left( F\left( \vec{x}_{1:M}^{(i)} \right) \right)\\
    &\quad\quad+ \frac{2}{n} \sum_{1\le i<j\le n} \Cov
    \left( F \left( \vec{x}_{1:M}^{(i)} \right), 
    F\left( \vec{x}_{1:M}^{(j)} \right) \right),
\end{align*}
where $\vec{x}_{1:M}^{(i)}$ states the accepted set of $M$ samples
in the $i$th iteration, and $F$ is defined as
\begin{align}
	 F(\vec{x}_{1:M}) =  \frac{1}{M} \sum_{m=1}^{M} f(\vec{x}_{m}).
    \label{eq:definition_capital_F}
\end{align}
Further,
\begin{align*}
\Var \left( F \left( \vec{x}_{1:M}^{(i)} \right) \right) = 
\frac{1}{M}\Var\left(f(\vec{x}^{(i)})\right) 
+ \frac{1}{M^2}\sum_{\ell,m=1}^M \Cov\left(f(\vec{x}^{(i)}_\ell), f(\vec{x}^{(i)}_m \right).
\end{align*}
Clearly, the first term decreases with increasing $M$.
For the second term, note that for sufficiently large $M$, the 
relative frequency of accepting a proposal $\vec{y}_\ell^{(i)}$ as
a sample among all 
$N+1$ proposals of the $i$th iteration is approximately equal to 
the stationary distribution $p(I=\ell|\vec{y}_{1:N+1}^{(i)})$. 
Therefore,
\begin{align}
\begin{split}
\frac{1}{M^2}\sum_{\ell,m=1}^M \Cov\left(f(\vec{x}^{(i)}_\ell), f(\vec{x}^{(i)}_m \right)
&\approx
\sum_{\ell,m=1}^{N+1} p(I=\ell|\vec{y}^{(i)}_{1:N+1}) p(I=m|\vec{y}^{(i)}_{1:N+1})  \\
&\quad\quad \cdot  \Cov\left(f(\vec{y}_\ell^{(i)}), f(\vec{y}_m^{(i)})  \right),
\end{split}
\label{eq:motivation_is_cov1}
\end{align}
which does not depend on $M$. Similarly,
\begin{align}
\Cov\left( F\left( \vec{x}_{1:M}^{(i)} \right), F\left( \vec{x}_{1:M}^{(j)} \right) \right) 
&= \frac{1}{M^2} \sum_{\ell,m=1}^M \Cov\left(f(\vec{x}_\ell^{(i)}), f(\vec{x}_m^{(j)})  \right)
\nonumber\\
\begin{split}
&\approx
\sum_{\ell, m =1}^{N+1}p(I=\ell|\vec{y}^{(i)}_{1:N+1}) p(I=m|\vec{y}^{(j)}_{1:N+1})\\
&\quad\quad \cdot \Cov\left(f(\vec{y}_\ell^{(i)}), f(\vec{y}_m^{(j)})  \right),
\end{split}
\label{eq:motivation_is_cov2}
\end{align}
for sufficiently large $M$, which again is independent of $M$. In summary, 
we have
\begin{align*}
\sigma_{M,N,n}^2 \gtrsim \sigma_{M',N,n}^2 \quad\quad \text{for} \quad M'>M.
\end{align*}
An increase of $M$ for a given iteration and increasing proposal numbers 
$N$ has been investigated numerically: for any $N$, we analysed the 
empirical variance of a mean estimate for $M\in\{2^\alpha N: \alpha=0,...,4\}$. 
The case of $\alpha = \infty$ corresponds to MP-MCMC with importance sampling
(IS-MP-MCMC),
described by Algorithm \ref{algorithm:importance_sampling_mp_mcmc}.
The underlying target is a one-dimensional Gaussian distribution, and we set the
proposal sampler to the SmMALA kernel defined in \eqref{eq:smMALA_kernel}. The 
corresponding results are displayed in Figure 
\ref{fig:mp-mcmc_varying_m} (\textit{left}). Indeed, an increase in $M$ 
yields a decrease in variance as expected. At the same time, the magnitude of the reduction in variance also decreases with increasing $M$. The limiting case, which corresponds to accepting all proposals and then suitably weighting them, exhibits the lowest variance. In some sense this contrast the general observation that an increased acceptance rate in MCMC does not necessarily produce more informative estimates.

\begin{figure}[h]
    \centering
    \begin{subfigure}[b]{0.65\textwidth}
        \includegraphics[width=\textwidth]{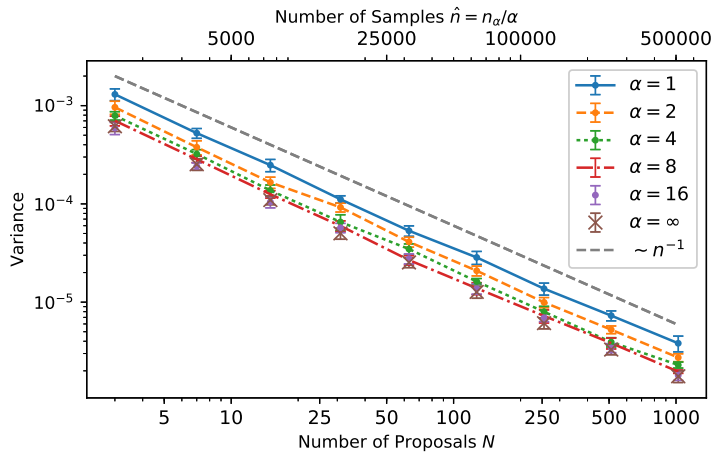}
        \label{a}
    \end{subfigure}
    \caption{Variance convergence of the arithmetic mean from MP-MCMC with Riemannian 
    SmMALA proposals on a one-dimensional standard Normal posterior for increasing proposal numbers 
    $N$ and for changing number of 
    accepted proposals per iteration $M=\const$ ({\it right}). Results based on 
    $10$ MCMC simulations. The error bars corresponds to three times a standard 
    deviation}
    \label{fig:mp-mcmc_varying_m}
\end{figure}


In the limiting case, $M\rightarrow \infty$, the two approximations in
\eqref{eq:motivation_is_cov1}-\eqref{eq:motivation_is_cov2} become equalities,
and sampling from the finite state chain in one iteration 
corresponds in principle to accepting all proposals $\vec{y}_{1:N+1}$ but 
weighting each $\vec{y}_i$ according to $p(I=i|\vec{y}_{1:N+1})$. This can 
be formalised as an importance sampling approach for MCMC with multiple 
proposals. A visualisation of this method is given in Figure
\ref{fig:importance_sampling_visualisation}.
Due to the considerations above, this approach typically
produces a smaller variance than the standard MP-MCMC, 
where $p(I|\vec{y}_{1:N+1})$ is used to sample from the finite state 
chain in every iteration. 

\begin{figure}%
\centering
\begin{tikzpicture}[scale=.3, font=\sffamily, dot/.style = 
					{state, fill=gray!20!white, line width=0.1mm, 
					inner sep=1pt, minimum size=0.5pt, minimum width=0.02cm},
					>=triangle 45]

\draw[rotate=45, black!50, line width=0.1mm] \boundellipse{0,1}{10}{4};
\draw[rotate=45, black!50, line width=0.1mm] \boundellipse{0.3,1.3}{7.5}{3};
\draw[rotate=45, black!50, line width=0.1mm] \boundellipse{0.6,1.6}{5}{2};
\draw[rotate=45, black!50, line width=0.1mm] \boundellipse{0.8,1.8}{2.5}{1};

\node[dot, minimum size=5pt, rectangle, label={[label distance=-7	pt]-80:{\small ${y}^{(k)}_1$}}] at (4.5, -4)     (x1l0) {}; 
\node[dot, minimum size=1.2pt, label={[label distance=-7pt]-60:{\small ${y}^{(k)}_2$}}] at (3.5, -8.0)     (x2l0) {}; 
\node[dot, minimum size=1.2pt, label={[label distance=-7pt]-60:{\small ${y}^{(k)}_3$}}] at (8.5, -2)     (x3l0) {}; 
\node[dot, minimum size=6pt, rectangle, label={[label distance=2pt]0:{\small ${y}^{(k)}_4=y^{(k+1)}_4$}}] at (2, 0)     (x4l0) {}; 
\node[dot, minimum size=4pt, label={[label distance=-5pt]-75:{\small ${y}^{(k)}_5$}}] at (0,-5.5)     (x5l0) {};

\node[dot, minimum size=3pt, label={[label distance=-0pt]0:{\small ${y}^{(k+1)}_1$}}] at (4, 4.5)     (x1l1) {}; 
\node[dot, minimum size=10pt, label={[label distance=-5pt]30:{\small ${y}^{(k+1)}_2$}}] at (-1., 3)     (x2l1) {}; 
\node[dot, minimum size=3.pt, label={[label distance=-5pt]-60:{\small ${y}^{(k+1)}_3$}}] at (-5, -3)     (x3l1) {}; 
\node[dot, minimum size=1.2pt, label={[label distance=-5pt]90:{\small ${y}^{(k+1)}_5$}}] at (-7., -0)     (x5l1) {};

\draw[dashed, red!60, line width=0.25mm] [->]  (x1l0) -- (x2l0) ;
\draw[dashed, red!60, line width=0.25mm] [->]  (x1l0) -- (x3l0) ;    
\draw[ red!60, line width=0.25mm] [->]  (x1l0) -- (x4l0) ;
\draw[dashed, red!60, line width=0.25mm] [->]  (x1l0) -- (x5l0) ;

\draw[dashed, red!60, line width=0.25mm] [->]  (x4l0) -- (x1l1) ;
\draw[dashed, red!60, line width=0.25mm] [->]  (x4l0) -- (x2l1) ;
\draw[dashed, red!60, line width=0.25mm] [->]  (x4l0) -- (x3l1) ;
\draw[dashed, red!60, line width=0.25mm] [->]  (x4l0) -- (x5l1) ;


\end{tikzpicture} 
\caption{\small In IS-MP-MCMC, we associate to every proposal 
${\vec{y}}^{(\ell)}_{i}$ its weight $w^{(\ell)}_i=p(I^{(\ell)}=i|\vec{y}_{1:N+1}^{(\ell)})$, 
thereby prioritising proposals that are most informative about the posterior}
\label{fig:importance_sampling_visualisation}
\end{figure}
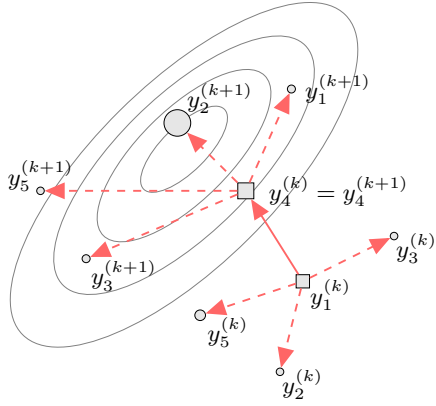

\subsubsection{Waste-Recyling}

Using a different heuristic, \cite{tjelmeland2004using} introduced the importance sampling technique from above, as well as \cite{ceperley1977monte,frenkel2004speed,frenkel2006waste,delmas2009does}.
In some of the literature, e.g.\ \cite{frenkel2006waste}, this technique is referred to as Waste-Recycling due to the fact that every proposal is used, including the ones rejected by MCMC. Compared to standard MP-MCMC, i.e.\ using Barker's acceptance probabilities, and when $M=1$, IS-MP-MCMC has been shown to be superior in terms of asymptotic variance \cite{delmas2009does}. However, \cite{delmas2009does} construct an example for the single proposal case
where importance sampling (Waste-recycling) can perform worse than the Metropolis-Hastings algorithm if Peskun's acceptance probability is employed.

\subsection{Importance sampling MP-MCMC Algorithms}
\label{subsubsec:algorithm_description_is_mpmcmc}

We now present the importance sampling version of MP-MCMC
to estimate the integral 
$\boldsymbol{\mu} = \boldsymbol{\mu}(f) = \int f(\vec{x})\pi(\vec{x})\mathrm{d}\vec{x}$
for a given function $f:\Omega\rightarrow \mathbb{R}^{d'}$ for $d'\in \mathbb{N}$. 
In every
iteration, each proposal $f(\vec{y}_i)$ is weighted according to the stationary
distribution $p(I=i|\vec{y}_{1:N+1})$. The sum of weighted proposals
yields an estimate for the mean $\boldsymbol{\mu}$. The resulting method is 
described by Algorithm \ref{algorithm:importance_sampling_mp_mcmc}.

Note that this defines a Markov chain driven algorithm: in every iteration, 
according to $p(\cdot|{{\vec{y}}}_{1:N+1})$ one sample
from the $N+1$ proposals is drawn (line $6$), conditioned on which then $N$ new 
proposals are drawn in the subsequence iteration. This chain corresponds 
to the standard MP-MCMC with $M=1$. When we mention the underlying Markov 
chain corresponding to importance sampling MP-MCMC, we refer to this chain.

The importance sampling estimator $\boldsymbol{\mu}_L=\boldsymbol{\mu}_L(f)$ 
for $L\in \mathbb{N}$ can also be written as
\begin{align}
\boldsymbol{\mu}_L = \frac{1}{L} \sum_{\ell=1}^L \sum_{i=1}^{N+1} w^{(\ell)}_i f({\vec{y}}_i^{(\ell)}),
\label{eq:def_importance_sampling_estimator}
\end{align}
where $ w^{(\ell)}_i  = p(I=i| {{\vec{y}}}_{1:N+1}^{(\ell)})$ for
$i=1,...,N+1$ and $\ell=1,...,L$.

\begin{algorithm}[h]
\SetAlgoLined
\KwIn{Initialise starting point (proposal) $\vec{y}_1\in \Omega$, number of proposals $N$, auxiliary variable $I=1$, \hl{integrand $f$ and initial mean estimate $\boldsymbol{\mu}_1 = \boldsymbol{\mu}_1(f)$}\; }
\For{\textnormal{each MCMC iteration $\ell=1,2,...$}}{
 Sample ${\vec{y}}_{\setminus I}$ conditioned on $I$, i.e., draw $N$ new points from the proposal kernel $\kappa({\vec{y}}_I, \cdot) = p({\vec{y}}_{\setminus I}|{\vec{y}}_I)$ \;
  Calculate the stationary distribution of $I$ conditioned on ${\vec{y}}_{1:N+1}$, i.e.\ $\forall$ $i=1,...,N+1$, $p(I=i|{{\vec{y}}}_{1:N+1}) = $ $\pi({{\vec{y}}}_i)\kappa({{\vec{y}}}_{{i}}, {{\vec{y}}}_{\setminus{i}}) / \sum_j \pi({{\vec{y}}}_j)\kappa({{\vec{y}}}_{{j}}, {{\vec{y}}}_{\setminus{j}})$, which can be done in parallel\;
\hl{ Compute $\tilde{\boldsymbol{\mu}}_{\ell+1}=\sum_{i} p(I=i| {{\vec{y}}}_{1:N+1})f({{\vec{y}}}_i)$}\;
\hl{Set $\boldsymbol{\mu}_{\ell+1} = \boldsymbol{\mu}_{\ell} + \frac{1}{\ell+1}\left(\tilde{\boldsymbol{\mu}}_{\ell+1} - \boldsymbol{\mu}_{\ell}\right)$}\;
{Sample new $I$ via the stationary distribution $p(\cdot|{{\vec{y}}}_{1:N+1})$}\;
 }
\caption{Importance sampling MP-MCMC \newline
All code altered compared to original MP-MCMC, Algorithm \ref{algorithm:multiproposal_MH},
is highlighted}
 \label{algorithm:importance_sampling_mp_mcmc}
\end{algorithm}

\subsubsection{Lack of samples representing $\pi$}

Despite the amenable properties of the importance sampling approach for
MP-MCMC, compared to the standard MP-MCMC, Algorithm 
\ref{algorithm:importance_sampling_mp_mcmc} has the disadvantage that it 
does not produce samples that are directly informative and approximately distributed according to a target, but rather an approximation of an integral with respect to the target instead.

\subsubsection{Adaptive importance sampling}
\label{subsubsec:algorithm_description_adaptive_IS_mpmcmc}

In many situations, it may make sense to adaptively 
learn the proposal kernel about the target, based on the past history of the algorithm.  We therefore extend Algorithm \ref{algorithm:importance_sampling_mp_mcmc} to make use of importance sampling, which is described in Algorithm \ref{algorithm:adaptive_importance_sampling_mp_mcmc}.
This can be achieved by making use of estimates for global parameters which are 
informative about the target, e.g.\ mean and covariance. Clearly, the Markov property
of the stochastic process resulting from this approach will not hold. This 
is generally
problematic for convergence, and thus for the consistency of the importance sampling
estimate. However, given the usual diminishing adaptation condition,
i.e.\ when the difference in subsequent updates converges to zero, and some further
assumptions on the transition kernel, referring to Section
\ref{subsec:adaptive_mpmcmc}, consistency can be shown. Since in the importance 
sampling method there are no actual samples generated following the target 
distribution asymptotically, but an estimate for an integral over the target, we
understand asymptotic unbiasedness of the resulting estimate when we talk about
consistency (\cite{delmas2009does}).

With the same notation as in Section \ref{subsec:theoretical_results_adaptive_is_mpmcmcm},
we assume that the proposal kernel $\kappa=\kappa_\gamma$ depends on a 
parameter $\gamma$ belonging to some space $\mathcal{Y}$. Examples of 
$\gamma$ are mean and covariance estimates of the posterior distribution.
A proof for asymptotic unbiasedness in the case where $\kappa$ is the Normal
distribution is given in Corollary \ref{cor:asympt_unbias_adapt_ismpmcmc}.
\noindent In the particular case where sampling proposals depends on
previous samples only though the adaptation parameters but is otherwise
independent,
i.e.\ $\kappa_{\gamma}(\vec{x}, \cdot) = \kappa_\gamma(\cdot)$ $\forall \vec{x} \in \Omega$,
we found the use of adaptivity most beneficial in applications for
the Bayesian logistic regression from \ref{adaptive_is_mp_qmcmc_empirical_results} 
and Bayesian linear regression from \ref{adaptive_is_mp_qmcmc_empirical_results},
compared to dependent proposals.

\begin{algorithm}[h]
\SetAlgoLined
\KwIn{Initialise starting point (proposal) $\vec{y}_1$, number of proposals $N$, auxiliary variable $I=1$, integrand $f$, initial mean estimate $\boldsymbol{\mu}_1 = \boldsymbol{\mu}_1(f)$ \hl{and
covariance estimate $\Sigma_1$}\;}
\For{\textnormal{each MCMC iteration $\ell=1,2,...$}}{
Sample ${\vec{y}}_{\setminus I}$ conditioned on $I$ \hl{and $\Sigma_\ell$},
i.e., draw $N$ new points from the proposal kernel $\kappa_{\text{\hl{$\Sigma_\ell$}}}({\vec{y}}_I,\cdot) = p({\vec{y}}_{\setminus I}|{\vec{y}}_I,$\hl{$\Sigma_\ell)$} \;
  Calculate the stationary distribution of $I$ conditioned on ${\vec{y}}_{1:N+1}$ and \hl{$\Sigma_\ell$}, i.e.\ $\forall$ $i=1,...,N+1$, $p(I=i|{{\vec{y}}}_{1:N+1},$\hl{$\Sigma_\ell$})$ = $ $\pi({{\vec{y}}}_i)\kappa_{\text{\hl{$\Sigma_\ell$}}}(
  {\vec{y}}_i, {{\vec{y}}}_{\setminus{i}}) / \sum_j \pi({{\vec{y}}}_j)\kappa_{\text{\hl{$\Sigma_\ell$}}}(
  {\vec{y}}_j,{{\vec{y}}}_{\setminus{j}})$, which can be done in parallel\;
 Compute $\tilde{\boldsymbol{\mu}}_{\ell+1}=\sum_{i} p(I=i| {{\vec{y}}}_{1:N+1}$,
 \text{\hl{$\Sigma_\ell$}}$)f({{\vec{y}}}_i)$\;
Set $\boldsymbol{\mu}_{\ell+1} = \boldsymbol{\mu}_{\ell} + \frac{1}{\ell+1}\left(\tilde{\boldsymbol{\mu}}_{\ell+1} - \boldsymbol{\mu}_{\ell}\right)$\;
  Sample new $I$ via the stationary distribution $p(\cdot|{{\vec{y}}}_{1:N+1},$ \hl{$\Sigma_\ell)$}\;
\hl{Compute $\tilde{{\Sigma}}_{\ell+1} = \sum_{i}p(I=i| {\vec{y}}_{1:N+1}, {{\Sigma}}_\ell)[{\vec{y}}_i-\vec{\mu}_{\ell+1}][{\vec{y}}_i-\vec{\mu}_{\ell+1}]^T$}\;
\hl{ Set ${\Sigma}_{\ell+1} = {\Sigma_\ell} + \frac{1}{\ell+1}(\tilde{{\Sigma}}_{\ell+1} - {\Sigma}_{\ell})$}\;
 }
\caption{Adaptive importance sampling MP-MCMC \newline
All code altered compared to IS-MP-MCMC, Algorithm \ref{algorithm:importance_sampling_mp_mcmc},
is highlighted 
}
 \label{algorithm:adaptive_importance_sampling_mp_mcmc}
\end{algorithm}

\subsection{Asymptotic unbiasedness of IS-MP-MCMC}
\label{subsec:theoretical_results_adaptive_is_mpmcmcm}

In this section, we prove the asymptotic unbiasedness of mean and covariance estimates from Algorithm \ref{algorithm:importance_sampling_mp_mcmc}
and Algorithm \ref{algorithm:adaptive_importance_sampling_mp_mcmc}, respectively. We further refer to an existing result in the literature which
states the asymptotic normality of the IS-MP-MCMC mean estimator.


\begin{Lemma}[Asymptotic unbiasedness of IS-MP-MCMC]
\label{lemma:asymptotic_unbiasedness_is}
Given that the underlying Markov chain is positive Harris,,
the IS-MP-MCMC sequence of estimators $(\boldsymbol{\mu}_L)_{L \ge 1}$ from Algorithm \ref{algorithm:importance_sampling_mp_mcmc} is asymptotically unbiased.
\begin{proof}
The statement is proven in Appendix \ref{proof:lemma:asymptotic_unbiasedness_is}.
\end{proof}
\end{Lemma}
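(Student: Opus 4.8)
The plan is to recognise the running average produced by Algorithm~\ref{algorithm:importance_sampling_mp_mcmc} as an ergodic average of a single function evaluated along the underlying $M=1$ Markov chain, and then to verify that the limit of this average is exactly $\boldsymbol{\mu}$. First I would observe that the recursion $\boldsymbol{\mu}_{\ell+1}=\boldsymbol{\mu}_\ell+\frac{1}{\ell+1}(\tilde{\boldsymbol{\mu}}_{\ell+1}-\boldsymbol{\mu}_\ell)$ telescopes into the arithmetic mean already recorded in \eqref{eq:def_importance_sampling_estimator}, so that
\begin{align*}
\boldsymbol{\mu}_L=\frac{1}{L}\sum_{\ell=1}^L g\big(\vec{y}^{(\ell)}_{1:N+1}\big),\qquad
g(\vec{y}_{1:N+1}):=\sum_{i=1}^{N+1} p(I=i\mid \vec{y}_{1:N+1})\, f(\vec{y}_i).
\end{align*}
The proposal tuples $\vec{y}^{(\ell)}_{1:N+1}$ are a function of the state of the product-space chain $(\vec{y}_{1:N+1},I)$ with $M=1$, which by assumption is positive Harris with invariant law given by \eqref{eq:factorisation_derivation_mpmcmc}; hence $\boldsymbol{\mu}_L$ is a genuine time average of $g$ along this chain.

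Next I would apply the ergodic theorem for positive Harris chains (Theorem~17.1.7 in \cite{meyn2012markov}, already invoked in Section~\ref{subsection:consistency}) to conclude that, provided $g$ is integrable against the stationary marginal $p(\vec{y}_{1:N+1})$,
\begin{align*}
\boldsymbol{\mu}_L \xrightarrow{L\to\infty} \int g(\vec{y}_{1:N+1})\, p(\vec{y}_{1:N+1})\, \mathrm{d}\vec{y}_{1:N+1}\qquad\text{a.s.}
\end{align*}
For continuous bounded $f$ the integrand $g$ is bounded, so this integrability is immediate and the same boundedness will later license passing the limit through an expectation.

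The crux is then to evaluate this limit. Writing the stationary marginal as $p(\vec{y}_{1:N+1})=\frac{1}{N+1}\sum_k \pi(\vec{y}_k)\kappa(\vec{y}_k,\vec{y}_{\setminus k})$ and $p(I=i\mid\vec{y}_{1:N+1})$ as in \eqref{eq:barker_acceptance}, the denominator of the weights cancels the normalising sum appearing in $p(\vec{y}_{1:N+1})$, leaving
\begin{align*}
g(\vec{y}_{1:N+1})\,p(\vec{y}_{1:N+1})
=\frac{1}{N+1}\sum_{i=1}^{N+1}\pi(\vec{y}_i)\,\kappa(\vec{y}_i,\vec{y}_{\setminus i})\,f(\vec{y}_i).
\end{align*}
Integrating term by term and using that $\kappa(\vec{y}_i,\cdot)$ is a probability density (so $\int \kappa(\vec{y}_i,\vec{y}_{\setminus i})\,\mathrm{d}\vec{y}_{\setminus i}=1$), each of the $N+1$ summands reduces to $\int \pi(\vec{y}_i) f(\vec{y}_i)\,\mathrm{d}\vec{y}_i=\boldsymbol{\mu}$, whence the whole integral equals $\boldsymbol{\mu}$. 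This yields $\boldsymbol{\mu}_L\to\boldsymbol{\mu}$ almost surely; asymptotic unbiasedness, $\E[\boldsymbol{\mu}_L]\to\boldsymbol{\mu}$, then follows either by dominated convergence (with bound $\sup|f|$) or, equivalently, by Ces\`aro-averaging the marginal expectations $\E[g(\vec{y}^{(\ell)}_{1:N+1})]$, which converge to $\boldsymbol{\mu}$ by Harris recurrence.

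The routine part is the ergodic average; the only genuinely delicate step is the cancellation identity, where one must keep careful track of the $N+1$ symmetric factorisations and confirm that the weight normalisation exactly annihilates the stationary normalisation, so that the auxiliary proposals $\vec{y}_{\setminus i}$ integrate out to unity. I expect this bookkeeping --- rather than any analytic difficulty --- to be the main thing to get right; a secondary point is justifying the interchange of limit and expectation, which is why the restriction to bounded (continuous) $f$, consistent with the notion of consistency adopted in Section~\ref{subsection:consistency}, is convenient.
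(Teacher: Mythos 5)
Your proof is correct and follows essentially the same route as the paper's: the core of both arguments is the identical cancellation of the weight normalisation $\sum_k \pi(\vec{y}_k)\kappa(\vec{y}_k,\vec{y}_{\setminus k})$ against the stationary marginal $p(\vec{y}_{1:N+1})=\frac{1}{N+1}\sum_k\pi(\vec{y}_k)\kappa(\vec{y}_k,\vec{y}_{\setminus k})$, followed by integrating out the proposal kernel, combined with the ergodic theorem for positive Harris chains. The only (immaterial) difference is ordering: the paper computes $\E_p[\boldsymbol{\mu}_L]=\boldsymbol{\mu}$ exactly under a stationary start and then appeals to ergodicity, whereas you first obtain almost-sure convergence and then pass to expectations by dominated convergence.
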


Lemma \ref{lemma:asymptotic_unbiasedness_is} states that after having discarded a
sufficiently large burn-in period of (weighted) samples, the importance sampling 
estimator defined by the remaining samples is unbiased.

\begin{corollary}[Asymptotic unbiasedness of adaptive IS-MP-MCMC]
\label{cor:asympt_unbias_adapt_ismpmcmc}
Under any of the conditions stated in
Theorem \ref{thm:ergodicity_adap_mpmcmc_independent},
Theorem \ref{thm:ergodicity_adapt_mpmcmc_bounded} or
Theorem \ref{thm:ergodicity_adaptive_mpmcmc_positive}, 
the sequence of estimators 
$(\boldsymbol{\mu}_L)_{L \ge 1}$ from Algorithm
\ref{algorithm:adaptive_importance_sampling_mp_mcmc} is asymptotically unbiased.
\begin{proof}
Ergodicity of the adaptive MP-MCMC follows by the respective theorem used.
Thus, we may argue analogously to the proof of Lemma
\ref{lemma:asymptotic_unbiasedness_is}.
\end{proof}
\end{corollary}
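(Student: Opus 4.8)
The plan is to deduce the claim from the ergodicity of the accepted-sample chain together with a per-iteration unbiasedness identity, mirroring the proof of Lemma~\ref{lemma:asymptotic_unbiasedness_is} but carrying the adaptation parameter through every estimate. First I would observe that the process underlying Algorithm~\ref{algorithm:adaptive_importance_sampling_mp_mcmc} is precisely the adaptive MP-MCMC chain of Algorithm~\ref{algorithm:adaptive_mp_mcmc} run with $M=1$: in each iteration a single index $I^{(\ell)}\sim p(\cdot\,|\,\vec{y}_{1:N+1}^{(\ell)},\Sigma_\ell)$ is drawn and the accepted sample $\vec{x}^{(\ell)}=\vec{y}_{I^{(\ell)}}^{(\ell)}$ seeds the proposals of iteration $\ell+1$. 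Hence whichever of Theorem~\ref{thm:ergodicity_adap_mpmcmc_independent}, Theorem~\ref{thm:ergodicity_adapt_mpmcmc_bounded} or Theorem~\ref{thm:ergodicity_adaptive_mpmcmc_positive} has its hypotheses met applies verbatim and gives ergodicity $T(\vec{z}^*,\gamma^*,\ell)\to 0$, i.e.\ the law of $\vec{x}^{(\ell)}$ converges to $\pi$ in total variation.

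The core ingredient is an importance-sampling identity that holds for \emph{every} fixed parameter value. For $\gamma\in\mathcal{Y}$ let $p_\gamma$ denote the joint law obtained from the factorisation \eqref{eq:factorisation_derivation_mpmcmc} with $\kappa=\kappa_\gamma$, and let $h_\gamma(\vec{x})$ be the conditional expectation of the weighted increment $\sum_{i} p(I=i\,|\,\vec{y}_{1:N+1},\gamma)\,f(\vec{y}_i)$ given that the retained proposal equals $\vec{x}$. Since under $p_\gamma$ the accepted state $\vec{y}_I$ is marginally $\pi$-distributed, the tower property gives
\begin{align*}
\int_\Omega h_\gamma(\vec{x})\,\pi(\vec{x})\,\mathrm{d}\vec{x}
= \E_{p_\gamma}\!\Big[\sum_{i=1}^{N+1} p(I=i\,|\,\vec{y}_{1:N+1},\gamma)\,f(\vec{y}_i)\Big]
= \E_{p_\gamma}\big[f(\vec{y}_I)\big]
= \int_\Omega f(\vec{x})\,\pi(\vec{x})\,\mathrm{d}\vec{x}
= \boldsymbol{\mu},
\end{align*}
uniformly in $\gamma$. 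As $f$ is bounded, $g_\gamma:=h_\gamma-\boldsymbol{\mu}$ satisfies $\int g_\gamma\,\pi = 0$ and $\|g_\gamma\|_\infty\le 2\|f\|_\infty$ with a constant independent of $\gamma$.

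I would then control the bias of the $\ell$-th increment $\tilde{\boldsymbol{\mu}}_\ell=\sum_i p(I=i\,|\,\vec{y}_{1:N+1}^{(\ell)},\Sigma_\ell)f(\vec{y}_i^{(\ell)})$. Conditioning on the history $\mathcal{F}_{\ell-1}$ up to the start of iteration $\ell$ yields $\E[\tilde{\boldsymbol{\mu}}_\ell\,|\,\mathcal{F}_{\ell-1}] = h_{\Sigma_\ell}(\vec{x}^{(\ell-1)})$, so $\E[\tilde{\boldsymbol{\mu}}_\ell]-\boldsymbol{\mu} = \E[g_{\Sigma_\ell}(\vec{x}^{(\ell-1)})]$. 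Uniform boundedness of $g_\gamma$ together with the total-variation convergence of the seed law lets this expectation tend to $0$. Finally, since $\boldsymbol{\mu}_L=\frac{1}{L}\sum_{\ell=1}^L\tilde{\boldsymbol{\mu}}_\ell$ by \eqref{eq:def_importance_sampling_estimator}, I have
\begin{align*}
\E[\boldsymbol{\mu}_L] - \boldsymbol{\mu} = \frac{1}{L}\sum_{\ell=1}^L\big(\E[\tilde{\boldsymbol{\mu}}_\ell]-\boldsymbol{\mu}\big)\longrightarrow 0,
\end{align*}
because the Ces\`aro mean of a null sequence is again null, which is the asserted asymptotic unbiasedness.

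The main obstacle is the convergence $\E[g_{\Sigma_\ell}(\vec{x}^{(\ell-1)})]\to 0$, where the integrand $g_{\Sigma_\ell}$ and the seed $\vec{x}^{(\ell-1)}$ are statistically dependent through the common past, so marginal convergence of $\vec{x}^{(\ell-1)}$ to $\pi$ is not by itself sufficient. The decisive point is that the identity $\int g_\gamma\,\pi=0$ and the bound $\|g_\gamma\|_\infty\le 2\|f\|_\infty$ hold \emph{uniformly} over $\gamma$; combined with diminishing adaptation this is exactly what the coupling argument underlying Theorem~\ref{thm:theorem1roberts2007} and Theorem~\ref{thm:theorem2roberts2007} delivers, freezing the kernel over long windows so that the seed behaves as if $\pi$-distributed irrespective of the slowly varying $\Sigma_\ell$. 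It is precisely this uniform control, made possible by restricting to bounded $f$ as in \eqref{eq:consistency}, that allows the single-kernel argument of Lemma~\ref{lemma:asymptotic_unbiasedness_is} to be transported to the adaptive setting.
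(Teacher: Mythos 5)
Your proposal is correct and takes essentially the same route as the paper: you identify the underlying process of Algorithm~\ref{algorithm:adaptive_importance_sampling_mp_mcmc} as the adaptive MP-MCMC chain with $M=1$, invoke whichever of the three ergodicity theorems applies, and then transport the stationarity identity from the proof of Lemma~\ref{lemma:asymptotic_unbiasedness_is} (your tower-property formulation $\int_\Omega h_\gamma(\vec{x})\,\pi(\vec{x})\,\mathrm{d}\vec{x} = \boldsymbol{\mu}$ is just a repackaging of that computation), finishing with a Ces\`aro average. The only difference is explicitness: the paper's proof is two lines, whereas you additionally flag the genuine subtlety that $\Sigma_\ell$ and $\vec{x}^{(\ell-1)}$ are dependent, so marginal total-variation convergence alone does not give $\E[g_{\Sigma_\ell}(\vec{x}^{(\ell-1)})]\to 0$ --- a point the paper silently glosses over --- and your appeal to the coupling construction behind the theorems of \cite{roberts2007coupling} is the right way to close it, although (like the paper) you assert rather than carry out that final step, which would also require continuity of $\gamma \mapsto h_\gamma$ to swap $\Sigma_\ell$ for a frozen parameter.
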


\begin{corollary}
\label{corollary:asymptotic_unbiasedness_is_covariance}
Under the same conditions as in Lemma \ref{lemma:asymptotic_unbiasedness_is},
the sequence of covariance estimates $({\Sigma}_{L})_{L\ge 1}$ from Algorithm
\ref{algorithm:adaptive_mp_mcmc} and Algorithm
\ref{algorithm:adaptive_importance_sampling_mp_mcmc}
is asymptotically unbiased. 
\begin{proof}
For a proof, we refer to Appendix \ref{proof:corollary:asymptotic_unbiasedness_is_covariance}.
\end{proof}
\end{corollary}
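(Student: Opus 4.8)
The plan is to exploit the structural analogy between the covariance recursion and the mean recursion and to bootstrap off Lemma~\ref{lemma:asymptotic_unbiasedness_is}. First I would unroll the update $\Sigma_{\ell+1}=\Sigma_\ell+\tfrac{1}{\ell+1}(\tilde{\Sigma}_{\ell+1}-\Sigma_\ell)$, equivalently $(\ell+1)\Sigma_{\ell+1}=\ell\Sigma_\ell+\tilde{\Sigma}_{\ell+1}$, into the Ces\`aro form
\begin{align*}
\Sigma_L = \frac{1}{L}\Big(\Sigma_1 + \sum_{\ell=2}^L \tilde{\Sigma}_\ell\Big).
\end{align*}
Since $\Sigma_1/L\to 0$, asymptotic unbiasedness reduces to showing $\frac{1}{L}\sum_{\ell=2}^L\E[\tilde{\Sigma}_\ell]\to\Sigma_\pi$, where $\boldsymbol{\mu}_\pi=\int\vec{x}\,\pi(\vec{x})\mathrm{d}\vec{x}$ and $\Sigma_\pi=\int(\vec{x}-\boldsymbol{\mu}_\pi)(\vec{x}-\boldsymbol{\mu}_\pi)^T\pi(\vec{x})\mathrm{d}\vec{x}$ denotes the true posterior covariance.

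The next step is to separate the effect of estimating the mean. Writing $\vec{m}^{(\ell)}=\sum_i w_i^{(\ell)}\vec{y}_i^{(\ell)}$ for the within-iteration weighted mean and using $\sum_i w_i^{(\ell)}=1$, the cross terms collapse and I obtain the decomposition $\tilde{\Sigma}_{\ell+1}=\tilde{\Sigma}_{\ell+1}^0+R_{\ell+1}$, where
\begin{align*}
\tilde{\Sigma}_{\ell+1}^0 = \sum_i w_i^{(\ell)}[\vec{y}_i^{(\ell)}-\boldsymbol{\mu}_\pi][\vec{y}_i^{(\ell)}-\boldsymbol{\mu}_\pi]^T,
\end{align*}
and, with $\vec{d}_{\ell+1}:=\boldsymbol{\mu}_\pi-\vec{\mu}_{\ell+1}$,
\begin{align*}
R_{\ell+1} = [\vec{m}^{(\ell)}-\boldsymbol{\mu}_\pi]\vec{d}_{\ell+1}^T + \vec{d}_{\ell+1}[\vec{m}^{(\ell)}-\boldsymbol{\mu}_\pi]^T + \vec{d}_{\ell+1}\vec{d}_{\ell+1}^T.
\end{align*}
The Ces\`aro average of the idealised part, $\frac{1}{L}\sum_\ell\tilde{\Sigma}_\ell^0$, is precisely the IS-MP-MCMC estimator $\boldsymbol{\mu}_L(g)$ of \eqref{eq:def_importance_sampling_estimator} applied to the matrix-valued integrand $g(\vec{x})=(\vec{x}-\boldsymbol{\mu}_\pi)(\vec{x}-\boldsymbol{\mu}_\pi)^T$; applying Lemma~\ref{lemma:asymptotic_unbiasedness_is} entrywise (treating $g$ as $\mathbb{R}^{d^2}$-valued) then yields $\E[\frac{1}{L}\sum_\ell\tilde{\Sigma}_\ell^0]\to\int g\,\mathrm{d}\pi=\Sigma_\pi$.

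It then remains to show the averaged remainder vanishes in expectation. Here I would invoke the ergodic theorem for the positive Harris $M=1$ chain underlying Algorithm~\ref{algorithm:importance_sampling_mp_mcmc} to conclude $\vec{\mu}_{\ell+1}\to\boldsymbol{\mu}_\pi$ almost surely and in $L^2$, so that $\E\|\vec{d}_{\ell+1}\|^2\to 0$; combined with a uniform second-moment bound $\sup_\ell\E\|\vec{m}^{(\ell)}-\boldsymbol{\mu}_\pi\|^2<\infty$, the Cauchy--Schwarz inequality gives $\E\|R_{\ell+1}\|\to 0$, whence its Ces\`aro average also tends to zero. I expect this remainder control to be the main obstacle: the delicate point is justifying the interchange of limit and expectation without assuming boundedness, which rests on the $L^2$ convergence of the running mean and on uniform integrability of the within-iteration second moments. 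I would secure these from finiteness of the second moment of $\pi$ together with the integrability hypotheses carried over from Lemma~\ref{lemma:asymptotic_unbiasedness_is}, and they hold automatically in the bounded-support settings of Theorem~\ref{thm:ergodicity_adapt_mpmcmc_bounded}.

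Finally, for the adaptive variants (Algorithm~\ref{algorithm:adaptive_mp_mcmc} and Algorithm~\ref{algorithm:adaptive_importance_sampling_mp_mcmc}) the weights $w_i^{(\ell)}$ additionally depend on $\Sigma_\ell$, so the underlying chain is no longer Markov. There I would simply replace the appeal to positive Harris recurrence by the ergodicity guaranteed by Theorem~\ref{thm:ergodicity_adap_mpmcmc_independent}, Theorem~\ref{thm:ergodicity_adapt_mpmcmc_bounded} or Theorem~\ref{thm:ergodicity_adaptive_mpmcmc_positive}, exactly as in Corollary~\ref{cor:asympt_unbias_adapt_ismpmcmc}; this still delivers the asymptotic $\pi$-distributedness of the accepted samples and the convergence $\vec{\mu}_{\ell+1}\to\boldsymbol{\mu}_\pi$, so the identical decomposition and Ces\`aro argument apply verbatim.
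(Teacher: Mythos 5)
Your proposal is correct and follows the same core strategy as the paper's proof: put the chain in stationarity, unroll $\Sigma_L$ into a Ces\`aro average, expand each $\tilde{\Sigma}_\ell$ around the true mean, recognise the leading term as the IS-MP-MCMC estimator \eqref{eq:def_importance_sampling_estimator} applied (entrywise) to the matrix integrand $(\vec{x}-\boldsymbol{\mu}_\pi)(\vec{x}-\boldsymbol{\mu}_\pi)^T$ so that Lemma \ref{lemma:asymptotic_unbiasedness_is} gives the target covariance, and then show the mean-estimation error terms vanish. The difference lies in the bookkeeping of those error terms. The paper centres every iteration's contribution at the \emph{final} estimate $\boldsymbol{\mu}_L$ (silently replacing the algorithm's running mean $\boldsymbol{\mu}_{\ell+1}$ by $\boldsymbol{\mu}_L$); with that centring and $\sum_i w_i^{(\ell)}=1$, the two cross terms and the quadratic term collapse exactly after Ces\`aro averaging, producing the clean identity $\mathbb{E}_p[({\Sigma}_L)_{j,k}] = \Cov_{\pi}(x_j,x_k) - \Cov_p\left(({\mu}_{L})_j, ({\mu}_{L})_k\right)$, and the proof ends by asserting that the second term vanishes as $\boldsymbol{\mu}_L$ converges to the constant $\boldsymbol{\mu}$. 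You instead keep the per-iteration centring at $\boldsymbol{\mu}_{\ell+1}$ --- which is what Algorithms \ref{algorithm:adaptive_mp_mcmc} and \ref{algorithm:adaptive_importance_sampling_mp_mcmc} actually compute --- so the cross terms no longer telescope exactly, and you control the remainder $R_{\ell+1}$ by Cauchy--Schwarz at the price of explicitly invoking $L^2$ convergence of the running mean and a uniform second-moment bound. That price is not really extra: the paper's final step ($\Cov_p$ of the mean estimator tending to zero from almost-sure convergence) tacitly requires exactly the same second-moment/uniform-integrability hypotheses; it simply does not name them. So your argument is marginally more laborious but more faithful to the stated algorithms and more honest about the moment conditions that both versions of the proof need.
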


The following result states that IS-MP-MCMC produces asymptotically normal estimates,
that outperform standard MP-MCMC (using Barker's acceptance probabilities
\eqref{eq:barker_acceptance}) for $M=1$ in terms of their asymptotic variance. Thus,
making use of all proposals in every iteration is better than accepting only a single 
one per iteration.

\begin{Lemma}[Proposition 4.1, \cite{delmas2009does}]
The IS-MP-MCMC sequence of estimators $(\boldsymbol{\mu}_L)_{L \ge 1}$ from Algorithm \ref{algorithm:importance_sampling_mp_mcmc} is asymptotically normal. Its asymptotic
variance is smaller or equal to the asymptotic variance of MP-MCMC with $M=1$.
\end{Lemma}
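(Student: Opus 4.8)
The plan is to exploit the fact that the estimator \eqref{eq:def_importance_sampling_estimator} is a \emph{Rao--Blackwellisation} of the plain $M=1$ estimator along the \emph{same} underlying Markov chain. Writing $\phi_\ell = f(\vec{y}^{(\ell)}_{I^{(\ell)}})$ for the accepted-sample functional and $\psi_\ell = \sum_i w^{(\ell)}_i f(\vec{y}^{(\ell)}_i)$ for the weighted functional, Barker's weights \eqref{eq:barker_acceptance} give exactly $\psi_\ell = \E[\phi_\ell \mid \vec{y}^{(\ell)}_{1:N+1}]$, the conditional expectation of the accepted value given the proposals. Both $(\phi_\ell)$ and $(\psi_\ell)$ are square-integrable functionals of the positive Harris chain $\Phi_\ell = (\vec{y}^{(\ell)}_{1:N+1}, I^{(\ell)})$, i.e.\ the $M=1$ chain described after Algorithm \ref{algorithm:importance_sampling_mp_mcmc}. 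First I would record that, since Barker's stationary transition matrix satisfies detailed balance (the discussion around \eqref{eq:barker_acceptance}), the induced chain on accepted samples, with kernel I denote $\bar P$, is reversible with respect to $\pi$. Asymptotic normality of $\boldsymbol\mu_L = \tfrac1L\sum_\ell \psi_\ell$ then follows from the central limit theorem for additive functionals of a reversible positive Harris chain (the specialisation of Lemma \ref{lemma:clt_mp_mcmc}, i.e.\ Kipnis--Varadhan) once square-integrability of $\psi$ is noted; consistency is already granted by Lemma \ref{lemma:asymptotic_unbiasedness_is}.

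For the variance comparison I would reduce everything to the autocovariances along $\bar P$. Using the Markov property and that, given $\vec{y}^{(\ell)}_{1:N+1}$, the index $I^{(\ell)}$ is drawn afresh while the future depends on iteration $\ell$ only through $\vec{x}^{(\ell)}=\vec{y}^{(\ell)}_{I^{(\ell)}}$, a short conditioning computation yields, for every lag $k\ge 0$ and with $f_0 = f - \mu$,
\begin{align*}
\Cov(\phi_0,\phi_k) - \Cov(\psi_0,\psi_k) = \E\big[\Cov_I\big(f_0(\vec{y}_I),\, (\bar P^k f_0)(\vec{y}_I)\,\big|\,\vec{y}_{1:N+1}\big)\big] =: c_k,
\end{align*}
where $\Cov_I$ is the covariance over $I\sim p(\cdot\mid\vec{y}_{1:N+1})$ under stationarity and $\bar P^0=I$. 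Summing the asymptotic-variance series gives the compact identity
\begin{align*}
\sigma_S^2 - \sigma_{\mathrm{WR}}^2 = c_0 + 2\sum_{k\ge 1} c_k = \E\big[\Cov_I\big(f_0(\vec{y}_I),\, ((I+\bar P)\hat f)(\vec{y}_I)\,\big|\,\vec{y}_{1:N+1}\big)\big],
\end{align*}
where $\hat f = (I-\bar P)^{-1} f_0$ solves $(I-\bar P)\hat f = f_0$, and $\sigma_S^2,\sigma_{\mathrm{WR}}^2$ denote the asymptotic variances of the plain and weighted estimators. The lag-$0$ contribution $c_0 = \E[\Var_I(f_0(\vec{y}_I)\mid \vec{y}_{1:N+1})]\ge 0$ is the familiar nonnegative Rao--Blackwell gain.

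The main obstacle is that the terms $c_k$ for $k\ge 1$ are expected conditional \emph{covariances} and need not individually have a sign, so the naive Rao--Blackwell intuition is insufficient; this is precisely what distinguishes the asymptotic variance (a full covariance series) from the one-step variance. Establishing that the \emph{aggregate} $c_0 + 2\sum_{k\ge1}c_k$ is nonnegative is where the special symmetry of Barker's weights, equivalently the reversibility of $\bar P$ together with the self-adjointness of $(I+\bar P)(I-\bar P)^{-1}$ on $L^2(\pi)$, becomes indispensable. This is exactly the content of Proposition 4.1 in \cite{delmas2009does}, and the same computation reveals why the inequality can fail for Peskun's acceptance probabilities, matching the counterexample noted above. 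Concretely, I would finish by verifying the hypotheses of that proposition for the present chain --- positive Harris recurrence (assumed in the statement), reversibility of $\bar P$ with respect to $\pi$ (from Barker detailed balance), and $f\in L^2(\pi)$ --- and then invoking it to conclude $\sigma_{\mathrm{WR}}^2 \le \sigma_S^2$.
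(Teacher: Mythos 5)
The paper offers no proof of this lemma at all---it is imported verbatim as Proposition 4.1 of \cite{delmas2009does}---and your argument ultimately rests on the same citation: after a correct Rao--Blackwell reduction (the identity $\psi_\ell = \E[\phi_\ell \mid \vec{y}^{(\ell)}_{1:N+1}]$, the lag-$k$ covariance formula, and the series identity via $(I+\bar P)(I-\bar P)^{-1}$ are all sound), you defer the decisive nonnegativity of the aggregate $c_0 + 2\sum_{k\ge 1}c_k$ to that very proposition. So your proposal is correct and takes essentially the same approach as the paper, merely with the verification of its hypotheses (positive Harris, Barker reversibility, $f \in L^2(\pi)$) made explicit.
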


\subsection{Bayesian logistic regression}
\label{subsubsec:emp_results_adaptive_IS_mp_mcmc}

In what follows we consider the Bayesian logistic regression model as formulated in \cite{girolami2011riemann}. The dependent 
variable $y$ is categorical with binary outcome. The probability of $y$ 
is based on predictor variables defined by the design matrix 
$X \in \mathbb{R}^{n\times d}$, and is given by 
$P(y=1|X,\vec{\theta})=\sigma(X \vec{\theta})$ and 
$P(y=0|X,\theta)=1-\sigma(X \theta)$, where $\sigma$ denotes the logistic 
function. Our goal is to perform inference over the regression parameter 
$\vec{\theta} \in \mathbb{R}^d$, which has the Gaussian prior 
$\pi(\theta) = \mathcal{N}(\vec{0},\alpha \mathbf{I}_d)$, with $\alpha=100$.
For further details, we refer to \cite{girolami2011riemann}. For the above mentioned logistic regression, there are 
overall $5$ different underlying data sets of varying dimensionality at our disposal, which we denote by \textit{Ripley} (d=3), \textit{Pima} (d=8), \textit{Heart} (d=14), \textit{Australian} (d=15) and \textit{German} (d=25). For brevity, we consider only the lowest-dimensional model data in the following experiments. In later experiments where we use a QMC seed to run the above introduced MCMC algorithms, we investigate their performance on all data sets.

\subsubsection{Empirical results}

We now compare the performance of IS-MP-MCMC and adaptive IS-MP-MCMC in the context of the Bayesian logistic regression model introduced above. As a reference we also consider the standard, i.e.\ single proposal, random-walk Metropolis-Hastings algorithm. To ensure fairness the total number of samples $n$ produced by the single and multiple proposal algorithms are equal, i.e.\ $n=LN$ if $L$ denotes the number if iterations and $N$ the number of proposals in the multiple proposal case.

In all algorithms we choose a Gaussian proposal sampler. For the importance sampling methods, proposals are generated independently of previous samples. As an initial proposal mean and covariance a rough estimate of the posterior mean and covariance is employed. The former is used to initialise the Metropolis-Hastings algorithm. In the adaptive algorithm, proposal mean and covariance estimates are iteratively updated after every iteration. The results for the empirical variance associated to the posterior mean estimates in the above mentioned algorithms are displayed in Figure \ref{fig:is_vs_adapt_mpmcmc}. The importance sampling algorithms outperform Metropolis-Hastings by over an order of magnitude. Further, the adaptive algorithm produces slightly better results than the non-adaptive importance sampler, with an average empirical variance reduction of over $20 \%$.

\begin{figure}[h]
    \centering
    \begin{subfigure}[b]{0.65\textwidth}
        \includegraphics[width=\textwidth]{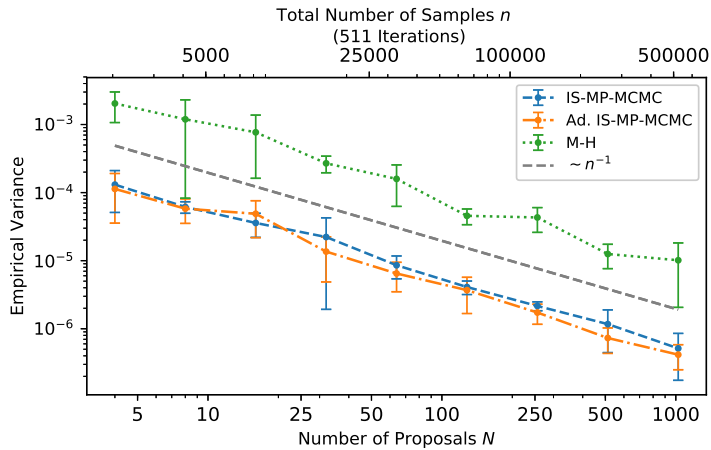}
        \label{c}
    \end{subfigure}
    \caption{Empirical variance of the arithmetic mean for IS-MP-MCMC and adaptive IS-MP-MCMC in the Bayesian logistic regression model
    from \cite{girolami2011riemann} ($d=3$) for increasing proposal numbers 
    $N$ and total number of samples $n$; also displayed is Metropolis-Hastings (M-H) for reference. Here, M-H was tuned
to an approximately optimal acceptance rate of $20$-$25 \%$.
    Results are based on $25$ MCMC 
    simulations. The error bars correspond to three times a standard deviation}
    \label{fig:is_vs_adapt_mpmcmc}
\end{figure}

\section{Combining QMC with MP-MCMC}
\label{sec:multiproposal_quasi_MH}


In this section, we introduce a general framework for using CUD numbers
in the context of MP-MCMC, leading to a method we shall call MP-QMCMC. The motivation for this is that since in each iteration $N$ proposals are provided as alternatives to any current state, all of which contribute to the exploration of the underlying local region of state space, we might reasonably expect a higher gain using QMC points for these than for the single proposal case, where there is always only one alternative to the current state.  We prove the consistency of our proposed method and illustrate in numerical experiments an increased performance.  We also extend our methodology to importance sampling, for which we observe an improved rate of convergence of close to $n^{-2}$ in simulations, instead of the standard MCMC rate of $n^{-1}$ in MSE. Summarising, we generalise Algorithms \ref{algorithm:multiproposal_MH},
\ref{algorithm:importance_sampling_mp_mcmc} and \ref{algorithm:adaptive_importance_sampling_mp_mcmc} to using any choice of
CUD numbers as the driving sequence.

\subsection{MP-QMCMC}

We now introduce the above mentioned MP-QMCMC algorithm, and subsequently prove its consistency based on regularity conditions. Moreover, we investigate the algorithm's performance by simulations based on the Bayesian logistic regression model introduced in Section \ref{subsubsec:emp_results_adaptive_IS_mp_mcmc} and $5$ different underlying data sets. We conclude the section with some practical advice on effectively harvesting the benefits of QMC based MCMC algorithms.

\subsubsection{Algorithm description}
\label{subsubsec:algorithm_description_mpqmcmc}

We exchange the usual driving sequence of pseudo-random numbers by CUD numbers. In every iteration, this comprises two situations: in  the first, proposals are generated, and in the second one, given the transition matrix for the finite state chain on the proposed states, auxiliary variables are generated. In order to create the $N$ new proposals in $\Omega \subset \mathbb{R}^d$ 
we utilise $Nd$ numbers from a CUD sequence in $(0,1)$. Further, to sample from 
the finite state chain $M$ times we utilise another $M$ numbers from the underlying CUD sequence. Our algorithm is designed such that the entire CUD sequence is used, thereby making full use of the spatial homogeneity. A pseudo-code description of the resulting MP-QMCMC is given in Algorithm \ref{algorithm:multiproposal_quasi_MH}, which represents an extension of Algorithm \ref{algorithm:multiproposal_MH} to using any choice of CUD numbers as the driving sequence. 

The function $\Psi_{\vec{y}_I}$
in line three of the algorithm denotes the inverse of the cumulative
distribution function (CDF) of the proposal distribution. Thus, 
given $N$ vectors $\vec{u}_i \in (0,1)^d$, represented by the 
joint vector $\vec{u}=(\vec{u}_1, ..., \vec{u}_N)$, $\Psi_{\vec{y}_I}$
assigns $\vec{u}$ to $N$ new proposals $\vec{y}_{\setminus{I}}\in \Omega^N$.
Practically, each sub-vector $\vec{u}_i$ of $\vec{u}$ is assigned to one
new proposal. For $d>1$, sampling via inversion can be expressed as
iteratively sampling from the one-dimensional conditional proposal
distribution, i.e.\ sampling the first coordinate of the new proposal
via inversion of the conditional CDF of the first coordinate, then 
given that coordinate sampling the second one accordingly and so on.

\begin{algorithm}[H]
\SetAlgoLined
\KwIn{Initialise starting point ${\vec{x}}_0=\vec{y}_1\in\Omega$, number of proposals $N$, 
number of accepted samples per iteration $M$, auxiliary variable $I=1$, counter $n=1$ 
\hl{and number of MCMC iterations $L$}\;}
\hl{Generate a CUD sequence $u_1, ..., u_{L(Nd+M)}\in (0,1)$}\; 
\For{\textnormal{each MCMC iteration $\ell=1,...,$\hl{$L$}  }}{
\hl{Set $\vec{u} = (u_{(\ell-1)(Nd+M)+1}, ..., u_{(\ell-1) (Nd+M)+ Nd}) \in (0,1)^{Nd}$}, and sample $\vec{y}_{\setminus I}$ conditioned on $I$, i.e., draw $N$ new points from $\kappa(\vec{y}_I, \cdot) = p(\vec{y}_{\setminus I}|\vec{y}_I)$ \hl{by the inverse $\Psi_{\vec{y}_I}(\vec{u})$} \;
Calculate the stationary distribution of $I$ conditioned on $\vec{y}_{1:N+1}$,
i.e.\ $\forall$ $i=1,...,N+1$, $p(I=i|\vec{y}_{1:N+1}) = $ $\pi(\vec{y}_i)\kappa(\vec{y}_i,\vec{y}_{\setminus{i}}) / \sum_j \pi(\vec{y}_j)\kappa(\vec{y}_j,\vec{y}_{\setminus{j}})$, which can be done in parallel\;
\hl{Set $\vec{v}' = (u_{(\ell-1) (Nd+M)+Nd+1}, ..., u_{\ell (Nd+M)}) \in (0,1]^M$}\;
  \For{$m=1,...,M$}{
 \hl{If $v'_{m} \in (\gamma_{j -1}, \gamma_{j}]$, where $\gamma_{j} = \sum_{i=1}^j p(I=i| \vec{y}_{1:N+1})$ 
 for $j= 1,...,N+1$ and $\gamma_0:=0$, set $x_{n+m}=\vec{y}_j$}\;
 }
 Update counter $n=n+M$
 }
\caption{MP-QMCMC \newline
All code altered compared to original (pseudo-random) MP-MCMC, Algorithm \ref{algorithm:multiproposal_MH},
is highlighted
}
\label{algorithm:multiproposal_quasi_MH}
\end{algorithm}

\subsubsection{Consistency}

In the following section we prove that MP-MCMC driven by CUD numbers 
instead of pseudo-random numbers produces samples according to the 
correct stationary distribution under regularity conditions. In 
general, using CUD points is not expected to yield consistency for 
any MCMC algorithm that is not ergodic when sampling with IID 
numbers \cite{chen2011consistency}. Similar to Chen et al., our 
proof is based on the so called Rosenblatt-Chentsov transformation.

\vspace{4mm}
\noindent \textbf{Rosenblatt-Chentsov transformation}
\vspace{1.5mm}
 
\noindent Let us assume that there is a generator $\psi_\pi$ that produces 
samples according to the target distribution $\pi$, i.e.\ 
$\psi_\pi(\vec{u})=\vec{x} \sim \pi$ if
$\vec{u}\sim \mathcal{U}[0,1]^d$. For example, $\psi_\pi$ could
be based on the inversion method applied to the one-dimensional conditional
distributions of the target, assumed that they are available.
For $n=LM$ the $N$-proposal Rosenblatt-Chentsov transformation of 
$\vec{u}_0 \in(0,1)^d$ and a finite 
sequence of points $u_1, ..., u_{L(Nd+M)}\in (0,1)$ is defined as 
the finite sequence $\vec{x}_0,\vec{x}_1, ...\vec{x}_{LM}\in \Omega$, where 
$\vec{x}_0 = \psi_\pi(\vec{u}_0)$ and $\vec{x}_1, ...\vec{x}_{LM}$ are generated
according to Algorithm \ref{algorithm:multiproposal_quasi_MH} using 
$u_1, ..., u_{L(Nd+M)}\in (0,1)$ as driving sequence and $\vec{x}_0$ as initial 
point. 

Since the standard version of
MP-MCMC fulfills the detailed balance condition, updating samples preserves
the underlying stationary distribution $\pi$. Thus, whenever one sample 
follows $\pi$, all successive samples follow $\pi$. That means, whenever 
$\vec{x}_0\sim\pi$, all points generated by MP-MCMC follow $\pi$. If the sequence 
of points $u_1, ..., u_{L(Nd+M)}$ in the $N$-proposal Rosenblatt-Chentsov transformation 
are uniformly distributed, then this holds for the samples generated by
Algorithm \ref{algorithm:multiproposal_quasi_MH}, too. This observation will be used
in the following to show the consistency of MP-QMCMC. Before that, we formulate
some regularity condition that will be used in the proof.

\vspace{4mm}
\noindent \textbf{Regularity conditions}
\vspace{1.5mm}

\noindent Similarly to \cite{chen2011consistency}, the consistency proof 
which is 
given below relies on two regularity conditions. The first one defines 
coupling properties of the sampling method, and the second one suitable 
integrability over the sample space.

\vspace{2mm}

\noindent \textbf{1) Coupling}: Let
$\phi(\tilde{\vec{x}}_M, (u_1, ..., u_{Nd+M})) = (\vec{x}_1,...,\vec{x}_M) $ denote the
innovation operator of MP-MCMC, which assigns to the last sample $\tilde{\vec{x}}_M$ 
of the current iteration the $M$ new samples from the subsequent iteration.
Let $\mathcal{C}\subset (0,1)^{Nd+M}$ have positive Jordan measure. If for any
$\vec{u}\in \mathcal{C}$ it holds 
$\phi(\vec{x}, \vec{u}) = \phi(\vec{x}', \vec{u})$ $\forall$ 
$\vec{x},\vec{x}' \in \Omega$, then $\mathcal{C}$ is called a 
\textit{coupling region}. \\
Let $\vec{z}_i = \phi(\vec{x}, \vec{u}_i)$ and 
$\vec{z}_i' = \phi(\vec{x}', \vec{u}_i)$ be two iterations from
Algorithm \ref{algorithm:multiproposal_quasi_MH} based 
on the same innovations $\vec{u}_i \in (0,1)^{Nd+M}$ but possibly 
different current states $\vec{x}$ and $\vec{x}'$, respectively.. 
If $\vec{u}_i\in \mathcal{C}$, then $\vec{z}_j = \vec{z}_j'$ for any $j\ge i$. 
In other words, if 
$\vec{u}_i \in \mathcal{C}$, two chains with the same innovation operator
but potentially different starting points coincide for all $j \ge i$.
As a non-trivial example, 
standard MP-MCMC with independent proposal sampler has a coupling 
see Lemma \ref{lemma:coupling_region_barker_independent}.

\vspace{2mm}
\noindent \textbf{2) Integrability}: For $k\ge 1$, let
$\vec{x}_k=\vec{x}^i_j=\vec{x}^i_j(\vec{u}^1,...,\vec{u}^i)$ 
with $i\ge 1, 1\le j\le M$ and $k=(i-1)M+j$ denote the $k$th 
$N$-proposal MCMC update, i.e.\ the $j$th sample in the $i$th 
iteration, according to Algorithm 
\ref{algorithm:multiproposal_quasi_MH}. The method is called 
regular if the function $g:(0,1)^{i(Nd+M)}\rightarrow \mathbb{R}$, 
defined by
$g(\vec{u^1}, \ldots, \vec{u}^i) = f(\vec{x}_k(\vec{u^1}, \ldots, \vec{u}^i))$, 
is Riemann integrable for any bounded and continuous scalar-valued
$f$ defined on $\Omega$.


\vspace{2mm}
\noindent With reference to \cite{chen2011consistency}, it may seem odd at first 
to use the Riemann integral instead of the Lebesgue integral in 
the previous formulation. However, QMC numbers are typically designed to meet equi-distribution criteria over rectangular sets or are based upon a spectral condition, such that both have a formulation that is naturally closely related to the Riemann integral.

The following theorem is the main result of this section, and states 
that under the above conditions MP-MCMC is consistent when driven 
by CUD numbers.

\begin{theorem}
\label{theorem:consistency_multi_prop_mcmc}
Let $\vec{x}_0 \in \Omega$. For $k\ge 1$, let 
$\vec{x}_k=\vec{x}^i_j$ with $i\ge 1, 1\le j\le M$ 
be the $k$th $N$-proposal MCMC update according to 
Algorithm \ref{algorithm:multiproposal_quasi_MH}, 
which is assumed to be positive Harris with stationary 
distribution $\pi$ for an IID random seed. 
The method is also assumed to be regular and to have a coupling 
region $\mathcal{C}$. Further, let 
\begin{align*}
\vec{u}^i = (v^i_1, ..., v^i_{Nd+M}),
\end{align*}
for a CUD sequence $(v_i)_{i\ge 0}$ with $v^i_\ell := v_{i(Nd+M)+\ell}$
for $i\ge 0$ and $\ell=1,...,Nd+M$. Then, 
the sequence $(\vec{x}_k)_{k \ge 1}$ consistently samples $\pi$.
\end{theorem}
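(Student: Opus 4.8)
The goal is to verify the consistency relation \eqref{eq:consistency}, i.e.\ $\frac1n\sum_{k=1}^n f(\vec{x}_k)\to \mu:=\int_\Omega f\,\pi\,\mathrm d\vec{x}$ for every bounded continuous $f$, where $n=LM$. The plan is to run a coupling/truncation argument in the spirit of \cite{chen2011consistency}: compare the true CUD-driven chain against a chain restarted from a fixed reference point only $t$ iterations into the past, then let first $n\to\infty$ and afterwards $t\to\infty$. Fix $\vec{x}^*\in\Omega$ and $t\in\mathbb{N}$. For $k=(i-1)M+j$ lying in iteration $i\ge t$, let $\tilde{\vec{x}}_k^{(t)}$ be the $j$th accepted sample of the chain obtained by iterating the innovation operator $\phi$ for $t$ steps from $\vec{x}^*$, driven by the blocks $\vec{u}^{\,i-t+1},\dots,\vec{u}^{\,i}$; for $i<t$ set $\tilde{\vec{x}}_k^{(t)}=\vec{x}_k$. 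Then $\tilde{\vec{x}}_k^{(t)}$ is a fixed, Riemann-integrable (by the regularity hypothesis) function of the single window $(\vec{u}^{\,i-t+1},\dots,\vec{u}^{\,i})\in[0,1]^{t(Nd+M)}$, and I would split
\begin{align*}
\Bigl|\tfrac1n\textstyle\sum_k f(\vec{x}_k)-\mu\Bigr|
\le \underbrace{\tfrac1n\textstyle\sum_k\bigl|f(\vec{x}_k)-f(\tilde{\vec{x}}_k^{(t)})\bigr|}_{(I)}
+\underbrace{\Bigl|\tfrac1n\textstyle\sum_k f(\tilde{\vec{x}}_k^{(t)})-\mu_t\Bigr|}_{(II)}
+\underbrace{|\mu_t-\mu|}_{(III)},
\end{align*}
where $\mu_t:=\int_{[0,1]^{t(Nd+M)}}\bar f_t$ and $\bar f_t$ maps a window to the per-iteration average $\frac1M\sum_{j=1}^M f(\tilde{\vec{x}}^{(t)})$ of the restarted chain; equivalently $\mu_t=\mathbb{E}_{\mathrm{IID}}[\bar f_t]$ is the expected per-iteration average of the $t$th iteration of the IID-driven chain started at $\vec{x}^*$.

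For term $(I)$, the defining property of the coupling region gives $\phi(\vec{x},\vec{u})=\phi(\vec{x}',\vec{u})$ whenever $\vec{u}\in\mathcal{C}$, so the true and restarted chains coincide from the first iteration in the window whose block lies in $\mathcal{C}$; hence $f(\vec{x}_k)\ne f(\tilde{\vec{x}}_k^{(t)})$ forces all of $\vec{u}^{\,i-t+1},\dots,\vec{u}^{\,i}$ to avoid $\mathcal{C}$. Since $\mathcal{C}$ has positive Jordan measure $\lambda(\mathcal{C})$, the $t$-fold product $(\mathcal{C}^c)^t\subset[0,1]^{t(Nd+M)}$ is Jordan measurable with volume $(1-\lambda(\mathcal{C}))^t$, and I would bound $(I)\le 2\|f\|_\infty\cdot\frac{1}{L}\#\{i:(\vec{u}^{\,i-t+1},\dots,\vec{u}^{\,i})\in(\mathcal{C}^c)^t\}$, which by the block-equidistribution argument below tends to $2\|f\|_\infty(1-\lambda(\mathcal{C}))^t$.

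The crux of both $(I)$ and $(II)$ is that averages over the overlapping windows $(\vec{u}^{\,i-t+1},\dots,\vec{u}^{\,i})$ of a Riemann-integrable $h$ converge to $\int_{[0,1]^{t(Nd+M)}}h$. This does not follow immediately from the CUD property, since a periodic subsequence of the overlapping blocks of $(v_i)$ need not be equidistributed; this is where I expect the main difficulty. The remedy is to partition $\{t,\dots,L\}$ into the $t$ residue classes modulo $t$: inside each class the windows, stepped by $t$, are exactly the \emph{non-overlapping} blocks of size $t(Nd+M)$ of a shifted copy of $(v_i)$, for which $D^{*}\to0$ by \eqref{eq:cud_convergence_non_overlapping} together with the shift-invariance of the CUD property noted earlier. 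Each such average therefore converges to $\int h$, and averaging the $t$ classes yields the claim (the boundary terms $i<t$ contribute $O(t/L)\to0$). Applying this with $h=\bar f_t$ gives $(II)\to0$, and with $h=\mathbf{1}_{(\mathcal{C}^c)^t}$ gives the stated limit for $(I)$.

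Finally, for $(III)$, positive Harris recurrence with invariant distribution $\pi$ makes the law of the $t$th-iteration accepted samples converge to $\pi$ irrespective of $\vec{x}^*$, so $\mu_t\to\mu$ as $t\to\infty$ (this is the ergodic input of Theorem~17.1.7 of \cite{meyn2012markov}, exactly as used in Lemma~\ref{lemma:lln_mp_mcmc}). Combining the three pieces, given $\varepsilon>0$ I would first fix $t$ so large that $(III)<\varepsilon/2$ and $2\|f\|_\infty(1-\lambda(\mathcal{C}))^t<\varepsilon/2$, and then let $n\to\infty$ so that $(II)\to0$ and the empirical frequency in $(I)$ attains its limit. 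This yields $\limsup_n|\tfrac1n\sum_k f(\vec{x}_k)-\mu|\le\varepsilon$, and since $\varepsilon$ was arbitrary, consistency follows.
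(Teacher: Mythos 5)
Your overall architecture---compare the CUD-driven chain against a chain restarted a fixed number of iterations in the past, control the discrepancy between the two via the coupling region, and control the restarted average via equidistribution of windows of the driving sequence---is the same as the paper's. Two ingredients differ. First, your treatment of the overlapping windows by partitioning the iteration indices into residue classes modulo $t$, so that within each class the windows are exactly consecutive non-overlapping blocks of a shifted copy of $(v_i)$ (to which \eqref{eq:cud_convergence_non_overlapping} and shift-invariance apply), is a valid and arguably cleaner substitute for the paper's Lemma \ref{lemma:auxiliary_technicality_consistency_proof}, whose proof instead packs $r$ overlapping sub-blocks into long non-overlapping super-blocks and sandwiches the limit between a $\liminf$ and a $\limsup$ bound. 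Second---and this is where the genuine gap lies---you restart from a fixed point $\vec{x}^*$, whereas the paper's Rosenblatt--Chentsov transformation restarts from $\psi_\pi(\vec{u}_0)$, i.e.\ from a point that is exactly $\pi$-distributed whenever the driving variables are uniform. By detailed balance the restarted chain is then stationary, so the integral of the windowed function over the hypercube equals $\mu$ exactly for every window length; no analogue of your term $(III)$ ever appears in the paper's proof.

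Your justification of $(III)$ is incorrect as written. Positive Harris recurrence with invariant distribution $\pi$ does \emph{not} imply that the $t$-step marginal $P^t(\vec{x}^*,\cdot)$ converges to $\pi$: a positive Harris chain may be periodic (e.g.\ a deterministic two-state swap with uniform invariant measure), in which case $\mathbb{E}[f(X_t)\mid X_0=\vec{x}^*]$ oscillates and $\mu_t$ has no limit. Theorem 17.1.7 of \cite{meyn2012markov}, which you invoke and which is the result used in Lemma \ref{lemma:lln_mp_mcmc}, concerns convergence of \emph{time averages}, not of fixed-time marginals; marginal convergence is the aperiodic ergodic theorem and needs aperiodicity, which Theorem \ref{theorem:consistency_multi_prop_mcmc} does not assume. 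The gap is repairable without changing your structure, because the coupling region you already exploit in $(I)$ supplies a Doeblin-type argument for the IID-driven chain: with probability $\lambda(\mathcal{C})>0$ in each iteration the innovation lands in $\mathcal{C}$, and then $\phi(\vec{x},\vec{u})=\phi(\vec{x}',\vec{u})$ for all $\vec{x},\vec{x}'$, so a chain started at $\vec{x}^*$ and a stationary copy driven by the same innovations coincide from that iteration onwards; hence $\| P^t(\vec{x}^*,\cdot)-\pi \| \le (1-\lambda(\mathcal{C}))^t \to 0$, which gives $\mu_t\to\mu$ (and aperiodicity as a by-product). Either make this argument explicit, or adopt the paper's device of restarting from $\psi_\pi$, which eliminates $(III)$ altogether; as it stands, the stated reason for $(III)\to 0$ is false.
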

\begin{proof}
A proof can be found in Appendix \ref{app:sec:proof_consistency_mpqmcmc}.
\end{proof}

Instead of requiring a coupling region, consistency for a continuous 
but bounded support $\Omega$ of $\pi$ can be achieved in the classical single
proposal case by using a 
contraction argument \cite{chen2011consistencythesis,chen2011consistency}.
Given an update function both continuous on the last state and 
the innovations, one further requires continuity and integrability 
conditions.

In the following lemma, we show that standard MP-MCMC, i.e.\ 
Algorithm \ref{algorithm:multiproposal_MH}, has a coupling region
when proposals are sampled independently of previous samples.

\begin{Lemma}[Coupling region for MP-MCMC with independent sampler]
\label{lemma:coupling_region_barker_independent}

Let $\Psi_{\vec{y_I}}$ denote the inverse of the proposal distribution and $\vec{y}_I$
the last accepted sample from the previous MP-MCMC iteration, i.e.\
\begin{align}
\Psi_{\vec{y_I}}(\vec{u}_1, \ldots, \vec{u}_{N}) = \vec{y}_{\setminus{I}},
\label{eq:proposal_sampler_coupling_region}
\end{align}
are the new proposals in one MCMC iteration, where $\vec{u}_i \in (0,1)^{d}$
for $i=1, ..., N$, and $I \neq 1$ without loss of generality. 
We assume that proposals are sampled independently of previous samples, i.e.\
$\Psi_{\vec{y_I}} = \Psi$ and 
$\kappa(\vec{y}_I, \vec{y}_{\setminus{I}}) = \kappa(\vec{y}_{\setminus{I}})$. 
The proposal $\vec{y}_1$ is always accepted, i.e.\ 
$\phi(\vec{y_I}, (\vec{u}_1, \ldots, \vec{u}_{N}, \vec{\tilde{u}}))=(\vec{y}_1, ..., \vec{y}_1)$
with $\vec{\tilde{u}}\in (0,1)^M$, if
\begin{align*}
0 \le \tilde{u}_{m} \le \frac{\pi(\vec{y}_1)K(\vec{y}_{\setminus 1})}{\sum_i \pi(\vec{y}_i)\kappa( \vec{y}_{\setminus i})}
\quad\quad \text{for all }m=1,...,M.
\end{align*} 
Let us assume that
\begin{align}
\rho = \sup_{\vec{y}_1,\ldots, \vec{y}_{N+1}\in \Omega} \frac{\sum_{i=1}^{N+1} \pi(\vec{y}_i)}{\kappa(\vec{y}_{\setminus 1})} < \infty.
\label{eq:kappa_sup_condition}
\end{align}
Moreover, let us assume that there is a rectangle 
$[\vec{a}, \vec{b}]\subset [0,1]^{Nd}$ 
of positive volume with
\begin{align}
\eta = \inf_{\substack{(\vec{u}_1, \ldots, \vec{u}_{N}) \in [\vec{a}, \vec{b}],\\
			\vec{y}_I\in \Omega}} 
\frac{\pi(\Psi(\vec{u}_1))}{\prod_{j \neq I}\kappa(\Psi(\vec{u}_j)) 
+ \sum_{i\neq I} \kappa(\vec{y}_I) \prod_{j \neq i,I}\kappa(\Psi(\vec{u}_j))\cdot } >0.
\label{eq:eta_inf_condition}
\end{align}
Then, $\mathcal{C}:= [\vec{a}, \vec{b}] \times [0, \eta/\rho]^M$ is a coupling region.
\begin{proof}
Let us assume that $(\vec{u}_1, ..., \vec{u}_N, \vec{\tilde{u}})\in \mathcal{C}$,
where $\vec{u}_i \in (0,1)^d$ for $i=1,...,N$ and $\vec{\tilde{u}} \in (0,1)^M$.
The set $\mathcal{C}$ has by definition positive Jordan measure. Note that by the
definition of $\rho$ and $\eta$, and equation \eqref{eq:proposal_sampler_coupling_region},
it holds
\begin{align}
\kappa(\vec{y}_{\setminus 1}) \ge \frac{1}{\rho}\sum_{i=1}^{N+1} \pi(\vec{y}_i), \quad 
\text{and}, \quad\quad
\pi(\vec{y}_1) \ge \eta \sum_{i=1}^{N+1} K(\vec{y}_{\setminus i}).
\end{align}
Therefore,
\begin{align*}
\pi(\vec{y}_1)\kappa(\vec{y}_{\setminus 1}) 
&\ge \eta \sum_{i=1}^{N+1} 
\kappa(\vec{y}_{\setminus i})\cdot \frac{1}{\rho}\sum_{i=1}^{N+1} \pi(\vec{y}_i) \\
&\ge \tilde{u}_{m} \sum_{i=1}^{N+1} \pi(\vec{y}_i)\kappa(\vec{y}_{\setminus i})
\end{align*}
for any $m=1,...,M$. Hence, 
$\phi(\vec{y}_I, \vec{u}_1, \ldots, \vec{u}_{N}, \vec{\tilde{u}})=(\vec{y}_1, ..., \vec{y}_1)$ 
for any $\vec{y}_I \in \Omega$.
\end{proof}
\end{Lemma}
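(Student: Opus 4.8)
The plan is to verify directly the two requirements in the definition of a coupling region: that $\mathcal{C}$ has positive Jordan measure, and that the innovation operator $\phi(\cdot, \vec{u})$ is constant in its first argument for every $\vec{u} \in \mathcal{C}$. The second requirement is where all the work lies. First I would observe that, since the proposal sampler is independent of the current state, the $N$ new proposals $\Psi(\vec{u}_1), \ldots, \Psi(\vec{u}_N)$ do not depend on $\vec{y}_I$; in particular, because $I \neq 1$, the point $\vec{y}_1 = \Psi(\vec{u}_1)$ occupies a proposal slot and is therefore state-independent. Hence it suffices to show that whenever $(\vec{u}_1, \ldots, \vec{u}_N, \vec{\tilde{u}}) \in \mathcal{C}$, each of the $M$ finite-state draws selects index $1$, so that the iteration returns $(\vec{y}_1, \ldots, \vec{y}_1)$ regardless of the starting point.

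Next, recalling the selection rule of Algorithm \ref{algorithm:multiproposal_quasi_MH}, index $1$ is chosen at the $m$-th inner step precisely when $\tilde{u}_m \le \gamma_1 = p(I=1 \mid \vec{y}_{1:N+1})$. So the crux is a uniform lower bound $p(I=1\mid \vec{y}_{1:N+1}) \ge \eta/\rho$ valid for all current states $\vec{y}_I$; combined with $\tilde{u}_m \le \eta/\rho$ (the defining constraint of the factor $[0,\eta/\rho]^M$ in $\mathcal{C}$) this forces the choice of index $1$. To establish the bound I would expand $p(I=1\mid \vec{y}_{1:N+1}) = \pi(\vec{y}_1)\kappa(\vec{y}_{\setminus 1}) / \sum_i \pi(\vec{y}_i)\kappa(\vec{y}_{\setminus i})$ and bound numerator and denominator separately. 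The definition \eqref{eq:kappa_sup_condition} of $\rho$ gives $\kappa(\vec{y}_{\setminus 1}) \ge \rho^{-1}\sum_i \pi(\vec{y}_i)$, while the definition \eqref{eq:eta_inf_condition} of $\eta$ — whose denominator is exactly $\sum_i \kappa(\vec{y}_{\setminus i})$ once one writes $\kappa(\vec{y}_{\setminus I}) = \prod_{j\neq I}\kappa(\Psi(\vec{u}_j))$ and $\kappa(\vec{y}_{\setminus i}) = \kappa(\vec{y}_I)\prod_{j\neq i,I}\kappa(\Psi(\vec{u}_j))$ for $i\neq I$ — gives $\pi(\vec{y}_1) \ge \eta \sum_i \kappa(\vec{y}_{\setminus i})$.

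Multiplying these two inequalities yields $\pi(\vec{y}_1)\kappa(\vec{y}_{\setminus 1}) \ge (\eta/\rho)(\sum_i \kappa(\vec{y}_{\setminus i}))(\sum_i \pi(\vec{y}_i))$, and the final gluing step is the elementary observation that $(\sum_i a_i)(\sum_i b_i) \ge \sum_i a_i b_i$ for nonnegative $a_i = \kappa(\vec{y}_{\setminus i})$ and $b_i = \pi(\vec{y}_i)$, the omitted cross terms being nonnegative. This gives $\pi(\vec{y}_1)\kappa(\vec{y}_{\setminus 1}) \ge (\eta/\rho)\sum_i \pi(\vec{y}_i)\kappa(\vec{y}_{\setminus i}) \ge \tilde{u}_m \sum_i \pi(\vec{y}_i)\kappa(\vec{y}_{\setminus i})$, that is $\tilde{u}_m \le \gamma_1$, as required. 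Positive Jordan measure of $\mathcal{C} = [\vec{a}, \vec{b}] \times [0,\eta/\rho]^M$ is then immediate from the positive volume of $[\vec{a}, \vec{b}]$ together with $\eta > 0$ and $\rho < \infty$.

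The main obstacle I anticipate is the bookkeeping in the denominator of $\eta$: one must recognise that the two regularity conditions are engineered precisely so that the numerator controls $\pi(\vec{y}_1)$ against the full sum $\sum_i \kappa(\vec{y}_{\setminus i})$ — including the terms that carry the state-dependent factor $\kappa(\vec{y}_I)$ — uniformly over $\vec{y}_I \in \Omega$. Arranging for the current state to enter only through that supremum and infimum, so that the resulting lower bound on $\gamma_1$ is genuinely independent of the starting point, is the delicate part; the algebra afterwards is routine.
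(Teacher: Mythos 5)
Your proof is correct and follows essentially the same route as the paper's: both extract the bounds $\kappa(\vec{y}_{\setminus 1}) \ge \rho^{-1}\sum_i \pi(\vec{y}_i)$ and $\pi(\vec{y}_1) \ge \eta \sum_i \kappa(\vec{y}_{\setminus i})$ from the definitions of $\rho$ and $\eta$, multiply them, and conclude from $\tilde{u}_m \le \eta/\rho$ that index $1$ is selected at every inner draw. If anything, you make explicit two steps the paper leaves implicit, namely the cross-terms inequality $(\sum_i a_i)(\sum_i b_i) \ge \sum_i a_i b_i$ for nonnegative terms and the observation that, since the sampler is independent and $I \neq 1$, the accepted point $\vec{y}_1 = \Psi(\vec{u}_1)$ does not depend on the starting state.
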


Following a similar argumentation to Section 5.3 and 5.4 in
\cite{chen2011consistency}, one can prove that the Rosenblatt-Chentsov
transformation is regular under continuity and boundedness assumptions.

\begin{theorem}
\label{theorem:regularity_rosenblatt_chentsiv}
If boundedness and continuity holds for the
generator $\Psi_\pi$, the inverse CDF 
$\Psi_{\cdot}: \Omega \times (0,1)^{Nd}\rightarrow \Omega^{N}$, 
the proposal density $\kappa$ and the target density $\pi$,
then the Rosenblatt-Chentsov transformation is regular.
\begin{proof}
Here, we refer to Theorem 6 in \cite{chen2011consistency}. The
idea of the proof is to use that
the composition of a continuous function defined on a bounded interval
with Riemann integrable functions leads again to a Riemann integrable
function, and that the sampling step on
the finite state chain does not break Riemann integrability.
\end{proof}
\end{theorem}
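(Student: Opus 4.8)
The plan is to verify the definition of regularity directly: for a fixed bounded continuous $f:\Omega\to\mathbb{R}$ and a fixed index $k=(i-1)M+j$, I would show that $g(\vec{u}^1,\dots,\vec{u}^i)=f(\vec{x}_k(\vec{u}^1,\dots,\vec{u}^i))$ is Riemann integrable on the cube $(0,1)^{i(Nd+M)}$. Since $f$ is bounded, $g$ is automatically bounded, so by Lebesgue's criterion (a bounded function on a box is Riemann integrable iff its set of discontinuities is Lebesgue-null) it suffices to prove that the discontinuity set of $g$ has measure zero. I would proceed by induction on the iteration count $i$, tracking the map from the driving sequence to the accepted samples one iteration at a time.

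For a single iteration I decompose the update into three maps. First, the proposal map sends the current state $\tilde{\vec{x}}$ together with the $Nd$ coordinates $u_{1:Nd}$ to the proposals $\vec{y}_{\setminus I}=\Psi_{\tilde{\vec{x}}}(u_{1:Nd})$; by hypothesis this is continuous and bounded, and $\vec{y}_I=\tilde{\vec{x}}$. Second, the weights $w_r=p(I=r\mid\vec{y}_{1:N+1})$ and their cumulative sums $\gamma_r=\sum_{s\le r}w_s$ are continuous in $\vec{y}_{1:N+1}$, being ratios of continuous functions of $\pi$ and $\kappa$ with strictly positive denominator. Third, the finite-state sampling step sets $x_{n+m}=\vec{y}_{J(m)}$, where $J(m)=r$ iff $v_m\in(\gamma_{r-1},\gamma_r]$. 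Writing this selection additively I use the key identity
\[
f(x_{n+m})=\sum_{r=1}^{N+1} f(\vec{y}_r)\,\mathbf{1}_{(\gamma_{r-1},\gamma_r]}(v_m),
\]
in which each summand is the product of the continuous function $f(\vec{y}_r)$ with an indicator whose only discontinuities occur on the graph sets $\{v_m=\gamma_{r-1}\}\cup\{v_m=\gamma_r\}$. These graphs are null: for each fixed value of the remaining coordinates, $\gamma_r$ is a single number, so the set of $v_m$ with $v_m=\gamma_r$ is a singleton, and Fubini gives measure zero for the full set. Hence the discontinuity set of $f(x_{n+m})$ is contained in a finite union of null sets and $f(x_{n+m})$ is Riemann integrable in the inputs of one iteration.

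To close the induction I use the fact emphasised in the statement, that composing a continuous map with a Riemann-integrable one preserves Riemann integrability. Concretely, the state $\tilde{\vec{x}}$ feeding iteration $i$ is, by the inductive hypothesis applied to each coordinate projection (which is bounded on the compact range and continuous), a bounded Riemann-integrable function of the earlier blocks $(\vec{u}^1,\dots,\vec{u}^{i-1})$; denote its null discontinuity set by $E$. Since the proposal map, the weights and the cumulative weights are all continuous in $(\tilde{\vec{x}},u_{1:Nd})$, any discontinuity of $g$ on the full cube must lie in $\bigl(E\times(0,1)^{Nd+M}\bigr)$ together with the finitely many selection graphs $\{v_m=\gamma_r\}$ of the current iteration. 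The product $E\times(0,1)^{Nd+M}$ is null because $E$ is null, and the selection graphs are null as above, so their union is null. By Lebesgue's criterion $g$ is Riemann integrable, completing the induction and the proof.

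The main obstacle is precisely this interplay between the discontinuous finite-state sampling step and the inductive composition across iterations: the accepted state is no longer continuous but only Riemann integrable in the earlier inputs, so one cannot simply invoke continuity of compositions. The argument is rescued by controlling everything through measure-zero discontinuity sets and by the observation that feeding a null-discontinuity input into a continuous map only enlarges the null set by a product with a bounded cube, which remains null; the selection step, although genuinely discontinuous, contributes only further null graphs and therefore never destroys Riemann integrability.
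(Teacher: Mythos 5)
Your proposal is correct and is, in substance, a self-contained version of exactly the argument the paper outsources to Theorem 6 of Chen et al.: the paper's one-line idea (``composition with continuous maps preserves Riemann integrability, and the finite-state sampling step does not break it'') is precisely what your Lebesgue-criterion bookkeeping makes rigorous. The decomposition of each iteration into a continuous proposal map, continuous weights, and the selection step written as $\sum_r f(\vec{y}_r)\mathbf{1}_{(\gamma_{r-1},\gamma_r]}(v_m)$, together with the induction in which the ``bad set'' of inputs only ever grows by null graphs $\{v_m=\gamma_r\}$ and products of null sets with cubes, is the standard machinery behind Chen's result; what your write-up buys is independence from the citation and an explicit treatment of the multiple-proposal selection step, which Chen's theorem only covers in the single-proposal Metropolis--Hastings form. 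One step deserves a patch: you assert that the normalising denominator $\sum_j \pi(\vec{y}_j)\kappa(\vec{y}_j,\vec{y}_{\setminus j})$ is strictly positive, but continuity and boundedness of $\pi$, $\kappa$, $\Psi$ alone do not guarantee this. It follows if you strengthen your inductive invariant to record that, outside the accumulated null set, the accepted state $\tilde{\vec{x}}$ satisfies $\pi(\tilde{\vec{x}})\kappa(\tilde{\vec{x}},\cdot)>0$ (true for the initial state since $\vec{x}_0=\psi_\pi(\vec{u}_0)$ lies in the support of $\pi$, and propagated because a proposal can only be selected with positive weight, the zero-weight selections occurring exactly on the null graphs you already discard); alternatively one assumes $\kappa>0$, as holds for the Gaussian example the paper gives immediately after the theorem. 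With that invariant added, the weights are continuous wherever you need them and your induction closes.
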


\begin{example}[MP-MCMC with independent Gaussian proposals]
Let the proposals be Gaussian, and independently sampled 
from previous samples. Further, let the target distribution be bounded 
and continuous.
According to Lemma \ref{lemma:coupling_region_barker_independent} 
and Theorem \ref{theorem:regularity_rosenblatt_chentsiv}, the 
resulting MP-MCMC satisfies the regularity conditions that ensure
consistency when run by a CUD driving sequence.
\end{example}

\subsubsection{Empirical Results: Bayesian logistic regression}

In Figure \ref{fig:variance_bias_mpmcmc_qmc_vs_psr} we compare the performance of SmMALA MP-MCMC 
with its CUD driven counterpart on a Bayesian logistic regression problem from Section \ref{subsubsec:emp_results_adaptive_IS_mp_mcmc}
for increasing proposal numbers and sample sizes. What we compare is the empirical variance and the squared
bias of estimates for the posterior mean. Since the actual posterior
mean is not analytically available, we computed a gold-standard mean based on
$25$ simulations of approximately $8 \cdot 10^6$ weighted proposals according
to Algorithm \ref{algorithm:importance_sampling_mp_qmcmc}. The bias is then
calculated using the gold-standard mean estimate. Here, a proposal mechanism is applied that makes use of an auxiliary proposed state, as described in Section \ref{intro_aux_prop_state}. More precisely, in a first step of the proposal procedure, an auxiliary point is generated independently of previous samples. In a second step, based on geometric information on the auxiliary point, the $N$ proposals, from which MP-MCMC samples are drawn, are generated using the SmMALA kernel \eqref{eq:smMALA_kernel}. Note that formally, this algorithm has an independent proposal sampler, which is why this algorithm satisfies the coupling region condition according to Lemma \ref{lemma:coupling_region_barker_independent} and therefore consistently samples from the posterior.

In the lower dimensional models, we see a slight improvement in the rate of convergence for increasing proposal numbers and sample sizes between pseudo-random and QMC driven SmMALA MP-MCMC. However, the improvement basically disappears in higher dimensions. This may first seem as a setback in exploring beneficial aspects of QMC driven MCMC methods since actually we hope for significantly improved rates of convergence. However, some further thought may explain the observed behaviour, which is done in what follows, and identify its source. As a result we are able to implement a methodology presented later in this work that circumvents the problem and therefore allows for significantly improved convergence rates for certain QMC driven MCMC methods compared to pseudo-random methods.

\begin{figure}[h]
    \centering
    \begin{subfigure}[b]{0.45\textwidth}
        \includegraphics[width=\textwidth]{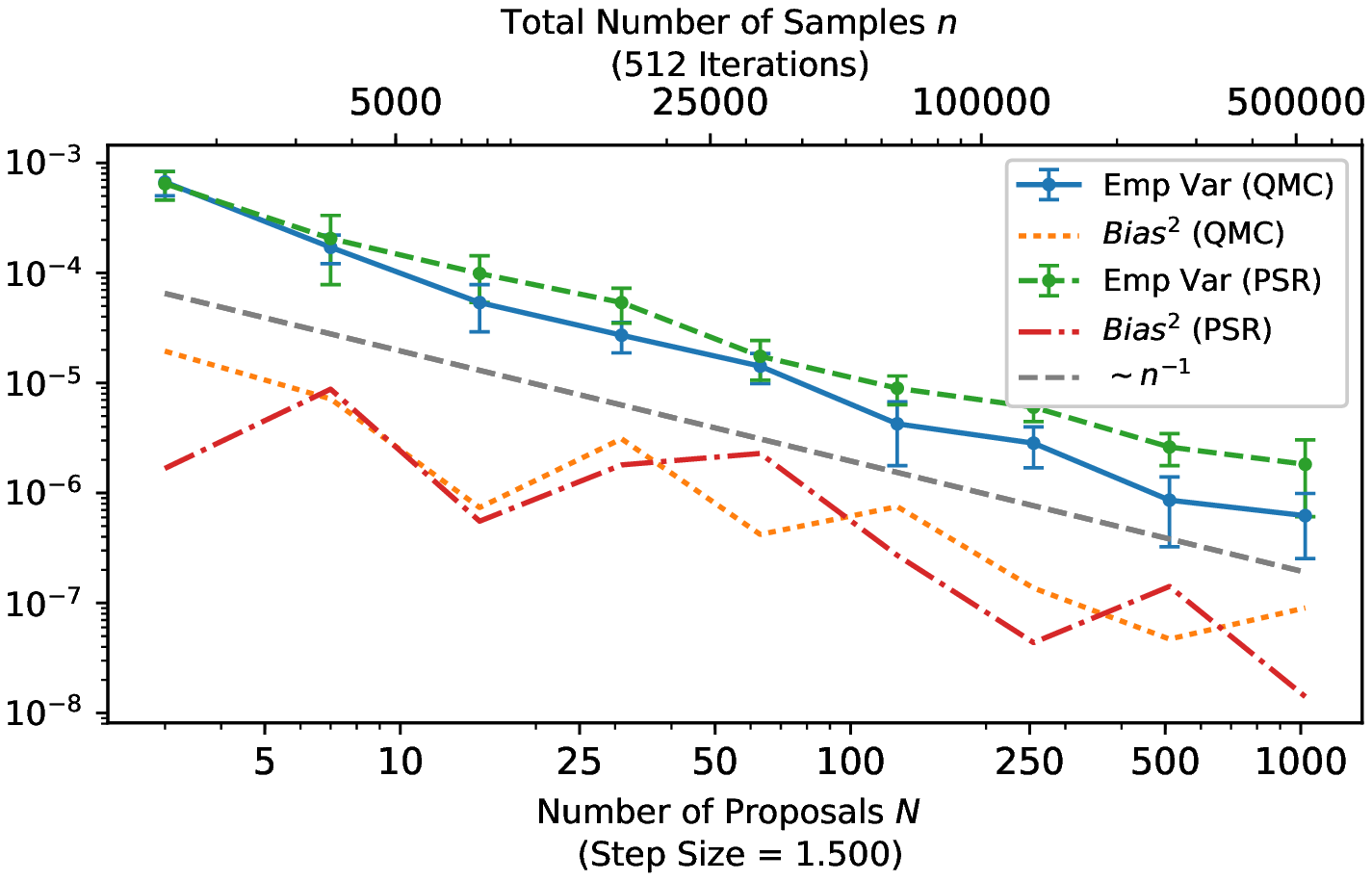}
        \subcaption{Ripley, $d=3$}
    \end{subfigure} 
        \begin{subfigure}[b]{0.45\textwidth}
        \includegraphics[width=\textwidth]{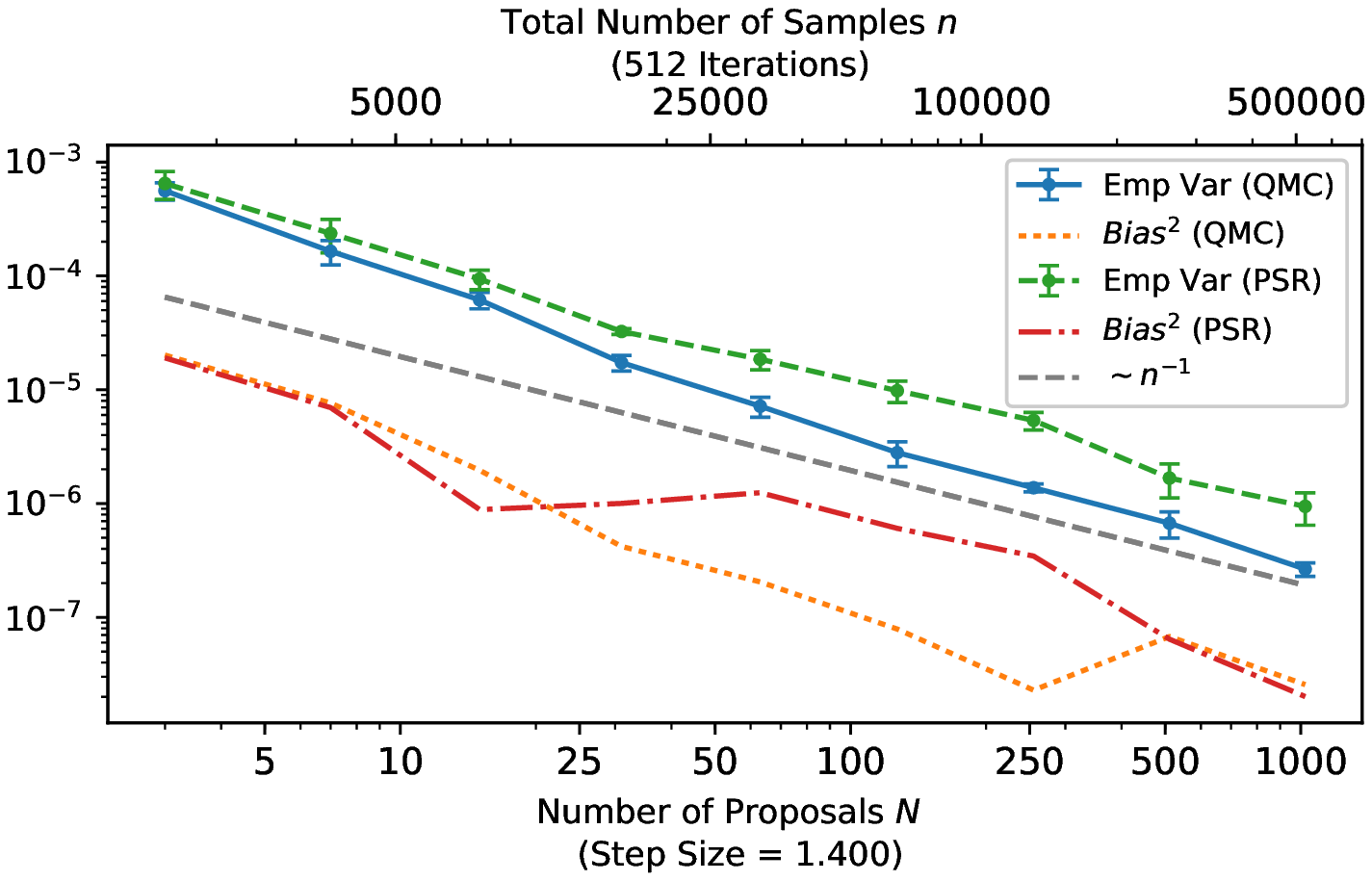}
        \subcaption{Pima, $d=8$}
    \end{subfigure} 
    \begin{subfigure}[b]{0.45\textwidth}
        \includegraphics[width=\textwidth]{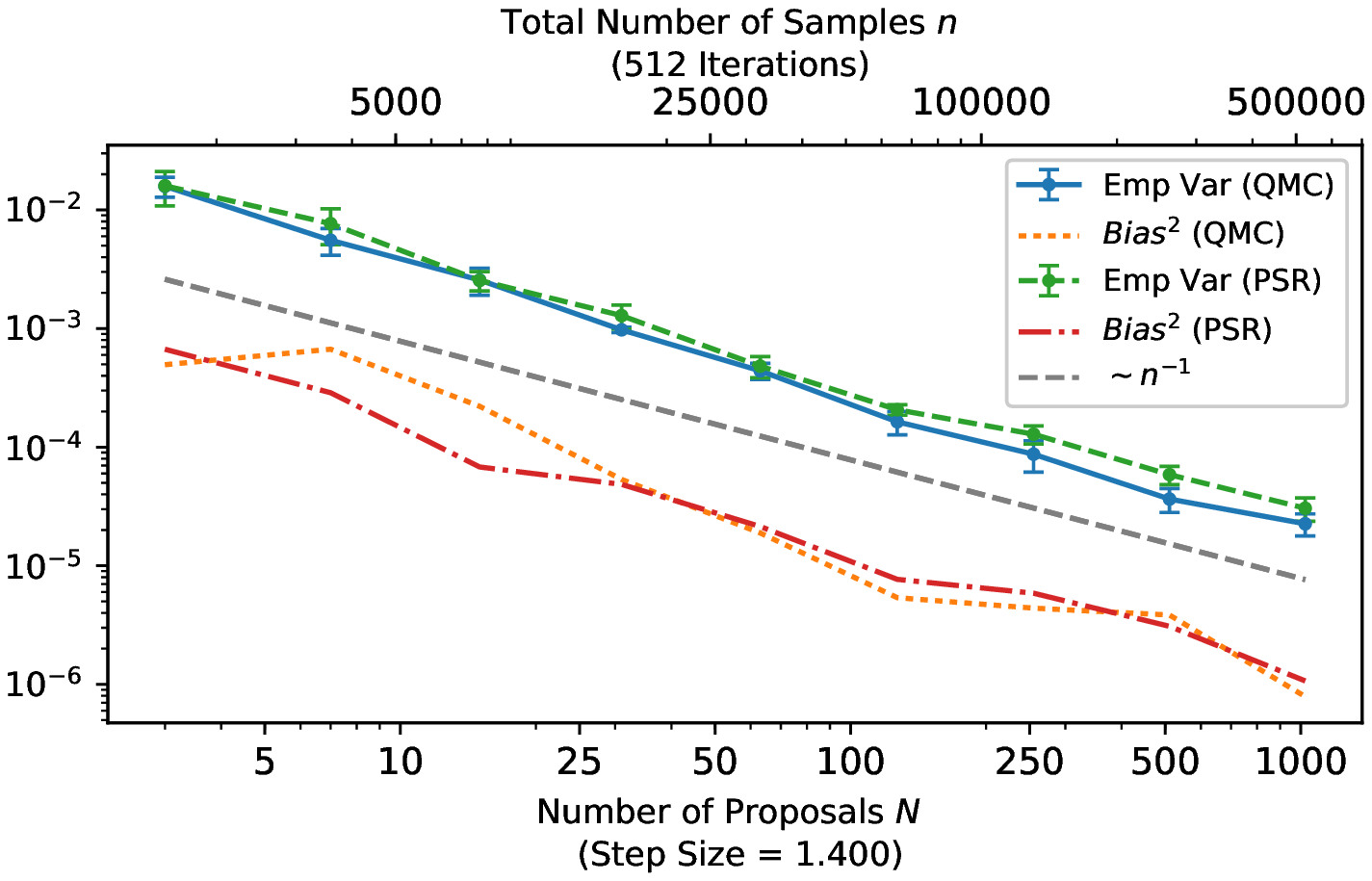}
        \subcaption{Heart, $d=14$}
    \end{subfigure} 
        \begin{subfigure}[b]{0.45\textwidth}
        \includegraphics[width=\textwidth]{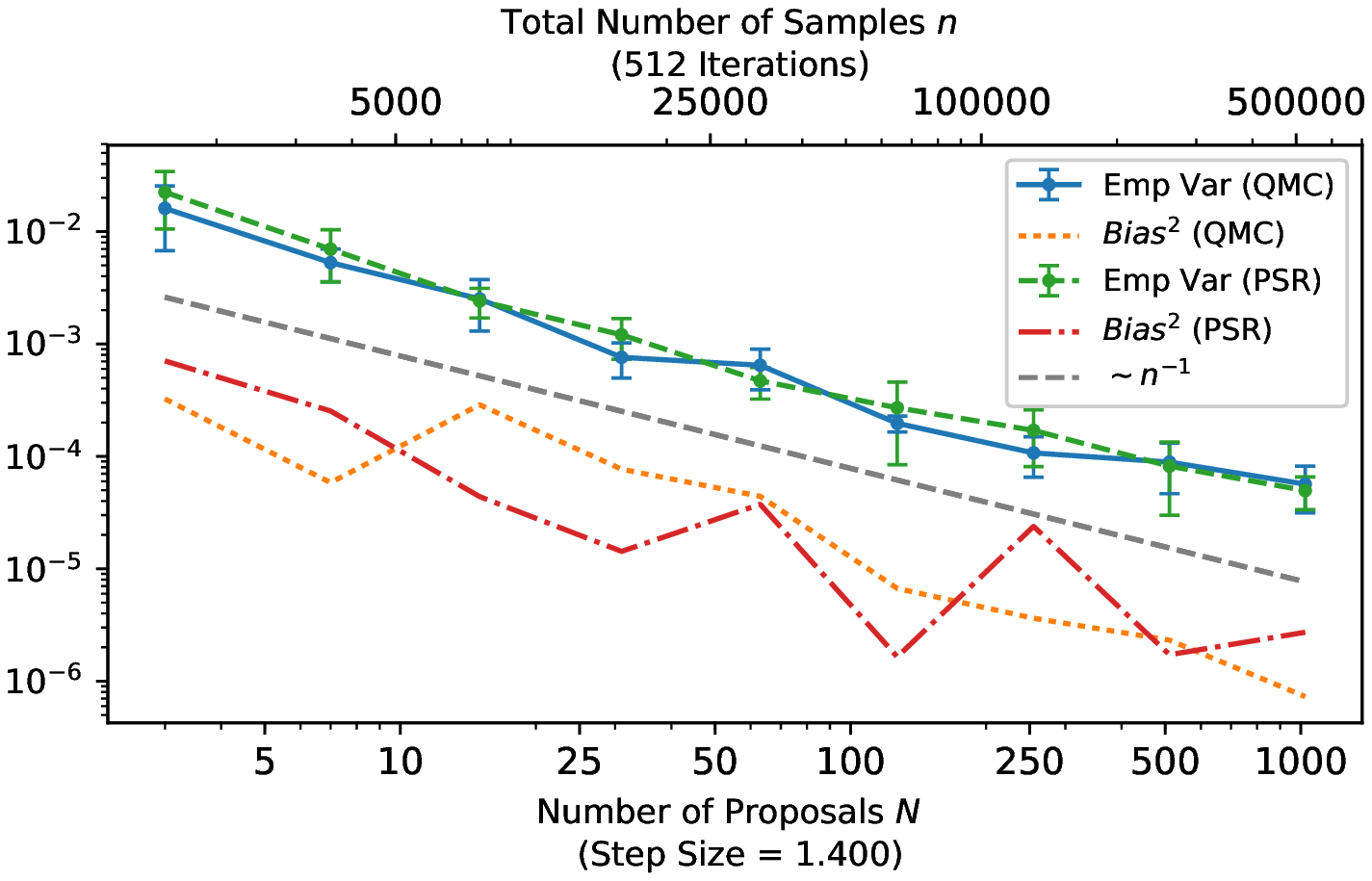}
        \subcaption{Australian, $d=15$}
    \end{subfigure}     
        \begin{subfigure}[b]{0.45\textwidth}
        \includegraphics[width=\textwidth]{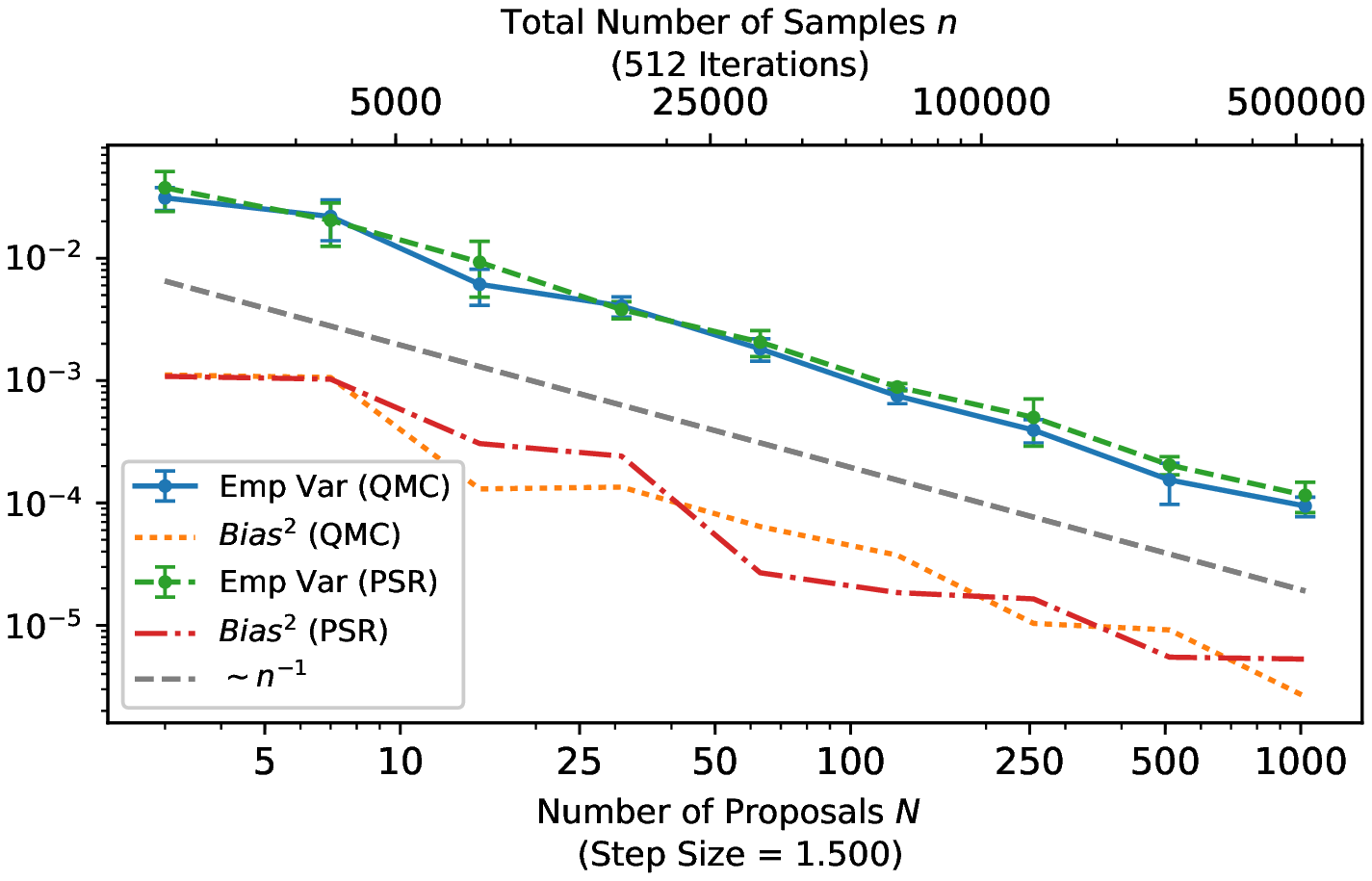}
        \subcaption{German, $d=25$}
    \end{subfigure}       
    \caption{\small{Empirical variance and squared bias considered in an 
    estimation associated to the Bayesian logistic regression problem 
    from \cite{girolami2011riemann} using a pseudo-random (PSR) vs.\ CUD (QMC) seed, 
    resp., for increasing proposal numbers and sample sizes. The results 
    are based on $25$ MCMC simulations and the error bars 
    correspond to three times a standard deviation}}
    \label{fig:variance_bias_mpmcmc_qmc_vs_psr}
\end{figure}

\subsubsection{The ``right'' way of using CUD numbers}
\label{subsubsec:right_way}

Due to the acceptance threshold in standard MP-MCMC, some proposals are accepted
at least once while others are not accepted at all. In the latter case,
the underlying QMC points that generate such proposals and that contribute 
to the homogeneous spatial coverage of the hypercube of QMC points
are neglected. Since this homogeneity is the core of performance gain due to
QMC, we do not expect the best results using CUD points when applying it to standard MP-MCMC. More successful approaches do not exhibit any discontinuity due to an acceptance threshold, e.g.\ Gibbs sampling \cite{owen2005quasi,tribble2008construction}. Our approach for making use of all information carried by the underlying CUD sequence is by incorporating all proposals, which is achieved using the importance sampling approach for MP-MCMC introduced in Section \ref{sec:importance_sampling}. Indeed, as we will see in the following section, with standard MP-MCMC we achieve only a reduction in the constant associated to the rate $n^{-1}$, whereas using importance sampling there are situations where the convergence rate improves to close to $n^{-2}$.

\subsection{IS-MP-QMCMC}
\label{subsec:IS-MP-QMCMC}

As previously discussed in Section \ref{subsubsec:right_way}, importance sampling seems to be a solid approach in the context of MP-MCMC driven by CUD numbers, since it respects all proposals and therefore all numbers in the underlying sequence, thereby making full use of its spatial homogeneity. We now
introduce IS-MP-QMCMC, prove its consistency and show an improved convergence
rate compared to standard MP-MCMC or other MCMC methods in numerical
experiments. The two algorithms introduced here are extensions of Algorithms
\ref{algorithm:importance_sampling_mp_mcmc} and
\ref{algorithm:adaptive_importance_sampling_mp_mcmc} to using
any CUD sequence as their driving sequences.

\subsubsection{Algorithm description}
\label{subsubsec:algorithm_description_IS_mpqmcmc}

Analogously to pseudo-random IS-MP-MCMC, introduced in Section
\ref{sec:importance_sampling}, all proposals from one iteration 
are \textit{accepted}. Thus, only a single sample of $I$ from the finite 
state Markov chain is generated, which determines the distribution from which
the subsequent $N$ proposals are generated. The resulting method is displayed as Algorithm \ref{algorithm:importance_sampling_mp_qmcmc}, which can be viewed as an extension of Algorithm \ref{algorithm:importance_sampling_mp_mcmc} to the CUD case.

\begin{algorithm}[h]
\SetAlgoLined
\KwIn{Initialise starting point (proposal) $\vec{y}_1\in \Omega$, number of proposals $N$, auxiliary variable $I=1$, integrand $f$, initial mean estimate $\boldsymbol{\mu}_1 = \boldsymbol{\mu}_1(f)$ \hl{and number of MCMC iterations $L$} \;}
\hl{Generate a CUD sequence $u_1, ..., u_{L(Nd+1)}\in (0,1)$}\;
\For{\textnormal{each MCMC iteration $\ell=1,...,$\hl{$L$} }}{
\hl{Set $\vec{u} = (u_{(\ell-1)(Nd+M)+1}, ..., u_{(\ell-1) (Nd+1)+ Nd}) \in (0,1)^{Nd}$}, and 
sample ${\vec{y}}_{\setminus I}$ conditioned on $I$, i.e., draw $N$ new points from $\kappa({\vec{y}}_I, \cdot) = p({\vec{y}}_{\setminus I}|{\vec{y}}_I)$ \hl{by the inverse $\Psi_{\vec{y}_I}(\vec{u})$} \;
  Calculate the stationary distribution of $I$ conditioned on ${\vec{y}}_{1:N+1}$, i.e.\ $\forall$ $i=1,...,N+1$, $p(I=i|{{\vec{y}}}_{1:N+1}) = $ $\pi({{\vec{y}}}_i)\kappa({{\vec{y}}}_{{i}}, {{\vec{y}}}_{\setminus{i}}) / \sum_j \pi({{\vec{y}}}_j)\kappa({{\vec{y}}}_{{j}}, {{\vec{y}}}_{\setminus{j}})$, which can be done in parallel\;
{ Compute $\tilde{\boldsymbol{\mu}}_{\ell+1}=\sum_{i} p(I=i| {{\vec{y}}}_{1:N+1})f({{\vec{y}}}_i)$\;}
{Set $\boldsymbol{\mu}_{\ell+1} = \boldsymbol{\mu}_{\ell} + \frac{1}{\ell+1}\left(\tilde{\boldsymbol{\mu}}_{\ell+1} - \boldsymbol{\mu}_{\ell}\right)$}\;
\hl{Set $v' = u_{\ell(Nd+1)} \in (0,1]$}\;
 \hl{If $v' \in (\gamma_{j -1}, \gamma_{j}]$, where $\gamma_{j} = \sum_{i=1}^j p(I=i| \vec{y}_{1:N+1})$ 
 for $j= 1,...,N+1$ and $\gamma_0:=0$, set $I=j$}\;
 }
\caption{Importance sampling MP-QMCMC \newline
We highlight all altered code compared to (pseudo-random) IS-MP-MCMC, Algorithm \ref{algorithm:importance_sampling_mp_mcmc}}
 \label{algorithm:importance_sampling_mp_qmcmc}
\end{algorithm}

\subsubsection{Asymptotic unbiasedness of IS-MP-QMCMC}

\begin{corollary}[Asymptotic unbiasedness of IS-MP-QMCMC]
\label{corollary:asymptptic_unbiasedness_is_mp_qmcmc}
Under the conditions of Theorem \ref{theorem:consistency_multi_prop_mcmc},
the IS-MP-QMCMC sequence of estimators $(\boldsymbol{\mu}_L)_{L \ge 1}$ from Algorithm
\ref{algorithm:importance_sampling_mp_qmcmc} is asymptotically unbiased.
\begin{proof}
Due to the consistency of the underlying MP-QMCMC chain we may argue analogously
to Lemma \ref{lemma:asymptotic_unbiasedness_is}. 
\end{proof}
\end{corollary}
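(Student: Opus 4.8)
The plan is to mirror the structure of the proof of Lemma~\ref{lemma:asymptotic_unbiasedness_is}, replacing the positive Harris recurrence used there (which supplied consistency in the pseudo-random setting) by the CUD consistency guaranteed by Theorem~\ref{theorem:consistency_multi_prop_mcmc}. The first step is to identify the stochastic process underlying Algorithm~\ref{algorithm:importance_sampling_mp_qmcmc}. In each iteration exactly one auxiliary index $I$ is drawn from the finite state chain with the Barker weights \eqref{eq:barker_acceptance}, and this single index determines the state from which the next $N$ proposals are generated. Hence the chain on the accepted states coincides with MP-QMCMC, Algorithm~\ref{algorithm:multiproposal_quasi_MH}, in the special case $M=1$, driven by the same CUD sequence $(v_i)_{i\ge 0}$. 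Under the hypotheses inherited from Theorem~\ref{theorem:consistency_multi_prop_mcmc} --- positive Harris recurrence for an IID seed, regularity, and the existence of a coupling region $\mathcal{C}$ --- this $M=1$ chain consistently samples $\pi$.

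The second step is to rewrite the estimator \eqref{eq:def_importance_sampling_estimator} as a time average of a single per-iteration functional,
\[
\boldsymbol{\mu}_L = \frac{1}{L}\sum_{\ell=1}^L F\bigl(\vec{y}_{1:N+1}^{(\ell)}\bigr), \qquad F(\vec{y}_{1:N+1}) := \sum_{i=1}^{N+1} p(I=i\,|\,\vec{y}_{1:N+1})\, f(\vec{y}_i).
\]
By construction $F(\vec{y}_{1:N+1})$ is precisely the conditional expectation $\mathbb{E}[f(\vec{y}_I)\,|\,\vec{y}_{1:N+1}]$ of the accepted sample given the whole proposal set. Using the factorisation \eqref{eq:factorisation_derivation_mpmcmc} one checks that under the joint stationary law $p$ the marginal of $\vec{y}_I$ is exactly the target, so that the tower property gives $\mathbb{E}_p[F] = \mathbb{E}_p[f(\vec{y}_I)] = \int_\Omega f(\vec{x})\pi(\vec{x})\,\mathrm{d}\vec{x} = \boldsymbol{\mu}$. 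It therefore suffices to show that the CUD-driven average of $F$ converges to $\mathbb{E}_p[F]$.

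The third and central step is to extend the consistency conclusion of Theorem~\ref{theorem:consistency_multi_prop_mcmc}, which is stated for bounded continuous functions of the accepted samples, to the weighting functional $F$, which depends on all $N+1$ proposals. Here I would argue exactly as in Theorem~\ref{theorem:regularity_rosenblatt_chentsiv}: within the Rosenblatt--Chentsov transformation the proposals $\vec{y}_{1:N+1}^{(\ell)}$ are a continuous function of the previous accepted state and of the innovation block $\vec{u}^\ell$, the weights $p(I=i\,|\,\vec{y}_{1:N+1})$ are continuous in the proposals whenever $\pi$ and $\kappa$ are continuous and the normalising denominator is bounded away from zero, and composition of continuous maps with Riemann integrable functions preserves Riemann integrability. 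Moreover the coupling region $\mathcal{C}$ already furnished by the hypotheses applies verbatim, since once two chains couple their accepted samples agree and hence so do all subsequent proposals, so $F$ evaluated along the two chains coincides thereafter. The same CUD equidistribution argument then yields $\tfrac1L\sum_\ell F(\vec{y}_{1:N+1}^{(\ell)}) \to \mathbb{E}_p[F] = \boldsymbol{\mu}$, which is the desired asymptotic unbiasedness.

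The main obstacle I anticipate is precisely this third step: Theorem~\ref{theorem:consistency_multi_prop_mcmc} is phrased for averages of $f$ evaluated at the single accepted sample, whereas the importance sampling estimator averages a weighted combination of \emph{all} proposals. The crux is to verify that the functional $F$ inherits the Riemann integrability required by the regularity condition --- this is where continuity of $\pi$ and $\kappa$ and positivity of the denominator in \eqref{eq:barker_acceptance} enter --- and to confirm that no coupling assumption beyond the one already used for the accepted-sample chain is needed, because the coupling region is a property of the innovation operator $\phi$ and is independent of which functional of the proposals is integrated.
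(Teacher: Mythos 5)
Your proposal is correct and follows essentially the same route as the paper: the published proof consists of exactly your first two steps --- identifying the chain underlying Algorithm \ref{algorithm:importance_sampling_mp_qmcmc} as MP-QMCMC with $M=1$, whose consistency is supplied by Theorem \ref{theorem:consistency_multi_prop_mcmc}, and then repeating the stationarity calculation of Lemma \ref{lemma:asymptotic_unbiasedness_is} showing $\mathbb{E}_p\bigl[\sum_i w_i f(\vec{y}_i)\bigr] = \mathbb{E}_\pi[f]$. Your third step, extending the CUD consistency statement from bounded continuous functions of the accepted sample to the weighted functional $F$ of all $N+1$ proposals via Riemann integrability of the weights and the observation that the coupling region is a property of the innovation operator alone, is precisely the content the paper compresses into the phrase ``argue analogously,'' so you have filled in, rather than diverged from, the intended argument.
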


\subsubsection{Empirical results: Bayesian linear regression}
\label{mp_qmcmc_empirical_results}

\footnote{For the Python code of this simulation, we refer to \url{https://github.com/baba-mpe/MP-Quasi-MCMC}}

Let us consider the standard linear regression problem, in which the 
conditional distribution of an observation $\vec{y}\in \mathbb{R}^n$, 
given a design matrix $X \in \mathbb{R}^{n\times d}$, is specified by
\begin{align*}
\vec{y} = X \vec{\beta} + \vec{\varepsilon},
\end{align*}
where $\vec{\beta} \in \mathbb{R}^d$, and 
$\vec{\varepsilon} \sim \mathcal{N}(\vec{0},\sigma^2\mathbf{I}_n)$ 
denotes the $n$-dimensional noise term. Each row of the matrix $X$ is 
a predictor vector. The resulting likelihood function is given by
\begin{align*}
\pi(\vec{y}|\beta,X,\sigma^2) \propto (\sigma^2)^{-n/2}
\exp\left( -\frac{1}{2\sigma^2}\left( \vec{y}-X\vec{\beta} \right)^T 
\left( \vec{y} - X \vec{\beta} \right) \right).
\end{align*}
In a Bayesian context, we are interested in the distribution of the 
weight vector $\vec{\beta}$ conditioned on the design matrix $X$, the 
observation $\vec{y}$, the noise variance $\sigma^2$ and some prior 
information on $\vec{\beta}$. More precisely, given the above we would 
like to estimate the expectation of $\vec{\beta}$. Our a priori 
knowledge about $\vec{\beta}$ is expressed by the prior distribution
\begin{align*}
\pi(\vec{\beta}|\sigma^2) \propto |\det(\Sigma_0)|^{-1} 
\exp\left( -\frac{1}{2} \vec{\beta}^T \Sigma_0^{-1} \vec{\beta}  \right),
\end{align*}
where $\Sigma_0 = \sigma^2/g (X^TX)^{-1}$ with $g=1/n$. Thus, 
$\pi(\vec{\beta}|\sigma^2)$ denotes Zellner's g-prior according to 
\cite{zellner1986prior}. To estimate 
$\mathbb{E}_{\pi}[\vec{\beta}|\vec{y},X,\sigma^2]$ under the resulting 
posterior distribution,
\begin{align*}
\pi(\vec{\beta}|\vec{y},X,\sigma^2) \propto 
\pi(\vec{\beta}|\sigma^2)\pi(\vec{y}|\vec{\beta},X,\sigma^2),
\end{align*}
the importance sampling MP-QMCMC, described in Algorithm 
\ref{algorithm:importance_sampling_mp_qmcmc}, is applied. 
The underlying 
data consisting of $X$ and $\vec{y}$ is simulated according to 
$X \sim \mathcal{N}( \vec{0}, \Sigma_X)$ and 
$\vec{y} = X^T \vec{\beta^*} + \vec{\varepsilon}$, 
where $\vec{\beta^*} = (1,...,1)^T \in \mathbb{R}^d$, 
and $\Sigma_X \in \mathbb{R}^{n\times n}$ has non-negligible entries 
off its diagonal. Referring to Table \ref{table:results_bayesian_linear_regression},
experiments are performed for dimensionalities
between $d=1$ and $d=500$.

The bias of the underlying MP-MCMC methods can hereby be computed
exactly as the posterior is available analytically and is given by 
the Gaussian $\pi(\vec{\beta}|\vec{y},X,\sigma^2)= \mathcal{N}(\vec{\mu},\Sigma)$, 
where
\begin{align*}
\vec{\mu} = \left( X^TX + \Sigma_0 \right)^{-1}
\left( X^TX\vec{\hat{\beta}} + \Sigma_0\vec{\mu}_0\right), \quad \text{and} \quad
\Sigma =  \sigma\left(X^TX +  \Sigma_0\right).
\end{align*}
Here, $\vec{\hat{\beta}} = \left( X^TX \right)^{-1}X^T\vec{y}$
denotes the ordinary least squares solution for $\vec{\beta}$,
where we made use of the Moore-Penrose pseudo-inverse for $X^TX$.

To generate proposals we make use of the SmMALA kernel defined in
\eqref{eq:smMALA_kernel}. Since samples from a current iteration are therefore
dependent on samples from the previous iteration, the consistency
of the resulting method using a CUD seed is not covered by Corollary
\ref{corollary:asymptptic_unbiasedness_is_mp_qmcmc}.
However, as we can see in Figure 
\ref{fig:variance} and from the results for the MSE reductions given in Table 
\ref{table:results_bayesian_linear_regression1}, convergence of the MSE seems 
to hold true. Furthermore, using the importance sampling formulation of MP-QMCMC, 
together with the CUD driving sequence, results in an improved variance convergence 
rate of close to $n^{-2}$, compared to $n^{-1}$ when using an IID seed.

\begin{figure}[h]
    \centering
    \begin{subfigure}[b]{0.6\textwidth}
        \includegraphics[width=\textwidth]{./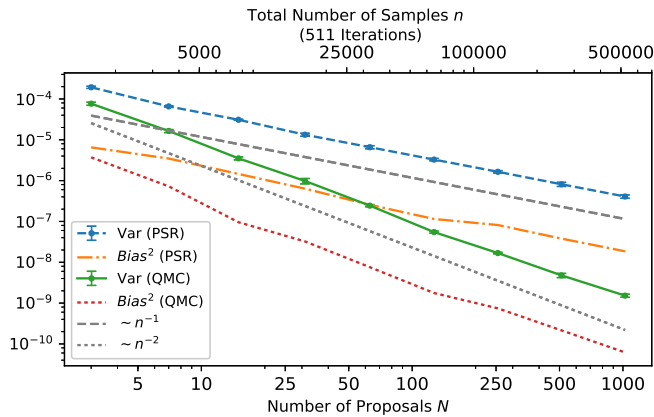}
    \end{subfigure} 
    \caption{\small{Empirical variance and squared bias of the sample mean in 
    $100$-dimensional Bayesian linear regression based on IS-MP-QMCMC (SmMALA) 
    using a pseudo-random (PSR) vs.\ CUD (QMC) seed, resp., for increasing proposal 
    numbers and sample sizes. The results are based on $25$ MCMC simulations, and
    the error bars correspond to twice a standard deviation}} 
    \label{fig:variance}
\end{figure}

\begin{table}[h]
\ra{1.1}
\centering
\caption{
Comparison of MSE convergence rates for IS-MP-MCMC (SmMALA)
using a pseudo-random (PSR) vs.\ a CUD (QMC) seed, resp., in Bayesian linear regression
for increasing dimensionality. Displayed
is also the associated reduction factor in MSE by the 
use of the deterministic
instead of the pseudo-random seed, resp.}
\centering
\resizebox{.55\textwidth}{!}{

\begin{tabular}{  @{} l  @{\hspace{3mm}} c @{\hspace{3mm}} c @{} *4c @{}}  \bottomrule
  \multirow{2}{*}{{Dimension}\hspace{1mm}} 
& \multicolumn{2}{c}{{MSE Rate}\hspace{-5mm}} & \multicolumn{4}{c}{{Reduction by using QMC}}\\
\cmidrule{2-3} \cmidrule{5-7}
& {\hspace{3mm}{PSR}\hspace{3mm}} & {\hspace{3mm}{QMC}\hspace{3mm}} &&
	{{$N=3$}} &{{$N=63$}} &{{$N=1023$}} \\ \midrule

	  1          	  & -1.06       & -1.90        &&     2.5  &  24.0 & 508.0 \\   

       2     	  & -1.09       & -1.97        &&     1.6   & 45.7 & 319.8\\ 

	    5        	  & -1.03        & -1.88        &&    1.9  &  35.2 & 234.1\\ 

	   10         	  & -1.03       & -1.89        &&   2.1   & 26.4 & 375.4\\ 

      25     	  & -1.04        & -1.86        &&    2.7   & 32.0 & 247.2\\ 

      50         	  & -1.04        & -1.89        &&    2.2   & 26.2 & 271.3 \\ 
      
      100 			& -1.04		& -1.88			&& 2.5		& 27.2 		&  269.2 \\ 
      
        250 			& -1.03		& -1.90			&& 1.8		& 31.5 		&  263.7 \\  

       500 			& -1.04		& -1.79			&& 2.7		& 15.7 		&  173.3 \\ 
       
 \bottomrule

\end{tabular}
\label{table:results_bayesian_linear_regression1}
}
\end{table}

\subsection{Adaptive IS-MP-QMCMC}
\label{subsec:adaptive_is_mp_qmcmc}

Finally, we introduce an importance sampling method that is based on MP-QMCMC 
proposals with adaptation of the proposal distribution. As a result we receive
Algorithm Algorithm \ref{algorithm:adaptive_importance_sampling_mp_qmcmc}, which 
states a direct extension of Algorithm \ref{algorithm:adaptive_importance_sampling_mp_mcmc} to the general CUD case.

\subsubsection{Algorithm description}
\label{subsubsec:algorithm_description_adaptive_IS_mpqmcmc}

In every MCMC iteration, the proposal distribution is updated. More precisely,
the mean and covariance of the proposal distribution, which is assumed to 
have a multivariate Gaussian distribution, is determined in each iteration by the average
of the weighted sum mean and covariance estimates, respectively, based on
previous iterations including the present one. The resulting mean estimate after the 
final iteration is the output of this algorithm.

 \begin{algorithm}[h]
\SetAlgoLined
\KwIn{Initialise starting point (proposal) $\vec{y}_1\in \Omega$, number of proposals $N$, auxiliary variable $I=1$, integrand $f$, initial mean estimate $\boldsymbol{\mu}_1 = \boldsymbol{\mu}_1(f)$,
initial covariance estimate $\Sigma_1$ \hl{and number of MCMC iterations $L$}\;} 
\hl{Generate a CUD sequence $u_1, ..., u_{LNd}\in (0,1)$}\;
\For{\textnormal{each MCMC iteration $\ell=1,...,$\hl{$L$} }}{
\hl{Set $\vec{u} = (u_{(\ell-1)Nd+1}, ..., u_{\ell Nd}) \in (0,1)^{Nd}$}, and 
sample ${\vec{y}}_{\setminus I}$ conditioned on $I$ {and $\Sigma_\ell$}, i.e., draw $N$ new points from $\kappa_{\Sigma_\ell}({\vec{y}}_I, \cdot) = p({\vec{y}}_{\setminus I}|{\vec{y}}_I, \Sigma_\ell)$ \hl{by the inverse $\Psi_{\vec{y}_I}(\vec{u}|\Sigma_\ell)$} \;
  Calculate the stationary distribution of $I$ conditioned on ${\vec{y}}_{1:N+1}$ and $\Sigma_\ell$, i.e.\ $\forall$ $i=1,...,N+1$, $p(I=i|{{\vec{y}}}_{1:N+1}, \Sigma_\ell) = $ $\pi({{\vec{y}}}_i)\kappa_{\Sigma_\ell}({{\vec{y}}}_{{i}}, {{\vec{y}}}_{\setminus{i}}) / \sum_j \pi({{\vec{y}}}_j)\kappa_{\Sigma_\ell}({{\vec{y}}}_{{j}}, {{\vec{y}}}_{\setminus{j}})$, which can be done in parallel\;
{ Compute $\tilde{\boldsymbol{\mu}}_{\ell+1}=\sum_{i} p(I=i| {{\vec{y}}}_{1:N+1}, \Sigma_\ell)f({{\vec{y}}}_i)$}\;
{Set $\boldsymbol{\mu}_{\ell+1} = \boldsymbol{\mu}_{\ell} + \frac{1}{\ell+1}\left(\tilde{\boldsymbol{\mu}}_{\ell+1} - \boldsymbol{\mu}_{\ell}\right)$}\;
\hl{Set $v' = u_{\ell(Nd+1)} \in (0,1]$}\;
\hl{If $v' \in (\gamma_{j -1}, \gamma_{j}]$, where $\gamma_{j} = \sum_{i=1}^j 
p(I=i| \vec{y}_{1:N+1}, \Sigma_\ell)$ for $j= 1,...,N+1$ and $\gamma_0:=0$, set $I=j$}\;
{Compute $\tilde{{\Sigma}}_{\ell+1} = \sum_{i}p(I=i| {\vec{y}}_{1:N+1}, {{\Sigma}}_\ell)[{\vec{y}}_i-\vec{\mu}_{\ell+1}][{\vec{y}}_i-\vec{\mu}_{\ell+1}]^T$}\;
{ Set ${\Sigma}_{\ell+1} = {\Sigma_\ell} + \frac{1}{\ell+1}(\tilde{{\Sigma}}_{\ell+1} - {\Sigma}_{\ell})$}\;
 }
\caption{Adaptive importance sampling MP-QMCMC \newline
All code altered compared to (pseudo-random) adaptive IS-MP-MCMC, Algorithm \ref{algorithm:adaptive_importance_sampling_mp_mcmc},
is highlighted}
 \label{algorithm:adaptive_importance_sampling_mp_qmcmc}
\end{algorithm}

\subsubsection{Asymptotic unbiasedness of adaptive IS-MP-QMCMC}

\begin{corollary}[Asymptotic unbiasedness of adaptive IS-MP-QMCMC]
Under the conditions of Theorem \ref{theorem:consistency_multi_prop_mcmc}, and
under any of the conditions stated in
Theorem \ref{thm:ergodicity_adap_mpmcmc_independent},
Theorem \ref{thm:ergodicity_adapt_mpmcmc_bounded} or
Theorem \ref{thm:ergodicity_adaptive_mpmcmc_positive}, 
the sequence of estimators 
$(\boldsymbol{\mu}_L)_{L \ge 1}$ from Algorithm
\ref{algorithm:adaptive_importance_sampling_mp_qmcmc} is asymptotically unbiased.
\begin{proof}
Consistency of the adaptive MP-QMCMC follows by the respective theorem used and
Theorem \ref{theorem:consistency_multi_prop_mcmc}.
Thus, we may argue analogously to the proof of Corollary
\ref{cor:asympt_unbias_adapt_ismpmcmc}.
\end{proof}
\end{corollary}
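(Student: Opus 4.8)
The plan is to reduce the claim to the consistency of the single underlying Markov chain associated with Algorithm \ref{algorithm:adaptive_importance_sampling_mp_qmcmc}, exactly as in the proofs of Corollary \ref{cor:asympt_unbias_adapt_ismpmcmc} and Corollary \ref{corollary:asymptptic_unbiasedness_is_mp_qmcmc}. Recall that the importance sampling estimator $\boldsymbol{\mu}_L$ in \eqref{eq:def_importance_sampling_estimator} is built on top of the chain that, in each iteration, draws a single auxiliary index $I$ from the stationary finite state distribution $p(\cdot \mid \vec{y}_{1:N+1}, \Sigma_\ell)$ (the step determining $v'$ in the algorithm) and conditions the next block of $N$ proposals on the resulting sample. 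This is precisely the $M=1$ adaptive MP-QMCMC chain. Once this chain is shown to consistently sample $\pi$, the weighting argument of Lemma \ref{lemma:asymptotic_unbiasedness_is} applies verbatim, so the bulk of the work is establishing consistency of that chain.

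First I would combine the two consistency mechanisms available in the paper. On the one hand, for an IID driving sequence the adaptation is handled by the diminishing-adaptation plus containment framework of Theorem \ref{thm:theorem2roberts2007}: under any of the hypotheses of Theorem \ref{thm:ergodicity_adap_mpmcmc_independent}, Theorem \ref{thm:ergodicity_adapt_mpmcmc_bounded} or Theorem \ref{thm:ergodicity_adaptive_mpmcmc_positive}, the adaptive MP-MCMC chain is ergodic, so that each fixed kernel $P_\gamma$ is positive Harris with the correct stationary law and the accepted samples are asymptotically distributed according to $\pi$. On the other hand, Theorem \ref{theorem:consistency_multi_prop_mcmc} upgrades such a positive Harris, regular chain possessing a coupling region from an IID seed to a CUD seed while preserving consistency. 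I would therefore verify that the adapted kernel satisfies the regularity and coupling-region conditions of Theorem \ref{theorem:consistency_multi_prop_mcmc}, and then read off consistency of the CUD-driven adaptive chain. In the independent-proposal case of Theorem \ref{thm:ergodicity_adap_mpmcmc_independent}, the coupling region is supplied by Lemma \ref{lemma:coupling_region_barker_independent} and regularity by Theorem \ref{theorem:regularity_rosenblatt_chentsiv}, so the two results dovetail cleanly.

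With consistency of the underlying $M=1$ chain in hand, the final step mirrors Lemma \ref{lemma:asymptotic_unbiasedness_is}: asymptotically the current sample is distributed according to $\pi$, so the conditional expectation of the per-iteration weighted sum $\sum_i w_i^{(\ell)} f(\vec{y}_i^{(\ell)})$ equals $\int_\Omega f(\vec{x})\pi(\vec{x})\,\mathrm{d}\vec{x}$ up to a transient that vanishes in the Ces\`aro average, and averaging over $\ell=1,\dots,L$ yields $\boldsymbol{\mu}_L \to \boldsymbol{\mu}(f)$, i.e.\ asymptotic unbiasedness. The simultaneous adaptation of both mean and covariance through the weighted updates in the algorithm is absorbed exactly as in the proof of Corollary \ref{cor:asympt_unbias_adapt_ismpmcmc}.

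The main obstacle I anticipate is reconciling the adaptivity with the coupling and regularity hypotheses of Theorem \ref{theorem:consistency_multi_prop_mcmc}, which were stated for a \emph{fixed} innovation operator $\phi$. Under adaptation the operator depends on the running adaptation parameter $\gamma\in\mathcal{Y}$, so one genuinely needs the coupling region and the Riemann integrability to hold uniformly over the bounded adaptation space $\mathcal{Y}$; the boundedness of $\mathcal{Y}$ assumed in Theorems \ref{thm:ergodicity_adap_mpmcmc_independent}--\ref{thm:ergodicity_adaptive_mpmcmc_positive} is precisely what I would exploit to obtain a single rectangle $[\vec{a},\vec{b}]$ and constants $\rho,\eta$ in \eqref{eq:kappa_sup_condition}--\eqref{eq:eta_inf_condition} valid for every $\gamma\in\mathcal{Y}$. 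A secondary subtlety is the SmMALA case used in the experiments, where proposals depend on the previous sample and the independence hypothesis of Lemma \ref{lemma:coupling_region_barker_independent} fails; here the rigorous statement is confined to the independent-proposal or bounded-support regimes, with the dependent case remaining only empirically supported, consistent with the caveat already noted around Figure \ref{fig:variance}.
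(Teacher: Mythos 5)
Your proposal follows essentially the same route as the paper's own (two-line) proof: consistency of the adaptive CUD-driven chain is obtained by combining whichever ergodicity theorem applies (Theorem \ref{thm:ergodicity_adap_mpmcmc_independent}, \ref{thm:ergodicity_adapt_mpmcmc_bounded} or \ref{thm:ergodicity_adaptive_mpmcmc_positive}) with Theorem \ref{theorem:consistency_multi_prop_mcmc}, after which the weighting argument of Corollary \ref{cor:asympt_unbias_adapt_ismpmcmc} (via Lemma \ref{lemma:asymptotic_unbiasedness_is}) applies verbatim. Your further points --- that the coupling region and Riemann regularity must hold uniformly over the bounded adaptation space $\mathcal{Y}$ because the innovation operator now depends on the running parameter $\gamma$, and that dependent (SmMALA) proposals fall outside the rigorous hypotheses --- are correct refinements of steps the paper's terse proof leaves implicit, not deviations from it.
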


\subsection{Empirical Results: Simple Gaussian example}
\label{adaptive_is_mp_qmcmc_simple_Gaussian_example}

As a simple reference problem, we consider the estimation of the
posterior mean in a generic $1$-dimensional numerical example analogously
to \cite{owen2005quasi}, in which the posterior is just a standard Gaussian. 
For this problem, \cite{owen2005quasi} compared the Metropolis-Hastings
algorithm using a pseudo-random seed to using a QMC seed, respectively,
in terms of the resulting MSEs of the estimated posterior mean. 
The independent sampler is given by $\mathcal{N}(0,2.4^2)$ and the 
random walk sampler by $\mathcal{N}(x, 2.4^2)$, where $x\in \mathbb{R}$
denotes the last accepted sample.
Here, we additionally compare those algorithms with the IS-MP-QMCMC and its
adaptive version introduced in Section \ref{subsec:IS-MP-QMCMC} and Section 
\ref{subsec:adaptive_is_mp_qmcmc}, respectively.
In the case of the independent sampler, we apply adaptivity within
AIS-MP-QMCMC not only in the proposal covariance but also in the proposal mean,
according to the estimate in line 6 from Algorithm
\ref{algorithm:adaptive_importance_sampling_mp_qmcmc}.
For the random walk case, we do not make use of adaptivity in the mean,
as the proposal sampler mean is just a sampled point from the finite state
chain from one iteration.
The low-discrepancy sequence used in \cite{owen2005quasi} is based on a 
LCG, while the QMC construction used here is based on the LFSR introduced
in \cite{chen2012new}, see Section 
\ref{subsubsec:completely_uniformly_distributed_points}
for details. Nevertheless, we receive similar results in the reduction of 
MSE between standard Metropolis-Hastings and its QMC-driven counterpart
as \cite{owen2005quasi} did:
for the independent sampler, the MSE is reduced by a factor $\approx 7.0$,
and for the random walk sampler by a factor $\approx 2.3$, respectively. 
In the 
independent sampler case, the additional performance gain using the 
importance sampling approaches is significant: compared to standard
Metropolis-Hastings, the maximum MSE reduction is $\approx 112.2$, i.e.\
more than an order of magnitude (factor $\approx 16.1$) reduction compared 
to QMC-driven Metropolis-Hastings. Note that the performance of the 
multiple proposal methods increases with number of proposals used,
thereby making use of less number of iterations to result in the same total
number
of samples. In the case of random walk proposals, 
the additional gain is less substantial. Compared to standard 
Metropolis-Hastings we receive a maximum MSE reduction of $\approx 6.3$, i.e.\ 
a decrease of $\approx 2.7$ compared to QMC-driven Metropolis-Hastings.

\begin{table}[h]
\ra{1.3}
\centering
\caption{
Comparison of standard Metropolis-Hastings with different QMC driven
MCMC algorithms for independent and random walk proposals on a 1-dimensional
Gaussian numerical example with $65535$ samples. The numbers in the brackets correspond to three-times a standard deviation}
\centering
\resizebox{1.\textwidth}{!}{

\begin{tabular}{  @{} l  @{\hspace{3mm}} c @{\hspace{3mm}} c @{} *3c @{}}  \bottomrule
  \multirow{2}{*}{{Method}\hspace{1mm}} 
& \multicolumn{2}{c}{{Independence}\hspace{-5mm}} & \multicolumn{3}{c}{{Random Walk}}\\
\cmidrule{2-3} \cmidrule{5-6}
& {\hspace{8mm}{Mean}\hspace{8mm}} & {\hspace{8mm}{MSE}\hspace{8mm}} &&
	{{Mean}} &{{MSE}}  \\ \midrule

	  PSR Metropolis-Hastings          	  & $-1.64\times 10^{-4}$      
	  & $3.60 \times 10^{-5} (\pm 6.96 \times 10^{-6})$   
	       
	  &&     $-8.76\times 10^{-5}$  &  $6.76 \times 10^{-5} (\pm 6.56 \times 10^{-6})$   \\   

       QMC Metropolis-Hastings     	  & $1.96\times 10^{-4}$       
       & $5.17 \times 10^{-6} (\pm 1.61 \times 10^{-6})$        
       && $1.17 \times 10^{-4}$   & $2.88 \times 10^{-5} (\pm 6.68 \times 10^{-6})$ \\ 

	    IS-MP-QMCMC (4)        	  & $-1.97\times 10^{-4}$        
	    & $3.56 \times 10^{-6} (\pm 3.56 \times 10^{-7})$        
       && $-1.20 \times 10^{-3}$   & $2.74 \times 10^{-5} (\pm 7.70 \times 10^{-6})$ \\ 
      
	   Adapt.\ IS-MP-QMCMC (4)         	  & $-1.64\times 10^{-4}$       
	   & $4.31 \times 10^{-6} (\pm 2.94 \times 10^{-7})$        
		&&   $-1.44 \times 10^{-3}$   & $2.81\times 10^{-5} (\pm 8.11 \times 10^{-6})$  \\ 

      IS-MP-QMCMC (32)     	  & $-2.10\times 10^{-5}$        
      & $7.72 \times 10^{-7} (\pm 1.53 \times 10^{-7})$        
      && $-2.63 \times 10^{-4}$   & $1.07 \times 10^{-5} (\pm 1.52 \times 10^{-6})$ \\ 

      Adapt.\ IS-MP-QMCMC (32)         	  & $-1.15\times 10^{-4}$        
      & $7.83 \times 10^{-7} (\pm 1.60 \times 10^{-7})$     
        && $-5.62 \times 10^{-4}$		& $1.26 \times 10^{-5} (\pm 1.44 \times 10^{-6})$ 		 \\  

      IS-MP-QMCMC (256) 			& $9.44\times 10^{-5}$		
      & $5.32 \times 10^{-7} (\pm 1.29 \times 10^{-7})$			
	    && $-4.39 \times 10^{-4}$  &  $1.21 \times 10^{-5} (\pm 2.78 \times 10^{-6})$ \\ 

        Adapt.\ IS-MP-QMCMC (256) 			& $-1.29\times 10^{-5}$		
        & $3.21 \times 10^{-7} (\pm 6.17 \times 10^{-8})$			
	   &&  $-3.29 \times 10^{-4}$  & $1.07 \times 10^{-5} (\pm 1.35 \times 10^{-6})$ \\ 

       
 \bottomrule

\end{tabular}
\label{table:results_bayesian_linear_regression}
}
\end{table}

\subsection{Empirical Results: Bayesian logistic regression}
\label{adaptive_is_mp_qmcmc_empirical_results}

We now apply the proposed adaptive important sampling method as described
in Algorithm \ref{algorithm:adaptive_importance_sampling_mp_qmcmc}
to the Bayesian logistic regression from Section 
\ref{subsubsec:emp_results_adaptive_IS_mp_mcmc}. As proposal sampler,
we apply a Gaussian kernel that samples independently of previous samples
and adaptively updates its mean and covariance according to the weighted
estimates from Algorithm \ref{algorithm:adaptive_importance_sampling_mp_qmcmc},
respectively. Note that no gradient information about the posterior is needed
in this case. We compare the performance of the respective QMC algorithm to its pseudo-random version in terms of the empirical variance of their posterior mean estimates. As a reference, we perform the same experiments also for standard Metropolis-Hastings and SmMALA Metropolis-Hastings. Thereby, we employ the same total number of samples $n$ as the respective multiple proposal algorithms produce, i.e.\ $n=LN$, where $L$ denotes the number of iterations and $N$ the number of proposals used. This guarantees a fair comparison between the single and multiple proposal algorithms. The results for the datasets of varying dimension and increasing proposal numbers (total number of samples) are displayed in Figure 
\ref{fig:emprVar_blinreg_plus_mh_smmala}. Again, we observe
an improved convergence rate in the empirical variance of our estimator, which in many cases is close to $n^{-2}$. Further, Table \ref{table:results_bayesian_logistic_regression_emprVar} displays the associated reductions in empirical variance between the adaptive IS-MP-QMCMC and the other algorithms. Due to the improved convergence, the reductions increase with increasing proposal numbers and thus total numbers of proposals, leading to significant reductions in all models and compared to all algorithms for a large number of proposals. In some situations, a reduction of more than $4$ orders of magnitude compared to standard Metropolis-Hastings can be observed, and more than $2$ orders of magnitude compared to the SmMALA Metropolis-Hastings as well as compared to pseudo-random IS-MP-MCMC. For any model and throughout all number of proposals employed, the reduction compared to the two reference algorithms is significant.

\begin{figure}[H]
    \centering
    \begin{subfigure}[b]{0.45\textwidth}
        \includegraphics[width=\textwidth]{./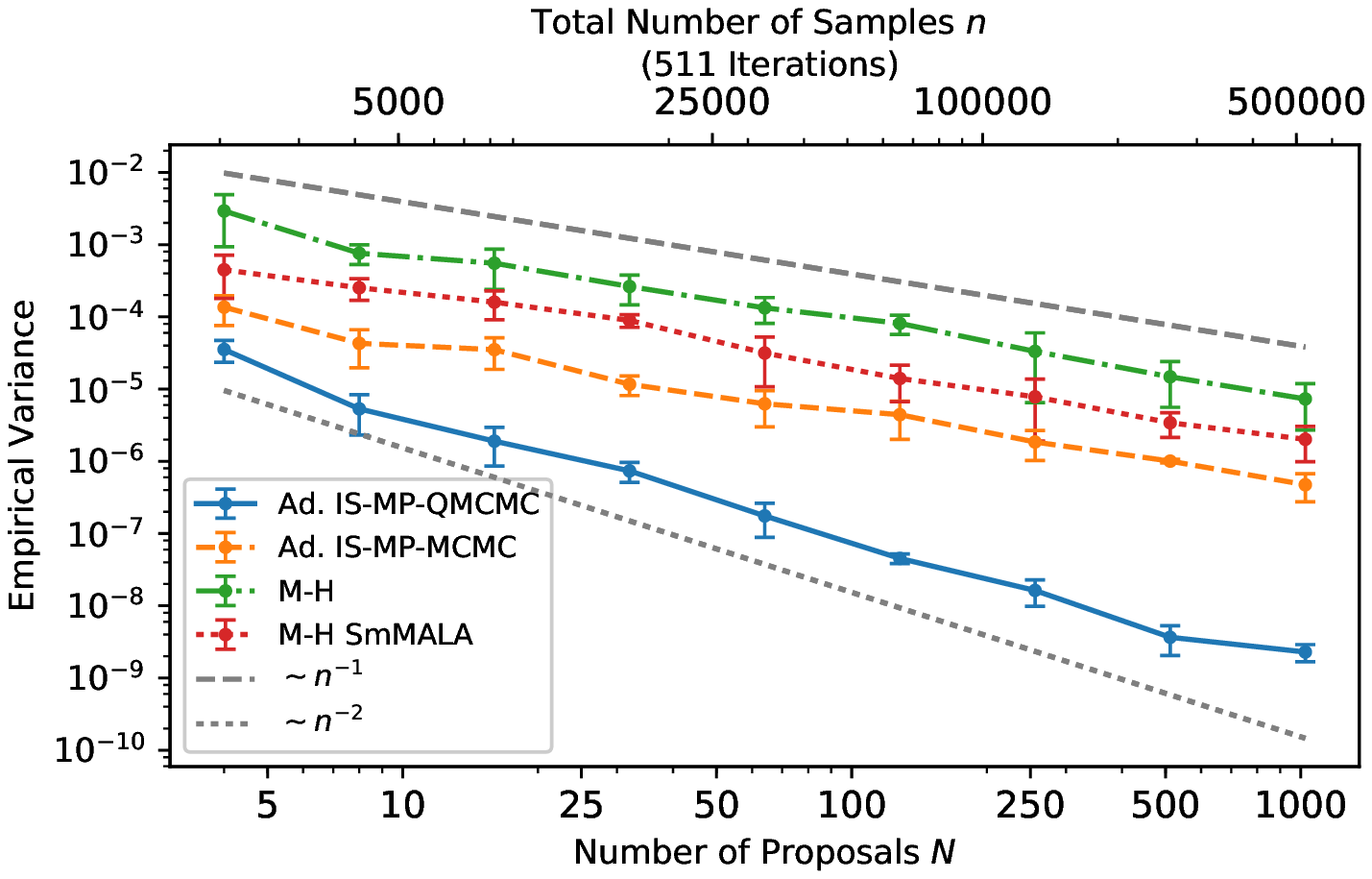}
        \subcaption{Ripley, $d=3$}
    \end{subfigure} 
        \begin{subfigure}[b]{0.45\textwidth}
        \includegraphics[width=\textwidth]{./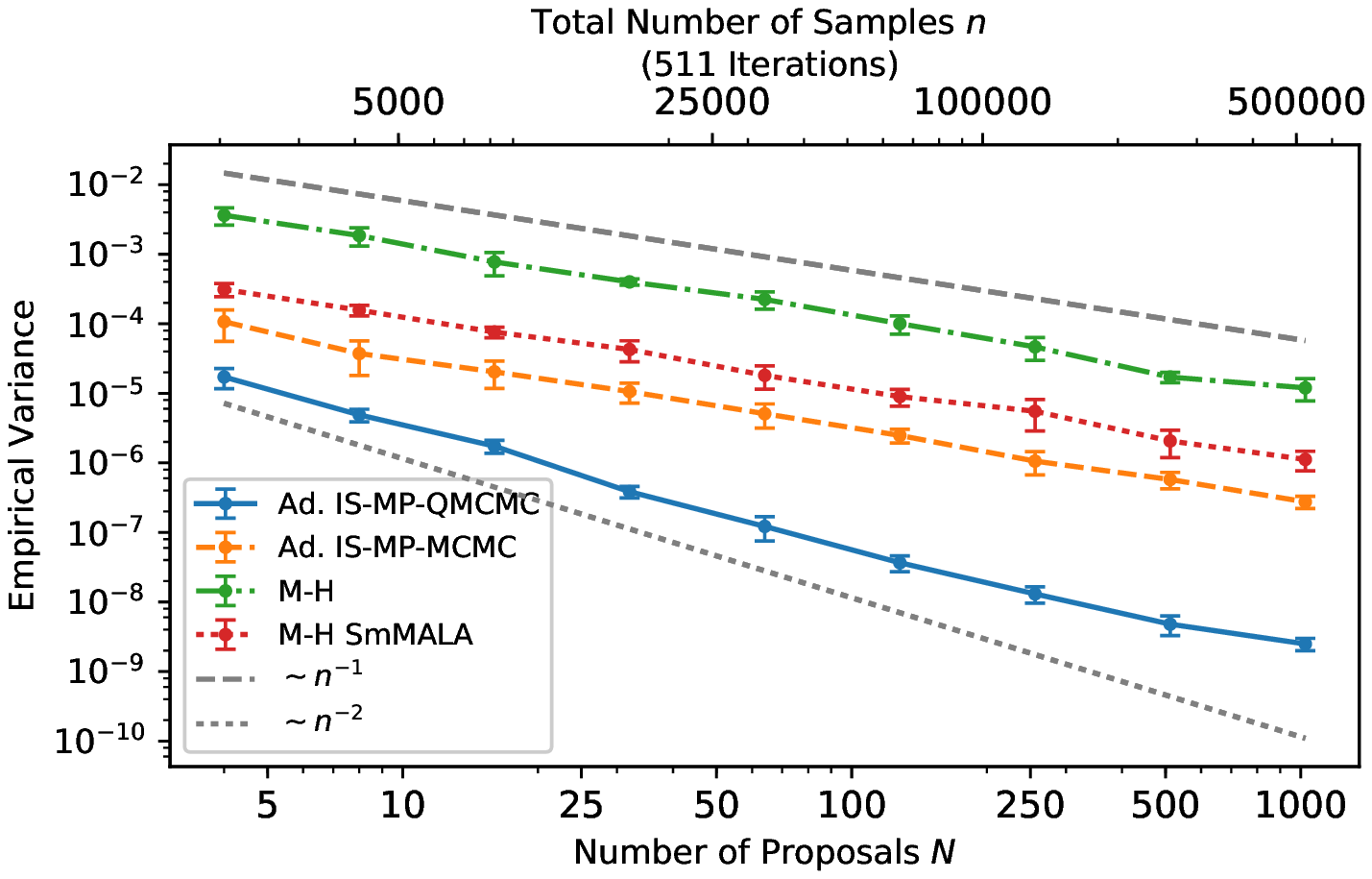}
        \subcaption{Pima, $d=8$}
    \end{subfigure} 
    \begin{subfigure}[b]{0.45\textwidth}
        \includegraphics[width=\textwidth]{./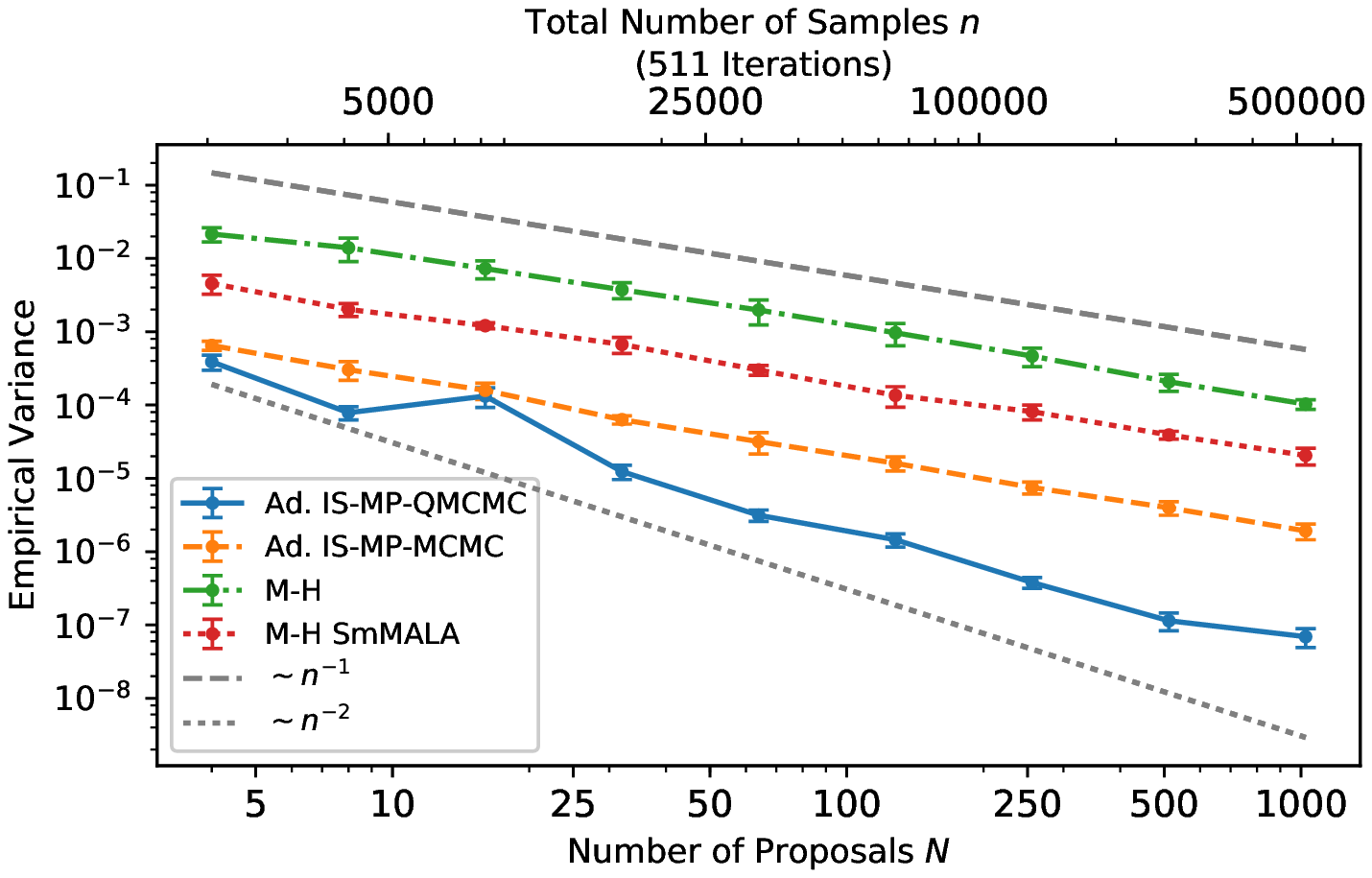}
        \subcaption{Heart, $d=14$}
    \end{subfigure} 
        \begin{subfigure}[b]{0.45\textwidth}
        \includegraphics[width=\textwidth]{./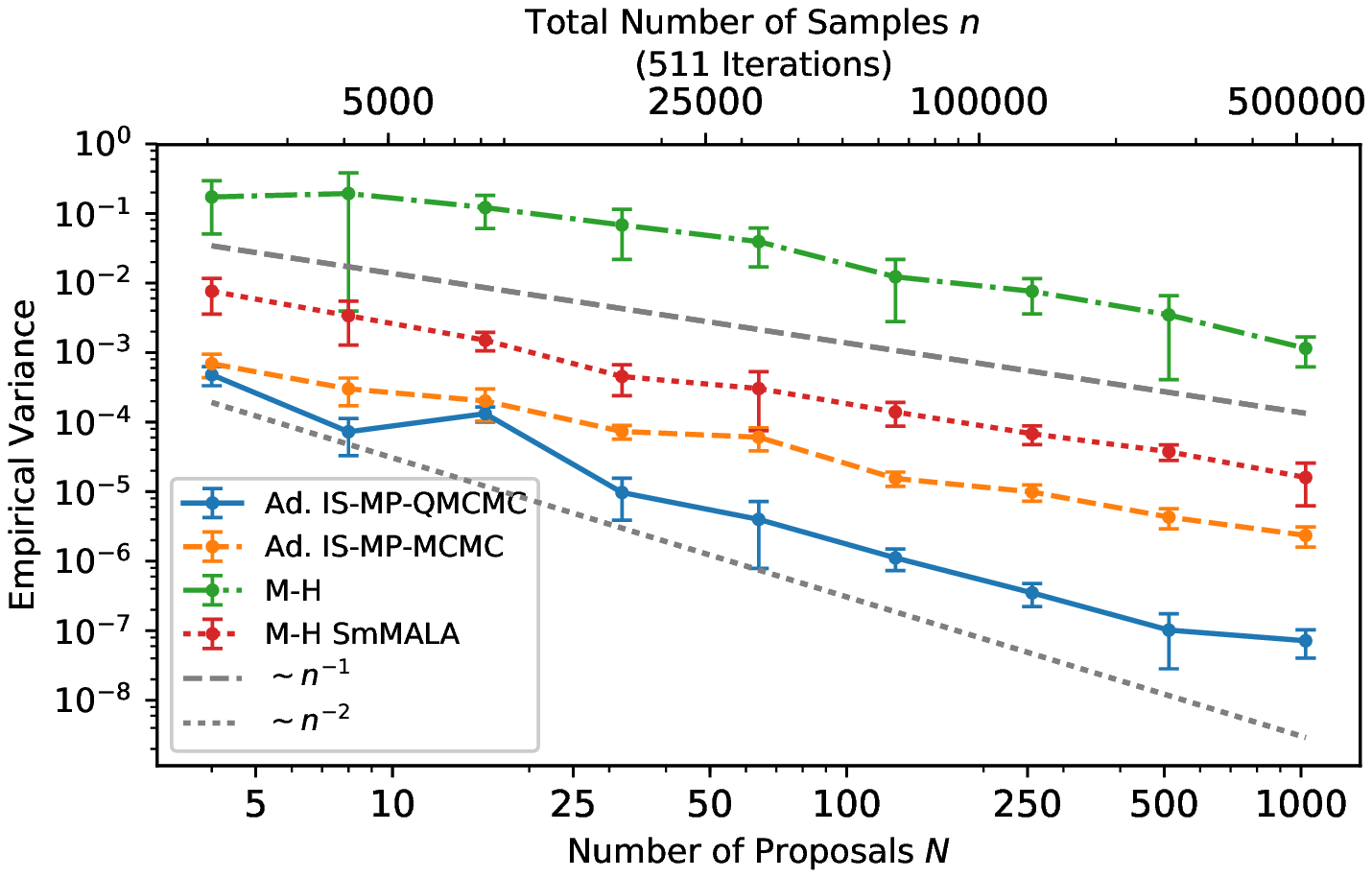}
        \subcaption{Australian, $d=15$}
    \end{subfigure}     
        \begin{subfigure}[b]{0.45\textwidth}
        \includegraphics[width=\textwidth]{./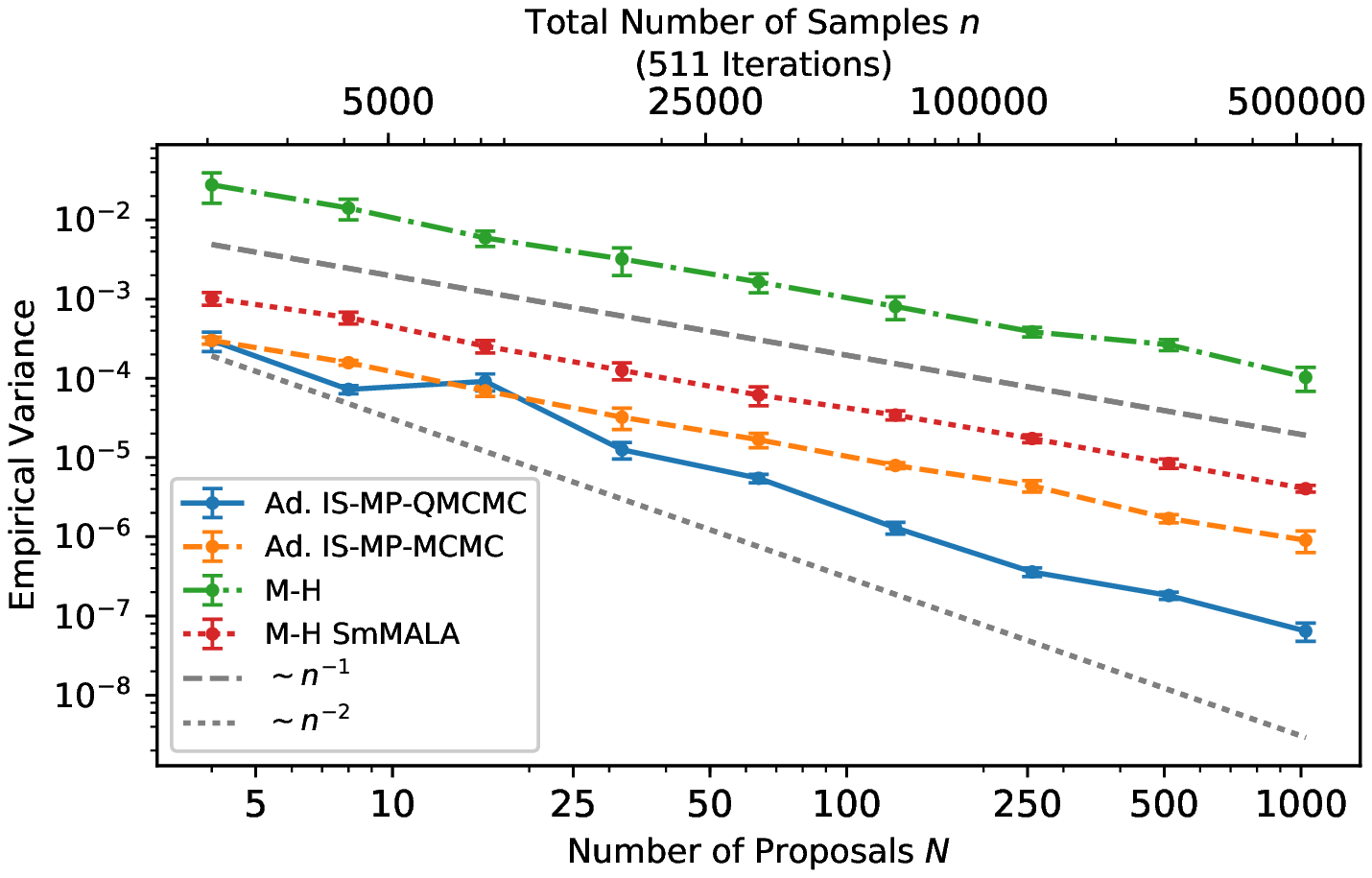}
        \subcaption{German, $d=25$}
    \end{subfigure}       
    \caption{\small{Empirical variance of adaptive IS-MP-QMCMC compared to adaptive IS-MP-MCMC,
SmMALA Metropolis-Hastings (M-H SmMALA) and standard Metropolis-Hastings (M-H), 
resp., for the Bayesian logistic regression problem from
\ref{subsubsec:emp_results_adaptive_IS_mp_mcmc}. Here, M-H SmMALA was tuned
to an approximately optimal acceptance rate of $50$-$60 \%$, and M-H to $20$-$25 \% $.
For the adaptive methods, a Burn-In of between 0-8192 samples, increasing with 
dimensionality $d$, was discarded. The results are based on $25$ MCMC simulations and
the errors bands correspond to three times a standard deviation}} 

    \label{fig:emprVar_blinreg_plus_mh_smmala}
\end{figure}

\begin{table}[h]
\ra{1.2}
\centering
\caption{
Ratio of empirical variances of adaptive IS-MP-QMCMC compared to adaptive IS-MP-MCMC,
SmMALA Metropolis-Hastings (M-H SmMALA) and standard Metropolis-Hastings (M-H), 
resp., for the Bayesian logistic regression problem from
\ref{subsubsec:emp_results_adaptive_IS_mp_mcmc}. Here, M-H SmMALA was tuned
to an approximately optimal acceptance rate of $50$-$60 \%$, and M-H to $20$-$25 \% $.
The results are based on $25$ MCMC simulations}
\centering
\resizebox{1.\textwidth}{!}{

\begin{tabular}{  @{} l  @{} c @{} c @{} *9c @{}}  \bottomrule
  \multirow{2}{*}{} 
& \multicolumn{1}{c}{{Ad.\ IS-MP-QMCMC}} & \multicolumn{10}{c}{{Ratio in empirical variance for $N=$}} \\
\cmidrule{2-2} \cmidrule{4-12}
 & {{VS}} &  & $4$& $8$& $16$ & $32$ & $64$ & $128$ & $256$ & $512$ & $1024$ \\
 \midrule

& {{Ad.\ IS-MP-MCMC}} &  &
	$3.9$ & $8.1$ & $18.5$ & $15.8$ & $35.7$ & $97.7$ & $113.5$ &
	$274.7$ & $207.1$ \\ 
Ripley & {{M-H SmMALA}} &  & $12.7$& $47.5$ & $84.0$ & $121.0$ &
	$179.9$ & $309.7$ & $479.4$ & $933.2$ & $880.7$ \\
& {{M-H}} &  & $82.8$ & $142.8$ & $290.8$ & $356.8$ & $759.3$ &
	$1794.6$ &$2040.6$ & $4040.5$ & $3191.7$\\	\midrule

& {{Ad.\ IS-MP-MCMC}}  &  & $6.2$& $7.6$ & $11.7$ & $27.5$ &
	$41.7$ & $67.6$ & $81.1$ & $120.2$ & $110.3$ \\
Pima & {{M-H SmMALA}}& \hspace{5mm} &
	$18.1$ & $32.0$ & $43.7$ & $110.8$ & $148.5$ & $244.7$ & $422.7$ &
	$430.8$ & $448.9$ \\ 
& {{M-H}} &  & $211.3$ & $378.5$ & $445.3$ & $1038.2$ & $1837.7$ &
	$2739.3$ &$3561.0$ & $3568.3$ & $4808.5$\\	\midrule  

 & {{Ad.\ IS-MP-MCMC}}  &  & $1.6$& $3.9$ & $1.2$ & $5.1$ &
	$10.1$ & $11.0$ & $19.8$ & $34.7	$ & $ 27.7$ \\
Heart & {{M-H SmMALA}}& \hspace{5mm} &
	$11.7$ & $25.6$ & $9.1$ & $54.2$ & $95.8$ & $93.2$ & $213.9$ &
	$342.0$ & $296.1$ \\ 
& {{M-H}} &  & $55.2$ & $177.7$ & $54.8$ & $301.7$ & $629.2$ &
	$665.3$ &$1225.1$ & $1815.6$ & $1486.8$\\	\midrule  

 & {{Ad.\ IS-MP-MCMC}}  &  & $1.4$& $4.1$ & $1.5$ & $7.5$ &
	$15.2$ & $13.9$ & $28.2$ & $42.1$ & $32.8$ \\
Australian & {{M-H SmMALA}}& \hspace{5mm} &
	$15.9$ & $46.7$ & $11.4$ & $46.6$ & $76.0$ & $125.1$ & $193.7$ &
	$367.5$ & $222.5$ \\ 
& {{M-H}} &  & $359.6$ & $2664.2$ & $917.4$ & $7017.4$ & $9842.3$ &
	$10985.6$ &$21625.8$ & $34043.2$ & $16010.5$\\	\midrule 

 & {{Ad.\ IS-MP-MCMC}}  &  & $1.0$& $2.2$ & $0.8$ & $2.6$ &
	$3.1$ & $6.1$ & $12.2$ & $9.4$ & $14.0$ \\
German & {{M-H SmMALA}}& \hspace{5mm} &
	$3.4$ & $8.1$ & $2.8$ & $10.0$ & $11.2$ & $26.5$ & $48.3$ &
	$46.5$ & $62.6$ \\ 
& {{M-H}} &  & $92.1$ & $195.0$ & $64.8$ & $255.7$ & $301.0$ &
	$622.9$ &$1076.5$ & $1465.6$ & $1595.2$\\	
		      
 \bottomrule

\end{tabular}
\label{table:results_bayesian_logistic_regression_emprVar}
}
\end{table}

\subsection{Empirical Results: Bayesian inference in non-linear differential equations}
\label{subsec:bayesian_inference_non_linear_odes}

An important category of inverse problems concern the study of uncertainty quantification in dynamical systems given by a set of ordinary differential equations (ODEs). Typically, such a system can be formalised by $M$ coupled ODEs and a model parameter $\vec{\theta} \in \mathbb{R}^d$, which describes the dynamics of the system's state $\vec{x}\in \mathbb{R}^M$ in terms of its time derivative by $\mathrm{d}\vec{x}/ \mathrm{d}t = \vec{f}(\vec
{x}, \vec{\theta}, t)$. Given state observations $\vec{y}(t)$ at $T$ distinct points in time, our aim is to infer about the underlying parameter $\vec{\theta}$ and, more specifically, about integral quantities $\int g(\vec{\theta}) \mathrm{d}\vec{\theta}$ for an integrable scalar-valued function $g$. An observation $\vec{y}(t)$ at time $t$ is usually subject to a measurement error, which can be modeled as $\vec{y}(t) = \vec{x}(t) + \bm{\varepsilon}(t)$, where $\bm{\varepsilon}(t)\in \mathbb{R}^M$ states a suitable multivariate noise variate at time $t$. Often, $\bm{\varepsilon}(t)$ is Gaussian with zero mean and standard deviation $\sigma_m$ in the $n$th component for $m=1,...,M$. Given $T$ distinct observations, the observed system can be summarised in matrix notation by $Y = X+E$, where $Y,X,E$ denote $T \times M$ matrices whose rows correspond to the observation process at the distinct $T$ points in time. To generate a sample $X$ one needs to solve the underlying set of ODEs given the model parameter $\vec{\theta}$ and an initial condition $\vec{x}_0$, i.e.\ $X = X(\vec{\theta}, \vec{x}_0)$. If $\pi(\vec{\theta})$ denotes the prior for $\vec{\theta}$, the posterior density of $\vec{\theta}|Y$ can then be expressed as
\begin{align}
\pi(\vec{\theta}|Y) \propto \pi(\vec{\theta}) 
\prod_{m} \mathcal{M}\left(Y_{1:T,m} | X(\vec{\theta}, \vec{x}_0)_{1:T,m}, \Sigma_n \right).
\end{align}
In the following, experiments based on the adaptive importance sampling QMCMC scheme for multiple proposals introduced in Section \ref{subsec:adaptive_is_mp_qmcmc} are performed for two different ODE models, namely the Lotka-Volterra and the FitzHugh-Nagumo model.

\subsubsection{Lotka-Volterra}
\label{subsubsec:lotvol}

The Lotka-Volterra equations are a set of two non-linear ODEs that describe the interaction between two species in a predator-prey relationship. Formally, this can be expressed as
\begin{align}
\frac{\mathrm{d}u}{\mathrm{d}t} &= \alpha u - \beta u v\\
\frac{\mathrm{d}v}{\mathrm{d}t} &= \gamma u v - \delta v,
\end{align}
where $u$ and $v$ represent the population of prey and predators, respectively, and $\alpha, \beta, \gamma, \delta >0$ determine the interaction between the two species given $\mathrm{d}u/\mathrm{d}t$ and $\mathrm{d}v/\mathrm{d}t$. 

We used 400 data points generated by the respective models between $t\in [0,8]$. The respective model parameters were chosen as $\alpha = 1.8, \beta=0.5, \gamma=2.5$ and $\delta = 1$, and the initial conditions as $u(0)=10$ and $v(0)=5$. To the model state outcomes was then added a Gaussian noise with standard deviation equal to $0.25$. In Figure \ref{fig:ode_data}, the underlying true state trajectories together with their noisy measurements are displayed.

\subsubsection{FitzHugh–Nagumo}
\label{subsubsec:fitznag}

The FitzHugh-Nagumo model is a set of two non-linear ODEs that describe the dynamics of an excitable system in terms of two states, namely a membrane voltage $u$ and a recovery variable $v$, defined by
\begin{align}
\frac{\mathrm{d}u}{\mathrm{d}t} &= 
\gamma\left( u - \frac{u^3}{3} + v \right)\\
\frac{\mathrm{d}v}{\mathrm{d}t} &= 
-\left( \frac{u-\alpha + \beta v}{\gamma} \right).
\end{align}
Here, $\alpha, \beta$ and $\gamma$ serve as scaling parameters and to determine the unstable equilibrium state value.

The underlying data consists of 200 data points produced by the FitzHugh-Nagumo model between $t\in [0,2]$ with model parameters $\alpha = 0.5, \beta=0.5, \gamma=1.5$ and initial conditions $u(0)=-1$ and $v(0)=1$. A Gaussian noise with standard deviation $1$ was then added to the model outcomes. Figure \ref{fig:ode_data} (b) shows the model outcomes and the associated noisy observations.

\begin{figure}[h]
    \centering
    \begin{subfigure}[b]{0.45\textwidth}
        \includegraphics[width=\textwidth]{./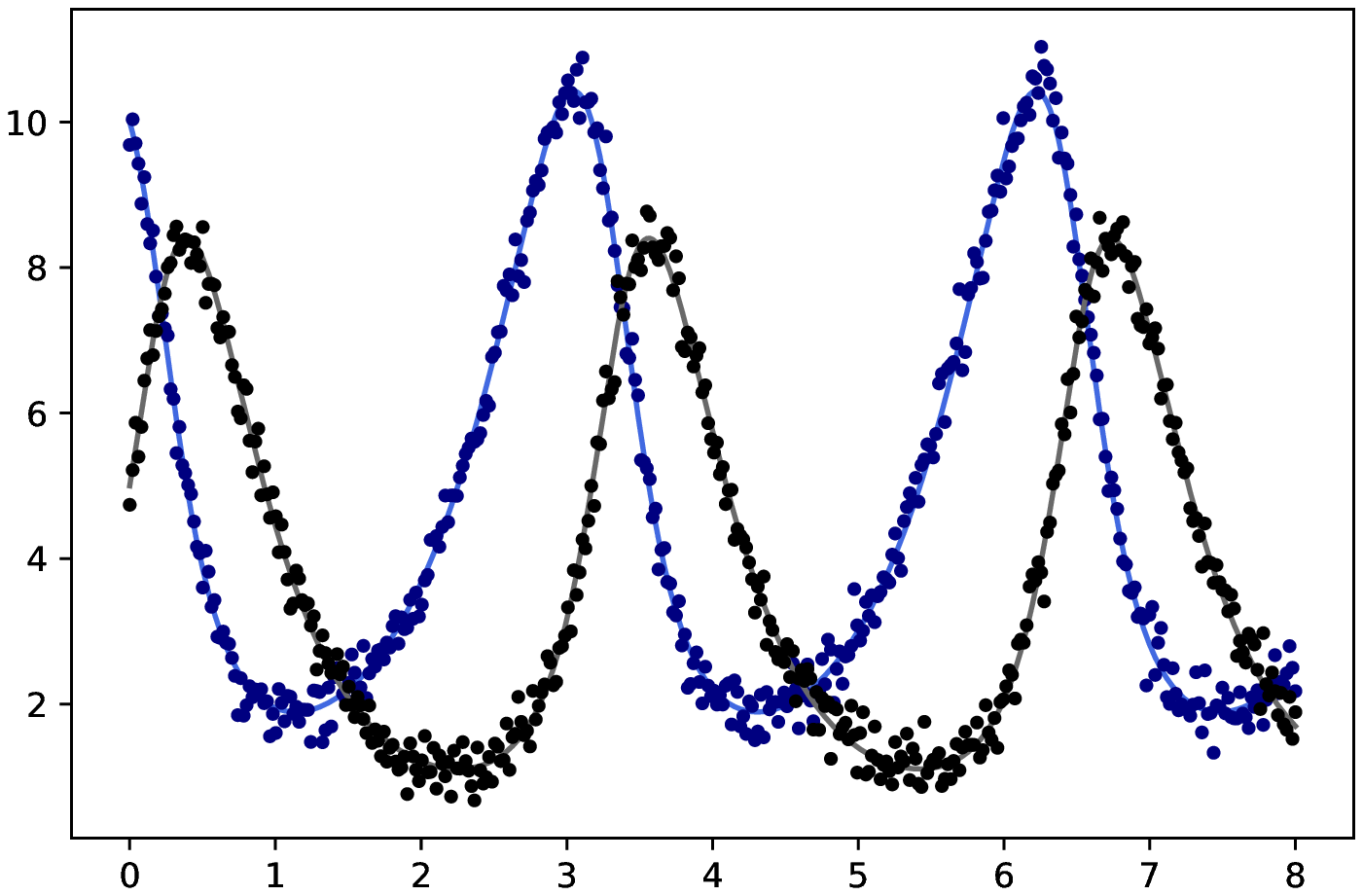}
        \subcaption{Lotka-Volterra}
    \end{subfigure}     
    \begin{subfigure}[b]{0.45\textwidth}
        \includegraphics[width=\textwidth]{./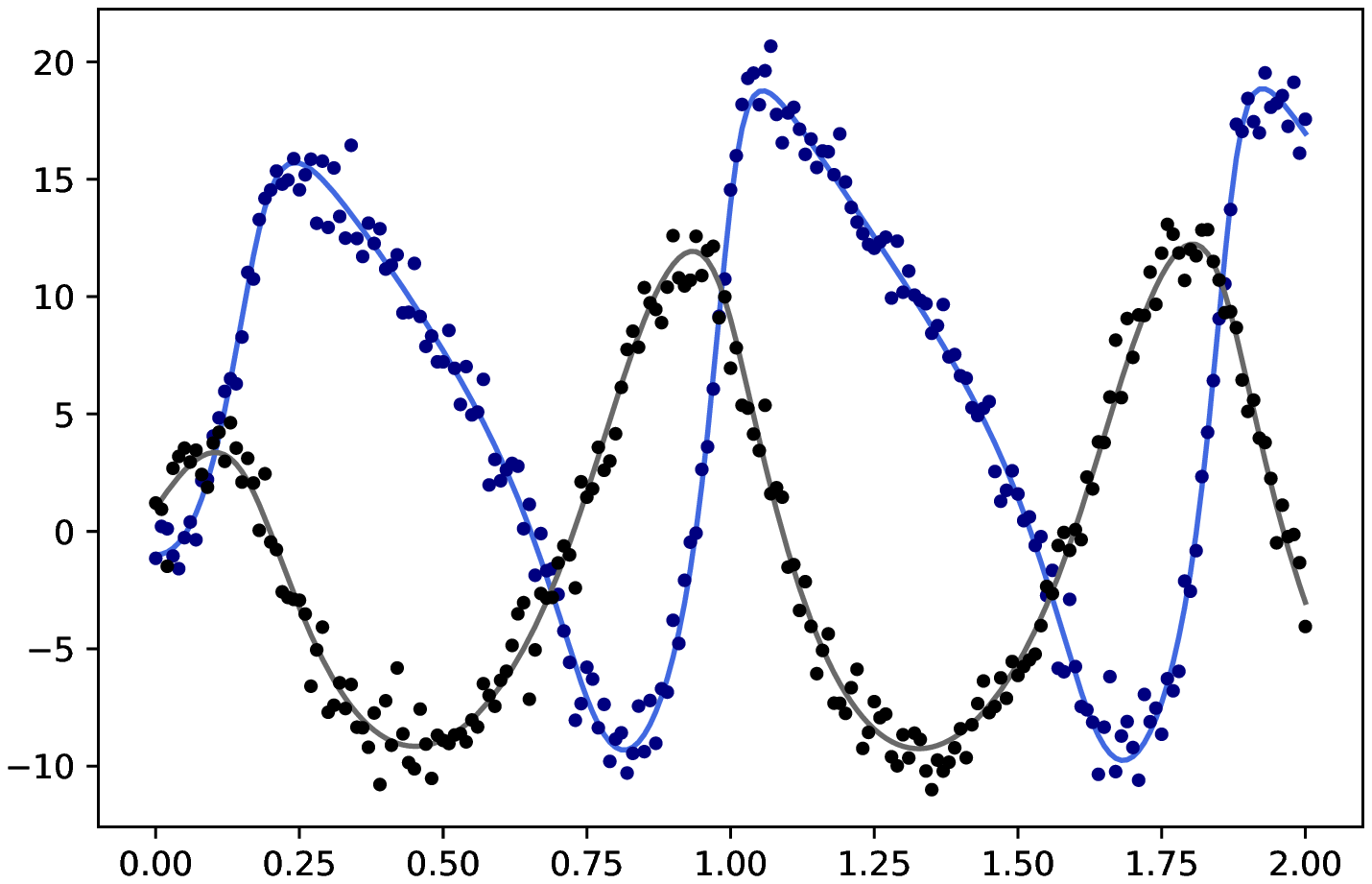}
        \subcaption{FitzHugh-Nagumo}
    \end{subfigure}       
    \caption{\small{Output for both states $u$ and $v$ in (a) the Lotka-Volterra model with parameters $\alpha = 1.8, \beta=0.5, \gamma=2.5$ and $\delta = 1$, and in (b) the FitzHugh-Nagumo model with parameters $\alpha=0.5, \beta=0.5$ and $\gamma=1.5$, respectively; the dots correspond to the respective noisy data}} 
    \label{fig:ode_data}
\end{figure}

\subsubsection{Numerical results}
 
The performance of using QMC numbers compared to using pseudo-random numbers as driving sequence is investigated numerically for the adaptive IS-MP-MCMC algorithm. As a reference, the respective simulations are also performed for a random-walk Metropolis-Hastings within Gibbs algorithm, which corresponds to performing a Metropolis-Hastings step within each directional component update in a Gibbs sampler. The standard Metropolis-Hastings algorithm suffers severely from low mixing for the considered problems, and therefore did not qualify as a performance reference. To satisfy fairness of comparison, the Metropolis-Hastings within Gibbs algorithm produces the same number $n$ of total samples as the multiple proposal algorithms, i.e.\ $n=LN$ with $L$ denoting the number of iterations and $N$ the number of proposals.  

Non-linear ODEs generally produce corresponding non-linear features in the posterior distribution, which can result in the emergence of multiple local maxima. It is therefore germane to ensure for an underlying MCMC method not to dwell in a wrong mode. However, to allow for the comparison of sampling efficiency measured by the empirical variance of estimates, we employ the respective MCMC methods initialised on the true mode. As initial proposal mean and covariance in the adaptive IS-MP-MCMC algorithms a rough posterior mean and covariance estimate was used. The former further served as initial value for the Metropolis-Hastings within Gibbs algorithm, whose steps sizes for individual components was chosen to meet an acceptance rate between $20$-$40 \%$.

The outcomes of the numerical experiments associated to the inference problems for the ODE models introduced above are shown in Figure \ref{fig:emprVar_ode_plus_mig} and Table \ref{table:results_ode_inference_emprVar}. As a prior, a Gamma distribution with shape parameter equal to $1$ and scale parameter equal to $3$, truncated at zero, was employed in both model problems. A clear improvement in the rate of convergence, being close to $n^{-2}$ in both the Lotka-Volterra and the FitzHugh-Nagumo case can be observed. In addition, we observe significant reductions in empirical variance for increasing numbers of proposals for adaptive IS-MP-QMCMC compared to its pseudo-random version and the reference Metropolis-Hastings within Gibbs algorithm. Compared to the latter, a maximal reduction of over $6$ orders of magnitude could be achieved.

\begin{figure}[h]
    \centering
    \begin{subfigure}[b]{0.45\textwidth}
        \includegraphics[width=\textwidth]{./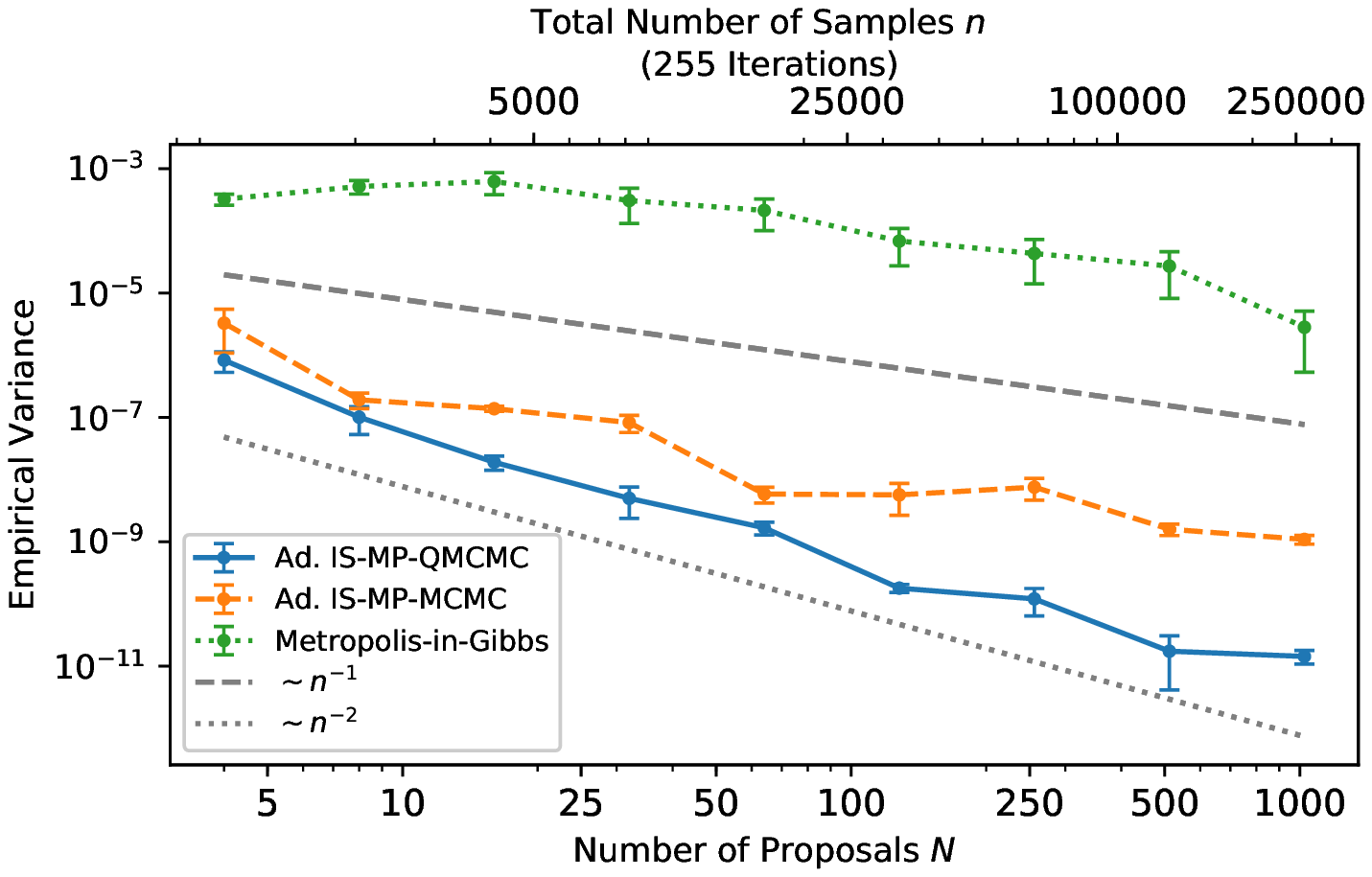}
        \subcaption{Lotka-Volterra}
    \end{subfigure} 
    \begin{subfigure}[b]{0.45\textwidth}
        \includegraphics[width=\textwidth]{./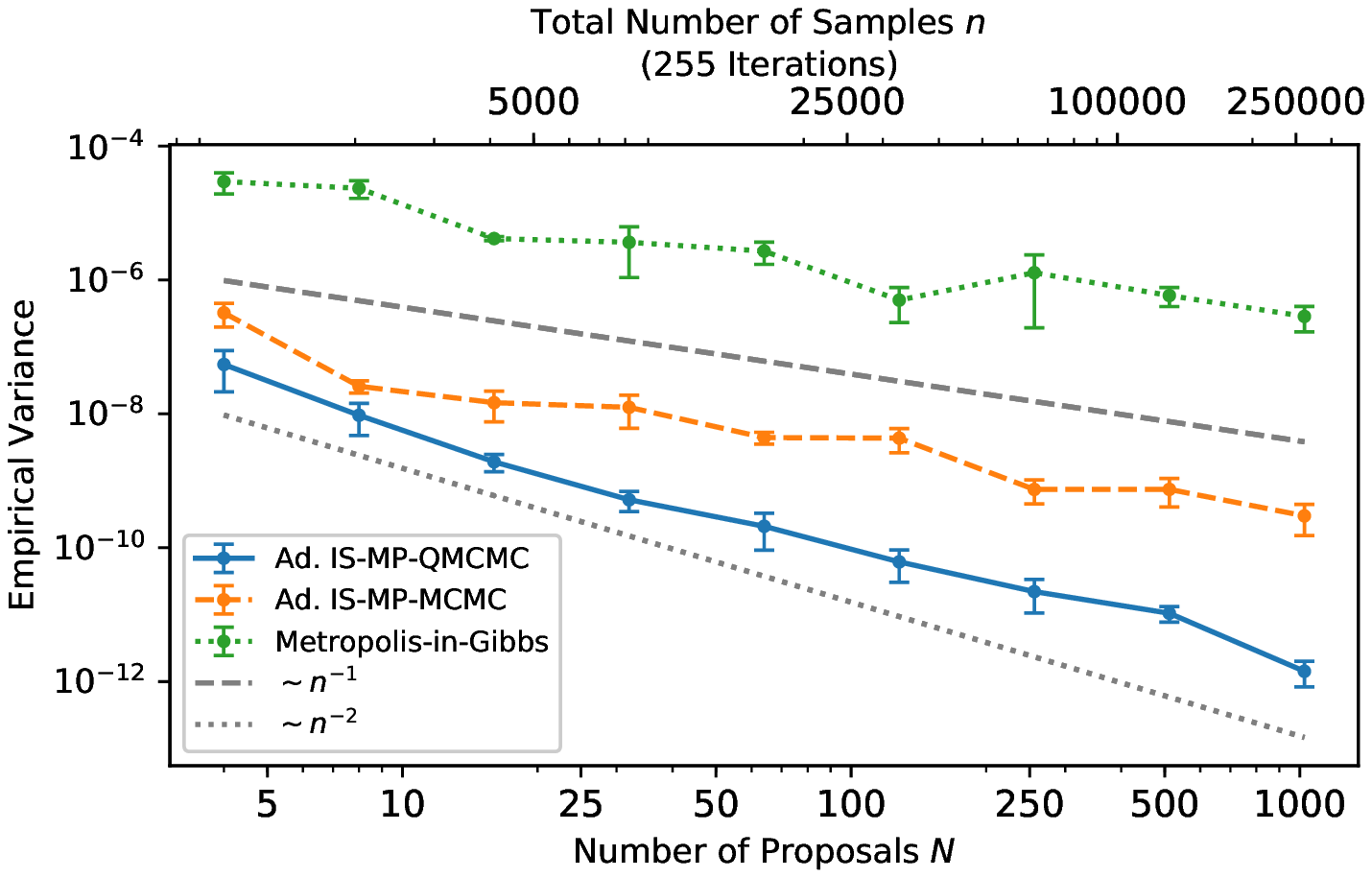}
        \subcaption{FitzHugh-Nagumo}
    \end{subfigure}     
    \caption{\small{Empirical variance of the posterior mean estimate associated to the (a) Lotka-Volterra and (b) FitzHugh-Nagumo model inference problem, 
using a pseudo-random (PSR) vs.\ CUD (QMC) seed, resp., for increasing proposal numbers and sample sizes}; also displayed are the results for the random-walk Metropolis-Hastings within Gibbs as a reference, tuned to an acceptance rate between $20$-$40 \%$. The results are based on $10$ MCMC simulations. The error band correspond to twice a standard deviation} 
    \label{fig:emprVar_ode_plus_mig}
\end{figure}

\begin{table}[h]
\ra{1.2}
\centering
\caption{
Ratio of empirical variances of adaptive IS-MP-QMCMC compared to adaptive IS-MP-MCMC and random-walk Metropolis-Hastings within Gibbs (M-H in Gibbs), 
resp., for the ODE model inference problems from
\ref{subsubsec:lotvol} and \ref{subsubsec:fitznag}, respectively. Here, Metropolis-Hasting within Gibbs was tuned to an acceptance rate between $20$-$40 \% $. The results are based on $10$ MCMC simulations}
\centering
\resizebox{1.\textwidth}{!}{

\begin{tabular}{  @{} l  @{} c @{} c @{} *9c @{}}  \bottomrule
  \multirow{2}{*}{} 
& \multicolumn{1}{c}{{Ad.\ IS-MP-QMCMC}} & \multicolumn{10}{c}{{Ratio in empirical variance for $N=$}} \\
\cmidrule{2-2} \cmidrule{4-12}
 & {{VS}} &  & $4$& $8$& $16$ & $32$ & $64$ & $128$ & $256$ & $512$ & $1024$ \\
 \midrule

\multirow{ 2}{*}{Lotka-Volterra} & {{Ad.\ IS-MP-MCMC}} & \hspace{5mm} &
	$3.9$ & $1.9$ & $7.3$ & $16.5$ & $3.5$ & $31.6$ & $62.7$ &
	$91.5$ & $76.2$ \\ 
 & {{M-H in Gibbs}} &  & $389.3$& $5129.5$ & $32677.7$ & $61495.1$ &
	$127050.1$ & $381491.6$ & $358768.2$ & $1559126.1$ & $196450.0$ \\\midrule
\multirow{ 2}{*}{FitzHugh-Nagumo} & {{Ad.\ IS-MP-MCMC}} &  &
	$5.9$ & $2.7$ & $7.6$ & $24.0$ & $21.1$ & $70.5$ & $33.5$ &
	$70.9$ & $209.6$ \\
	& {{M-H in Gibbs}} &  & $540.3$ & $2455.1$ & $2160.8$ & $7005.2$ &
	$12887.9$ & $8128.5$ & $57942.5$ & $55702.0$ & $199902.3$ \\
 \bottomrule

\end{tabular}
\label{table:results_ode_inference_emprVar}
}
\end{table}

\subsection{Intuition for increased convergence using importance sampling and QMC combined}

Vanilla MP-MCMC has, independently of the underlying driving sequence, an
acceptance mechanism that works similarly to classical Metropolis-Hastings:
some proposals are accepted while others are refused, according to some 
acceptance ratio. This procedure can be viewed as introducing a discontinuity
in the mapping between seed and actual samples. This can be explained more
precisely as follows. Every proposal of dimensionality $d$ is generated
based on a tupel of size $d$ from the underlying driving sequence. If the
driving sequence is QMC, then it is homogeneously distributed on the unit
interval. To refuse a proposal means to discard the underlying tupel from
the driving sequence that created it. However, discarding points means
figuratively speaking creating holes in the otherwise equidistributed 
point set, which damages its homogeneity. Since this homogeneity
is the core of performance gain brought by QMC we cannot expect 
vanilla MP-QMCMC to converge at a faster rate. In contrast to this, in
the extension to importance sampling every proposal is accepted and weighted 
such that the entire underlying seed is respected. Therefore the previously 
mentioned discontinuity is removed. Convergence rates close to $n^{-2}$ known 
from vanilla QMC are then possible as our numerical experiments illustrate.

\subsection{Pros and Cons of using CUD numbers in MCMC}

Whenever the regularity conditions formulated in Section 
\ref{subsection:consistency} are satisfied, consistency of the 
resulting MCMC algorithm is guaranteed. However, in applications
where these do not hold true, e.g.\ in the algorithm used in
\ref{mp_qmcmc_empirical_results}, numerical experiments suggest that
the resulting samples asymptotically still follow the target 
distribution. In none of the experiments that we performed the CUD
version of MP-MCMC along with its extensions (importance sampling,
MALA, nonreversible kernels, etc.) has performed significantly worse
than standard MCMC using a pseudo-random seed. However, in many
simulations the benefit of using CUD numbers as driving sequence
compared to IID numbers is substantial, see Sections \ref{mp_qmcmc_empirical_results}, 
\ref{adaptive_is_mp_qmcmc_simple_Gaussian_example},
\ref{adaptive_is_mp_qmcmc_empirical_results} and \ref{subsec:bayesian_inference_non_linear_odes}.

The downside of using CUD numbers is that one relies on a construction
that must first be developed, implemented and computed previous to 
running any MCMC experiments. The implementation might become expensive
when the number of required samples is large. However, once a finite
sequence is constructed, it can be reused for as many applications and
experiments as one requires. Further, many CUD constructions do not allow
a user-defined specific length of the sequence, but rather certain
unique lengths. For example, some CUD constructions correspond to
PRNGs with a short period $p$. In the case of this work, an LFSR construction
was used which has a period of $p=2^s-1$ for $s \in \mathbb{N}$. Thus,
sample sizes that differ by roughly a factor of two can be achieved,
which limits the general applicability in the case of user-defined sample 
sizes.

\section{Discussion and Conclusions}

There is a rich history of research in QMC, importance sampling and MCMC methods, however these have tended to take different paths and have developed into somewhat separate research communities.  This paper adds to the growing literature that ties together these complementary directions, and provides a demonstration that combining these ideas can result in computationally efficient and practically useful methodology, which we hope will prompt much more research into the intersection of these Monte Carlo techniques.

\subsection{Contributions}

We have significantly built upon a recent generalisation of Metropolis-Hastings, which allows for straight-forward parallelisation of MCMC by making multiple proposals at each iteration, and we have proposed numerous methodological extensions and proven some fundamental theoretical results. In particular, we investigated the use of non-reversible and optimised transition kernels within the proposal sub-sampling step of this method, and compared the relative performance of these approaches through a simulation study.  We then extended this basic algorithm to make use of adaptivity of the proposal kernel and importance sampling, which can be considered as the limiting case of sampling from the finite state Markov chain on the multiple proposals.  In addition, for the seven proposed algorithms, we have proven a variety of theoretical results, including limit theorems, asymptotic unbiasedness of the proposed importance sampling based estimators and ergodicity of the proposed adaptive samplers.

We then showed how this general framework offers a principled and effective way of incorporating CUD numbers from the quasi-Monte Carlo literature into Markov chain Monte Carlo.  In the case of using a driving sequence of CUD numbers, we prove consistency and asymptotic unbiasedness of our method under suitable regularity conditions.  Furthermore, we demonstrated that the use of importance sampling based estimators together with CUD numbers results in an MCMC method, whose mean square error empirically converges at a rate closer to $n^{-2}$ rather than the standard ${n^{-1}}$ of pseudo-randomly driven MCMC algorithms.  We argue that importance sampling removes the discontinuity induced by the acceptance threshold inherent in standard Metropolis-Hastings, thereby incorporating all points from the underlying homogeneously distributed CUD driving sequence. This leads to a smaller discrepancy, which is generally the basis of an increased performance when using QMC over pseudo-random numbers.

\subsection{Further research directions}

The work we have presented in this paper offers many interesting and potentially very useful theoretical, methodological and practical avenues for further research that tie together ideas from the MCMC and QMC literature.

We have provided strong numerical evidence of the increased convergence rates that are possible by incorporating CUD numbers within a highly parallelisable multiple proposal MCMC algorithm, and so a natural question is whether theoretical results on the convergence rate for IS-MP-QMCMC are possible?  While it appears to be a very challenging problem to tackle theoretically, some initial results on the convergence rate of CUD driven MCMC are given in \cite{chen2011consistencythesis}, which may potentially be extended.  The proofs we derive in this paper depend on the existence of a coupling region, which exists for an independent sampler, as well as asymptotically when incorporating adaptivity.  Empirical results suggest that this should also hold for dependent proposals, however a coupling region does not appear to be available for such an argument.  Further research could therefore investigate whether there exists a consistency proof that does not rely on the coupling region condition, perhaps based on the contraction condition in \cite{chen2011consistency} instead.

From a methodological perspective, there are many ways in which our work could be extended, for example investigating the use of variational approximations as proposal distributions, or indeed the use of optimal transport approximations for highly non-Gaussian target densities.  With the advent of RKHS and low-discrepancy sequences tailored to the underlying integrand, more challenging integration problems in very high-dimensional spaces can be more efficiently solved using QMC, circumventing the curse of dimensionality.  In light of this development, the use of QMC in MCMC becomes significantly more relevant, and this link is worthy of further investigation.  Furthermore, in this work we have used one particular construction of CUD numbers, although there is much research currently taking place in this area.  Could other constructions perhaps offer even greater efficiency gains within the proposed framework?

Finally, there are many practical avenues of the above suggestions for research, including investigation of this methodology across a wider range of statistical models, experimental comparisons of different CUD constructions, and eventually the development of robust implementations in software to allow a wider range of practitioners to benefit more easily from the increased convergence rates and parallelisation offered by these multiple proposal quasi-Markov chain Monte Carlo methods.

\bibliographystyle{alphamod}
\bibliography{sample}

\newcommand{\etalchar}[1]{$^{#1}$}
\begin{thebibliography}{CMNO12}

\bibitem[AC93]{albert1993bayesian}
J.~H. Albert and S.~Chib.
\newblock Bayesian analysis of binary and polychotomous response data.
\newblock {\em Journal of the American statistical Association},
  88(422):669--679, 1993.

\bibitem[AD11]{agarwal2011distributed}
A.~Agarwal and J.~C. Duchi.
\newblock Distributed delayed stochastic optimization.
\newblock In {\em Advances in Neural Information Processing Systems}, pages
  873--881, 2011.

\bibitem[AFMP11]{atchad2011adaptive}
Y.~Atchade, G.~Fort, E.~Moulines, and P.~Priouret.
\newblock Adaptive {M}arkov chain {M}onte {C}arlo: {T}heory and {M}ethods.
\newblock 2011.

\bibitem[AKW12]{ahn2012bayesian}
S.~Ahn, A.~Korattikara, and M.~Welling.
\newblock {Bayesian posterior sampling via stochastic gradient Fisher scoring}.
\newblock {\em arXiv preprint arXiv:1206.6380}, 2012.

\bibitem[AM{\etalchar{+}}06]{andrieu2006ergodicity}
C.~Andrieu, {\'E}.~Moulines, et~al.
\newblock On the ergodicity properties of some adaptive {MCMC} algorithms.
\newblock {\em The Annals of Applied Probability}, 16(3):1462--1505, 2006.

\bibitem[AT08]{andrieu2008tutorial}
C.~Andrieu and J.~Thoms.
\newblock A tutorial on adaptive {MCMC}.
\newblock {\em Statistics and Computing}, 18(4):343--373, 2008.

\bibitem[Bar65]{barker1965monte}
A.~Barker.
\newblock Monte {C}arlo calculations of the radial distribution functions for a
  proton electron plasma.
\newblock {\em Australian Journal of Physics}, 18(2):119--134, 1965.

\bibitem[BEB17]{brown2017dieharder}
R.~Brown, D.~Eddelbuettel, and D.~Bauer.
\newblock Dieharder: a random number test suite (v 3.31. 1).
\newblock {\em Accessed: December}, 2017.

\bibitem[BO16]{basu2016transformations}
K.~Basu and A.~B. Owen.
\newblock {Transformations and Hardy--Krause Variation}.
\newblock {\em SIAM Journal on Numerical Analysis}, 54(3):1946--1966, 2016.

\bibitem[BRR11]{bai2011containment}
Y.~Bai, G.~O. Roberts, and J.~S. Rosenthal.
\newblock On the containment condition for adaptive {M}arkov chain {M}onte
  {C}arlo algorithms.
\newblock 21, 01 2011.

\bibitem[Cal14]{calderhead2014general}
B.~Calderhead.
\newblock {A general construction for parallelizing Metropolis-Hastings
  algorithms}.
\newblock {\em Proceedings of the National Academy of Sciences of the United
  States of America}, 111(49):17408--17413, 2014.

\bibitem[CCK77]{ceperley1977monte}
D.~Ceperley, G.~Chester, and M.~Kalos.
\newblock Monte Carlo simulation of a many-fermion study.
\newblock {\em Physical Review B}, 16(7):3081, 1977.

\bibitem[CDO{\etalchar{+}}11]{chen2011consistency}
S.~Chen, J.~Dick, A.~B. Owen, et~al.
\newblock Consistency of Markov chain quasi-Monte Carlo on continuous state
  spaces.
\newblock {\em The Annals of Statistics}, 39(2):673--701, 2011.

\bibitem[CG{\L}{\etalchar{+}}15]{craiu2015stability}
R.~V. Craiu, L.~Gray, K.~{\L}atuszy{\'n}ski, N.~Madras, G.~O. Roberts, J.~S.
  Rosenthal, et~al.
\newblock {Stability of adversarial Markov chains, with an application to
  adaptive MCMC algorithms}.
\newblock {\em The Annals of Applied Probability}, 25(6):3592--3623, 2015.

\bibitem[Cha04]{chaudhary2004acceleration}
S.~K. Chaudhary.
\newblock {\em {Acceleration of Monte Carlo methods using low discrepancy
  sequences}}.
\newblock PhD thesis, UCLA, 2004.

\bibitem[Che67]{chentsov1967pseudorandom}
N.~Chentsov.
\newblock Pseudorandom numbers for modelling {M}arkov chains.
\newblock {\em USSR Computational Mathematics and Mathematical Physics},
  7(3):218--233, 1967.

\bibitem[Che11]{chen2011consistencythesis}
S.~Chen.
\newblock {\em {Consistency and convergence rate of Markov chain quasi-Monte
  Carlo with examples}}.
\newblock Stanford University, 2011.

\bibitem[CL07]{craiu2007acceleration}
R.~V. Craiu and C.~Lemieux.
\newblock {Acceleration of the multiple-try Metropolis algorithm using
  antithetic and stratified sampling}.
\newblock {\em Statistics and Computing}, 17(2):109, 2007.

\bibitem[CMNO12]{chen2012new}
S.~Chen, M.~Matsumoto, T.~Nishimura, and A.~B. Owen.
\newblock {New inputs and methods for Markov chain quasi-Monte Carlo}.
\newblock In {\em Monte Carlo and quasi-Monte Carlo methods 2010}, pages
  313--327. Springer, 2012.

\bibitem[CMO97]{caflisch1997valuation}
R.~E. Caflisch, W.~J. Morokoff, and A.~B. Owen.
\newblock {\em Valuation of mortgage backed securities using Brownian bridges
  to reduce effective dimension}.
\newblock Department of Mathematics, University of California, Los Angeles,
  1997.

\bibitem[CP76]{cranley1976randomization}
R.~Cranley and T.~N. Patterson.
\newblock Randomization of number theoretic methods for multiple integration.
\newblock {\em SIAM Journal on Numerical Analysis}, 13(6):904--914, 1976.

\bibitem[Dev86]{devroye1986non}
L.~Devroye.
\newblock {\em Non-uniform random variate generation}.
\newblock Springer New York, 1986.

\bibitem[DF09]{doerr2009deterministic}
B.~Doerr and T.~Friedrich.
\newblock Deterministic random walks on the two-dimensional grid.
\newblock {\em Combinatorics, Probability and Computing}, 18(1-2):123--144,
  2009.

\bibitem[Dic09]{dick2009quasi}
J.~Dick.
\newblock On quasi-Monte Carlo rules achieving higher order convergence.
\newblock In {\em Monte Carlo and Quasi-Monte Carlo Methods 2008}, pages
  73--96. Springer, 2009.

\bibitem[Dic11]{dick2011higher}
J.~Dick.
\newblock Higher order scrambled digital nets achieve the optimal rate of the
  root mean square error for smooth integrands.
\newblock {\em The Annals of Statistics}, 39(3):1372--1398, 2011.

\bibitem[DJ09]{delmas2009does}
J.-F. Delmas and B.~Jourdain.
\newblock Does waste recycling really improve the multi-proposal
  {M}etropolis-{H}astings algorithm? {A}n analysis based on control variates.
\newblock {\em Journal of Applied Probability}, 46(4):938--959, 2009.

\bibitem[DKS13]{dick2013high}
J.~Dick, F.~Y. Kuo, and I.~H. Sloan.
\newblock High-dimensional integration: the quasi-Monte Carlo way.
\newblock {\em Acta Numerica}, 22:133--288, 2013.

\bibitem[DP10]{dick2010digital}
J.~Dick and F.~Pillichshammer.
\newblock {\em Digital nets and sequences: discrepancy theory and quasi--Monte
  Carlo integration}.
\newblock Cambridge University Press, 2010.

\bibitem[EHL98]{entacher1998quasi}
K.~Entacher, P.~Hellekalek, and P.~L’Ecuyer.
\newblock {Quasi-Monte Carlo node sets from linear congruential generators}.
\newblock {\em Monte Carlo and Quasi-Monte Carlo Methods}, pages 188--198,
  1998.

\bibitem[Fin47]{finney1947estimation}
D.~Finney.
\newblock The estimation from individual records of the relationship between
  dose and quantal response.
\newblock {\em Biometrika}, 34(3/4):320--334, 1947.

\bibitem[Fre04]{frenkel2004speed}
D.~Frenkel.
\newblock Speed-up of {M}onte {C}arlo simulations by sampling of rejected
  states.
\newblock {\em Proceedings of the National Academy of Sciences of the United
  States of America}, 101(51):17571--17575, 2004.

\bibitem[Fre06]{frenkel2006waste}
D.~Frenkel.
\newblock {Waste-recycling Monte Carlo}.
\newblock In {\em Computer Simulations in Condensed Matter Systems: From
  Materials to Chemical Biology Volume 1}, pages 127--137. Springer, 2006.

\bibitem[GC11]{girolami2011riemann}
M.~Girolami and B.~Calderhead.
\newblock Riemann manifold {L}angevin and {H}amiltonian {M}onte {C}arlo
  methods.
\newblock {\em Journal of the Royal Statistical Society: Series B (Statistical
  Methodology)}, 73(2):123--214, 2011.

\bibitem[GS90]{gelfand1990sampling}
A.~E. Gelfand and A.~F. Smith.
\newblock Sampling-based approaches to calculating marginal densities.
\newblock {\em Journal of the American statistical Association},
  85(410):398--409, 1990.

\bibitem[GSW09]{gnewuch2009finding}
M.~Gnewuch, A.~Srivastav, and C.~Winzen.
\newblock {Finding optimal volume subintervals with k points and calculating
  the star discrepancy are NP-hard problems}.
\newblock {\em Journal of Complexity}, 25(2):115--127, 2009.

\bibitem[Has70]{hastings1970monte}
W.~K. Hastings.
\newblock {Monte Carlo sampling methods using Markov chains and their
  applications}.
\newblock {\em Biometrika}, 57(1):97--109, 1970.

\bibitem[Hla61]{hlawka1961funktionen}
E.~Hlawka.
\newblock {Funktionen von beschr{\"a}nkter Variation in der Theorie der
  Gleichverteilung}.
\newblock {\em Annali di Matematica Pura ed Applicata}, 54(1):325--333, 1961.

\bibitem[HLMS06]{haario2006dram}
H.~Haario, M.~Laine, A.~Mira, and E.~Saksman.
\newblock {D}{R}{A}{M}: efficient adaptive {M}{C}{M}{C}.
\newblock {\em Statistics and Computing}, 16(4):339--354, 2006.

\bibitem[HST{\etalchar{+}}01]{haario2001adaptive}
H.~Haario, E.~Saksman, J.~Tamminen, et~al.
\newblock An adaptive Metropolis algorithm.
\newblock {\em Bernoulli}, 7(2):223--242, 2001.

\bibitem[Kok42]{koksma1942ageneral}
J.~F. Koksma.
\newblock A general theorem from the theory of uniform distribution modulo 1.
\newblock {\em Mathematica, Zutphen}, B:11:7–11, 1942.

\bibitem[KS05]{kuo2005lifting}
F.~Y. Kuo and I.~H. Sloan.
\newblock Lifting the curse of dimensionality.
\newblock {\em Notices of the AMS}, 52(11):1320--1328, 2005.

\bibitem[LB14]{landau2014guide}
D.~P. Landau and K.~Binder.
\newblock {\em {A guide to Monte Carlo simulations in statistical physics}}.
\newblock Cambridge university press, 2014.

\bibitem[Lec99]{l1999tables}
P.~Lecuyer.
\newblock Tables of linear congruential generators of different sizes and good
  lattice structure.
\newblock {\em Mathematics of Computation of the American Mathematical
  Society}, 68(225):249--260, 1999.

\bibitem[Lev99]{levin1999discrepancy}
M.~B. Levin.
\newblock Discrepancy estimates of completely uniformly distributed and
  pseudorandom number sequences.
\newblock {\em International Mathematics Research Notices},
  1999(22):1231--1251, 1999.

\bibitem[Lia98]{liao1998variance}
J.~Liao.
\newblock {Variance reduction in Gibbs sampler using quasi random numbers}.
\newblock {\em Journal of Computational and Graphical Statistics},
  7(3):253--266, 1998.

\bibitem[Liu08]{liu2008monte}
J.~S. Liu.
\newblock {\em {Monte Carlo strategies in scientific computing}}.
\newblock Springer Science \& Business Media, 2008.

\bibitem[LLT06]{l2006randomized}
P.~L’Ecuyer, C.~L{\'e}cot, and B.~Tuffin.
\newblock Randomized quasi-{M}onte {C}arlo simulation of {M}arkov chains with
  an ordered state space.
\newblock In {\em Monte Carlo and Quasi-Monte Carlo Methods 2004}, pages
  331--342. Springer, 2006.

\bibitem[LLT08]{l2008randomized}
P.~L'Ecuyer, C.~L{\'e}cot, and B.~Tuffin.
\newblock A randomized quasi-{M}onte {C}arlo simulation method for {M}arkov
  chains.
\newblock {\em Operations Research}, 56(4):958--975, 2008.

\bibitem[LMLT18]{l2018sorting}
P.~L’Ecuyer, D.~Munger, C.~L{\'e}cot, and B.~Tuffin.
\newblock Sorting methods and convergence rates for Array-RQMC: some empirical
  comparisons.
\newblock {\em Mathematics and Computers in Simulation}, 143:191--201, 2018.

\bibitem[{\L}RR13]{latuszynski2013adaptive}
K.~{\L}atuszy{\'n}ski, G.~O. Roberts, and J.~S. Rosenthal.
\newblock Adaptive {G}ibbs samplers and related {MCMC} methods.
\newblock {\em The Annals of Applied Probability}, 23(1):66--98, 2013.

\bibitem[LS06]{lemieux2006exact}
C.~Lemieux and P.~Sidorsky.
\newblock Exact sampling with highly uniform point sets.
\newblock {\em Mathematical and Computer Modelling}, 43(3):339--349, 2006.

\bibitem[LS07]{l2007testu01}
P.~L'Ecuyer and R.~Simard.
\newblock TestU01: AC library for empirical testing of random number
  generators.
\newblock {\em ACM Transactions on Mathematical Software}, 33(4):22, 2007.

\bibitem[MC93]{morokoff1993quasi}
W.~J. Morokoff and R.~E. Caflisch.
\newblock A quasi-{M}onte {C}arlo approach to particle simulation of the heat
  equation.
\newblock {\em SIAM Journal on Numerical Analysis}, 30(6):1558--1573, 1993.

\bibitem[MT12]{meyn2012markov}
S.~P. Meyn and R.~L. Tweedie.
\newblock {\em Markov chains and stochastic stability}.
\newblock Springer Science \& Business Media, 2012.

\bibitem[NB99]{newman1999monte}
M.~Newman and G.~Barkema.
\newblock {\em {Monte Carlo Methods in Statistical Physics chapter 1-4}}.
\newblock Oxford University Press: New York, USA, 1999.

\bibitem[NC06]{nuyens2006fast}
D.~Nuyens and R.~Cools.
\newblock Fast component-by-component construction, a reprise for different
  kernels.
\newblock In {\em Monte Carlo and Quasi-Monte Carlo Methods 2004}, pages
  373--387. Springer, 2006.

\bibitem[Nie77]{niederreiter1977pseudo}
H.~Niederreiter.
\newblock Pseudo-random numbers and optimal coefficients.
\newblock {\em Advances in Mathematics}, 26(2):99--181, 1977.

\bibitem[Nie92]{niederreiter1992random}
H.~Niederreiter.
\newblock {\em Random number generation and quasi-Monte Carlo methods},
  volume~63.
\newblock Siam, 1992.

\bibitem[NWX13]{neiswanger2013asymptotically}
W.~Neiswanger, C.~Wang, and E.~Xing.
\newblock Asymptotically exact, embarrassingly parallel MCMC.
\newblock {\em arXiv preprint arXiv:1311.4780}, 2013.

\bibitem[O{\etalchar{+}}97]{owen1997scrambled}
A.~B. Owen et~al.
\newblock Scrambled net variance for integrals of smooth functions.
\newblock {\em The Annals of Statistics}, 25(4):1541--1562, 1997.

\bibitem[OT05]{owen2005quasi}
A.~B. Owen and S.~D. Tribble.
\newblock A quasi-{M}onte {C}arlo {M}etropolis algorithm.
\newblock {\em Proceedings of the National Academy of Sciences of the United
  States of America}, 102(25):8844--8849, 2005.

\bibitem[Owe97]{owen1997monte}
A.~B. Owen.
\newblock Monte {C}arlo variance of scrambled net quadrature.
\newblock {\em SIAM Journal on Numerical Analysis}, 34(5):1884--1910, 1997.

\bibitem[Owe05]{owen2005multidimensional}
A.~B. Owen.
\newblock Multidimensional variation for quasi-Monte Carlo.
\newblock In {\em Contemporary Multivariate Analysis And Design Of Experiments:
  In Celebration of Professor Kai-Tai Fang's 65th Birthday}, pages 49--74.
  World Scientific, 2005.

\bibitem[Pes73]{peskun1973optimum}
P.~H. Peskun.
\newblock Optimum {M}onte-{C}arlo sampling using {M}arkov chains.
\newblock {\em Biometrika}, 60(3):607--612, 1973.

\bibitem[PG10]{pasarica2010adaptively}
C.~Pasarica and A.~Gelman.
\newblock Adaptively scaling the {M}etropolis algorithm using expected squared
  jumped distance.
\newblock {\em Statistica Sinica}, pages 343--364, 2010.

\bibitem[PT{\etalchar{+}}95]{paskov1995faster}
S.~H. Paskov, J.~F. Traub, et~al.
\newblock Faster valuation of financial derivatives.
\newblock {\em Journal of Portfolio Management}, 22(1):113, 1995.

\bibitem[PW96]{propp1996exact}
J.~G. Propp and D.~B. Wilson.
\newblock Exact sampling with coupled {M}arkov chains and applications to
  statistical mechanics.
\newblock {\em Random structures and Algorithms}, 9(1-2):223--252, 1996.

\bibitem[RC04]{robert2004monte}
C.~Robert and G.~Casella.
\newblock {\em {Monte Carlo Statistical Methods}}.
\newblock Springer-Verlag New York, 2004.

\bibitem[Ric57]{richard1957dynamic}
B.~Richard.
\newblock Dynamic programming.
\newblock {\em Princeton University Press}, 89:92, 1957.

\bibitem[RR07]{roberts2007coupling}
G.~O. Roberts and J.~S. Rosenthal.
\newblock {Coupling and ergodicity of adaptive Markov chain Monte Carlo
  algorithms}.
\newblock {\em Journal of Applied Probability}, 44(2):458--475, 2007.

\bibitem[RR09]{roberts2009examples}
G.~O. Roberts and J.~S. Rosenthal.
\newblock Examples of adaptive {MCMC}.
\newblock {\em Journal of Computational and Graphical Statistics},
  18(2):349--367, 2009.

\bibitem[Sch18]{tobias_schwedes_2018_1255042}
T.~Schwedes.
\newblock Python Code for {CUD} constructions.
\newblock \url{https://doi.org/10.5281/zenodo.1255042}, May 2018.

\bibitem[Sha63]{sharygin1963lower}
I.~F. Sharygin.
\newblock A lower estimate for the error of quadrature formulae for certain
  classes of functions.
\newblock {\em USSR Computational Mathematics and Mathematical Physics},
  3(2):489--497, 1963.

\bibitem[SKJ02]{sloan2002constructing}
I.~H. Sloan, F.~Y. Kuo, and S.~Joe.
\newblock Constructing randomly shifted lattice rules in weighted Sobolev
  spaces.
\newblock {\em SIAM Journal on Numerical Analysis}, 40(5):1650--1665, 2002.

\bibitem[SN10]{smola2010architecture}
A.~Smola and S.~Narayanamurthy.
\newblock An architecture for parallel topic models.
\newblock {\em Proceedings of the VLDB Endowment}, 3(1-2):703--710, 2010.

\bibitem[Sob74]{sobol1974pseudo}
I.~M. Sobol'.
\newblock {Pseudo-random numbers for constructing discrete Markov chains by the
  Monte Carlo method}.
\newblock {\em Zhurnal Vychislitel'noi Matematiki i Matematicheskoi Fiziki},
  14(1):36--44, 1974.

\bibitem[ST10]{suwa2010markov}
H.~Suwa and S.~Todo.
\newblock Markov chain {M}onte {C}arlo method without detailed balance.
\newblock {\em Physical review letters}, 105(12):120603, 2010.

\bibitem[SW98]{sloan1998quasi}
I.~H. Sloan and H.~Wo{\'z}niakowski.
\newblock When are quasi-Monte Carlo algorithms efficient for high dimensional
  integrals?
\newblock {\em Journal of Complexity}, 14(1):1--33, 1998.

\bibitem[Tje04]{tjelmeland2004using}
H.~Tjelmeland.
\newblock {Using all Metropolis--Hastings proposals to estimate mean values}.
\newblock {\em Technical Report, Norwegian University of Science and
  Technology, Trondheim}, 4, 2004.

\bibitem[TO08]{tribble2008construction}
S.~D. Tribble and A.~B. Owen.
\newblock {Construction of weakly {CUD} sequences for {MCMC} sampling}.
\newblock {\em Electronic Journal of Statistics}, 2:634--660, 2008.

\bibitem[TS13]{todo2013geometric}
S.~Todo and H.~Suwa.
\newblock Geometric allocation approaches in Markov chain Monte Carlo.
\newblock In {\em Journal of Physics: Conference Series}, volume 473, page
  012013. IOP Publishing, 2013.

\bibitem[VdC35]{van1935b}
J.~Van~der Corput.
\newblock Verteilungsfunktionen I, II.
\newblock {\em Nederl. Akad. Wetensch. Proc. 38}, 1935.

\bibitem[WD13]{wang2013parallelizing}
X.~Wang and D.~B. Dunson.
\newblock Parallelizing MCMC via Weierstrass sampler.
\newblock {\em arXiv preprint arXiv:1312.4605}, 2013.

\bibitem[WF03]{wang2003effective}
X.~Wang and K.-T. Fang.
\newblock {The effective dimension and quasi-Monte Carlo integration}.
\newblock {\em Journal of Complexity}, 19(2):101--124, 2003.

\bibitem[WT11]{welling2011bayesian}
M.~Welling and Y.~W. Teh.
\newblock Bayesian learning via stochastic gradient Langevin dynamics.
\newblock In {\em Proceedings of the 28th International Conference on Machine
  Learning (ICML-11)}, pages 681--688, 2011.

\bibitem[Yao82]{yao1982theory}
A.~C. Yao.
\newblock Theory and application of trapdoor functions.
\newblock In {\em Foundations of Computer Science, 1982. SFCS'08. 23rd Annual
  Symposium on}, pages 80--91. IEEE, 1982.

\bibitem[Zel86]{zellner1986prior}
A.~Zellner.
\newblock {On Assessing Prior Distributions and Bayesian Regression Analysis
  with g-Prior Distributions}.
\newblock In {\em Bayesian Inference and Decision Techniques: Essays in Honor
  of Bruno de Finetti}, pages 233--243. Elsevier Science Publishers, 1986.

\bibitem[Zha06]{zhang2006schur}
F.~Zhang.
\newblock {\em The Schur complement and its applications}, volume~4.
\newblock Springer Science \& Business Media, 2006.

\end{thebibliography}

\newpage

\appendix

\section{Transition kernel derivation, Section \ref{subsec:derivation_mpmcmc}}

Here, we derive the equations from Section 
\ref{subsubsec:transition_probabilities_product_space}
and Section 
\ref{subsubsec:trans_probs_sample_state_space} 
for the transition probabilities based on the formulation of the
Markov chain defined on the
product space of proposals and auxiliary variables per iteration, and on the
space of accepted variables per iteration, respectively, in detail. We thereby
make use of the same notation as in the respective sections above.

\subsection{Derivation in Section 
\ref{subsubsec:transition_probabilities_product_space}}
\label{appendix:transition_probabilities_product_space}

A more thorough derivation of equation \eqref{eq:last_eq_transition_kernel_mp_mcmc},
using the notation from Section
\ref{subsubsec:transition_probabilities_product_space} is,
\begin{align*}
\hat{P}(\tilde{z}, z) &= \hat{P}((\tilde{\vec{y}}_{1:N+1}, \tilde{I}_{1:M}=\tilde{i}_{1:M}), (\vec{y}_{1:N+1}, I_{1:M}=i_{1:M}))
\\
&= \kappa(\tilde{\vec{y}}_{\tilde{i}_M}, \vec{y}_{\setminus{i_M}}) p(I_{1:M}=i_{1:M}|\vec{y}_{1:N+1}, \tilde{I}_M=\tilde{i}_M)\\
&= \kappa(\vec{y}_{i_0}, \vec{y}_{\setminus{i_0}}) \prod_{m=1}^M p(I_m=i_m|\vec{y}_{1:N+1}, I_{m-1} = i_{m-1})\\
&= \kappa(\vec{y}_{i_0}, \vec{y}_{\setminus{i_0}}) \prod_{m=1}^M A(i_{m-1}, {i_m}),
\end{align*}
where we used that $\tilde{i}_M=i_0$ and $\tilde{\vec{y}}_{\tilde{i}_M} = \vec{y}_{i_0}$.
If the latter is not assumed, we need to add the term $\delta_{\tilde{\vec{y}}_{\tilde{i}_M}}(\vec{y}_{i_0})$
to the expression of the transition kernel. Equation
\eqref{eq:transition_kernel_set_mp_mcmc1} then follows in detail, as
\begin{align*}
\hat{P}(\tilde{z}, B) &= \hat{P}((\tilde{\vec{y}}_{1:N+1}, \tilde{I}_{1:M}=\tilde{i}_{1:M}), C_{1:{N+1}} \times D_{1:M})
\\
&= \int_{C_{1:N+1}} \kappa(\vec{y}_{i_0}, \vec{y}_{\setminus{i_0}}) \delta_{\tilde{\vec{y}}_{\tilde{i}_M}}(\vec{y}_{i_0})  \sum_{i_{1:M}\in D_{1:M}} 
\prod_{m=1}^M A(i_{m-1}, i_m)
\mathrm{d}\vec{y}_{1:N+1}\\
&= \chi_{C_{i_0}(\tilde{\vec{y}}_{\tilde{i}_M})}\int_{C_{\setminus{i_0}}}\kappa(\vec{y}_{i_0}=\tilde{\vec{y}}_{\tilde{i}_M},  \vec{y}_{\setminus{i_0}}) \sum_{i_{1:M}\in D_{1:M}} \prod_{m=1}^M A(i_{m-1}, i_m| \vec{y}_{i_0}=\tilde{\vec{y}}_{\tilde{i}_M})
\mathrm{d}\vec{y}_{\setminus{i_0}},
\end{align*}
where we used the same notation as in Section 
\ref{subsubsec:transition_probabilities_product_space}.

\subsection{Transition kernel derivation, Section 
\ref{subsubsec:trans_probs_sample_state_space}}
\label{appendix:transition_probabilities_accepted_samples}

In order to derive equation \eqref{eq:mp_mcmc_transition_kernel_general_case1a}
in more detail, we consider the relationship
between the transition kernel $\hat{P}$ on the product space of proposals and auxiliary
variables and the transition kernel $P$ on the space of accepted samples
more thoroughly. First, let us treat the case of $M=N=1$, which corresponds to standard
Metropolis-Hastings. 
Based on the current
sample $\tilde{\vec{x}}$, or in terms of $\hat{P}$ any state 
$(\tilde{\vec{y}}_{1:2}, \tilde{I}=\tilde{i})$ with
$\tilde{\vec{y}}_{\tilde{i}}=\tilde{\vec{x}}$, for the next accepted sample it holds
$\vec{x}\in B\in \mathcal{B}(\Omega)$, either
if $\tilde{\vec{x}}\in B$ and the additional proposal is refused or the additional
proposal is $\in B$ and is moreover accepted, i.e.\
\begin{align}
P(\tilde{\vec{x}}, B) &= \chi_B(\tilde{\vec{x}}) \int_{\mathbb{R}^d} \kappa(\vec{y}_{i_0}=\tilde{\vec{x}}, \vec{y}_{\setminus{i_0}})\left[ 1- A(i_0, {\setminus{i_0}}|\vec{y}_{i_0}=\tilde{\vec{x}}) \right]
\mathrm{d}\vec{y}_{\setminus{i_0}} \nonumber\\
&\quad 
+ \int_B \kappa(\vec{y}_{i_0}=\tilde{\vec{x}}, \vec{y}_{\setminus{i_0}})A(i_0, {\setminus{i_0}}|\vec{y}_{i_0}=\tilde{\vec{x}})\mathrm{d}\vec{y}_{\setminus{i_0}} 
\nonumber\\
&= \int_{B}\int_{\mathbb{R}^d}\delta_{\tilde{\vec{x}}}(\vec{y}_{i_0}) \kappa(\vec{y}_{i_0}, \vec{y}_{\setminus{i_0}}) \left[ 1- A(i_0, {\setminus{i_0}}) \right]
\mathrm{d}\vec{y}_{\setminus{i_0}}\mathrm{d}\vec{y}_{i_0}
\nonumber\\
&\quad+ \int_{\mathbb{R}^d}\int_{B}\delta_{\tilde{\vec{x}}}(\vec{y}_{i_0})\kappa(\vec{y}_{i_0}, \vec{y}_{\setminus{i_0}})
A({i_0}, {\setminus{i_0}})\mathrm{d}\vec{y}_{\setminus{i_0}}\mathrm{d}\vec{y}_{i_0} 
\nonumber\\
&= \hat{P}\left(\tilde{\vec{x}}, \left[B\times\mathbb{R}^d\times\{i_0\}\right] \ \cup\ \left[\mathbb{R}^d\times B\times\{\setminus i_0\}\right]\right).
\nonumber
\end{align}
The case $M=1$, and general $N\in\mathbb{N}$, can be treated in a similar fashion. 
In order to have $\vec{x}\in B\in \mathcal{B}(\Omega)$, 
either $\tilde{\vec{x}} \in B$ and all $N$ 
additional proposals are refused, or one of the additional proposals is $\in B$
and is moreover accepted, the latter case factorising into $N$ separate cases:
\begin{align*}
{P}(\tilde{\vec{x}}, B) &= \chi_B(\tilde{\vec{x}}) \int_{\Omega^N} \kappa(\vec{y}_{i_0}=\tilde{\vec{x}}, \vec{y}_{\setminus{i_0}})
\bigg[ 1- \sum_{i \neq i_0}A(i_0, i |\vec{y}_{i_0}=\tilde{\vec{x}}) \bigg]
\mathrm{d}\vec{y}_{\setminus{i_0}}\\
&\quad + \sum_{i=1}^N \int_{\Omega^{(i-1)d}\times B \times \Omega^{(N-i)d}}\kappa(\vec{y}_{i_0}=\tilde{\vec{x}}, \vec{y}_{\setminus{i_0}})
A(i_0, i |\vec{y}_{i_0}=\tilde{\vec{x}}) \mathrm{d}\vec{y}_{\setminus{i_0}}\\
&=\sum_{i=1}^{N+1}\int_{\Omega^{(i-1)d}\times B \times \Omega^{(N+1-i)d}}
\delta_{\tilde{\vec{x}}}(\vec{y}_{i_0})\kappa(\vec{y}_{i_0}, \vec{y}_{\setminus{i_0}})
A(i_0, i) \mathrm{d}\vec{y}_{1:N+1}\\
&= \hat{P}\bigg(\tilde{\vec{x}}, \bigcup_{i=1}^{N+1} \left[\Omega^{(i-1)d}\times B \times \Omega^{(N+1-i)d} \times \{i\}\right]\bigg),
\end{align*}
The general case of $M,N \in\mathbb{N}$, corresponding to equation
\eqref{eq:mp_mcmc_transition_kernel_general_case1a} now follows from
carefully considering what composition of sets allow for 
$\vec{x}_{1:M}\in B \in \Omega^M\subset \mathbb{R}^{Md}$.

\section{Proof of MP-MCMC limit theorems, Section \ref{subsection:limit_theorems}}

We now prove the law of large numbers and the central limit theorem for
MP-MCMC.

\subsection{Proof of Lemma \ref{lemma:lln_mp_mcmc}, LLN for MP-MCMC}
\label{appendix:proof_lemma_lln_mp_mcmc}
\begin{proof}
It was proven that the Markov chain on the product space on variables
$(\vec{y}_{1:N+1}, I)$ has an invariant distribution that is preserved
by the updating kernel due to detailed balance. According to
\ref{subsubsec:equivalence_mpmcmc}, the same chain
can be described on the accepted samples $\vec{x}_{1:M}\in \Omega^M$ 
in one iteration or on the product space of proposals and auxiliary 
variables $(\vec{y}_{1:N+1}, I_{1:M})\in \Omega^{N+1}\times \{1,...,N+1\}^M$ in one iteration. The
resulting stationary distribution on any of the respective spaces,
we denote for simplicity by $p$.
We apply Theorem 17.0.1 and Theorem 17.1.6 from \cite{meyn2012markov},
which state that for any scalar-valued, integrable function $F$ 
on $\mathbb{R}^{Md}$ it holds $\hat\mu_{F,n} \rightarrow \mu_F$ a.s., where
\begin{align*}
    \hat\mu_{F,n} &= \frac{1}{n}\sum_{\ell=1}^n F(\vec{x}_{1:M}^{(\ell)}), \text{ and},\\
    \mu_F &= \mathbb{E}_p\left[ F(\vec{x}_{1:M}) \right]. 
\end{align*}
Let us define
\begin{align*}
	F(\vec{x}_{1:M}) =  \frac{1}{M} \sum_{m=1}^{M} f(\vec{x}_{m}).
\end{align*}
Thus, $\hat\mu_{F,n} = \hat\mu_{n,M,N}$, as defined in
\eqref{eq:lln_mp_mcmc_inner_and_outer_iterations}.
Further, note that $\vec{x}_m^{(\ell)} = \vec{y}_{I_m^{(\ell)}}^{(\ell)}$, 
with the usual notation, for any $\ell=1,...,n \in \mathbb{N}$. Therefore, 
$F(\vec{x}_{1:M}) = 1/M \sum_{m} f(\vec{y}_{I_m})$, and,
\begin{align*}
    \mu_F&= \frac{1}{M}\sum_{m=1}^{M} \int_{\Omega^M} f(\vec{x}_m) p(\vec{x}_{1:M})
    \mathrm{d}\vec{x}_{1:M}\\
    &=\frac{1}{M}\sum_{m=1}^{M} \int_{\Omega} f(\vec{x}_m)p(\vec{x}_{m})
    \mathrm{d}\vec{x}_{m}
    \\
    &=\frac{1}{M}\sum_{m=1}^{M}\sum_{i_m=1}^{N+1} \int_{\Omega^{N+1}} f(\vec{y}_{i_m})p(y_{1:N+1},I_m=i_{m}) \mathrm{d}\vec{y}_{1:N+1}\\
  &= \frac{1}{M}\sum_{m=1}^{M}\sum_{i_m=1}^{N+1} \int_{\Omega^{N+1}} f(\vec{y}_{i_m}) \frac{1}{N+1}
  \pi(\vec{y}_{i_m}) 
  \kappa(\vec{y}_{i_m}, \vec{y}_{\setminus{i_m}})
  \mathrm{d}\vec{y}_{1:N+1}  \\   
    &= \frac{1}{N+1}\sum_{i=1}^{N+1} \int_{\Omega^{N+1}} f(\vec{y}_{i})\pi(\vec{y}_{i})\kappa(\vec{y}_{i}, \vec{y}_{\setminus{i}}) \mathrm{d}\vec{y}_{1:N+1}\\
    &= \frac{1}{N+1}\sum_{i=1}^{N+1}\int_{\Omega} f(\vec{y}_{i})\pi(\vec{y}_{i})\left[\int_{\Omega^N} \kappa(\vec{y}_{i}, \vec{y}_{\setminus{i}}) \mathrm{d}\vec{y}_{\setminus{i}}\right]\mathrm{d}\vec{y}_{i} \\
    &= \mu,
\end{align*}
where in the last line, we used
$\int \kappa(\vec{y}_{i}, \vec{y}_{\setminus{i}}) \mathrm{d}\vec{y}_{\setminus{i}}=1$
for any $i=1,...,N+1$.
This concludes the proof.
\end{proof}

\subsection{Proof of Lemma \ref{lemma:clt_mp_mcmc}, CLT for MP-MCMC}
\label{appendix:proof_lemma_ctl_mp_mcmc}

Before proving the CLT we specify a condition under which the Markov chain
on the accepted samples per iteration, defined by MP-MCMC, is \textit{well-behaved},
i.e.\ such that the asymptotic variance can be represented by the limit of 
the variances at iteration $n$ for $n \rightarrow \infty$, and is well-defined
and positive.
Referring to the proof of the CLT in \cite{meyn2012markov}, such a condition
can be formulated by $V$-uniform ergodicity of the underlying Markov chain defined
on the $M$ accepted samples per iteration: an ergodic Markov chain on 
$\vec{z}\in \Omega^M \subset \mathbb{R}^{Md}$ with limiting
distribution $p$ and transition function $P^n$ for $n=1,2,...$ is called 
$V$-uniformly ergodic, with a positive function $1\le V < \infty$, if
\begin{align}
    \| P^n - p\|_V \rightarrow 0 \quad \text{for} \quad n \rightarrow \infty,
    \label{eq:uniform_V_ergodicity}
\end{align}
where
\begin{align}
    \|P_1 - P_2\|_V := \sup_{\vec{z}\in \Omega^M}\frac{\| P_1(\vec{z},\cdot) - P_2(\vec{z},\cdot)\|_V}{V(\vec{z})},
    \label{eq:V_norm}.
\end{align}
In \eqref{eq:uniform_V_ergodicity}, we set $p(\vec{z},B) = p(B)$ for any
$B\in \mathcal{B}(\mathbb{R}^{Md}), \vec{z}\in \Omega^M$. Note that $p$ is
in this case understood as the probability measure associated with the 
stationary distribution of the Markov chain on the accepted samples. Moreover, the 
norm in \eqref{eq:V_norm} is defined by
\begin{align*}
    \| \nu \|_V = \sup_{U \le V}\left|\int_X U(\vec{z}) \nu(\mathrm{d}\vec{z})\right|.
\end{align*}
Let us define $\gamma_F^2$ by
\begin{align*}
    \gamma_F^2 &= \mathbb{E}_p \left[ \bar F^2
    \left(\vec{z}^{(1)}\right) \right]\\
    &\quad\quad+ 2\sum_{k=2}^\infty \mathbb{E}_p \left[ \bar{F}\left(\vec{z}^{(1)} \right)
    \bar F\left(\vec{z}^{(k)}\right)\right]
\end{align*}
where $\bar{F} = F - \int F(\vec{z}) p(\vec{z}) \mathrm{d}\vec{z}$, and $\vec{z}^{(n)}$
denotes the state of the chain in the $n$th iteration for 
$n\in\mathbb{N}$.
Under the assumption that $F^2\le V$, the constant $\gamma_F^2$
is well-defined, non-negative and finite according to Theorem 17.0.1 from
\cite{meyn2012markov}, if the underlying Markov chain is 
V-uniformly ergodic. Further, if $\gamma_F^2>0$, then the CLT holds true for that
chain, i.e., which is defined on the $M$ accepted samples per iteration. Note that we 
aim on deriving a CLT for individual accepted states of MP-MCMC, which 
follows now.
\begin{proof}[Proof of Lemma \ref{lemma:clt_mp_mcmc}, CLT for MP-MCMC]
Since detailed balance holds true, the joint distribution $p$ is the invariant
distribution of the chain on the product space, as defined by MP-MCMC.
Therefore, we can apply Theorem 17.0.1 and Theorem 17.3.6 from \cite{meyn2012markov},
which ensure the CLT to hold true on this chain. Thus,
\begin{align}
    \sqrt{n}\left( \hat\mu_{F,n} - \mu \right) 
    \xrightarrow{\mathcal{D}}
    \mathcal{N}\left( 0, \sigma_{F}^2 \right),
    \label{eq:ctl_mp_mcmc_outer_iterations}
\end{align}
where $\sigma_{F}^2$ denotes the asymptotic variance of the sequence
of samples $(F(\vec{x}_{1:M}^{(i)}))_{i \ge 1}$, which, in real problems, 
cannot be determined exactly but can be estimated by the same run (several
runs in the case where batch method and ESS estimate cannot be applied) of 
the Markov chain that produced the estimate $\hat\mu_{F,n}$. The variance 
$\sigma_{F,n}^2$ of the expression on the left hand side of 
\eqref{eq:ctl_mp_mcmc_outer_iterations} is given by
\begin{align*}
    \sigma_{F,n}^2 &= \frac{1}{n}\sum_{i=1}^n \Var \left( F (\vec{x}_{1:M}^{(i)}) \right)\\
    &\quad\quad+ \frac{2}{n} \sum_{1\le i<j\le n} \Cov\left( F (\vec{x}_{1:M}^{(i)}), F (\vec{x}_{1:M}^{(j)}) \right).
\end{align*}
According to Theorem 17.1.6 from \cite{meyn2012markov}, if the CLT holds
true for a particular initial distribution of the Markov chain, then it automatically
holds true for every initial distribution. Thus, as the asymptotic behaviour 
of the Markov chain does not depend on the
initial distribution, we may assume the target distribution as initial
distribution. It follows the stationarity of the sequence
$(F(\vec{x}_{1:M}^{(i)}))_{i \ge 1}$, which implies
\begin{align*}
    \gamma^{(0)} := \Var\left( (F(\vec{x}_{1:M}^{(i)}) \right)
\end{align*}
is the same for any $i=1,2,...$, and similarly,
\begin{align*}
    \gamma^{(k)} := \Cov\left( F(\vec{x}_{1:M}^{(i)}), F(\vec{x}_{1:M}^{(i+k)}) \right)
\end{align*}
depends only on the lag $k$ between two samples on the product space. We call $\gamma^{(k)}$ the lag-$k$ auto-covariance of the series
$(F(\vec{x}_{1:M}^{(i)}))_{i \ge 1}$. Due to
stationarity we have
\begin{align*}
    \sigma_{F,n}^2 &= \gamma^{(0)} + 2 \sum_{k=1}^{n-1}\frac{n-k}{n}\gamma^{(k)}\\
    &\quad\xrightarrow{n \rightarrow \infty} \gamma^{(0)} + 2 \sum_{k=1}^\infty \gamma^{(k)} = \sigma_{F}^2,
\end{align*}
and the limit is well-defined and positive. 
We now want to derive an expression for the asymptotic variance $\sigma^2$
in terms of the function of interest $f$. In order to do so note that
\begin{align*}
    M\gamma^{(0)} &= \frac{1}{M}\sum_{m=1}^{M}\Var\left( f(\vec{x}_m) \right) 
    + \frac{2}{M}\sum_{1\le \ell<m \le M}\Cov\left( f(\vec{x}_\ell), f(\vec{x}_m) \right)\\
    &= \zeta^{(0)} + \frac{2}{M}\sum_{1\le \ell< m\le M} \zeta^{(0)}_{\ell,m},
\end{align*}
where $\zeta^{(0)}=\Var(f(\vec{x}_m))$ is independent of $m$, and 
$\zeta_{\ell,m}^{(0)} = \Cov(f(\vec{x}_\ell),f(\vec{x}_{m}))$.
Here, we used the stationarity of the Markov chain on the product space.
Similarly, for any $k \ge 1$,
\begin{align*}
    M\gamma^{(k)} &= \frac{1}{M}\sum_{\ell,m=1}^{M}\Cov\left( f(\vec{x}_\ell^{(i)}, f(\vec{x}_m^{(i+k)}) \right)\\
    &= \frac{1}{M}\sum_{\ell,m=1}^{M} \zeta^{(k)}_{\ell,m},
\end{align*}
where $\zeta^{(k)}_{\ell,m} = \Cov( f(\vec{x}_\ell^{(i)}, f(\vec{x}_m^{(i+k)}))$ for 
any $i\ge 1$. Summarising, we have
\begin{align*}
    \sqrt{nM}\left( \hat\mu_{n,M,N}- \mu \right)
    \xrightarrow{\mathcal{D}} \mathcal{N}(0,\sigma^2) \quad \text{for} \quad n\rightarrow \infty,
\end{align*}
where the asymptotic variance can be expressed as
\begin{align*}
    \sigma^2 &= M \sigma_F^2\\
    &= M \left[ \gamma^{(0)} + 2 \sum_{k=1}^{\infty}\gamma^{(k)} \right]\\
    &= \zeta^{(0)} + 2\sum_{1\le \ell<m \le M} \zeta^{(0)}_{\ell,m} + \frac{2}{M}\sum_{k=1}^\infty \sum_{\ell,m=1}^{M} \zeta^{(k)}_{\ell,m}.
\end{align*}

\end{proof}

\section{Proofs for ergodicity of adaptive MP-MCMC}
\label{app:sec:ergodicity_adaptive_mpmcmc}

In the following, we show ergodicity of adaptive MP-MCMC under the conditions
given by Theorem \ref{thm:ergodicity_adap_mpmcmc_independent},
Theorem, \ref{thm:ergodicity_adapt_mpmcmc_bounded} and
Theorem \ref{thm:ergodicity_adaptive_mpmcmc_positive}.

\subsection{Ergodicity proof of adaptive MP-MCMC, 
Theorem \ref{thm:ergodicity_adap_mpmcmc_independent}}
\label{subsec:proof_ergodicity_adapt_mpmcmc_independent}

\begin{proof}
We shall use Theorem \ref{thm:theorem2roberts2007} to prove ergodicity.
Diminishing adaptation can be shown analogously to the proofs of Theorem
\ref{thm:ergodicity_adaptive_mpmcmc_positive} and Theorem 
\ref{thm:ergodicity_adapt_mpmcmc_bounded}.
We still need to prove the containment condition.
It was shown in \cite{roberts2007coupling} that simultaneous strongly 
aperiodic geometric ergodicity implies containment, which we will use in 
what follows.

\begin{definition}
\label{def:ssage}
A family of transition kernels $\{P_\gamma: \gamma \in \mathcal{Y} \}$ is simultaneous 
strongly aperiodic geometric ergodic (S.S.A.G.E.) if there is 
$C \in \mathcal{B}(\Omega^M)$, a function $V:\Omega^M\rightarrow[1,\infty)$ 
and $\delta>0, \lambda<1$, $b<\infty$ such that $\sup_{\vec{z}\in C}V(\vec{z}) < \infty$, 
and
\begin{enumerate}[label=\arabic*)]
\item $\forall \ \gamma\in \mathcal{Y}\ \exists$ a probability measure $\nu_\gamma$ on $C$ such that $P_\gamma(\vec{z},\cdot)\ge \delta \nu_\gamma(\cdot)$ for all $\vec{z}\in C$, and
\item $P_\gamma V(\vec{z}) \le \lambda V(\vec{z}) + b \mathbbm{1}_C(\vec{z})$ for all $\gamma\in \mathcal{Y}, \vec{z}\in\Omega^M$,
\end{enumerate}
where $P_\gamma V(\vec{z}) := \mathbb{E}[V(\vec{Z}_1)|\vec{Z}_0=\vec{z}]$.
\end{definition}

In order to show S.S.A.G.E., set $C=\Omega^M$ and $V \equiv 1$. Since the 
proposal distribution is independent of previous samples, we may set 
$\delta=1$ and $\nu_\gamma = P_\gamma$. Further, by setting $\lambda=1/2$ and $b=1$ we have
\begin{align*}
P_\gamma V(\vec{z}) =1 \le \lambda \cdot 1 + b = \lambda V(\vec{z}) + b \mathbbm{1}_C(\vec{z}),
\end{align*}
which implies S.S.A.G.E., and the proof is complete.
\end{proof}

\subsection{Ergodicity proof of adaptive MP-MCMC, Theorem \ref{thm:ergodicity_adapt_mpmcmc_bounded}}
\label{subsec:ergodicity_adapt_mpmcmc_bounded}
\begin{proof}
Without loss of generality we set the state space of accepted samples 
per iteration to the restriction $S\subset \mathbb{R}^{Md}$, 
i.e.\ $\Omega^M = S$, of states in $\mathbb{R}^{Md}$ that have positive 
probability with respect to the stationary probability $p$.
The support of $\pi$ is bounded by assumption. Further, it is closed
since the support of a continuous function is the closure of sets on which
it is non-zero. Hence, the support of $\pi$ is compact, and so is $S$.
The theorem is proven 
using Theorem \ref{thm:theorem2roberts2007}. Diminishing adaptation follows in 
the same way as in the proof of Theorem \ref{thm:ergodicity_adaptive_mpmcmc_positive},
where it is used that $\mathcal{Y}$ is bounded, which follows from the 
bounded support assumption.

It remains to show the containment condition, which follows via S.S.A.G.E., which we prove now. Let $C=S$. We need to find $\delta>0$ and $\nu_\gamma$ such that 
${P}_\gamma (\vec{z}, B) \ge \delta \nu_\gamma(B)$ 
for all $B\in \mathcal{B}(C)$ and for all $\vec{z}\in C, \gamma \in \mathcal{Y}$. 
Let
\begin{align*}
\tilde{C} = \bigcup_{m=1}^M C_m.
\end{align*}
Clearly, $\tilde{C}$ is compact. Since $\pi$ is positive on $C$ and assumed to be
continuous, and since $\mathcal{Y}$ is bounded, there is a $c_A>0$ such that
\begin{align}
c_A \le \frac{\pi({\vec{y}}_i)N_{{\gamma}} ({\vec{y}}_{i}, {\vec{y}}_{\setminus{i}}) }
{\sum_{j=1}^{N+1}\pi({\vec{y}}_j)N_{{\gamma}} ({\vec{y}}_{j}, {\vec{y}}_{\setminus{j}})}, 
\label{eq:boundness_finite_chain_transition_probability}
\end{align}
for all $\vec{y}_{1}, ..., \vec{y}_{N+1} \in \tilde{C}$ and all 
$\gamma \in \mathcal{Y}$. 
Similarly, there is a $c_N>0$ such that
\begin{align*}
c_N \le N_\gamma ({\vec{y}}_{i}, {\vec{y}}_{\setminus{i}}),
\end{align*}
for all $\vec{y}_{1}, ..., \vec{y}_{N+1} \in \tilde{C}$ and all 
$\gamma \in \mathcal{Y}$.
Without loss of generality, let $i_0=1$ and thus $\vec{x}_M=\vec{y}_{1}$. 
Further, let ${i}_1, ..., {i}_M\in \{2,...,N+1\}$ be chosen fixed.
With $S_{i_m}(B)$ from \eqref{eq:Sim_set} and $T_{i_m}(B)$, defined by
\begin{align}
T_{i_m}(B)= \{
\vec{y}_{\setminus{i_0}} \in \Omega^N: 
\vec{y}_{i_m}\in B_{m}, \text{ and }
\vec{y}_{\setminus{i_m}} \in\Omega^N
\},
\label{eq:notation_trans_sets_dim_adapt}
\end{align}
for $m=1,...,M$ and $i_m=1,...,N+1$, we have
\begin{align}
{P}(\vec{z}, B) &\ge P\Big(\vec{x}_M, \bigcap_{m=1}^M S_{i_m}(B)\Big)
\\
&= \int_{\bigcap_m {T_{i_m}(B)}}
N_{\gamma}(\vec{y}_{i_0}, \vec{y}_{\setminus{i_0}}) 
\prod_{m=1}^M A({i_{m-1}}, {{i_m}})
\mathrm{d}\vec{y}_{\setminus{i_0}}\nonumber\\
&\ge c_A^M \int_{\bigcap_m {T_{i_m}(B)}}
N_\gamma (\vec{y}_{i_0}, \vec{y}_{\setminus{i_0}})
\mathrm{d}\vec{y}_{\setminus{i_0}}\nonumber\\
& \ge c_A^M c_N \lambda\left(\bigcap_m T_{i_m}(B)\right)\nonumber\\
& = \delta \nu(B),\nonumber
\end{align}
where $\delta := c_A^M c_N>0$, $\nu(B):=\lambda(\bigcap_m T_{i_m}(B))$, 
and $\lambda(\cdot)$ denotes the Lebesgue measure. This concludes the
first condition in the S.S.A.G.E.\ definition \ref{def:ssage}.
The second condition follows immediately by
setting $\lambda=1/2$, $b=1$ and $V\equiv 1$, which concludes the proof.
\end{proof}

\subsection{Ergodicity proofs of adaptive MP-MCMC, Theorem \ref{thm:ergodicity_adaptive_mpmcmc_positive}}
\label{subsec:ergodicity_adapt_mpmcmc_positive}

\begin{proof}
In the following, we consider MP-MCMC as a single Markov chain over
the space $\Omega^M \subset \mathbb{R}^{Md}$ of accepted samples in each 
iteration, as introduced in Section
\ref{subsubsec:trans_probs_sample_state_space}.
Referring to Theorem \ref{thm:theorem2roberts2007}, there are 
two things that we need to show: diminishing adaptation and containment.\\


Diminishing adaptation: Let $B=B_{1:M} \in \mathcal{B}(\Omega^M)$ and 
$\vec{z} \in \Omega^M\subset \mathbb{R}^{Md}$ be arbitrary. 
For an arbitrary but fixed $i_0 \in \{1,...,N+1\}$, let 
$T_{i_m}(B)\in \mathcal{B}(\Omega^N)$ defined as in 
\eqref{eq:notation_trans_sets_dim_adapt} for $m=1,...,M$ and 
$i_m=1,...,N+1$. Further, let
$\tilde{B}\in \mathcal{B}(\Omega^N)$ be defined by
\begin{align*}
\tilde{B} = \bigcup_{i_1,...,i_M=1}^{N+1} \bigcap_{m=1}^M T_{i_m}(B).
\end{align*}
Using the formulas for the transition kernel
for the MP-MCMC in \eqref{eq:transition_kernel_set_mp_mcmc1}
and \eqref{eq:mp_mcmc_transition_kernel_general_case1a}, 
and the notation $\vec{z} = \vec{x}_{1:M}$, $\vec{y}_{i_0}= \vec{x}_M$, 
then yields
\begin{align}
&\quad\quad 
\| P_{\Gamma_{n+1}}(\vec{z},B) - P_{\Gamma_n}(\vec{z},B) \| 
\\
&\le \int_{\tilde{B}} 
\left[N_{\Sigma_{n+1}}(\vec{y}_{i_0}, \vec{y}_{\setminus{i_0}}') 
- N_{\Sigma_n}(\vec{y}_{i_0}, \vec{y}_{\setminus{i_0}}') \right]
\sum_{i_1,...,i_M=1}^{N+1}
\prod_{m=1}^M
A(i_{m-1}, i_m)\mathrm{d}\vec{y}_{\setminus{i_0}}
\label{eq:1st_inequality_proof_dim_adapt}
\\
&\le (N+1)^M \int_{\Omega^N}
\Big|N_{\Sigma_{n+1}}(\vec{y}_{i_0}, \vec{y}_{\setminus{i_0}}') 
- N_{\Sigma_n}(\vec{y}_{i_0}, \vec{y}_{\setminus{i_0}}') \Big|\mathrm{d}\vec{y}_{\setminus{i_0}}
\nonumber\\
&= (N+1)^M \int_{\Omega^N}
\Big| 
\prod_{\ell=1}^N N_{\Sigma_{n+1}}(\vec{y}_\ell) - \prod_{\ell=1}^N N_{\Sigma_{n}}(\vec{y}_\ell)
\Big|\mathrm{d}\vec{y}_{1:N}
\nonumber\\
&\le (N+1)^M \int_{\Omega^N} \int_0^1 \Big| \frac{\mathrm{d}}{\mathrm{d}s} 
\prod_{\ell=1}^N N_{\Sigma_n + s(\Sigma_{n+1} - \Sigma_n)}(\vec{y}_\ell) \Big| \mathrm{d}s\ \mathrm{d}y_{1:N} \label{eq:upper_bound_dim_adapt} \\
&= (\star ), \nonumber
\end{align}
where we used that $A(i_{m-1}, i_m)\le 1$ for any $m=1,...,M$ and
any $i_1,...,i_M=1,...,N+1$, and $\tilde{B}\subset \Omega^N$. Note that
the smaller-or-equal sign becomes an equal sign in
\eqref{eq:1st_inequality_proof_dim_adapt} if $\vec{x}_M\in B_m$ for any
$m=1,...,M$.
Further, setting $A_{n}(s)= \Sigma_n + s(\Sigma_{n+1}-\Sigma_n)$ leads to
\begin{align}
\prod_{\ell=1}^N N_{\Sigma_n + s(\Sigma_{n+1} - \Sigma_n)}(\vec{y}_\ell) = (2\pi)^{dN/2}\det\left(A_{n}(s)\right)^{-N/2}
\exp\left( - \frac{1}{2}\sum_{\ell=1}^N \vec{y}_\ell^T A_n(s)^{-1} \vec{y}_\ell \right).
\label{eq:product_equality_dim_adapt}
\end{align}
For the first of the two individual terms of derivatives of the 
product on the right hand side of \eqref{eq:product_equality_dim_adapt}, 
we have 
\begin{align}
&\quad\ \Big|\frac{\mathrm{d}}{\mathrm{d}s}\left[(2\pi)^{dN/2} \det\left( A_n(s) \right)^{-N/2}\right]\Big| 
\nonumber
\\
&= \Big|(2\pi)^{dN/2}\frac{N}{2}\det\left( A_n(s) \right)^{-N/2-1} \frac{\mathrm{d}}{\mathrm{d}s}\left[ \det( A_n(s)) \right]\Big|\nonumber\\
&= \Big|(2\pi)^{dN/2}\frac{N}{2}\det\left( A_n(s) \right)^{-N/2-1}
\det( A_n(s) )\operatorname{tr}\left( A_n(s) ^{-1}(\Sigma_{n+1}-\Sigma_n)\right)\Big|
\nonumber\\
&\le \const \left\| \Sigma_{n+1} - \Sigma_n \right\|, \label{eq:exp_term_estimate_dim_adapt2}
\end{align}
where in the third line we used Jacobi's formula, and in the last 
line we used $0<c_1\le \det(A_n(s))\le c_2 < \infty$ for any 
$n\in \mathbb{N}$, which is a consequence of $c_1I\le \gamma \le c_2I$ 
for any $\gamma \in \mathcal{Y}$, i.e.\ the boundedness of 
$\mathcal{Y}$. Moreover, we used
\begin{align}
\operatorname{tr}\left( A_n(s)^{-1}(\Sigma_{n+1}-\Sigma_n) \right) &= \langle A_n(s)^{-1}, \Sigma_{n+1}-\Sigma_n \rangle_F
\nonumber\\
&\le\|A_n(s)^{-1}\|_F \|\Sigma_{n+1}-\Sigma_n\|_F
\nonumber\\
&\le \const \|\Sigma_{n+1}-\Sigma_n\|,
\label{eq:trace_estimate_dim_adapt}
\end{align}
where $\langle \cdot, \cdot \rangle_F$ denotes the Frobenius inner 
product and $\| \cdot \|_F$ the associated Frobenius norm, for which
we made use of the Cauchy-Schwarz inequality. For the last estimate,
i.e.\ the boundedness of the Frobenius norm of $A_n(s)^{-1}$ we refer
to the calculations below. Note that we do not need 
to further define the norm used in \eqref{eq:exp_term_estimate_dim_adapt2} 
and \eqref{eq:trace_estimate_dim_adapt}
since all norms are equivalent over finite-dimensional linear spaces. 
For the second term of derivatives on the right hand side of 
\eqref{eq:product_equality_dim_adapt} we have
\begin{align*}
&\quad \ \Big| \frac{\mathrm{d}}{\mathrm{d}s}\left[ \exp\left( -\frac{1}{2}\sum_{\ell=1}^N \vec{y}_\ell^T A_n(s)^{-1} \vec{y}_\ell \right) \right] \Big|
\\
&= \Big|\exp\left( -\frac{1}{2}\sum_{\ell=1}^N \vec{y}_\ell^T A_n(s)^{-1} \vec{y}_\ell \right)\sum_{\ell=1}^N \vec{y}_\ell^T A_n(s)^{-1}(\Sigma_{n+1}-\Sigma_n)A_n(s)^{-1} \vec{y}_\ell\Big| 
\\
&\le \const \cdot \sum_{\ell=1}^N \vec{y}^T_\ell \vec{y}_\ell
\exp\left( -\frac{1}{2}\sum_{\ell=1}^N \vec{y}_\ell^T A_n(s)^{-1} \vec{y}_\ell \right)
\|\Sigma_{n+1}-\Sigma_n\|,
\end{align*}
where we in the last line we used some basic properties of the Schur
complement of submatrices in 
\begin{align*}
B_n(s)=
\begin{bmatrix}
A_n(s) & I \\
I & c_1^{-1}I
\end{bmatrix},
\end{align*}
referring to \cite{zhang2006schur}.
More precisely, since $c_1^{-1}I>0$ and $B_n(s) / c_1^{-1}I = A_n(s)-c_1I \ge 0$
for any $s \in [0,1]$, it follows that $B_n(s)\ge 0$. As $A_n(s)^{-1}>0$,
this is equivalent to $B_n(s)/ A_n(s) = c_1^{-1}I - A_n(s)^{-1} \ge 0$, the
latter being also used in equation \eqref{eq:trace_estimate_dim_adapt}.
Since $A_n(s)^{-1}$ is symmetric and positive definite, there is a unique 
symmetric square root of $A_n(s)^{-1}$, and therefore
\begin{align*}
\vec{y}^T (A_n(s)^{-1})^2 \vec{y} &= ([A_n(s)^{-1}]^{1/2} \vec{y})^T A_n(s)^{-1} ([A_n(s)^{-1}]^{1/2}\vec{y})\\
&\le c_1^{-1} ([A_n(s)^{-1}]^{1/2} \vec{y})^T([A_n(s)^{-1}]^{1/2} \vec{y})\\
&= c_1^{-1} \vec{y}^T A_n(s)^{-1} \vec{y}\\
&\le c_1^{-2} \vec{y}^T \vec{y} \quad\quad \forall\ \vec{y} \in \mathbb{R}^d.
\end{align*}
Finally, using Fubini for interchanging integration, the boundedness of moments
of the Normal distribution and the again the boundedness of $\mathcal{Y}$, 
we have
\begin{align*}
(\star ) \le \const  \|\Sigma_{n+1} - \Sigma_n\|\le \const \cdot \frac{1}{n} \rightarrow 0 \quad \text{ for } \quad n\rightarrow\infty,
\end{align*}
which proves diminishing adaptation.

\vspace{0.5cm}

Containment follows immediately by applying Theorem 21 from
\cite{craiu2015stability}, under the assumptions formulated in
\ref{assumption:bounded_jump_condition} and 
\ref{assumption:adaptation_within_compact}. This concludes the proof.

\end{proof}

\subsection{Asymptotic unbiasedness proof for IS-MP-MCMC, 
Lemma \ref{lemma:asymptotic_unbiasedness_is}}
\label{proof:lemma:asymptotic_unbiasedness_is}
\begin{proof}
Due to the ergodicity of MP-MCMC and since the asymptotic behaviour of the 
Markov chain is independent of its initial distribution
we may assume the stationary distribution $p$ on $(\vec{y}_{1:N+1},I)$ as initial
distribution. It follows the stationarity of the Markov chain, which implies,
\begin{align*}
\mathbb{E}_p\left[\boldsymbol{\mu}_L \right] &= \sum_{i=1}^{N+1}\mathbb{E}\left[ w_{i}f({\vec{y}}_i) \right]\\
&= \sum_{i=1}^{N+1}\int w_{i} f(\vec{y}_i) p(\vec{y}_{1:N+1}) \mathrm{d}\vec{y}_{1:N+1}
\\
&= \sum_{i=1}^{N+1}\int w_{i} f(\vec{y}_i) \sum_{j=1}^{N+1}
p(I=j) p(\vec{y}_{1:N+1}|I=j) \mathrm{d}\vec{y}_{1:N+1} 
\\
&= \sum_{i=1}^{N+1} \int w_{i} f({\vec{y}}_i)
\sum_{j=1}^{N+1}\frac{1}{N+1}  \pi({\vec{y}}_j) K({\vec{y}}_j, {\vec{y}}_{\setminus{j}})\mathrm{d}{\vec{y}}_{1:N+1}\\
&= \frac{1}{N+1}\sum_{i=1}^{N+1} \int \frac{\pi({\vec{y}}_i)K({\vec{y}}_i, \vec{y}_{\setminus{i})})}{\sum_{k=1}^{N+1}\pi(\vec{y}_k)K(\vec{y}_k, \vec{y}_{\setminus{k}})}f(\vec{y}_i) \sum_{j=1}^{N+1}\pi({\vec{y}}_j) K({\vec{y}}_j, {\vec{y}}_{\setminus{j}}) \mathrm{d}\vec{y}_{1:N+1} \\
&= \frac{1}{N+1}\sum_{i=1}^{N+1} \int f(\vec{y}_i) \pi(\vec{y}_i) \left(\int K(\vec{y}_i, \vec{y}_{\setminus{i}}) \mathrm{d}\vec{y}_{\setminus{i}}\right)\mathrm{d}\vec{y}_i\\
&=\frac{1}{N+1}\sum_{i=1}^{N+1}\int f(\vec{y}_i) \pi(\vec{y}_i)\mathrm{d}\vec{y}_i\\
&= \mathbb{E}_\pi \left[ f(\vec{y}) \right],
\end{align*}
where for the first and fourth equality we used stationarity, and in the penultimate line the kernel property. The statement follows now immediately by the ergodic theorem.
\end{proof}

\subsection{Asymptotic unbiasedness of the covariance estimate, Corollary
\ref{corollary:asymptotic_unbiasedness_is_covariance}}
\label{proof:corollary:asymptotic_unbiasedness_is_covariance}

\begin{proof}
Due to ergodicity, and since the asymptotic behaviour of the 
Markov chain is independent of its initial distribution we may set the 
stationary distribution $p$ on $(\vec{y}_{1:N+1},I)$ as initial
distribution. Stationarity of the chain follows, and thus, for any
 $j,k\in \{1,...,d\}$ we have
\begin{align*}
\mathbb{E}_p\left[ ({\Sigma}_L)_{j,k} \right] &= \frac{1}{L}\sum_{\ell=1}^L\mathbb{E}
\Bigg[ \sum_{i=1}^{N+1}w_{i}^{(\ell)}  
\bigg[
\left( (y_i^{(\ell)})_j - \mu_j \right)
-  
\Big( ({\mu}_{L})_j - \mu_j \Big) 
\bigg]
\\
&\quad\quad\quad\quad\quad
\cdot
\bigg[
\left( (y_i^{(\ell)})_k - \mu_k \right) 
-
\Big( ({\mu}_{L})_k - \mu_k \Big)
\bigg]
\Bigg]\\
&=\frac{1}{L}\sum_{\ell=1}^L \mathbb{E}\Bigg[\sum_{i=1}^{N+1}w_{i}^{(\ell)} 
\bigg\{
\Big((y_i^{(\ell)})_j- \mu_j  \Big)\Big((y_i^{(\ell)})_k - \mu_k  \Big)
\\ 
&\quad\quad
-\Big((y_i^{(\ell)})_j - \mu_j  \Big)\Big(({\mu}_{L})_k - \mu_k  \Big)
-\Big(({\mu}_{L})_j - \mu_j  \Big)\Big((y_i^{(\ell)})_k - \mu_k  \Big)
\\ 
&\quad\quad
+\Big(({\mu}_{L})_j - \mu_j  \Big) \Big(({\mu}_{L})_k - \mu_k \Big)
\bigg\}
\Bigg]
\\
&= \frac{1}{L}\sum_{\ell=1}^L\Bigg(
\mathbb{E}\Bigg[ \sum_{i=1}^{N+1} w_{i}^{(\ell)}\Big((y_i^{(\ell)})_j - \mu_j  \Big)\Big((y_i^{(\ell)})_k - \mu_k  \Big) \Bigg]
\\ 
&\quad\quad
-\mathbb{E}\bigg[ \Big(({\mu}_{L})_j - \mu_j  \Big) \Big(({\mu}_{L})_k- \mu_k \Big) \bigg]
\Bigg)
\\
&= \Cov_{\pi}\left(x_j,x_k\right) - \Cov_p\left(({\mu}_{L})_j, ({\mu}_{L})_k \right),
\end{align*}
where $x_i$ denotes the $i$th component of the random vector $\vec{x}\sim \pi$.
In the last line we applied Lemma \ref{lemma:asymptotic_unbiasedness_is}.
For $L\rightarrow \infty$, $\vec{\mu}_{L}$ converges to the constant
mean vector $\vec{\mu}$. Hence, for any $j,k\in \{1,...,d\}$,
\begin{align*}
\Cov\left(({\mu}_{L})_j, ({\mu}_{L})_k \right) \rightarrow 0 \quad \text{for } L\rightarrow \infty.
\end{align*}
Applying the ergodic theorem concludes the proof.
\end{proof}

\section{Proof of consistency of MP-QMCMC, Theorem \ref{theorem:consistency_multi_prop_mcmc}}
\label{app:sec:proof_consistency_mpqmcmc}

In the following, consistency of MP-QMCMC, as displayed in Algorithm 
\ref{algorithm:multiproposal_quasi_MH}, is proven. We assume the special case of
when the underlying
state space is one-dimensional, i.e.\ $d=1$. The general case can however be derived 
from this in a straightforward fashion. Before going to the proof of Theorem
\ref{theorem:consistency_multi_prop_mcmc}, we need a technical result 
on CUD points, which is similar to Lemma 6 in \cite{chen2011consistency}.

\begin{Lemma}[Auxiliary technicality]
\label{lemma:auxiliary_technicality_consistency_proof}
For $i\ge 1$ and $1\le j \le d$, let $v^i_j \in [0,1]$. For any $d,i,k \in \mathbb{N}$, let 
\begin{align*}
\vec{x}_i= (v_1^i, \ldots, v_{d}^i, \ldots,v_1^{i+k-1}, \ldots, v_{d}^{i+k-1})\in [0,1]^{dk}.
\end{align*}
If $v^i_j = v_{id+j}$, and $(v_i)_i$ is CUD, then $(\vec{x}_i)_{i}$ are uniformly distributed on $[0,1]^{dk}$ in the sense that 
\begin{align*}
\frac{1}{n} \sum_{i=1}^n \mathbbm{1}_{(\vec{a}, \vec{b}]} (\vec{x}_i) \rightarrow \operatorname{Vol}\left((\vec{a}, \vec{b}]\right),
\end{align*}
for any rectangular set $[\vec{a}, \vec{b}]\subset [0,1]^{dk}$.
\end{Lemma}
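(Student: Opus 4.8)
The plan is to unwind the definition of $\vec{x}_i$. Since $v^i_j = v_{id+j}$, the block $\vec{x}_i$ is just a string of $dk$ consecutive entries,
\[
\vec{x}_i = (v_{id+1}, v_{id+2}, \ldots, v_{id+dk}) \in [0,1]^{dk},
\]
so consecutive blocks overlap in $(k-1)d$ coordinates while the starting index advances by $d$. The naive route would be to invoke the overlapping-block form of the CUD definition in dimension $dk$ directly, but this fails: $(\vec{x}_i)_i$ is only the stride-$d$ subselection of the stride-one overlapping $dk$-blocks, and an arithmetic subsequence of an equidistributed sequence need not itself equidistribute. The key idea, and the step that removes this obstacle, is to decompose the index set $\{i\ge 1\}$ not modulo $d$ but modulo $k$: along any arithmetic progression of step $k$ the blocks $\vec{x}_i$ become exactly \emph{non-overlapping}, which is the situation handled by Chentsov's equivalence.

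Concretely, I would first fix a residue $r\in\{1,\ldots,k\}$ and examine the subsequence $(\vec{x}_{r+k\ell})_{\ell\ge 0}$. Setting $v'_j := v_{j+rd}$, one checks that $\vec{x}_{r+k\ell} = (v'_{\ell dk+1},\ldots,v'_{(\ell+1)dk})$, i.e.\ these are precisely the non-overlapping blocks of size $dk$ of the shifted sequence $(v'_j)_j$. Because deleting the finitely many initial terms of a CUD sequence leaves it CUD (the star discrepancy of the first $n$ blocks and of a fixed finite shift of them differ by $O(1/n)$; cf.\ the shift-invariance noted in Section \ref{subsubsec:completely_uniformly_distributed_points}), the sequence $(v'_j)_j$ is again CUD. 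The non-overlapping form of the CUD property, equation \eqref{eq:cud_convergence_non_overlapping}, then yields $D^{*,dk}_m\to 0$ for these blocks, and using $D_m\le 2^{dk}D^*_m$ to pass from anchored boxes to arbitrary rectangles gives, for every rectangle $(\vec{a},\vec{b}]\subset[0,1]^{dk}$ and each of the $k$ residues $r$,
\[
\frac{1}{m}\sum_{\ell=0}^{m-1}\mathbbm{1}_{(\vec{a},\vec{b}]}(\vec{x}_{r+k\ell}) \longrightarrow \operatorname{Vol}\bigl((\vec{a},\vec{b}]\bigr)\qquad(m\to\infty).
\]

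It then remains to reassemble the $k$ residue classes. Writing $a_i := \mathbbm{1}_{(\vec{a},\vec{b}]}(\vec{x}_i)\in\{0,1\}$, the full Cesàro average splits as $\frac{1}{n}\sum_{i=1}^n a_i = \sum_{r=1}^k \frac{m_r(n)}{n}\bigl(\frac{1}{m_r(n)}\sum_{\ell} a_{r+k\ell}\bigr)$, where $m_r(n)\sim n/k$ counts the terms of class $r$ up to $n$. Each bracket converges to $\operatorname{Vol}((\vec{a},\vec{b}])$ by the previous step, while the weights satisfy $m_r(n)/n\to 1/k$; since interleaving finitely many sequences with a common Cesàro limit preserves that limit, I would conclude $\frac{1}{n}\sum_{i=1}^n a_i\to\operatorname{Vol}((\vec{a},\vec{b}])$, which is the claim. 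I expect the only genuinely delicate point to be the identification of the stride-$k$ subsequences with non-overlapping $dk$-blocks of a shifted CUD sequence; once the mod-$k$ decomposition is in place, shift-invariance of CUD and Chentsov's theorem do the work, and the final recombination is routine.
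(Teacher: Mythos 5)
Your proof is correct, but it takes a genuinely different route from the paper's. The paper fixes an anchored box $(\vec{0},\vec{c}]$ and, for each $r\in\mathbb{N}$, defines a function $f_r$ on $[0,1]^{d(k+r)}$ that sums $r$ shifted copies of the indicator; it then lower-bounds the Ces\`aro average by applying $f_r$ to \emph{non-overlapping} blocks of length $d(k+r)$, invokes \eqref{eq:cud_convergence_non_overlapping} in dimension $d(k+r)$ to get $\liminf_n \tfrac1n\sum_i \mathbbm{1}_{(\vec{0},\vec{c}]}(\vec{x}_i)\ge \operatorname{Vol}((\vec{0},\vec{c}])$, and finally obtains the matching $\limsup$ bound by decomposing the complement $(\vec{0},\vec{c}]^{\operatorname{C}}$ into disjoint rectangles and applying the $\liminf$ bound to each. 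You instead decompose the index set modulo $k$, observe that along each residue class the windows $\vec{x}_{r+k\ell}$ are \emph{exactly} the non-overlapping $dk$-blocks of the finitely shifted sequence $(v_{j+rd})_{j\ge1}$, apply shift-invariance of CUD together with Chentsov's equivalence \eqref{eq:cud_convergence_non_overlapping} in the single fixed dimension $dk$, and recombine the $k$ classes by interleaving. Your route buys two-sided convergence directly — no $\liminf$/$\limsup$ sandwich, no complementation step, and no terms sacrificed to overlap — and it uses the non-overlapping property only in dimension $dk$ rather than in the unbounded family of dimensions $d(k+r)$; the price is the extra (easy) shift-invariance lemma, which you justify correctly via an $O(1/n)$ perturbation of the discrepancy. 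The paper's blocking argument, conversely, is the classical Chentsov/Owen-style proof and needs nothing beyond the non-overlapping property of the original sequence, at the cost of the sandwich bookkeeping; indeed, the index grouping in the paper's displayed inequality (positions $(i-1)r+j+1$ versus blocks $(v^{(i-1)(r+k)+1},\ldots,v^{i(r+k)})$) is written loosely, a wrinkle your exact mod-$k$ decomposition avoids entirely. Both proofs ultimately rest on the same key input, the equivalence of overlapping and non-overlapping equidistribution for CUD sequences.
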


\begin{proof}
Let $\vec{c}\in [0,1]^{dk}$ and let $v:=\prod_{i=1}^{dk}c_i$ be the volume of $(\vec{0}, \vec{c}]$. For $r\in \mathbb{N}$ define $f_r: [0,1]^{d(k+r)} \rightarrow [0,\infty)$ by
\begin{align*}
f_r(\vec{w})&=f_r\left((w_1^{1}, \ldots, w_d^{1}, \ldots, w_1^{r+k}, \ldots, w_d^{r+k})\right) \\
&= \sum_{j=0}^{r-1} \mathbbm{1}_{(\vec{0}, \vec{c}]}\left((w_1^{j+1}, \ldots, w_d^{j+1}, \ldots, w_1^{j+k}, \ldots, w_d^{j+k})\right).
\end{align*}
The Riemann-integral of $f_r$ over $[0,1]^{d(k+r)}$ equals $rv$ as we sum up $r$ integral of $\mathbbm{1}_{(\vec 0, \vec c]}$ over the unit hypercube. Using $f_r$ on non-overlapping blocks of size $d(k+r)$ of $v^i_j$ yields
\begin{align}
\frac{1}{n} \sum_{i=1}^n \mathbbm{1}_{(\vec{0}, \vec{c}]} (\vec{x}_i) &= \frac{1}{n} \sum_{i=1}^n \mathbbm{1}_{(\vec{0}, \vec{c}]} \left((v_1^i, \ldots, v_{d}^i, \ldots,v_1^{i+k-1}, \ldots, v_{d}^{i+k-1})\right) 
\nonumber \\
\begin{split}
&\ge \frac{1}{n}\sum_{i=1}^{\lfloor n/r \rfloor} \sum_{j=0}^{r-1} \mathbbm{1}_{(\vec{0},\vec{c}]}\left((v_1^{(i-1)r + j +1}, \ldots, v_d^{(i-1)r + j +1}, \right. 
\end{split}
\label{eq:lower_estimate_auxlemma_proof}\\
&\quad \quad \quad\quad \quad \quad \quad \quad \left. \ldots, v_1^{(i-1)r + j +k}, \ldots, v_d^{(i-1)r + j +k}) \right)
\nonumber \\
&= \frac{1}{n} \sum_{i=1}^{\lfloor n/r \rfloor} f_r\left((v_1^{(i-1)(r+k)+1}, \ldots, v_d^{(i-1)(r+k)+1}, \right.
\nonumber \\
&\left. \quad \quad \quad\quad \quad \quad \quad \quad \ldots,v_1^{i(r+k)}, \ldots, v_d^{i(r+k)})  \right)
\nonumber \\
& \rightarrow v, \quad \text{as } n\rightarrow \infty, \nonumber
\end{align}
where in the second line we split up the sum from line one into $\lfloor r/n \rfloor$ segments of sums. For the convergence we used \eqref{eq:cud_convergence_non_overlapping}. Note that equality holds in \eqref{eq:lower_estimate_auxlemma_proof} if and only if $n/r \in \mathbb{N}$. From the calculation above we conclude
\begin{align*}
\liminf_{n\rightarrow \infty} \frac{1}{n}\sum_{i=1}^n \mathbbm{1}_{(\vec{0}, \vec{c}]}(\vec{x}_i) \ge v.
\end{align*}
and therefore
\begin{align}
\liminf_{n\rightarrow \infty} \frac{1}{n}\sum_{i=1}^n \mathbbm{1}_{(\vec{a}, \vec{b}]}(\vec{x}_i) \ge \operatorname{Vol}\left((\vec{a}, \vec{b}]\right),
\label{eq:liminf_bound_auxlemma_proof}
\end{align}
for any rectangular set $(\vec{a}, \vec{b}]\subset [0,1]^{dk}$. But this implies
\begin{align*}
\limsup_{n\rightarrow \infty} \frac{1}{n}\sum_{i=1}^n \mathbbm{1}_{(\vec{0}, \vec{c}]} (\vec{x}_i) &= 1 - \liminf_{n\rightarrow \infty} \frac{1}{n}\sum_{i=1}^n \mathbbm{1}_{(\vec{0}, \vec{c}]^{\operatorname{C}}}(\vec{x}_i) \\
&\le 1 - \operatorname{Vol}\left( (\vec{0}, \vec{c}]^{\operatorname{C}}\right)  \\
& = v,
\end{align*}
since $(\vec{0}, \vec{c}]^{\operatorname{C}}$ can be written as the union
\begin{align*}
(\vec{0}, \vec{c}]^{\operatorname{C}} = \bigcup_{i=1}^{dk} (\vec{a}_i, \vec{b}_i],
\end{align*}
of disjoint sets
\begin{align*}
(\vec{a}_i, \vec{b}_i] = [0,1]\times \ldots \times [0,1] \times [c_i, 1] \times (0, c_{i+1}] \times \ldots \times (0, c_{dk}].
\end{align*}
Hence, it also holds
\begin{align}
\limsup_{n\rightarrow \infty} \frac{1}{n}\sum_{i=1}^n \mathbbm{1}_{(\vec{a}, \vec{b}]} (\vec{x}_i) \le \operatorname{Vol}\left((\vec{a}, \vec{b}]\right).
\label{eq:limsup_bound_auxlemma_proof}
\end{align}
Combining equations \eqref{eq:liminf_bound_auxlemma_proof} and \eqref{eq:limsup_bound_auxlemma_proof} concludes the proof.
\end{proof}

\subsection{Proof of Theorem \ref{theorem:consistency_multi_prop_mcmc}}

\begin{proof}
Let $\varepsilon>0$. Let $m,n\in\mathbb{N}$ and for $i=1, \ldots, n$, let
\begin{align}
x^{i,m,0}_{1}, \ldots, x^{i,m,0}_{M}, \ldots, x^{i,m,m}_{1}, \ldots, x^{i,m,m}_{M} \in \Omega\subset \mathbb{R},
\end{align}
be the Rosenblatt-Chentsov transformation of $\vec{u}^i, \ldots, \vec{u}^{i+m}$. Let $f$ be a bounded and continuous function on $\mathbb{R}$. 
One can write
\begin{align*}
\int_\Omega f(x) \pi(x) \mathrm{d}x - \frac{1}{nN}\sum_{\substack{i=1,\ldots,n\\j=1,\ldots,M}} f(x^i_j) = \Sigma_1 + \Sigma_2 + \Sigma_3,
\end{align*}
where
\begin{align*}
\Sigma_1 &= \int_\Omega f(x) \pi(x) \mathrm{d}x - \frac{1}{nM}\sum_{i,j} f(x^{i,m,m}_{j}),\\
\Sigma_2 &= \frac{1}{nM}\sum_{i,j} \left(f(x^{i,m,m}_{j}) -f(x^{i+m}_j)\right),\\
\Sigma_3 &= \frac{1}{nM}\sum_{i,j} \left(f(x^{i+m}_j) -f(x^i_j)\right).
\end{align*}
Note that when driven by IID random numbers in $[0,1]$, MP-MCMC is assumed to be ergodic. It also satisfies the detailed balance condition. Hence, it samples from the stationary distribution $\pi$, that is, 
\begin{align*}
x^{i,m,m}_{j}\sim \pi \quad \forall j, \quad \text{ if }\quad (v^i_1, \ldots, v^i_{N+M}, \ldots v^{i+m}_1, \ldots, v^{i+m}_{N+M}) \sim \mathcal{U}[0,1]^{(m+1)(N+M)}.
\end{align*}
Lemma \ref{lemma:auxiliary_technicality_consistency_proof} with $d=N+M$ and $k=m+1$ implies
\begin{align}
\frac{1}{n}\sum_{i=1}^n \mathbbm{1}_{(\vec{a}, \vec{b}]}\left((v^i_1, \ldots, v^i_{N+M}, \ldots v^{i+m}_1, \ldots, v^{i+m}_{N+M})\right) \rightarrow \operatorname{Vol}\left( (\vec{a}, \vec{b}] \right), 
\label{eq:uniformity_proof_theorem_multi_mcqmc}
\end{align}
Using \eqref{eq:uniformity_proof_theorem_multi_mcqmc} and the MCMC regularity, it therefore holds
\begin{align*}
\frac{1}{nM}\sum_{i,j} f(x^{i,m,m}_{j}) \xrightarrow{ n \to \infty } \int_{\Omega} f(x) \pi(x) \mathrm{d}x.
\end{align*}
Hence, we have $\Sigma_1 \rightarrow 0$ as $n\rightarrow \infty$.\\
Considering $\Sigma_2$, note that the only non-zero terms in the sum arise when $x^{i,m,m}_{j}\neq x^{i+m}_j$. This case occurs whenever the coupling region $\mathcal{C}$ is avoided $m$ consecutive times, 
namely by $\vec{u}^{i+1}, \ldots, \vec{u}^{i+m}$. Then, 
\begin{align*}
(v^{i+1}_1, \ldots, v^{i+1}_{N+M}, \ldots, v^{i+m}_1, \ldots, v^{i+m}_{N+M})
\end{align*}
belongs to an area $A\subset [0,1]^{m(N+M)}$ of volume at most $\left(1-\operatorname{Vol}(\mathcal{C})\right)^{m}$. Thus,
\begin{align*}
\limsup_{n\rightarrow \infty} \Sigma_2 &\le \int_{A}\left[f(x^{i,m,m}_{j}(\vec{u})) -f(x^{i+m}_j(\vec{u}))\right] \mathrm{d}\vec{u}\\
&\le \left(1-\operatorname{Vol}(\mathcal{C})\right)^{m}\cdot \max_{x,x'\in \Omega} \left|f(x)-f(x') \right|\\
&< \varepsilon,
\end{align*}
for $m$ chosen sufficiently large and since the maximising term is bounded by assumption.\\
For $\Sigma_3$, it holds
\begin{align*}
\Sigma_3 \le \frac{1}{nM}\sum_{\substack{i=1,\ldots,n\\j=1,\ldots,M}} \left(f(x^{i+m}_j) -f(x^i_j)\right) \le 2 \frac{m}{n}\max_{x,x'\in \Omega} \left|f(x)-f(x') \right| \xrightarrow{ n \to \infty } 0.
\end{align*}
Combining the results for $\Sigma_1, \Sigma_2$ and $\Sigma_3$ yields
\begin{align*}
\left|\int_\Omega f(x) \pi(x) \mathrm{d}x - \lim_{n \rightarrow \infty}\frac{1}{nM}\sum_{\substack{i=1,\ldots,n\\j=1,\ldots,M}} f(x^i_j) \right|< \varepsilon,
\end{align*}
which concludes the proof since $\varepsilon>0$ was chosen arbitrarily.
\end{proof}

\end{document}